\newtheorem{theorem}{Theorem}
\newtheorem{proposition}[theorem]{Proposition}
\newtheorem{lemma}[theorem]{Lemma}
\newtheorem{corollary}[theorem]{Corollary}
\newtheorem{definition}[theorem]{Definition}
\numberwithin{theorem}{section}
\numberwithin{figure}{section}
\numberwithin{table}{section}
\newcommand{\br}{0.6}
\newcommand{\h}{1}
\newcommand{\sca}{1}
\newcommand\z[2]{%
  \readlist*\student{#1}%
  First argument: #2.\par
  \ifnum\studentlen=1\relax%
    Only one name provided: \student[1]%
  \else%
    Two names provided: \student[1] and \student[2]%
  \fi%
}
\newcommand{\di}[2]{
\readlist*\scalesarguments{#1}%
\ifnum\scalesargumentslen=1\relax
 \begin{tikzpicture}[baseline={(current bounding box.center)},scale={\scalesarguments[1]}]
 {#2}
 \end{tikzpicture}
  \else
\begin{tikzpicture}[baseline={(current bounding box.center)},xscale={\scalesarguments[1]},yscale={\scalesarguments[2]}]
{#2}
\end{tikzpicture}
  \fi%
}
\NewDocumentCommand{\li}{ooo}{%
  \IfValueTF{#1}{%
	  \IfValueTF{#3}{
	  \draw [{#3}] ({#1},{#2}) to ({#1},{#2+\h});
	  }
	  {
	\draw ({#1},{#2}) to ({#1},{#2+\h});
  }
  }
  {%
    \draw (0,0) to (0,\h);
  }%
}
\NewDocumentCommand{\lin}{ooooo}{%
  \IfValueTF{#1}{%
	  \IfValueTF{#5}{
	  \draw [{#5}] ({#1},{#2}) to ({#3},{#4});
	  }
	  {
	\draw ({#1},{#2}) to ({#3},{#4});
  }
  }
  {%
    \draw (0,0) to (0,\h);
  }%
}
\NewDocumentCommand{\lili}{ooo}{%
  \IfValueTF{#1}{%
\IfValueTF{#3}{
	\li[{#1}][{#2}][{#3}]
	\li[{#1+\br}][{#2}][{#3}]
	}{
	\li[{#1}][{#2}]
	\li[{#1+\br}][{#2}]
	}
  }{%
    \li 
	\li[{\br}][0]
  }%
}
\NewDocumentCommand{\dli}{ooo}{%
  \IfValueTF{#1}{%
  \IfValueTF{#3}{
	  \draw[{#3}] ({#1},{#2}) to ({#1},{#2-0.4*\h});
  }{
	\draw ({#1},{#2}) to ({#1},{#2-0.4*\h});
	}
  }{%
    \draw (0,0) to (0,-0.4*\h);
  }%
}
\NewDocumentCommand{\uli}{ooo}{%
  \IfValueTF{#1}{%
  \IfValueTF{#3}{
	  \draw[{#3}] ({#1},{#2}) to ({#1},{#2+0.4*\h});
  }
  {
	\draw ({#1},{#2}) to ({#1},{#2+0.4*\h});
	}
  }{%
    \draw (0,0) to (0,0.4*\h);
  }%
}
\NewDocumentCommand{\ocr}{ooo}{%
  \IfValueTF{#1}{%
    \IfValueTF{#3}{
    \draw[{#3}] ({#1},{#2}) to ({#1+\br},{#2+\h});
	\draw [{#3}]({#1+\br},{#2}) to  ({#1+3*\br/4},{#2+\h/4});
	\draw[dotted,{#3}]({#1+3*\br/4},{#2+\h/4}) to  ({#1+\br/4},{#2+3*\h/4});
	\draw[{#3}] ({#1+\br/4},{#2+3*\h/4}) to  ({#1},{#2+\h});
    }
    {
	\draw ({#1},{#2}) to ({#1+\br},{#2+\h});
	\draw ({#1+\br},{#2}) to  ({#1+3*\br/4},{#2+\h/4});
	\draw[dotted]({#1+3*\br/4},{#2+\h/4}) to  ({#1+\br/4},{#2+3*\h/4});
	\draw ({#1+\br/4},{#2+3*\h/4}) to  ({#1},{#2+\h});
	}
  }{%
    \draw (0,0) to (\br,\h);
	\draw (\br,0) to  (3*\br/4,\h/4);
	\draw[dotted](3*\br/4,\h/4) to  (\br/4,3*\h/4);
	\draw (\br/4,3*\h/4) to  (0,\h);

  }%
}
\NewDocumentCommand{\cro}{ooo}{%
  \IfValueTF{#1}{%
    \IfValueTF{#3}{
    \draw[{#3}] ({#1},{#2}) to ({#1+\br},{#2+\h});
	\draw [{#3}]({#1+\br},{#2}) to  ({#1+3*\br/4},{#2+\h/4});
	\draw[{#3}]({#1+3*\br/4},{#2+\h/4}) to  ({#1+\br/4},{#2+3*\h/4});
	\draw[{#3}] ({#1+\br/4},{#2+3*\h/4}) to  ({#1},{#2+\h});
    }
    {
	\draw ({#1},{#2}) to ({#1+\br},{#2+\h});
	\draw ({#1+\br},{#2}) to  ({#1+3*\br/4},{#2+\h/4});
	\draw({#1+3*\br/4},{#2+\h/4}) to  ({#1+\br/4},{#2+3*\h/4});
	\draw ({#1+\br/4},{#2+3*\h/4}) to  ({#1},{#2+\h});
	}
  }{%
    \draw (0,0) to (\br,\h);
	\draw (\br,0) to  (3*\br/4,\h/4);
	\draw (3*\br/4,\h/4) to  (\br/4,3*\h/4);
	\draw (\br/4,3*\h/4) to  (0,\h);

  }%
}
\NewDocumentCommand{\ucr}{ooo}{%
  \IfValueTF{#1}{%
     \IfValueTF{#3}{
     \draw[{#3}] ({#1},{#2+\h}) to ({#1+\br}, {#2});
	\draw[{#3}] ({#1},{#2}) to  ({#1+\br/4},{#2+\h/4});
	\draw[dotted,#3]({#1+\br/4},{#2+\h/4}) to  ({#1+3*\br/4},{#2+3*\h/4});
	\draw[{#3}]({#1+3*\br/4},{#2+3*\h/4}) to  ({#1+\br},{#2+\h});
     }
     {
	\draw ({#1},{#2+\h}) to ({#1+\br}, {#2});
	\draw ({#1},{#2}) to  ({#1+\br/4},{#2+\h/4});
	\draw[dotted]({#1+\br/4},{#2+\h/4}) to  ({#1+3*\br/4},{#2+3*\h/4});
	\draw ({#1+3*\br/4},{#2+3*\h/4}) to  ({#1+\br},{#2+\h});
	}
  }{%
    \draw (0,\h) to (\br, 0);
	\draw (0,0) to  (\br/4,\h/4);
	\draw[dotted](\br/4,\h/4) to  (3*\br/4,3*\h/4);
	\draw (3*\br/4,3*\h/4) to  (\br,\h);

  }%
}
\NewDocumentCommand{\cu}{ooo}{%
  \IfValueTF{#1}{%
       \IfValueTF{#3}{
       \draw[{#3}]  ({#1},{#2}) to [out=-90, in=180] +(\br/2,-0.4*\h);
	\draw [{#3}] ({\br+#1}, {#2}) to [out=-90, in=0] +(-\br/2,-0.4*\h);
       }
       {
	\draw ({#1},{#2}) to [out=-90, in=180] +(\br/2,-0.4*\h);
	\draw ({\br+#1}, {#2}) to [out=-90, in=0] +(-\br/2,-0.4*\h);
	}
  }{%
    \draw (0,0) to [out=-90, in=180] +(\br/2,-0.4*\h);
	\draw (\br, 0) to [out=-90, in=0] +(-\br/2,-0.4*\h);

  }%
}
\NewDocumentCommand{\ca}{ooo}{%
  \IfValueTF{#1}{%
         \IfValueTF{#3}{
	\draw[{#3}]  ({#1},{#2}) to [out=90, in=180] +(\br/2,0.4*\h);
	\draw[{#3}]  ({\br+#1}, {#2}) to [out=90, in=0] +(-\br/2,0.4*\h);         
         }
	{         
	\draw ({#1},{#2}) to [out=90, in=180] +(\br/2,0.4*\h);
	\draw ({\br+#1}, {#2}) to [out=90, in=0] +(-\br/2,0.4*\h);
	}
  }{%
    \draw (0,0) to [out=90, in=180] +(\br/2,0.4*\h);
	\draw (\br, 0) to [out=90, in=0] +(-\br/2,0.4*\h);

  }%
}
\NewDocumentCommand{\cc}{ooo}{%
  \IfValueTF{#1}{%
           \IfValueTF{#3}{         
	\draw[{#3}] ({#1},{#2}) to [out=90, in=180] +(\br/2,0.4*\h);
	\draw[{#3}] ({\br+#1}, {#2}) to [out=90, in=0] +(-\br/2,0.4*\h);
	\draw [{#3}]({#1},{#2+\h}) to [out=-90, in=180] +(\br/2,-0.4*\h);
	\draw [{#3}]({\br+#1}, {#2+\h}) to [out=-90, in=0] +(-\br/2,-0.4*\h);
           }         
           {
  	\draw ({#1},{#2}) to [out=90, in=180] +(\br/2,0.4*\h);
	\draw ({\br+#1}, {#2}) to [out=90, in=0] +(-\br/2,0.4*\h);
	\draw ({#1},{#2+\h}) to [out=-90, in=180] +(\br/2,-0.4*\h);
	\draw ({\br+#1}, {#2+\h}) to [out=-90, in=0] +(-\br/2,-0.4*\h);
			}
  }{%
    \draw (0,0) to [out=90, in=180] +(\br/2,0.4*\h);
	\draw (\br, 0) to [out=90, in=0] +(-\br/2,0.4*\h);
	\draw (0,\h) to [out=-90, in=180] +(\br/2,-0.4*\h);
	\draw (\br, \h) to [out=-90, in=0] +(-\br/2,-0.4*\h);

  }%
}
\NewDocumentCommand{\culi}{ooo}{%
  \IfValueTF{#1}{%
       \IfValueTF{#3}{  
	\draw[{#3}] ({#1},{#2}) to [out=-90, in=180] +(\br/2,-0.4*\h);
	\draw[{#3}]({\br+#1}, {#2}) to [out=-90, in=0] +(-\br/2,-0.4*\h);
	\draw[{#3}] ({2*\br+#1}, {#2}) to ({2*\br+#1}, {-\h + #2});
       
       }
       {
	\draw ({#1},{#2}) to [out=-90, in=180] +(\br/2,-0.4*\h);
	\draw ({\br+#1}, {#2}) to [out=-90, in=0] +(-\br/2,-0.4*\h);
	\draw ({2*\br+#1}, {#2}) to ({2*\br+#1}, {-\h + #2});
		}
  }{%
    \draw (0,0) to [out=-90, in=180] +(\br/2,-0.4*\h);
	\draw (\br, 0) to [out=-90, in=0] +(-\br/2,-0.4*\h);
	\draw ({2*\br}, 0) to ({2*\br}, -\h);

  }%
}
\NewDocumentCommand{\licu}{ooo}{%
  \IfValueTF{#1}{%
         \IfValueTF{#3}{  
	\draw[{#3}] ({\br+#1}, {#2}) to [out=-90, in=180] +(\br/2,-0.4*\h);
	\draw[{#3}]  ({2*\br+#1}, {#2}) to [out=-90, in=0] +(-\br/2,-0.4*\h);
	\draw[{#3}] ({#1},{#2}) to ({#1}, {-\h + #2});
         
         }
         {
	\draw ({\br+#1}, {#2}) to [out=-90, in=180] +(\br/2,-0.4*\h);
	\draw  ({2*\br+#1}, {#2}) to [out=-90, in=0] +(-\br/2,-0.4*\h);
	\draw ({#1},{#2}) to ({#1}, {-\h + #2});
		}
  }{%
    \draw (\br, 0) to [out=-90, in=180] +(\br/2,-0.4*\h);
	\draw  ({2*\br}, 0)  to [out=-90, in=0] +(-\br/2,-0.4*\h);
	\draw (0,0)to ({0}, -\h);

  }%
}
\NewDocumentCommand{\ccr}{ooo}{%
  \IfValueTF{#1}{%
   \IfValueTF{#3}{  
   \cu[{#1}][{#2-\h}][{#3}]    
    \li[{#1}][{#2-\h}][{#3}]
	\ocr[{#1+\br}][{#2-\h}][{#3}]
	\dli[{#1+2*\br}][{#2-\h}][{#3}]
   }
   {
	\cu[{#1}][{#2-\h}]    
    \li[{#1}][{#2-\h}]
	\ocr[{#1+\br}][{#2-\h}]
	\dli[{#1+2*\br}][{#2-\h}]
	}
  }{%
	\cu[0][-\h]    
    \li[0][-\h]
	\ocr[\br][-\h]
	\dli[2*\br][-\h]
  }%
}
\NewDocumentCommand{\cali}{ooo}{%
  \IfValueTF{#1}{%
     \IfValueTF{#3}{  
     \ca[{#1}][{#2}][{#3}]
	\li[{#1+2*\br}][{#2}][{#3}]
     }
     {
	\ca[{#1}][{#2}] 
	\li[{#1+2*\br}][{#2}]
	}
  }{%
    \ca 
    \li[2*\br][0]
  }%
}
\NewDocumentCommand{\lica}{ooo}{%
  \IfValueTF{#1}{%
       \IfValueTF{#3}{  
       }
	\li[{#1}][{#2}][{#3}]
    \ca[{#1+\br}][{#2}][{#3}]
       {
	\li[{#1}][{#2}]
    \ca[{#1+\br}][{#2}]
    }
  }{%
    \li 
    \ca[\br][0]
  }%
}
\NewDocumentCommand{\crc}{ooo}{%
  \IfValueTF{#1}{%
         \IfValueTF{#3}{  
	\uli[{#1}][{#2+\h}][{#3}]
    \li[{#1+2*\br}][{#2}][{#3}]
	\ocr[{#1}][{#2}][{#3}]
	\ca[{#1+\br}][{#2+\h}][{#3}]
         
         }
         {
	\uli[{#1}][{#2+\h}]    
    \li[{#1+2*\br}][{#2}]
	\ocr[{#1}][{#2}]
	\ca[{#1+\br}][{#2+\h}]
	}
  }{%
	\uli[0][\h]    
    \li[2*\br][0]
	\ocr
	\ca[\br][\h]
  }%
}
\DeclareMathOperator{\End}{End}
\DeclareMathOperator{\Hom}{Hom}
\DeclareMathOperator{\id}{{\rm id}}
\DeclareMathOperator{\op}{op}
\newcommand{\mK}{\mathbb{K}}
\newcommand{\mC}{\mathbb{C}}
\newcommand{\cC}{\mathcal{C}}
\newcommand{\cD}{\mathcal{D}}
\newcommand{\oa}{\bar{0}}
\newcommand{\ob}{\bar{1}}
\renewcommand{\epsilon}{\varepsilon}
\newcommand{\parS}{\varsigma}
\newcommand{\parq}{\rho}
\DeclarePairedDelimiter\abs{\lvert}{\rvert}%
\DeclarePairedDelimiter\norm{\lVert}{\rVert}%
\let\oldabs\abs
\def\abs{\@ifstar{\oldabs}{\oldabs*}}
\let\oldnorm\norm
\def\norm{\@ifstar{\oldnorm}{\oldnorm*}}
\renewcommand{\arraystretch}{1.5} 
\begin{document}
\title{Diagram categories of Brauer type}

\author{Sigiswald Barbier}
\address{Department of Electronics and Information Systems \\Faculty of Engineering and Architecture\\Ghent University\\Krijgslaan 281, 9000 Gent\\ Belgium.}
\email{barbier.sigiswald@gmail.com}

\date{\today}
\keywords{diagram category, monoidal category, supercategory, Brauer algebra, periplectic Brauer algebra, Birman-Wenzl-Murakami algebra, periplectic $q$-Brauer algebra}
\subjclass[2020]{18M05, 18M30, 17B10} 

\begin{abstract}
This paper introduces monoidal (super)categories resembling the Brauer category.
For all categories, we can construct bases of the hom-spaces using Brauer diagrams.  
These categories include the Brauer category, its deformation the BWM-category, the periplectic Brauer category, and its deformation the periplectic $q$-Brauer category but also some new exotic categories. We show that the BWM-category is the unique deformation of the Brauer category in this framework, while the periplectic Brauer category has two deformations, which are each other monoidal opposite.
\end{abstract}

\maketitle

\section{Introduction}

\subsection{Diagram algebras and categories}
It is well known that certain classes of algebras such as the symmetric group algebra, the Temperley-Lieb algebra, the Brauer algebra, and the periplectic Brauer algebra all can be graphically represented using diagrams.
This diagrammatical interpretation allows us to construct for each class of these diagram algebras a corresponding category. The objects of such category $\cC$ are the natural numbers and homomorphisms between $m$ and $n$ in $\cC$ are given by linear combinations of the appropriate  $(m,n)$-diagrams. The diagram algebras themselves can then be recovered by the endomorphism algebras $\End_\cC(m,m)$. This categorical approach has many advantages \cite{LehrerZhangDiagramCategories, SamSnowden}. 
For example, Lehrer and Zhang \cite{LehrerZhang} used the Brauer category associated with the Brauer algebras to prove and extend the first and second theorem of invariant theory for the orthogonal and symplectic groups in an elegant manner.  

In this paper, we will be mainly interested in the Brauer and periplectic Brauer categories. Schur-Weyl duality relates the Brauer algebra to the symplectic group, the orthogonal group \cite{Brauer}, or the encompassing orthosymplectic supergroup \cite{BenkartLeeRam} while the periplectic Brauer algebra is related via Schur-Weyl duality to the periplectic Lie supergroup \cite{Moon}.
The Brauer algebra and the periplectic Brauer algebra are at first glance very similar. They are depicted using the same diagrams and the multiplication rules are equivalent up to some minus signs. This makes it possible to describe them together in one framework as done by Kujawa and Tharp \cite{KujawaTharp}.
The similarity between these categories leads to similarities in their representation theory. This manifests itself, for instance, in the classification and labelling of the simple modules \cite[Theorem 4.3.1]{KujawaTharp}. 
However, some properties can still differ wildly between these categories. For example, the classification of the blocks is completely different for the Brauer algebra compared to the periplectic Brauer algebra \cite{CDM,CoulembierPeriplectic}. 

\begin{figure}[h]
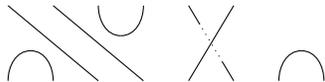

\begin{center}
\[
\di{\sca}{\ca \cu[2*\br][\h] \ocr[4*\br][0] \ca[6*\br][0] \lin[2*\br][0][0][\h] \lin[3*\br][0][\br][\h] }
\]
\caption{An example of a $(8,6)$-Brauer diagram with one cup, two caps, and four propagating lines.}\label{diagex}
\end{center}
\end{figure}

The Brauer algebra and the periplectic Brauer algebra also have deformations, namely the Birman-Wenzl-Murakami algebra \cite{BirmanWenzl, Murakami} and the periplectic  $q$-Brauer algebra \cite{AGG, RuiSong}. These deformed algebras can also be represented using diagrams and have corresponding categories.

The Brauer category, the BWM-category, and the periplectic ($q$-)Brauer category all share some interesting properties. They are monoidally generated by one object and three morphisms: $\di{0.5\sca}{\ocr}$, $\di{0.5\sca}{\cu}$ and $\di{0.5\sca}{\ca}$. In the ($q$-)periplectic case these last two morphisms are required to be odd so we obtain a graded (i.e.\ super) category. For each category, we can use the set of Brauer diagrams as a basis for the $\hom$-spaces. Figure \ref{diagex} gives an example of such a Brauer diagram.  
These categories also have a natural triangular decomposition. This decomposition allows us to define in each case so-called standard modules or cell modules, which are a great tool in the study of the representation theory \cite{BrundanStroppel, CoulembierZhang}.

The characterising difference to distinguish between these categories is in the relations the morphisms satisfy.  Figure \ref{Figure1} shows some examples of such relations. The research behind this article grew out of the search for the defining relations for a deformation of the periplectic Brauer category.  Recently, Rui and Song \cite{RuiSong} defined such a deformation, which they call the periplectic $q$-Brauer category.
Although no motivation for the choice of generators and relations is given, we know that they are the `correct’ ones because they lead to a ­periplectic $q$-Brauer algebra which satisfies a Schur­-Weyl duality with the quantum group of type P \cite{AGG}. An important motivation behind this paper was to understand better why exactly these relations are the correct ones. 

We will see in Section \ref{Subsection q periplectic} that there exist two categories satisfying the assumption that it is a diagram category of Brauer type as defined in Definition \ref{Definition category of Brauer type} and which has the periplectic Brauer category as a limit. One of these categories is isomorphic to the periplectic $q$-Brauer category defined in \cite{RuiSong} and the other is its monoidal opposite (in the sense of \cite[Definition 2.1.5]{TensorCategories}). In some way, we can thus conclude that the periplectic $q$-Brauer category is the only possible deformation of the periplectic Brauer category.
In Section \ref{Subsection BWM category}, we will also show that the BWM-category is the unique possible deformation for the Brauer category. The BWM-category is its own  monoidal opposite, see Corollary \ref{Corollary Monoidal opposites}.

\subsection{The diagram categories of Brauer type}
In this paper, we look at the following problem. Which monoidal (super)categories exist which are generated by one object and three morphisms, $H=\di{0.5\sca}{\ocr}$,  $A=\di{0.5\sca}{\ca}$ and $U=\di{0.5\sca}{\cu}$, where $A$ and $U$ have the same parity, if we furthermore require that $H$ is invertible and induces a braiding and the $\hom$-spaces can be given a basis using the same diagrams as for the Brauer category. 
This last condition forces us to impose relations, such as in Figure \ref{Figure1}, to simplify diagrams. 

\begin{figure}
\begin{center}
\[
 	\di{\sca}{\ca \cu[\br][0] \dli \uli[2*\br][0]} = \parS \; \di{\sca}{\li}, \quad \di{\sca}{\ocr \cu } = \lambda\; \di{\sca}{\cu}, \quad \di{\sca}{\ucr} = \alpha \; \di{\sca}{\lili} + \beta \; \di{\sca}{\ocr} +\gamma \; \di{\sca}{\cc}.
\]
\caption{Some examples of relations to simplify diagrams. \label{Figure1}}
\end{center}
\end{figure}

A priori imposing such relations introduces a whole plethora of parameters and corresponding categories. Remarkably, we only get a fairly small list of possible categories. We summarized the possible categories in Table \ref{summary table}. These categories have at most four independent parameters. By a rescaling isomorphism, we can reduce the number of independent parameters further to at most two. 
We can distinguish between the different categories using only four relations: sliding, upside-down sliding, straightening, and untwisting. 
Let us now take a closer look at these distinguishing relations. 

The main relation to distinguish between categories is the sliding relation and its upside-down version:
\begin{align*}
\di{0.5\sca}{ \ocr \dli \cu[\br][0] \li[2*\br][0] } =d\; \di{0.5\sca}{\culi}  + e\; \di{0.5\sca}{\ccr} +f\;  \di{0.5\sca}{\licu }\quad \text{ and } \quad \di{0.5\sca}{\crc} =d'\; \di{0.5\sca}{\cali} + e'\; \di{0.5\sca}{ \li \ocr[\br][0] \ca[0][\h] \dli} +f'\; \di{0.5\sca}{\lica} \; .
\end{align*}
The parameter $e$ always satisfies $e^4=1$. For most categories, we got a stronger condition. We have either $e^2=1$ or $e^2=-1$, where the odd case always has a different sign than the even case. In particular, if $-1$ is not a square in our field $\mK$, only the even category or the odd supercategory exists, but not both. 

We also distinguish between categories by whether the values for $f$ and $d$ are zero. If they are both non-zero, they are related by $d=-ef$. Note that by rescaling we can always set $f$ or $d$ to $1$, if they are non-zero. 
For most categories, the value of $e'$ is determined by $e$, but there are a few cases where it is independent. In these cases $e'$ satisfies the same constraints as $e$. The parameters $d'$ and $f'$ can always expressed using the other parameters. Namely, we always have $d=-ef'$ and $d'=-e'f$. 

The straightening relation $\di{\sca}{\ca \cu[\br][0] \dli \uli[2*\br][0]} = \parS \; \di{\sca}{\li}$ is another relation dividing categories into two different cases. We have categories where $\parS $ is zero and categories for which $\parS $ is non-zero. If $\parS $ is non-zero, we can use another rescaling to put $\parS =1$. 

The last relation to divide between categories is the untwisting relation:
$\di{0.5\sca}{
\ocr \ca[0][\h]
}= \lambda' \; \di{0.5\sca}{\ca}$.
Here we have that either the parameter $\lambda'$ has the same value as the parameter $\lambda$ determined by $\di{0.5\sca}{
\ocr 
\cu
}
 = \lambda  \;\di{0.5\sca}{
 \cu
} $ or that the value of $\lambda'$ is different from $\lambda$. Even if $\lambda'\not=\lambda$, the value of $\lambda'$ is not independent but can be expressed using other parameters. 

We will use the notation $\cC^{f,f'}_{\lambda',\parS }(\epsilon, e,e')$ for a category with the corresponding values for the parameters $f, f',\lambda',\parS ,e,e'$ and where $\epsilon$ is $+$ or $-$ depending on the parity of the cup and cap. For example $\cC^{b,0}_{b-\lambda,0}(+,i,-i)$  is the monoidal category with $f=b$, $f'=0$, $\lambda'=b-\lambda$, $\parS =0$, $e=i$, $e'=-i$ and where the cup and cap are even morphisms. 
Remark that not all possible values for the parameters lead to a valid category. 
For instance, the category $\cC^{b,0}_{b-\lambda,0}(-,i,-i)$ does not exist. See Table \ref{summary table} for the allowed categories. 

In Section \ref{Section connection with existing categories}, we will show that the Brauer category is given by $\cC^{0,0}_{1,1}(+,1,1)$, the Birman-Wenzl-Murakami category is  $\cC^{z,z}_{v,1}(+,1,1)$, the periplectic Brauer category is  $\cC^{0,0}_{-1,1}(-,1,1)$  and the periplectic $q$-Brauer category is  $\cC^{q-q^{-1},0}_{-q^{-1},1}(-,1,1)$.

\subsection{Structure of the paper}
We start the paper with two preliminary sections. In Section \ref{Section Brauer diagrams} we define Brauer diagrams and show how we can represent them graphically. We introduce fundamental diagrams. We also define a standard expression for each Brauer diagram. This is a unique decomposition of a Brauer diagram into fundamental diagrams. It is these standard expressions that we will use as a basis for the hom-spaces of the categories of Brauer type. 
Since the categories of Brauer type will be monoidally generated supercategories, we
recall some basic facts about monoidal supercategories in Section \ref{Section monoidal supercategories}.

In Section \ref{Section motivating}, we define what we mean by a diagram category of Brauer type, while in Section \ref{Section relations} we give some relations to simplify diagrams. We show in Theorem \ref{theorem simplifying} that these relations are sufficient to reduce any morphism in our category to a linear combination of standard expressions. 

Section \ref{Section rewritable diagrams} derives the equations the parameters have to satisfy to obtain a well-defined category. Most of the calculations are redelegated to Appendix \ref{Appendix rewrite equations} to not distract from the flow of the story.
We then solve these equations in Section \ref{Section classes of categories} leading to an overview in Table \ref{summary table} of the possible categories of Brauer type. Theorem \ref{Theorem basis} in Section \ref{Section basis} establishes that the standard expressions are linearly independent and thus that the  Brauer diagrams indeed form a basis for the hom-spaces.  

In Section \ref{Section scaling}, we introduce functors giving isomorphisms between different categories of Brauer type. This allows us in particular to rescale some independent parameters. 
We end the paper by giving the relation between the categories of Brauer type introduced in this paper and existing categories and algebras in the literature in Section \ref{Section connection with existing categories}. We also show that the Brauer category has a unique deformation, while there exist two possible deformations for the periplectic Brauer category. 
As an aside, we prove that the $q$-Brauer algebra introduced by Wenzl in \cite{Wenzl} does not fit in our framework.

In this paper, algebras and linear structures will be defined over a field $\mK$ of characteristic different from $2$. Most results still hold if we take $\mK$ to be an integral domain.
We will also introduce several parameters $\alpha_1, \dots, \alpha_n$. These parameters can either be seen as elements in $\mK$ or as formal variables. In the latter case, we will work over the ring $\mK[\alpha_1, \dots,\alpha_n]$ or even $\mK[\alpha_1, \dots,\alpha_n, \alpha_1^{-1},\dots \alpha_n^{-1}]$ if the parameters are assumed to be invertible.

\section{Brauer diagrams} \label{Section Brauer diagrams}
In this section, we will introduce Brauer diagrams and define for each Brauer diagram a unique way to depict it graphically, which we will call the standard expression for that Brauer diagram. It will be these standard expressions that we will use in this paper as a basis for the hom-spaces of our categories. 
\begin{definition}[Brauer diagram]
An $(r,s)$-Brauer diagram is a partitioning of $r+s$ dots into disjunct pairs. 
\end{definition}

We will depict such a Brauer diagram graphically in a diagram, see Figure \ref{diagex}. We draw $r$-bottom dots on a horizontal line, $s$-top dots on another horizontal line above the first and connect the paired dots with arcs. An arc connecting two bottom dots is called a cap, while an arc connecting two top dots is called a cup. An arc connecting a bottom top and top dot we call a propagating line. 
\begin{definition} \label{Def fundamental diagrams}
We define the following fundamental Brauer diagrams:
\begin{align*}
s_i^n &= \di{0.5\sca}{
\li[-\br][0]
\li[\br][0] 
\draw (0,0.5\h) node[]{$\dots$}; 
\draw (2*\br,-0.5\h) node[]{$i$};
\draw (4*\br,-0.5\h) node[]{$i+1$};
\li[4*\br][0]
\draw (5*\br,0.5\h) node[]{$\dots$};
\ocr[2*\br][0] \li[6*\br][0]}  &   \text{an $(n,n)$-Brauer diagram,}
\\
a_i^n &= \di{0.5\sca}{\li[-\br][0]
\li[\br][0] 
\draw (0,0.5\h) node[]{$\dots$};
\draw (2*\br,-0.5\h) node[]{$i$};
\draw (4*\br,-0.5\h) node[]{$i+1$};
\lin[4*\br][0][2*\br][\h]
\lin[5*\br][0][3*\br][\h]
\draw (5*\br,0.5\h) node[]{$\dots$};
\ca[2*\br][0] 
\lin[7*\br][0][5*\br][\h]} &  \text{an $(n+2,n)$-Brauer diagram,}
\\
u_i^n &= \di{0.5\sca}{\li[-\br][0]
\li[\br][0] 
\draw (0,0.5\h) node[]{$\dots$};
\draw (2*\br,1.5\h) node[]{$i$};
\draw (4*\br,1.5\h) node[]{$i+1$};
\draw (5*\br,0.5\h) node[]{$\dots$};
\lin[2*\br][0][4*\br][\h]
\lin[3*\br][0][5*\br][\h]
\cu[2*\br][\h]
 \lin[5*\br][0][7*\br][\h]} &  \text{an $(n,n+2)$-Brauer diagram.}
\end{align*}
\end{definition}
We can put a fundamental diagram $x_1$ on top of another fundamental diagram  $x_2$ when the number of bottom dots of $x_1$ matches the number of top dots of $x_2$. Connecting these matching dots, we obtain a new Brauer diagram which we denote by $x_1x_2$. Note that $x_1x_2$ may contain a closed loop, necessitating a rule to remove this loop from the diagram to really again obtain a Brauer diagram. For now, we can ignore it.   
It is clear that  every Brauer diagram can be decomposed into fundamental diagrams by making repeated use of this construction. However, this procedure is highly non-unique, as Figure \ref{different decompositions} shows.

\begin{figure}[h]
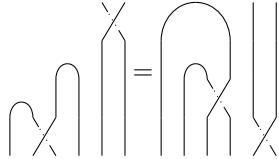

\begin{center}
\[
\di{0.5\sca}{\li \ocr[\br][0]\li[3*\br][0] \li[4*\br][0] \li[5*\br][0] 
\ca[0][\h] \li[2*\br][\h] \li[3*\br][\h] \li[4*\br][\h] \li[5*\br][\h] 
\ca[2*\br][2*\h] \li[4*\br][2*\h] \li[5*\br][2*\h]
\ocr[4*\br][3*\h] }=
\di{0.5\sca}{
\li \li[\br][0] \li[2*\br][0] \li[3*\br][0] \ocr[4*\br][0]
\li[0][\h] \li[\br][\h] \ocr[2*\br][\h] \li[4*\br][\h] \li[5*\br][\h]
\li[0][2*\h] \ca[\br][2*\h] \li[3*\br][2*\h] \li[4*\br][2*\h] \li[5*\br][2*\h]
\li[4*\br][3*\h] \li[5*\br][3*\h]
\draw  (0,3*\h) to [out=90, in=180] +(3*\br/2,\h);
\draw  (3*\br, 3*\h) to [out=90, in=0] +(-3*\br/2,\h);         
    }
\]
\caption{Two different decompositions into fundamental diagrams of the same Brauer diagram: $s_1^2 a_1^2 a_1^4s_2^6=a_1^2a_2^4s_3^6s_5^6$.}\label{different decompositions}
\end{center}
\end{figure}

We will choose for every Brauer diagram a distinguished decomposition into these fundamental diagrams, which we will call the standard expression. In such a standard expression all cups will be above the caps. We also want the left strand of a cup or a cap to be a straight line not encountering any crossings. Furthermore, we will  
order the height of the occurring caps and cups by the position of this left strand. 
Cups will be ascending from left to right, while caps will be descending. 

Let us now introduce standardly ordered cups and caps and a distinguished basis for the symmetric group algebra to define this standard expression.  	

Define  $I_s$ as a cup crossing $s$ propagating lines and  where the left strand of the arc is a straight line:
 \[ I_s \coloneqq  \di{0.5\sca,-0.5\sca}{\ca \uli[2*\br][0] \ucr[\br][-\h] \li[0][-\h] 
\draw[dotted] (2.5*\br, -0.5\h) to (3.5*\br,-0.5\h); 
\draw[dotted] (1.5*\br, -3.5*\h) to (2.5*\br,-3.5\h);
\draw[dotted] (2*\br, -\h) to (3*\br,-2*\h);  
 	 \uli[5*\br][0] \li[5*\br][-\h] \li[4*\br][-\h] \uli[4*\br][0]
\li[0][-2*\h] 	 \li[0][-3*\h] \li[0][-4*\h]
\li[\br][-2*\h] \li[\br][-3*\h] \li[\br][-4*\h]
 \li[3*\br][-4*\h] \ucr[3*\br][-3*\h]  \ucr[4*\br][-4*\h]
\li[4*\br][-2*\h]  
\li[5*\br][-2*\h] \li[5*\br][-3*\h]
 	 }  
. 
\]
We can embed such a cup $I_s$ into an $(n,n+2)$-Brauer diagram by adding straight propagating lines to the left and right
\begin{align}
 \label{elementary cup}
I_{s}^{n,a} \coloneqq \mathbb{1}^{a-1} \otimes I_s \otimes \mathbb{1}^{n-a-s+1}.
\end{align}
We call $I_{s}^{n,a}$ an elementary cup. Note that the left strand of the cup of $I_{s}^{n,a}$ is on the $a$th node. 
Then we can define the set of standardly ordered cups $I(r,n)$, containing  $(r,n)$-Brauer diagrams with only cups:
\[I(r,n)\coloneqq \left\{I_{s_1}^{n-2,a_1}  I_{s_2}^{n-4,a_2} \dots I_{s_{\frac{n-r}{2}}}^{r,a_{\frac{n-r}{2}}} \mid 
\begin{array}{l}
0 \leq s_i \leq n-2i, \\ a_i > a_j \text{ if } i<j  
\end{array} 
\right\}.
\] This set consists of compositions of elementary cups which are ordered such that the left strand of a cup is to the left of all left strands of cups above it.  
Note that this assures that the left strand of a cup is always a straight line to the dot above it since crossings only occur on right strands. 
\begin{figure}[h]
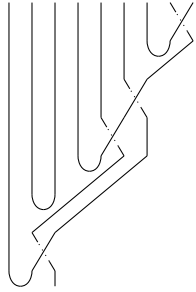

\begin{center}
\[
\di{0.5\sca}{ \ocr[7*\br][7*\h] \cu[6*\br][7*\h] \li[6*\br][7*\h] 
\lin[5*\br][8*\h][5*\br][6*\h] \ocr[5*\br][5*\h] \lin[6*\br][6*\h][8*\br][7*\h]
\lin[4*\br][8*\h][4*\br][5*\h] \ocr[4*\br][4*\h] 
\lin[3*\br][8*\h][3*\br][4*\h] \cu[3*\br][4*\h]
\lin[2*\br][8*\h][2*\br][3*\h]
\lin[\br][8*\h][\br][3*\h] \cu[\br][3*\h]
\lin[0][8*\h][0][\h] 
\cu[0][\h]
\ocr[\br][\h]
\dli[2*\br][\h]
\lin[6*\br][5*\h][6*\br][4*\h] 
\lin[\br][2*\h][5*\br][4*\h]
\lin[2*\br][2*\h][6*\br][4*\h]
    }
\]
\caption{Example of the diagram $I_1^{7,7}I_2^{5,4} I_0^{3,2}  I_1^{1,1}$ in $I(1,9)$.}
\end{center}
\end{figure}

Similarly, we can define standardly ordered caps. Set
\[ J_s \coloneqq \di{0.5\sca}{\ca \uli[2*\br][0] \ocr[\br][-\h] \li[0][-\h] 
\draw[dotted] (2.5*\br, -0.5\h) to (3.5*\br,-0.5\h); 
\draw[dotted] (1.5*\br, -3.5*\h) to (2.5*\br,-3.5\h);
\draw[dotted] (2*\br, -\h) to (3*\br,-2*\h);  
 	 \uli[5*\br][0] \li[5*\br][-\h] \li[4*\br][-\h] \uli[4*\br][0]
\li[0][-2*\h] 	 \li[0][-3*\h] \li[0][-4*\h]
\li[\br][-2*\h] \li[\br][-3*\h] \li[\br][-4*\h]
 \li[3*\br][-4*\h] \ocr[3*\br][-3*\h]  \ocr[4*\br][-4*\h]
\li[4*\br][-2*\h]  
\li[5*\br][-2*\h] \li[5*\br][-3*\h]
 	 } 
 \]
and define the elementary cap by
\[
J_{s}^{n,a} \coloneqq \mathbb{1}^{a-1} \otimes J_s \otimes \mathbb{1}^{n-a-s+1}.
\]
This is a $(n+2,n)$-Brauer diagram where the left strand of the cap is on the $a$th node. 
We now look at combinations of elementary caps such that the left strand of a cap is to the left of all left strands of caps under it:
\[J(n,r)\coloneqq \left\{J_{s_1}^{r,a_1}  J_{s_2}^{r+2,a_2} \dots J_{s_{\frac{n-r}{2}}}^{n-2,a_{\frac{n-r}{2}}} \mid 
\begin{array}{l}
0 \leq s_i \leq n-2i,\\
 a_i < a_j \text{ if } i<j  
 \end{array}
 \right\}.
\]
Consider a basis $H(r)$ for the symmetric group algebra $\mK S_r$  consisting of  reduced expressions in the generators $s_1, \dots, s_{r-1}$. We will also interpret an element in $H(r)$  as the diagram obtained via the composition of the fundamental diagrams $s_i$ corresponding to this expression. The specific choice for the expressions does not matter to us. We only require that we have reduced expressions in the generators $s_i$ and that the subexpression $s_i s_{i+1}s_i$ does not occur. This is always possible since we have the braid relation $s_is_{i+1}s_i =s_{i+1}s_is_{i+1}$. 

\begin{proposition}\label{Proposition standard expression}
Every $(m,n)$-Brauer diagram has a unique decomposition into fundamental diagrams of the form $UXA$ where $U \in I(r,n)$, 
$X \in H(r)$ and $A \in J(m,r)$. 
\end{proposition}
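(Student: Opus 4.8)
The plan is to prove existence and uniqueness simultaneously, by induction on the number of cups plus caps of the given $(m,n)$-Brauer diagram. Throughout one thinks of a Brauer diagram as a perfect matching on the disjoint union of its $m$ bottom dots and $n$ top dots, and of composition $x_1x_2$ as the evident concatenation of matchings. The first ingredient I would isolate is a structural observation about any decomposition $D=UXA$ with $U\in I(r,n)$, $X\in H(r)$, $A\in J(m,r)$: following a strand of $D$ that starts at a top dot, either it closes up at once into a cup of $U$, or it runs along a propagating line of $U$, through the permutation $X$, along a propagating line of $A$ (note that $A$ carries only caps and propagating lines), and ends at a bottom dot of $D$. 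Hence the cups of $D$ are exactly the cups of $U$; by the upside-down argument the caps of $D$ are exactly the caps of $A$; and therefore $r=n-2(\#\text{cups of }D)=m-2(\#\text{caps of }D)$ is determined by $D$ alone, so the three domains $I(r,n)$, $H(r)$, $J(m,r)$ are forced (for a single value of $r$).

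For the inductive step, suppose $D$ has a cup and let $a$ be the \emph{largest} left endpoint occurring among the cups of $D$, with partner $b$. Here I would invoke the defining feature of $I(r,n)$, that the left leg of each cup is a straight vertical line, to see that the left endpoints of the cups of $U=I_{s_1}^{n-2,a_1}\cdots I_{s_k}^{r,a_k}$ are exactly $a_1>a_2>\cdots>a_k$, with the topmost cup equal to $\{a_1,\,a_1+s_1+1\}$: each $a_j$ sits in the identity part of every elementary cup lying above it (because $a_j<a_i$ for $i<j$), so its left leg is never displaced on the way up. Together with the structural observation this forces $a_1=a$ and $s_1=b-a-1$ in any decomposition. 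I would then peel off this elementary cup, writing $D=I_{b-a-1}^{n-2,a}\cdot D'$, where $D'$ is the $(m,n-2)$-Brauer diagram obtained from $D$ by deleting the top dots $a$ and $b$ and renumbering; the assignment inverting composition with a fixed elementary cup is read directly off the explicit matching, so $D'$ does not depend on the decomposition. Now $D'$ has one cup fewer, and by the induction hypothesis has a unique decomposition $U'X'A'$; then $U=I_{b-a-1}^{n-2,a}U'$, $X=X'$, $A=A'$ does the job, the only point to verify being that $a$ exceeds the largest left endpoint of a cup of $U'$, which holds because deleting $a$ and $b>a$ leaves every smaller left endpoint in place. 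If $D$ has no cups but has a cap, the structural observation forces $U=\mathbb{1}^n$, and one peels the bottommost elementary cap of $A$ by the mirror argument; and if $D$ has neither cups nor caps it is a permutation diagram, so $U=A=\mathbb{1}^r$ and $D=X$ for the unique element of $H(r)$ representing that permutation.

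The step I expect to be the main obstacle is the bookkeeping of the renumberings: pinning down rigorously that the left endpoints of the cups of an element of $I(r,n)$ are exactly the indices $a_i$ (and the mirror statement for $J(m,r)$), and that deleting a matched pair of dots preserves the relative order of the remaining cup endpoints. It is precisely this that makes the greedy peeling both well defined — the ordering constraints $a_i>a_j$ for cups and $a_i<a_j$ for caps fall out automatically — and forced, which is what delivers uniqueness. (Alternatively, once existence is established, uniqueness could be deduced from a cardinality count comparing $\sum_r|I(r,n)|\,|H(r)|\,|J(m,r)|$ with the number of $(m,n)$-Brauer diagrams, but this relies on the same identification of $I(r,n)$ with cup configurations, and so does not really shorten the argument.) Everything else is a routine unwinding of the definitions of the fundamental diagrams and of diagram composition.
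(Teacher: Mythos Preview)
Your proof is correct. The key structural observation you make --- that in any decomposition $UXA$ the cups of $D$ are exactly the cups of $U$, the caps of $D$ are exactly the caps of $A$, and the propagating lines record the permutation $X$ --- is also the heart of the paper's argument, so in that sense the two proofs coincide.

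The organisation differs, however. The paper proceeds directly: it orders the cups of $D$ by their left endpoint and asserts that this ordering yields a unique element of $I(r,n)$; likewise for the caps and $J(m,r)$; the remaining propagating lines give the permutation $X\in H(r)$. There is no induction and no peeling --- existence and uniqueness are read off in one stroke from the observation that ``ordered set of cups'' is in bijection with $I(r,n)$ (and similarly for caps). Your inductive peeling argument makes explicit the step the paper leaves implicit, namely why the ordered list of left endpoints $a_1>\cdots>a_k$ of an element of $I(r,n)$ coincides with the left endpoints of the cups of the resulting diagram. What you gain is a more self-contained justification of that bijection; what the paper gains is brevity, at the cost of leaving the reader to verify that the map ``cups ordered by left endpoint'' $\to I(r,n)$ is well defined and bijective. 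Your anticipated obstacle (the renumbering bookkeeping) is real but routine, and your alternative suggestion of a cardinality count would work equally well once the identification of $I(r,n)$ with cup configurations is in hand.
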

We call this decomposition the standard expression. 
\begin{proof}
An $(m,n)$-Brauer diagram is a partitioning of $m+n$ dots into disjunct pairs where we have $m$ bottom dots and $n$ top dots.  
A pair which connects two top dots is called a cup and we order them as follows. We say that a cup is lower in the order than another cup if the left dot of the first cup is to the left of the left dot of the second cup. This ordering gives us a unique corresponding element $U$ in $I(n-2s,n)$, where $s$ is the number of cups in the Brauer diagram.  

Similarly, we call a pair that connects two bottom dots a cap and order them as follows. We say that a cap is higher in the order than another cap if the left dot of the first cap is to the left of the left dot of the second cap. This leads to a unique corresponding element $A$ in $J(m,m-2t)$, where $t$ is the number of caps in the Brauer diagram. 

Since the other pairs in the Brauer diagram correspond to pairings of a top and bottom dot, we necessarily have $m-2t = n-2s$. Moreover, these other pairs, corresponding to propagating lines, give a unique permutation of $r\coloneqq m-2t$ elements. Let $X$ be the  element in $H(r)$ corresponding to this permutation. We conclude that every Brauer diagram can be uniquely decomposed into fundamental diagrams such that it is of the form $UXA$ with $U \in I(r,n)$, 
$X \in H(r)$ and $A \in J(m,r)$. 
\end{proof}

\section{Monoidal supercategories}\label{Section monoidal supercategories}

In this section, we will recall monoidal supercategories as introduced by Brundan and Ellis in \cite{BrundanEllis}. A more detailed exposition can be found therein. 

\subsection{Definition of a monoidal supercategory}
A super vector space $V$ is a vector space over $\mK$ with a $\mathbb{Z}/2\mathbb{Z}$-grading, i.e. $V= V_{\oa} \oplus V_{\ob}$. Elements in $V_{\oa}$ are called even, and elements in $V_{\ob}$ are called odd. Together the even and odd elements give the homogeneous elements. For a homogeneous element, we set the parity $\abs{x} = i$ if $ x\in V_i$.

Let $\mathbf{svec}_{\mK}$ be the category of super vector spaces with grading preserving homomorphisms. 
\begin{definition}
A supercategory $\cC$ is defined as a category enriched over $\mathbf{svec}_{\mK}$.
\end{definition}
This means that for all $a,b \in \cC$ we have that $\Hom_{\cC}(a,b)$ 
is a vector space which decomposes as $\Hom_{\cC}(a,b)_{\oa}\oplus \Hom_{\cC}(a,b)_{\ob} $ and that the composition of morphisms is grading-preserving and linear.
 Thus $f\circ g \in \Hom_{\cC}(a,c)_{\abs{f} + \abs{ g}}$ for $f \in \Hom_{\cC}(b,c)_{\abs{f}}$ 	and $ g \in   \Hom_{\cC}(a,b)_{\abs{g}}$.
\begin{definition}
A superfunctor between two supercategories $\cC$ and $\cD$ is a functor $F\colon \cC \to \cD$ such that the map $\Hom_{\cC}(\lambda, \mu) \to \Hom_{\cD}(F(\lambda),F(\mu))$ is linear and even. 
\end{definition}
Let $\cC$ and $\cD$ be supercategories, then $\cC \boxtimes \cD$ is defined as the 
category which has as objects pairs of objects of $\cC$ and $\cD$ and morphisms are 
defined by 
\[ \Hom_{\cC \boxtimes \cD}((\lambda, \mu), (\sigma, \tau)) \coloneqq  \Hom_{\cC}(\lambda, \sigma) \otimes \Hom_{\cD}( \mu, \tau).   \]
Composition of morphisms is defined using the super interchange law:
\[
(f \otimes g) \circ (h  \otimes k) = (-1)^{\abs{h}\abs{g}}(f \circ h) \otimes (g \circ k).
\]
\begin{definition}[Strict monoidal supercategory]
A strict monoidal supercategory is a supercategory $\mathcal{C}$  with a superfunctor
\[ - \otimes - \colon \mathcal{C} \boxtimes \mathcal{C} \to \mathcal{C},
\]
 and a unit object $\mathbf{1}_{\mathcal{C}}$, such that we have 
 \[
 \mathbf{1}_{\mathcal{C}} \otimes -  = \id = - \otimes \mathbf{1}_{\mathcal{C}}, \qquad (-\otimes -) \otimes - = -\otimes (-\otimes -).
 \]
\end{definition}
An important difference between monoidal supercategories and ordinary monoidal categories is in the way composition of morphisms and the monoidal product interact with each other. In a monoidal supercategory, we have the so-called super interchange law
\[
(f \otimes g) \circ (h  \otimes k) = (-1)^{\abs{h}\abs{g}}(f \circ h) \otimes (g \circ k).
\]
\begin{definition}[Monoidal superfunctors]
A strict monoidal superfunctor $F$ between two strict monoidal supercategories $\mathcal{C}$ and $\mathcal{D}$ is a superfunctor $F\colon \cC \to \cD$ such that 
$F(a\otimes b) = F(a)\otimes F(b)$ and $F(\mathbf{1}_\cC) = \mathbf{1}_\cD$. 
\end{definition}
\begin{definition}[Monoidal opposite]
Let $(\cC, \otimes)$ be a strict monoidal supercategory. The opposite category $(\cC^{\op}, \otimes^{\op})$ is defined such that $\cC^{op}=\cC$ as categories, but the tensor product satisfies $f \otimes^{op} g \coloneqq (-1)^{\abs{f}\abs{g}} g \otimes f$.
\end{definition}
Note that this is a different notion than the dual of a category, which is defined by reversing all the arrows in a category, i.e. changing the source and the target for each morphism. 

We can depict morphisms graphically as follows.
A morphism $f \in \Hom_\mathcal{C} (\lambda, \mu)$ corresponds to the picture
\[
\begin{tikzpicture}[baseline={(current bounding box.center)},scale=0.5,thick,>=angle 90]
 \begin{scope} 
\draw (0,0) node[circle,draw] (A) {$f$}; 
\draw (A) to (0,2);
\draw (A) to (0,-2);
\draw (0,-2.5) node {$\lambda$}; 
\draw (0,2.5) node {$\mu$}; 
 \end{scope} 
\end{tikzpicture}.
\] 
Composition is represented by putting one morphism on top of the other, while the monoidal product corresponds to putting morphisms next to each other:
\[
f\circ g = \begin{tikzpicture}[baseline={(current bounding box.center)},scale=0.51,thick,>=angle 90]
 \begin{scope} 
\draw (0,1) node[circle,draw] (A) {$f$}; 
\draw (0,-1) node[circle,draw] (B) {$g$}; 
\draw (A) to (B);
\draw (A) to (0,2);
\draw (B) to (0,-2);
 \end{scope} 
\end{tikzpicture},
\quad
f \otimes g=
\begin{tikzpicture}[baseline={(current bounding box.center)},scale=0.51,thick,>=angle 90]
 \begin{scope} 
\draw (0,1) node[circle,draw] (A) {$f$}; 
\draw (A) to (0,2.5);
\draw (A) to (0,-0.5);
\draw (2,1) node[circle,draw](B) {$g$}; 
\draw (B) to (2,2.5);
\draw (B) to (2,-0.5);
 \end{scope} 
\end{tikzpicture}.
\]

Graphically, the super interchange law can then be depicted as follows:\[
\begin{tikzpicture}[baseline={(current bounding box.center)},scale=0.51,thick,>=angle 90]
 \begin{scope} 
\draw (0,2) node[circle,draw] (A) {$f$}; 
\draw (A) to (0,4);
\draw (A) to (0,-2);
\draw (2,0) node[circle,draw](B) {$g$}; 
\draw (B) to (2,4);
\draw (B) to (2,-2);
 \end{scope} 
\end{tikzpicture}
= (-1)^{\abs{f}\abs{g}}
\begin{tikzpicture}[baseline={(current bounding box.center)},scale=0.51,thick,>=angle 90]
 \begin{scope} 
\draw (2,2) node[circle,draw] (A) {$g$}; 
\draw (A) to (2,4);
\draw (A) to (2,-2);
\draw (0,0) node[circle,draw](B) {$f$}; 
\draw (B) to (0,4);
\draw (B) to (0,-2);
 \end{scope} 
\end{tikzpicture}.
\]
Note that we suppress identity morphisms.

\subsection{Example: the marked Brauer category} \label{section example Brauer category}
 The marked Brauer category $\mathcal{B}(\epsilon)$ introduced in \cite{KujawaTharp}   has as objects the natural numbers. The morphisms $\Hom_{\mathcal{B}(\epsilon)}(r,s)$ are given by linear combination of $(r,s)$-Brauer diagrams. We have two types of multiplication. \begin{itemize}
 \item Vertical multiplication corresponds to the composition of morphisms
\[\Hom_{\mathcal{B}(\epsilon)}(s,t) \times \Hom_{\mathcal{B}(\epsilon)}(r,s) \to \Hom_{\mathcal{B}(\epsilon)}(r,t) \]  defined via (periplectic) multiplication of Brauer diagrams. See \cite[Section 2]{KujawaTharp} for a precise definition. 
\item Horizontal multiplication is given on objects by $m\otimes n =m+n$ and on morphisms
\[\Hom_{\mathcal{B}(\epsilon)}(r,s) \otimes \Hom_{\mathcal{B}(\epsilon)}(r',s') \to \Hom_{\mathcal{B}(\epsilon)}(r+r',s+s') \] by putting Brauer diagrams next to each other.
\end{itemize}
As mentioned in \cite[Example 1.5 (iii)]{BrundanEllis} the marked Brauer category is a strict monoidal supercategory which can be presented graphically as follows. It has one generating object: $\cdot$ and three generating morphisms. 
One even generating morphism
$
\di{0.5\sca}{\cro}
$
and two generating morphisms 
$
\di{0.5\sca}{\ca}$ and $\di{0.5\sca}{\cu} 
$
of the same parity subject to the following relations
\begin{align*}
\di{\sca}{\cro \cro[0][\h]}
&=
\di{\sca}{\lili}
\; , \quad  
%
 \di{\sca}{\cro \li[2*\br][0] \li[0][\h] \cro[\br][\h] \cro[0][2*\h] \li[2*\br][2*\h]}
= 
\di{\sca}{\li \cro[\br][0] \cro[0][\h] \li[2*\br][\h] \li[0][2*\h] \cro[\br][2*\h]}
\; , \quad
\di{\sca}{\dli \ca \cu[\br][0] \uli[2*\br][0]}
=\;
\di{\sca}{\li}
\; ,\quad 
\di{\sca}{\uli \cu \ca[\br][0] \dli[2*\br][0]}={\color{red} \epsilon} \;
\di{\sca}{\li}
\; ,\quad
 \\
\di{\sca}{ \cu  \cro[\br][0] \li \dli[2*\br][0]}&=
\di{\sca}{\cro \dli \cu[\br][0] \li[2*\br][0]} \; , \quad
\di{\sca}{\cro \cu}= \;
\di{\sca}{\cu} \;.
\end{align*}
Here $\epsilon=1$ if $\di{0.5\sca}{\ca}$ and $\di{0.5\sca}{\cu}$ are even and $\epsilon=-1$ if they are odd. In the even case, we obtain a monoidal category, which corresponds to the Brauer category. For the odd case, we obtain a monoidal supercategory, which corresponds to the periplectic Brauer category.

\section{Motivating the definition of categories of Brauer type }
\label{Section motivating}
The goal of this paper is to obtain categories similar to the marked Brauer category introduced in the previous section (both the even and odd cases) by tweaking the relations. 
So we are interested in monoidal (super)categories generated by the same generating morphisms but with different relations. We also impose that the hom-spaces have bases given by Brauer diagrams and that the cross is an isomorphism satisfying the braid relation.

Consider a monoidal supercategory $\cC$ over $\mK$ with one generating object and three generating morphisms: the over-cross $H \in  \Hom_{\cC}(2,2)$ which is always even, and two morphisms of the same parity: the cap $A \in \Hom_{\cC} (2,0)$ and the cup $U \in \Hom_{\cC}(0,2)$.

Since we have one generating object, the objects of $\cC$ can be labelled by $\mathbb{N}$. An arbitrary morphism in $\cC$ is a combination of the three generating morphisms together with the identity morphism $I \in \Hom_{\cC}(1,1)$  using composition and tensoring. 

We will represent the morphisms $H, A,$ and $U$ graphically by 
 \[
 H=\begin{tikzpicture}[baseline={(current bounding box.center)},scale=1,thick,>=angle 90]
\begin{scope}
 \draw(0,0) to (0.15,0.25);
 \draw(0.15,0.25) to (0.45,0.75);
 \draw(0.45,0.75) to (0.6,1);
 \draw(0,1) to (0.15,0.75);
 \draw[dotted](0.15,0.75) to (0.45,0.25);
 \draw(0.45,0.25) to (0.6,0);
  \end{scope}
\end{tikzpicture},\quad 
A=\begin{tikzpicture}[baseline={(current bounding box.center)},scale=1,thick,>=angle 90]
\begin{scope}
\draw (0,0.3) to [out=90, in=180] +(0.3,0.4);
\draw (0.6,0.3) to [out=90, in=0] +(-0.3,0.4);
 \end{scope}
\end{tikzpicture},\quad 
U=\begin{tikzpicture}[baseline={(current bounding box.center)},scale=1,thick,>=angle 90]
\begin{scope}
\draw (0,0.6) to [out=-90, in=180] +(0.3,-0.4);
\draw (0.6,0.6) to [out=-90, in=0] +(-0.3,-0.4);
\end{scope}
\end{tikzpicture}.
 \]

We want that $\Hom_{\cC}(m,n)\cong \Hom_{\mathcal{B}(\epsilon)}(m,n)$ as vector spaces. This forces us to impose relations to simplify diagrams. 
For example, since
$\End_{\cC}(0)=\mK\; \id_0$, 
we need to impose
$
\di{0.5\sca}{\cu \ca}=
\delta,
$
for $\delta \in \mK$. 
Similarly, $\End_{\cC}(1)=\mK\; \id_1$ leads to
\begin{align}
\label{Straightening relation}
\di{0.5\sca}{\ca \cu[\br][0] \dli \uli[2*\br][0]}=
\parS \;\di{0.5\sca}{\li} \;, \quad
%
\di{0.5\sca}{\cu \ocr[\br][0] \li \ca[0][\h] \dli[2*\br][0] \uli[2*\br][\h]}
=q_1\;\di{0.5\sca}{\li}  \; ,
%
\quad
\di{0.5\sca}{\ocr \dli \uli[0][\h] \ca[\br][\h] \cu[\br][0] \li[2*\br][0]}
= q_2\; \di{0.5\sca}{\li} \;,
\end{align}
and $\Hom_{\cC}(2,0)= \mK \; \di{0.5\sca}{\ca}$, $\Hom_{\cC}(0,2)= \mK \;\di{0.5\sca}{\cu}$ forces relations of the following form
\begin{align*}
\di{0.5\sca}{\cu \ocr}= \lambda \;
  \di{0.5\sca}{\cu}\;,
\quad
\di{0.5\sca}{\ca[0][\h] \ocr}= \lambda' \; \di{0.5\sca}{\ca}\;.
\end{align*}

 A priori these constraints on the hom-spaces force us to introduce a whole plethora of relations and parameters. Remarkably, we can get by with at most  4 independent parameters and we get a fairly small list of possible categories as we will show in Section \ref{Section classes of categories}. 
 
Let us give an example of how we can reduce parameters.
The fact that $\End_{\cC}(1)=\mK\, \id_1$ also forces us to impose a relation
\begin{align}\label{Relation straightening pm}
\di{0.5\sca}{\cu \uli \ca[\br][0] \dli[2*\br][0]}=
\parS'
 \; \di{0.5\sca}{\li}\;.
\end{align} 
However, from the superinterchange law and relation \eqref{Straightening relation}, we can deduce
\begin{align*}
\parS' \di{0.5\sca}{\ca} = \di{0.5\sca}{\ca \li[0][-\h]  \dli[0][-\h] \li[\br][-\h] \cu[\br][-\h] \ca[2*\br][-\h] \dli[3*\br][-\h] } 
=\epsilon  
\di{0.5\sca}{\ca \cu[\br][0] \dli \li[2*\br][0] \ca[2*\br][\h] \li[3*\br][0] \dli[3*\br][0]}
=\epsilon \parS \di{0.5\sca}{\ca}.
\end{align*}
Here $\epsilon = -1$ if the cup and cap are odd and $\epsilon = 1$ if they are even.
So we obtain $\parS' = \epsilon \parS $ and we see that Relation \eqref{Relation straightening pm} did not introduce an extra parameter. 

We will call a monoidal supercategory a category of Brauer type if it is of the form we have described in this section.

\begin{definition}\label{Definition category of Brauer type}
A monoidal supercategory $\mathcal{C}$ is called of Brauer type if
\begin{enumerate}
\item The objects of $\mathcal{C}$ are generated by one object
\item The morphisms of $\mathcal{C}$ are generated by three morphisms :
\begin{itemize}
\item the over-cross $H \in  \Hom_{\cC}(2,2)$ denoted by $\di{0.5\sca}{\ocr}$,
\item the cap $A \in \Hom_{\cC} (2,0)$ denoted by $\di{0.5\sca}{\ca}$,
\item  the cup $U \in \Hom_{\cC}(0,2)$ denoted by $\di{0.5\sca}{\cu}$.
\end{itemize}
The morphism $H$ is always even, while $A$ and $U$ are either both even or both odd. 
\item The over-cross is an isomorphism that satisfies the braid relation.
\item The set of standard expressions for $(m,n)$-Brauer diagrams (as defined in Section \ref{Section Brauer diagrams}) form a basis for $\Hom_{\cC}(m,n)$.
\end{enumerate}
\end{definition}
Note that the marked Brauer category of Section \ref{section example Brauer category} satisfies this definition. We will see that also its deformations are categories of Brauer type. 

\section{The defining relations of the category} \label{Section relations}
Consider a category $\cC$ as in definition \ref{Definition category of Brauer type}. Then $\cC$ is a monoidal supercategory  which is generated by one object $\bullet$, an even morphism $\di{0.5\sca}{\ocr}$ and two morphisms $\di{0.5\sca}{\cu}$ and $\di{0.5\sca}{\ca}$ of the same parity.  
A morphism in $\cC$ is then a diagram obtained by combining the identity morphisms and these generating morphisms using composition and tensor products. Note that any morphism can be seen as the composition of the fundamental diagrams introduced in Definition \ref{Def fundamental diagrams}. 
We set $\epsilon =-1$ if the cup and cap are odd and $\epsilon =1$ if they are even. Note that in the last case, there is no super component and $\cC$ will just be a monoidal category. 

In this section, we introduce relations, which we call the defining relations of $\cC$. The existence of such relations in $\cC$ follows by definition from the fact that the set of standard expressions of $(m,n)$-Brauer diagrams form a basis for $\Hom_{\cC}(m,n)$. 
For example, $\{\di{0.5\sca}{\culi}, \di{0.5\sca}{\ccr}, \di{0.5\sca}{\licu} \}$ is a basis for $\Hom_{\cC}(1,3)$ and thus every $(1,3)$-Brauer diagram can be expressed as a linear combination of these three diagrams. 

The defining relations of $\cC$ are the following:
\begin{itemize}
\item Untwisting and upside-down untwisting
\begin{align*}
\di{\sca}{
\ocr 
\cu
}
 = \lambda  \;\di{\sca}{
 \cu
}\;,
\quad 
\di{\sca}{
\ocr \ca[0][\h]
}= \lambda' \; \di{\sca}{\ca}
\end{align*}
\item Looping
\begin{align*}
\di{\sca}{
\cu \ca
}=\delta\;
\end{align*}

\item  Straightening  and upside-down straightening
\begin{align*}
\di{\sca}{
\dli \ca \cu[\br][0] \uli[2*\br][0] 
}=\parS \; \di{\sca}{\li}
\;, \quad 
\di{\sca}{
\uli \cu \ca[\br][0] \dli[2*\br][0] 
}=\parS '\; \di{\sca}{\li}
\end{align*}

\item Delooping
\begin{align*}
\di{\sca}{
\cu \ocr[\br][0] \li \ca[0][\h] \dli[2*\br][0] \uli[2*\br][\h] 
}= \parq  \; \di{\sca}{\li} 
\end{align*}

\item Twisting
\begin{align*}
\di{\sca}{
\ocr \ocr[0][\h]
} = a\; \di{\sca}{\li \li[\br][0] } + b \; \di{\sca}{\ocr} +c\; \di{\sca}{\cc}
\end{align*}

\item Sliding
\begin{align*}
\di{\sca}{ \ocr \dli \cu[\br][0] \li[2*\br][0] } =d\; \di{\sca}{\culi}  + e\; \di{\sca}{\ccr} +f\;  \di{\sca}{\licu }
\end{align*}

\item Pulling

\begin{align*}
\di{\sca}{\dli[2*\br][0] \cu \ocr[\br][0] \li \li[2*\br][\h] \ocr[0][\h] }= D\;\di{\sca}{\culi}  + E\; \di{\sca}{\ccr} +F\;  \di{\sca}{\licu }
\end{align*}

\item Upside-down sliding:
\begin{align*}
\di{\sca}{\crc} =d'\; \di{\sca}{\cali} + e'\; \di{\sca}{ \li \ocr[\br][0] \ca[0][\h] \dli} +f'\; \di{\sca}{\lica} 
\end{align*}

\item Upside-down Pulling

\begin{align*}
\di{\sca}{ \ca \ocr[\br][-\h] \li[0][-\h] \li[2*\br][-2*\h] \ocr[0][-2*\h] \uli[2*\br][0] }= D'\; \di{\sca}{\cali} + E'\; \di{\sca}{ \li \ocr[\br][0] \ca[0][\h] \dli} +F'\; \di{\sca}{\lica}  
\end{align*}

\item Braiding

\begin{align*}
\di{\sca}{\ocr \li[2*\br][0] \li[0][\h] \ocr[\br][\h] \ocr[0][2*\h] \li[2*\br][2*\h]} = \di{\sca}{\ocr[\br][0] \li[0][0] \li[2*\br][\h] \ocr[0][\h] \ocr[\br][2*\h] \li[0][2*\h]}\; .
\end{align*}
\end{itemize}
Note that we have introduced the following parameters: \[\lambda,\lambda',\parS ,\parS ',\delta,\parq ,a,b,c,d,e,f,d',e',f',D,E,F,D',E',F'.\]
 We will always simplify diagrams using these relations from left to right. Thus we will replace a local occurrence of a left-hand side diagram with the corresponding linear combination on the right-hand side. In this way, the simplifying procedure will always end and result in a linear combination of standard expressions of Brauer diagrams, as we will show in Theorem \ref{theorem simplifying}.  
However, to have a well-defined category, we  also want this simplification to be unique. So, if different relations can be used to simplify a diagram, they should in the end lead to the same linear combination of diagrams. Demanding this will give us equations that the parameters should satisfy. This will be addressed in Section \ref{Section rewritable diagrams}.

\begin{theorem}\label{theorem simplifying}
Consider a diagram composed of an arbitrary number of fundamental diagrams in $\cC$. 
Simplifying this diagram using the defining relations of $\cC$ will always end in a linear combination of standard expressions of Brauer diagrams.
\end{theorem}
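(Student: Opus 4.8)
The plan is to define a suitable complexity measure on diagrams built from fundamental diagrams, and show that each defining relation, applied left-to-right, strictly decreases this measure (or leaves it fixed while decreasing a secondary measure), so that the rewriting process must terminate; and then to observe that a diagram to which no relation applies is necessarily (a linear combination of) standard expressions. The key point is that every diagram composed of fundamental diagrams is, by Proposition \ref{Proposition standard expression}, essentially built out of the pieces $s_i$, $a_i$, $u_i$, and what the relations do is push all cups ($u_i$) to the top, all caps ($a_i$) to the bottom, remove closed loops (via Looping), remove cup-cap pairs that create a through-strand (via Straightening and its variants and Delooping), and reduce the middle permutation part to a reduced word in $H(r)$ (via Braiding, together with Twisting which kills the $s_i s_i$ subwords).

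First I would set up the termination measure. A natural candidate is a tuple, ordered lexicographically: (i) the number of closed loops; (ii) the number of cup-cap pairs ``facing each other'' that can be straightened, i.e. the number of $u_i$'s appearing below some $a_j$ plus occurrences matching the left-hand sides of Straightening/Delooping; (iii) the total ``vertical disorder'' of cups and caps — for instance $\sum$ over all cups of the number of caps and crossings below it, plus $\sum$ over all caps of the number of crossings above it — measuring how far the diagram is from having all cups on top of all caps on top of the permutation part, with the left strands straight and correctly ordered; (iv) the length of the word in the $s_i$'s measuring how far the crossing part is from a reduced expression avoiding $s_i s_{i+1} s_i$. One checks, relation by relation: Looping drops (i); Straightening, upside-down straightening, Delooping drop (ii) (and at worst keep (i) fixed since no new loops are created on the right-hand sides); Untwisting, upside-down untwisting, Sliding, Pulling, their upside-down versions, and the $c$-term of Twisting reduce (iii) by moving a crossing past a cup/cap or a cup past a cap in the favorable direction (the right-hand sides are ``more ordered''); and Braiding together with the $a$-term of Twisting reduces (iv) exactly as in the standard rewriting proof that reduced words in a Coxeter group are reachable via braid and nil moves. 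Some care is needed because several right-hand sides are \emph{sums} of several diagrams — but the measure must drop on \emph{each} summand, which is exactly how the relations are arranged (e.g. in Sliding each of $\di{0.5\sca}{\culi}$, $\di{0.5\sca}{\ccr}$, $\di{0.5\sca}{\licu}$ has the offending crossing-below-cup configuration resolved).

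Next, I would argue that a diagram to which \emph{no} left-to-right relation applies is already a standard expression. Suppose not: then it has a closed loop (Looping applies), or it contains one of the configurations of cup-below-cap creating a through-strand (some form of Straightening or Delooping applies), or there is a crossing immediately below a cup that can be slid (Sliding/Pulling or their upside-down versions apply), or the left strand of some cup or cap meets a crossing or is not in the correct position relative to higher cups / lower caps (again Sliding/Untwisting-type relations apply), or the permutation block is not a reduced word avoiding $s_i s_{i+1} s_i$ (Twisting or Braiding applies). Ruling all these out forces the diagram into the shape $UXA$ with $U \in I(r,n)$, $X \in H(r)$, $A \in J(m,r)$, which by Proposition \ref{Proposition standard expression} is precisely the standard expression. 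Hence the rewriting terminates, by the well-founded measure, in a linear combination of standard expressions.

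The hard part will be (iii): designing the ``vertical disorder'' component robustly enough that \emph{every} one of the many sliding/pulling/untwisting relations — in all their upside-down and parity variants — provably decreases it on every summand, while simultaneously not being increased by the loop-removal and straightening relations (which change the number of strands and could naively inflate a crossing count). One clean way to handle this is to first prove a lemma that, up to the relations of strictly higher priority (loops and straightenings), one may assume a fixed number of strands, and then define (iii) on that fixed-strand stratum as a sum of ``inversions'' between the heights of distinguished strands of cups/caps and the other features below/above them; this is bookkeeping-heavy but entirely routine once the right invariant is isolated. I do not expect subtlety in (i), (ii), or (iv): (i) and (ii) are literally ``count the offending sub-configurations,'' and (iv) is the classical fact about Coxeter normal forms reachable by braid moves and the deletion of $s_i^2$, already flagged in the text's remark that we may avoid $s_i s_{i+1} s_i$ using the braid relation.
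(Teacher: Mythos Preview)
Your approach via a lexicographic termination measure is a legitimate and classical alternative, but it is \emph{not} what the paper does.  The paper proceeds by induction on the number of fundamental diagrams: writing an arbitrary diagram as $x_0 X'$ with $x_0$ a single fundamental piece and $X'$ already (by induction) a sum of standard expressions $x_i$, it does a case analysis on whether $x_0$ is a cup, a cap, or a crossing, and in each case shows concretely how to push $x_0$ into the standard expression $x_i$ using the defining relations together with the super interchange law.  In particular, the cap case is handled by analysing the six possible overlap positions of a fundamental cap placed on top of an elementary cup, and the crossing case by analysing the four overlap positions of a crossing with a cup and then invoking the symmetric-group normal form for the remaining permutation part.

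What each approach buys: the paper's inductive argument avoids having to design a single global measure that provably drops on \emph{every} summand of \emph{every} relation; in exchange it must do the slightly intricate overlap case analysis.  Your rewriting-system approach, if carried through, would give the a priori stronger statement that \emph{any} order of applying relations terminates, not just the specific inductive strategy the paper exhibits.

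Two points you should tighten if you pursue your route.  First, the super interchange law (swapping disjoint pieces up to a sign) is not one of the listed defining relations but a structural feature of the monoidal supercategory; you will need it explicitly, both to decrease your measure~(iii) when the obstructing feature is a far-away cup/cap rather than an adjacent crossing, and to argue that an irreducible diagram has its cups and caps in the prescribed $I(r,n)$--$J(m,r)$ order.  Second, Braiding is length-preserving, so component~(iv) cannot simply be word length; you correctly gesture at Matsumoto's theorem, but the measure you actually need is something like distance (in braid moves) to a fixed normal form in $H(r)$, and you should say so.
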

\begin{proof}
Note first that the standard expression of a diagram can not be simplified using the relations. This follows since in the standard expression cups are always above caps and the left strand of a cup or a cap is always a straight line. Furthermore, we choose a basis of the symmetric group algebra in such a way that the expression is reduced and the relation $s_is_{i+1}s_i$ does not occur. 
	
We will use induction on the number of fundamental diagrams. Note that a fundamental diagram is already a standard expression, covering the induction base case. 
So assume that we have a diagram $X$ consisting of $k+1$ fundamental diagrams. Then $X= x_0 X'$, where $x_0$ is a fundamental diagram and $X'$ consists of $k$ fundamental diagrams. By the induction hypothesis, we have $X'=\sum \lambda_i x_i$, where the $x_i$ are standard expressions consisting of at most $k$ fundamental diagrams. 

We will now show that $x_0x_i$ is a linear combination of standard expressions consisting of at most $k+1$ fundamental diagrams. We will  consider the cases where $x_0$ is a cup, cap or over-crossing separately.
\begin{itemize}
\item 
If $x_0$ is a fundamental cup, then using the super interchange rule, we can pull the cup down to the appropriate level in the ordering, making $x_0x_i$ into a standard expression. This is always possible without crossing lines since all left-side strands of cups are straight lines in the standard expression $x_i$.

\item
Assume now that $x_0$ is a fundamental cap of the form $a_r^n$ and $x_i$ is of the form $I_s^{n,a'}x_i'$, where $I_s^{n,a'}$ is an elementary cup as in Equation \eqref{elementary cup}. 
If $r+1<a'$ or $r>s+a'-1$, i.e. if the right strand of the cap is to the left of the  cup or the left strand of the cap is to the right of the  cup, then  we can use the superinterchange rule to switch the level of the elementary cup and the fundamental cap. This leads to $x_0x_i=\pm I_s^{n,a'}x_0 x_i'$ and for $x_0x_i'$ we can use the induction hypothesis to obtain a linear combination of standard expressions. Note that the cup of $I_s^{n,a'}$ is to the right of the other cups in $x_i'$ since $x_i  = I_s^{n,a'}x_i'$ is a standard expression. If the cup of $I_s^{n,a'}$ is still to the right of the cups in $ x_0 x_i'$, then $I_s^{n,a'}x_0 x_i'$ is a standard expression. The only simplifying relations that could introduce a cup in $x_0 x_i'$ to the right of $I_s^{n,a'}$ are twisting and the right-most term in pulling. But these relations reduce the number of fundamental diagrams, so we can use the induction hypothesis to rewrite $I_s^{n,a'}x_0 x_i'$ into a standard expression.

\begin{figure}[h]
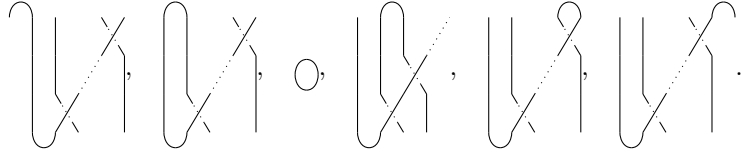

\begin{center}
\[
\di{0.5\sca}{\cu \li \li[0][\h] \li[0][2*\h] \li[\br][\h] \li[\br][2*\h] 
\draw[dotted] (2*\br, \h) to (3*\br,2*\h);  
\ocr[\br][0] \ocr[3*\br][2*\h] \li[4*\br][\h] \li[4*\br][0]   \ca[-\br][3*\h]  }, 
\quad 
\di{0.5\sca}{\cu \li \li[0][\h] \li[0][2*\h] \li[\br][\h] \li[\br][2*\h] 
\draw[dotted] (2*\br, \h) to (3*\br,2*\h);  
\ocr[\br][0] \ocr[3*\br][2*\h] \li[4*\br][\h] \li[4*\br][0]   \ca[0][3*\h] },
\quad 
\di{0.5\sca}{\cu \ca},
\quad
\di{0.5\sca}{\cu \li \li[0][\h] \li[0][2*\h] \li[\br][\h] \li[\br][2*\h] 
\draw[dotted] (3*\br, 2*\h) to (4*\br,3*\h); 
\li[2*\br][2*\h] 
\ocr[\br][0] \ocr[2*\br][\h]  \li[3*\br][0]   \ca[\br][3*\h] },
\quad
\di{0.5\sca}{\cu \li \li[0][\h] \li[0][2*\h] \li[\br][\h] \li[\br][2*\h] 
\draw[dotted] (2*\br, \h) to (3*\br,2*\h);  
\ocr[\br][0] \ocr[3*\br][2*\h] \li[4*\br][\h] \li[4*\br][0]  \ca[3*\br][3*\h] },
\quad
\di{0.5\sca}{\cu \li \li[0][\h] \li[0][2*\h] \li[\br][\h] \li[\br][2*\h] 
\draw[dotted] (2*\br, \h) to (3*\br,2*\h);  
\ocr[\br][0] \ocr[3*\br][2*\h] \li[4*\br][\h] \li[4*\br][0]   \ca[4*\br][3*\h] }.
\]
\caption{The different possible situations where the fundamental cap and cup overlap.\label{figure overlapping cup and cap}}
\end{center}
\end{figure}

If the fundamental cap and the cup overlap, see Figure \ref{figure overlapping cup and cap}, we can simplify the resulting diagram using   straightening, delooping, looping, upside-down pulling, untwisting, or upside-down sliding. Except for the upside-down sliding, all these relations reduce the number of fundamental diagrams, and then we can use our induction hypothesis to obtain a linear combination of standard expressions for $x_0x_i$. The upside-down sliding relation $\di{0.5\sca}{\crc} =d'\; \di{0.5\sca}{\cali} + e'\; \di{0.5\sca}{ \li \ocr[\br][0] \ca[0][\h] \dli} +f'\; \di{0.5\sca}{\lica} 
$ is needed in the most right diagram in Figure \ref{figure overlapping cup and cap}.
We only have to consider the term in $\di{0.5\sca}{ \li \ocr[\br][0] \ca[0][\h] \dli}$ as for the others term the numbers of fundamental diagrams has again been reduced. Observe that we can then repeat upside-down sliding until we obtain $\di{0.5\sca}{\cu \li  \li[\br][0] \li[2*\br][0] \ca[\br][\h]}$. On this, we can apply straightening which reduces the number of fundamental diagrams and allows us to apply the induction hypothesis.  

Assume $x_0$ is still a fundamental cap of the form $a_r$ but the diagram $x_i$ does not contain any cups. Then we repeatedly apply upside-down sliding and upside-down pulling until the left strand of the cup is a straight line. We can then use the superinterchange rule to bring this cup to the appropriate level to obtain a standard expression.   

\item 
We will consider now the case when $x_0$ is a fundamental cross $s_r$ and $x_i$ is of the form $I_s^{n,a'}x_i'$. If the strands of the cross $s_r$ are to the left or the right of the strands of the cup we can use the superinterchange rule to switch the cross and the elementary cup, while if the strands of $s_r$ are between the strands of the cup, we can use the braid rule to switch the cross and the cup. In these cases we have $x_0 I_s^{n,a'}x_i' = \pm I_s^{n,a'}x_0x_i'$ and on $x_0x_i'$ we can apply the induction hypothesis. This leaves us the four cases where the strands of the cross overlap with the strands of the cup.
If $r=a'-1$ then we can apply sliding $\di{0.5\sca}{ \ocr \dli \cu[\br][0] \li[2*\br][0] } =d\; \di{0.5\sca}{\culi}  + e\; \di{0.5\sca}{\ccr } +f\;  \di{0.5\sca}{\licu }
$. The terms in $ \di{0.5\sca}{\culi}$ and $\di{0.5\sca}{\licu }$ reduce the number of fundamental diagrams, while the term in $\di{0.5\sca}{\ccr }$ leads to $I_{s+1}^{n,a'-1}$. Hence $x_0 I_s^{n,a'} x_i'= I_{s+1}^{n,a'-1}x_i' +$ terms with less fundamental diagrams. 
If $r=a'$, we can apply pulling to reduce the number of fundamental diagrams.
while for $r=a'+s-2$ we can use twisting to reduce the number of fundamental diagrams. 
If $r=a'+s-1$, we immediately have that $x_0 I_s^{n,a'}= I_{s+1}^{n,a'}$.

For the last case, $x_0$ is a fundamental cross $s_r$ and $x_i$ does not contain any cup. Then $x_i = X A$, where $X \in H(n)$. We can use the braid relation to rewrite $x_0 X$ into a linear combination $\sum_l \mu_l X_l$ in $H(n)$ where every occurrence of $s_js_j$ can be discarded since the twisting relation reduces the number of fundamental diagrams. This allows us to obtain $x_0 x_i = x_0 XA = \sum_l \mu_l X_l A $+ terms with less fundamental diagrams.   
\end{itemize}
Using induction, this proves the theorem.
\end{proof}
The previous theorem shows that the Brauer diagrams, which we represent using their standard expression, form a spanning set for the hom-spaces of the monoidal supercategory $\cC$ generated by the cup, cap and over-cross and satisfying the defining relations in this section. 

From now on, we will also always assume that the over-cross is invertible. This has the following implications. 

\begin{lemma}\label{Lemma invertible}
If the over-cross $\di{0.5\sca}{\ocr}$ is invertible, then the inverse is given by 
\begin{align*}
\di{\sca}{\ucr}= -\frac{b}{a}\; \di{\sca}{\li \li[\br][0]} + \frac{1}{a}\; \di{\sca}{\ocr}-\frac{c}{\lambda a}\; \di{\sca}{\cc}.
\end{align*}
Furthermore, $\lambda$, $\lambda'$ and $a$ are non-zero.
\end{lemma}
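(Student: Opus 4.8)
The natural strategy is to compute the inverse directly from the twisting relation and then use known relations to pin down the nonvanishing of $\lambda$, $\lambda'$ and $a$. First I would write $\di{0.5\sca}{\ucr}$ for the candidate inverse and observe that, since we already have the braiding relation, one need only check that $\di{0.5\sca}{\ocr}$ composed with the proposed morphism equals the identity $\di{0.5\sca}{\lili}$ (a one-sided inverse in a monoidal category with the braid relation is automatically two-sided here, or one can simply verify both composites symmetrically). To guess the formula, note that the twisting relation $\di{0.5\sca}{\ocr \ocr[0][\h]} = a\,\di{0.5\sca}{\lili} + b\,\di{0.5\sca}{\ocr} + c\,\di{0.5\sca}{\cc}$ already expresses $H^2$ as a linear combination of $\id$, $H$, and the "cap-then-cup" morphism $E := U\circ A$ (in diagram notation $\di{0.5\sca}{\cc}$). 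So $H^{-1}$ should be sought as $\alpha\,\id + \beta\,H + \gamma\,E$; multiplying by $H$ and using the twisting relation together with the relations governing $H\circ E$ and $E\circ H$ gives a small linear system for $\alpha,\beta,\gamma$.

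The key computational inputs are: $H\circ E = \di{0.5\sca}{\ocr \cc}$, which by the untwisting relation $\di{0.5\sca}{\ocr \cu} = \lambda\,\di{0.5\sca}{\cu}$ applied to the cup part equals $\lambda\, E$; similarly $E\circ H = \lambda'\,E$ using the upside-down untwisting relation; and $E^2 = \di{0.5\sca}{\cu \ca \cu \ca}$ which by the looping relation $\di{0.5\sca}{\cu \ca} = \delta$ equals $\delta\, E$. Then
\[
\di{0.5\sca}{\ocr} \circ \left(\alpha\,\di{0.5\sca}{\lili} + \beta\,\di{0.5\sca}{\ocr} + \gamma\,\di{0.5\sca}{\cc}\right)
= \alpha\,\di{0.5\sca}{\ocr} + \beta\left(a\,\di{0.5\sca}{\lili} + b\,\di{0.5\sca}{\ocr} + c\,\di{0.5\sca}{\cc}\right) + \gamma\lambda\,\di{0.5\sca}{\cc},
\]
and setting this equal to $\di{0.5\sca}{\lili}$ forces, since the three standard expressions $\{\di{0.5\sca}{\lili},\di{0.5\sca}{\ocr},\di{0.5\sca}{\cc}\}$ are linearly independent (they are distinct standard expressions, so this is part of Definition \ref{Definition category of Brauer type}(4)): $\beta a = 1$, $\alpha + \beta b = 0$, and $\beta c + \gamma\lambda = 0$. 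This immediately requires $a \neq 0$, gives $\beta = 1/a$, $\alpha = -b/a$, and $\gamma = -c/(\lambda a)$ — provided $\lambda \neq 0$, which I address next.

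For the nonvanishing of $\lambda$: if $\lambda = 0$ then $\di{0.5\sca}{\ocr \cu} = 0$; composing on the left with $H^{-1}$ (whose existence as an abstract morphism we are assuming) would give $\di{0.5\sca}{\cu} = 0$, contradicting that $\di{0.5\sca}{\cu}$ is a basis element of $\Hom_\cC(0,2)$. Alternatively, and without circularity, one can argue from the linear system above: the equation $\beta c + \gamma\lambda = 0$ together with $\beta = 1/a \neq 0$ shows that if $\lambda = 0$ we would need $c = 0$, and then $\gamma$ is unconstrained — so instead one uses the delooping relation $\di{0.5\sca}{\cu \ocr[\br][0] \li \ca[0][\h] \dli[2*\br][0] \uli[2*\br][\h]} = \parq\,\di{0.5\sca}{\li}$ combined with the definition of the inverse to force $\lambda \neq 0$; the cleanest route is the one-line contradiction: $\di{0.5\sca}{\cu} = \di{0.5\sca}{\ucr \ocr[0][\h] \cu} = \di{0.5\sca}{\ucr}\circ(\lambda\,\di{0.5\sca}{\cu}) = \lambda\,(\di{0.5\sca}{\ucr}\circ\di{0.5\sca}{\cu})$, so $\lambda = 0$ would give $\di{0.5\sca}{\cu} = 0$. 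The same argument with the upside-down untwisting relation and $\di{0.5\sca}{\ca}$ gives $\lambda' \neq 0$. I expect the main obstacle to be purely bookkeeping: making sure the one-sided verification genuinely suffices (invoking that in a monoidal category a morphism with a two-sided inverse is what we want, and here the candidate is built symmetrically so checking $H\circ(\cdots) = \id$ and noting the analogous computation on the other side both work), and being careful that the coefficient-matching step legitimately uses the linear independence of standard expressions from Definition \ref{Definition category of Brauer type}(4) rather than any as-yet-unproven basis result.
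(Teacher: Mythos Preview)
Your proposal is correct and follows essentially the same approach as the paper: show $\lambda,\lambda'\neq 0$ via the one-line contradiction $U = H^{-1}(HU) = \lambda\, H^{-1}U$, then exploit the twisting relation together with linear independence of the standard expressions in $\Hom_\cC(2,2)$ to obtain $a\neq 0$ and the explicit formula. The paper left-multiplies the twisting relation by $H^{-1}$ directly rather than making an ansatz for $H^{-1}$ and verifying, but the content is identical.
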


\begin{proof}
Denote the inverse of $\di{0.5\sca}{\ocr}$ by the under-cross $\di{0.5\sca}{\ucr}$.
First note 
\[
\di{0.5\sca}{\cu}= \di{0.5\sca}{\cu \ocr \ucr[0][\h]} = \lambda \di{0.5\sca}{\ucr \cu}.
\]
Hence $\lambda$ must be non-zero and similar also $\lambda'$ must be non-zero. 

Multiplying the twisting relation on the left by the inverse $\di{0.5\sca}{\ucr}$ we obtain
\[
\di{0.5\sca}{\ocr} = a\; \di{0.5\sca}{\ucr}+ b\; \di{0.5\sca}{\lili} + c \lambda^{-1}\; \di{0.5\sca}{\cc}.
\]
Since the over-cross is by assumption linearly independent from $\di{0.5\sca}{\lili}$ and $\di{0.5\sca}{\cc}$, the parameter $a$ can not be zero. 
Then rewriting the previous relation gives us the lemma.
\end{proof}

\section{Establishing the equations} \label{Section rewritable diagrams}
Consider the monoidal supercategory $\cC$ generated by the cup, cap and over-cross and satisfying the defining relations of Section \ref{Section relations} and assume furthermore that the over-cross is invertible. Recall that these relations depend on parameters $\lambda,\lambda',\parS ,\parS ',\delta,\parq ,a,b,c,d,e,f,d',e',f',D,E,F,D',E',F'$, where $a,\lambda,\lambda'$ are non-zero. We will now derive the constraints these parameters have to satisfy such that the relations do not lead to contradictions and such that if we simplify a diagram using different relations, the resulting linear combination of diagrams we obtain is the same. 

Take two defining relations and consider the diagrams occurring on the left side of these relations. 
Since simplifying is a local operation, we only have to look at diagrams which contain these two diagrams as subdiagrams in such a way that they overlap. If this happens, we will simplify the diagram using these two relations in two different ways. We will then derive the equations which express that the final result of these simplifications is unique. 

The possible diagrams we have to consider are the following. We coloured the part of the diagram that overlaps. 
\begin{align*} 
\di{\sca}{\cu \ca[0][\h] \ocr[0][0][red]} \qquad
\di{\sca}{\cu \ocr[0][0][red] \ocr[0][\h]} \qquad
\di{\sca}{\cu \ocr[0][0][red] \ca[\br][\h] \uli[0][\h] \dli[2*\br][0][gray] \li[2*\br][0]} \qquad
\di{\sca}{\ocr[0][0][red] \cu \ca[0][2*\h] \ocr[\br][\h] \li[0][\h] \li[2*\br][0] \uli[2*\br][2*\h] \dli[2*\br][0][gray]} \qquad
\di{\sca}{\ocr[0][0][red] \cu \ocr[0][2*\h] \ocr[\br][\h] \li[0][\h] \li[2*\br][0] \li[2*\br][2*\h] \dli[2*\br][0][gray]}
\end{align*}

\begin{align*}
\di{-\sca,-\sca}{\cu \ocr[0][0][red] \ocr[0][\h]} \qquad
\di{\sca,-\sca}{\cu \ucr[0][0][red] \ca[\br][\h] \uli[0][\h] \dli[2*\br][0][gray] \li[2*\br][0]} \qquad
\di{\sca,-\sca}{\ucr[0][0][red] \cu \ca[0][2*\h] \ucr[\br][\h] \li[0][\h] \li[2*\br][0] \uli[2*\br][2*\h] \dli[2*\br][0][gray]} \qquad
\di{\sca,-\sca}{\ucr[0][0][red] \cu \ucr[0][2*\h] \ucr[\br][\h] \li[0][\h] \li[2*\br][0] \li[2*\br][2*\h] \dli[2*\br][0][gray]} 
\end{align*}

\begin{align*}
\di{\sca}{\cu \ca[\br][0][red] \uli \dli[2*\br][0] \cu[2*\br][-0.4\h] \uli[3*\br][0] \dli[3*\br][0] } \qquad 
\di{\sca,-\sca}{\cu \ca[\br][0][red] \uli \dli[2*\br][0] \cu[2*\br][-0.4\h] \uli[3*\br][0] \dli[3*\br][0] } \qquad 
\di{\sca}{\dli \ocr \cu[\br][0][red] \ca[2*\br][0] \dli[3*\br][0]} \qquad
\di{\sca,-\sca}{\dli \ucr \cu[\br][0][red] \ca[2*\br][0] \dli[3*\br][0]}
\end{align*}

\begin{align*} 
\di{\sca}{\dli \ocr[0][0][red] \ocr[0][\h] \cu[\br][0] \li[2*\br][0]\li[2*\br][\h] } \qquad
\di{\sca,-\sca}{\dli \ucr[0][0][red] \ucr[0][\h] \cu[\br][0] \li[2*\br][0]\li[2*\br][\h] } \qquad
\di{\sca}{\ocr \ocr[0][\h][red] \ocr[0][2*\h]} \qquad
\di{\sca}{\ocr \li[2*\br][0] \ocr[0][\h][red] \ocr[0][3*\h] \ocr[\br][2*\h] \li[2*\br][\h] \li[2*\br][3*\h] \li[0][2*\h] }\qquad
\di{\sca,-\sca}{\ucr \li[2*\br][0] \ucr[0][\h][red] \ucr[0][3*\h] \ucr[\br][2*\h] \li[2*\br][\h] \li[2*\br][3*\h] \li[0][2*\h] }
\end{align*}

\begin{align*}
\di{\sca}{\cu \li \ocr[\br][0] \ocr[0][\h][red] \ocr[0][2*\h]   \li[2*\br][\h] \li[2*\br][2*\h] \dli[2*\br][0] } \qquad
\di{\sca,-\sca}{\cu \li \ucr[\br][0] \ucr[0][\h][red] \ucr[0][2*\h]   \li[2*\br][\h] \li[2*\br][2*\h] \dli[2*\br][0] } \qquad
\di{\sca}{\ocr[0][0][red] \dli \cu[\br][0] \li[2*\br][0] \ca[\br][\h] \uli[0][\h]} \qquad
\di{\sca}{\cu \li \ca[0][\h] \ocr[\br][0][red] \uli[2*\br][\h] \dli[2*\br][0] \cu[2*\br][-0.4\h] \dli[3*\br][0] \li[3*\br][0] \uli[3*\br][\h]  }  \qquad
\di{\sca,-\sca}{\cu \li \ca[0][\h] \ucr[\br][0][red] \uli[2*\br][\h] \dli[2*\br][0] \cu[2*\br][-0.4\h] \dli[3*\br][0] \li[3*\br][0] \uli[3*\br][\h]  } 
\end{align*}

\begin{align*}
\di{\sca}{\cu \li \ocr[\br][0][red] \li[2*\br][\h] \ocr[0][\h] \dli[2*\br][0] \dli[3*\br][0] \cu[2*\br][-0.4\h] \li[3*\br][0] \li[3*\br][\h]} \qquad 
\di{\sca,-\sca}{\cu \li \ucr[\br][0][red] \li[2*\br][\h] \ucr[0][\h] \dli[2*\br][0] \dli[3*\br][0] \cu[2*\br][-0.4\h] \li[3*\br][0] \li[3*\br][\h]} \qquad
\di{\sca}{ \ocr[0][0][red] \ocr[\br][\h] \ca[0][2*\h] \li[0][\h] \uli[2*\br][2*\h] \li[2*\br][0][red] \cu[\br][0] \dli } \qquad
\di{\sca,-\sca}{ \ucr[0][0][red] \ucr[\br][\h] \ca[0][2*\h] \li[0][\h] \uli[2*\br][2*\h] \li[2*\br][0][red] \cu[\br][0] \dli }
\end{align*}

\begin{align*}
\di{\sca}{ \ocr[0][0] \ocr[\br][\h][red] \ca[0][2*\h] \li[0][\h] \uli[2*\br][2*\h] \li[3*\br][\h]  \uli[3*\br][2*\h] \cu[2*\br][\h]  } \qquad
\di{\sca,-\sca}{ \ucr[0][0] \ucr[\br][\h][red] \ca[0][2*\h] \li[0][\h] \uli[2*\br][2*\h] \li[3*\br][\h]  \uli[3*\br][2*\h] \cu[2*\br][\h]  } \qquad
\di{\sca}{\cu \dli[2*\br][0] \li \ocr[\br][0] \ocr[0][\h][red] \li[2*\br][\h][red] \li[0][2*\h] \ocr[\br][2*\h] \ca[0][3*\h] \uli[2*\br][3*\h]} 
\end{align*}

\begin{align*}
\di{\sca}{ \li[3*\br][2*\h] \li[3*\br][3*\h] \cu[2*\br][2*\h] \ocr[0][\h] \ocr[0][3*\h] \ocr[\br][2*\h][red]  \li[2*\br][3*\h] \li[0][2*\h] }\qquad
\di{\sca,-\sca}{ \li[3*\br][2*\h] \li[3*\br][3*\h] \cu[2*\br][2*\h] \ucr[0][\h] \ucr[0][3*\h] \ucr[\br][2*\h][red]  \li[2*\br][3*\h] \li[0][2*\h] }\qquad
\di{\sca}{\dli[2*\br][0] \ocr[\br][0] \li \cu[0][0] \ocr[0][\h][red] \ocr[0][3*\h] \ocr[\br][2*\h] \li[2*\br][\h][red]  \li[2*\br][3*\h] \li[0][2*\h] }\qquad
\di{\sca,-\sca}{\dli[2*\br][0] \ucr[\br][0] \li \cu[0][0] \ucr[0][\h][red] \ucr[0][3*\h] \ucr[\br][2*\h] \li[2*\br][\h][red]  \li[2*\br][3*\h] \li[0][2*\h] }
\end{align*}

\begin{align*}
\di{\sca}{\dli[0][\h] \cu[\br][\h] \ocr[0][\h][red] \ocr[0][3*\h] \ocr[\br][2*\h] \li[2*\br][\h] \li[2*\br][3*\h] \li[0][2*\h] }\qquad
\di{\sca,-\sca}{\dli[0][\h] \cu[\br][\h] \ucr[0][\h][red] \ucr[0][3*\h] \ucr[\br][2*\h] \li[2*\br][\h] \li[2*\br][3*\h] \li[0][2*\h] } \qquad
\di{\sca}{\ocr \li[2*\br][0] \li[0][\h] \ocr[\br][\h] \li[2*\br][2*\h][red] \ocr[0][2*\h][red] \li[0][3*\h] \ocr[\br][3*\h] \li[2*\br][4*\h] \ocr[0][4*\h]  }
\end{align*}

The calculations to obtain the equations expressing that rewriting the above overlapping diagrams gives a consistent result are straightforward but long. Therefore we put them in the appendix. We will now summarize here the results of Appendix \ref{Appendix rewrite equations}. 
The parameters $\parS ',d,d', D,E,F,D',E,F'$ can be expressed in the other parameters as follows
\begin{align*}
\parS '&=\epsilon\parS \qquad d=-ef' \qquad d'=-e'f \qquad E=b-f \qquad E'=b-f' \\
 D&= \frac{aE}{\lambda} \qquad D' = \frac{aE'}{\lambda'}  \qquad F= \frac{a}{e} \qquad F'=\frac{a}{e'}. 
\end{align*}
Furthermore
\begin{align}\label{Equation e,e'}
e^4=e'^4=1.
\end{align}
In particular $e$ and $e'$ are non-zero. 

If $\lambda=\lambda'$, we have the following equation
\begin{align}\label{Equation lambda=mu}
\lambda^2-b\lambda-c\delta&=a, 
\end{align}
while if $\lambda\not=\lambda'$, we have
\begin{align}\label{Equation mu not lambda}
c=\delta=0 \quad a=-\lambda'\lambda \quad b=\lambda'+\lambda. 	
\end{align}

From the equation
\begin{align}
f(b-f) &= 0 & f'(b-f') &= 0,
\end{align}
we conclude that we can distinguish four separate cases:
\begin{itemize}
\item $f=f'=0$,
\item $f=0$ and $f'=b\not=0$,
\item $f=b\not=0$ and $f'=0$,
\item $f=f'=b\not=0.$
\end{itemize}
We then have to solve the following equations for each case:
\begin{equation}\label{Equation f f'b}
\begin{aligned}
a(b-f)/\lambda &= \frac{c\parS }{e}-(b-\lambda-f)f' \\
a(b-f')/\lambda' &= \frac{c\parS '}{e'}-(b-\lambda'-f')f\\
\epsilon e^2(a(b-f)/\lambda -f\lambda) &= (f'^2-ff'-f'\lambda+f\lambda)+e c\parS' \\
\epsilon e'^2(a(b-f')/\lambda' -f'\lambda') &= (f^2-ff'-f\lambda'+f'\lambda')+e'c\parS \\
\epsilon e^2(b-2f) &= (b-2f') \\
\epsilon e'^2(b-2f') &= (b-2f), \\
f'(ec\parS '+f\lambda)&= a(b-f)(b-f')/\lambda \\
f(e'c\parS +f'\lambda')&= a(b-f)(b-f')/\lambda' \\
\lambda(b-f-f') -ec\parS '&=(b-f)(b-f') \\
\lambda'(b-f'-f) -e'c\parS &=(b-f)(b-f') \\
\lambda (ec\parS '+f\lambda-f^2) -ec\parS 'f &= a(b-f')  \\
\lambda' (e'c\parS +f'\lambda'-f'^2) -e'c\parS f' &= a(b-f). 
\end{aligned}
\end{equation}

The parameters should also satisfy the following equations
\begin{equation} \label{Equation rest}
\begin{aligned}
c\parq  &= a(b-f-f')  \\
\lambda (\parS e'c +ff')+c\delta f'&= \lambda' (\parS' ec+ff')+c\delta f= a(b-f-f') \\
\parq  & = \parS '(d+e\lambda)+f\delta= \parS (d'+e'\lambda') + f'\delta \\
\parS (\lambda'-\lambda)(1+\epsilon e^2)& =0 \\
(\lambda-b+f')\parq  &= \frac{a}{\lambda'}(b-f')\delta+a\parS 'e \\
 (\lambda'-b+f)\parq &= \frac{a}{\lambda}(b-f) \delta + a\parS e'\\
 (b-f)(\parq \lambda+a\delta ) + \parS a\lambda e'  &=(b-f')(\parq \lambda'+a\delta ) + \parS 'a\lambda' e  \\
a\delta(e-\frac{1}{e'}) &=(b-f')(\parS \frac{a}{\lambda'} -e\parq ) +(b-f'-f)\lambda\parS-ec\parS \parS '\\
a\delta(e'-\frac{1}{e}) &= (b-f)(\parS '\frac{a}{\lambda} -e'\parq )+(b-f -f')\lambda' \parS' -e'c\parS \parS' 
\end{aligned}
\end{equation}
and
\begin{equation} \label{Equation DEF}
\begin{aligned}
a(bE+c\parS' e )&= D^2+Ea\lambda + EbD+ Ec\parS' d +F\lambda d \\
a\lambda+bD+b^2E +c\parS' d +c\parS' eb &= DE +bE^2+Ec\parS' e + F\lambda e \\
b E c\parS' +bF\lambda + c^2 \parS '^2 e + c\parS' \lambda  f &= DF + EbF +Ec\parS' f + F \lambda f\\
a(bE'+c\parS e' )&= D'^2+E'a\lambda' + E'bD'+ E'c\parS d' +F'\lambda' d' \\
a\lambda'+bD'+b^2E' +c\parS d' +c\parS e'b &= D'E' +bE'^2+E'c\parS e' + F'\lambda' e' \\
b E' c\parS  +bF'\lambda' + c^2 \parS ^2 e' + c\parS \lambda' f' &= D'F' + E'bF' +E'c\parS f' + F' \lambda' f' .
\end{aligned}
\end{equation}

Furthermore, if $c$ is non-zero, we have the following extra equations
\begin{equation}\label{Equation c non-zero}
\begin{aligned}
f&=f' \\
e^2\epsilon f  & =e'^2\epsilon f=f \\
E&=E'=0  \\
ee'&=1 \\
ec\parS '+f\lambda&=0 & e'c\parS +f'\lambda'&=0 \\
\lambda &=\lambda', 
\end{aligned}
\end{equation}
while if $\parS $ is non-zero, we also have
\begin{align}\label{Equation S-non-zero}
ee'=1.
\end{align}

\begin{theorem}\label{Unique simplification}
Assume we have parameters $\lambda, \lambda', \parS ,\delta,\parq ,a, b,c,e,e',f,f'$ which satisfy Equations \eqref{Equation e,e'}  to \eqref{Equation S-non-zero}. Then the category $\mathcal{C}$ monoidally generated by the over-cross, the cup and the cap satisfying the defining relation of Section \ref{Section relations}  is well-defined. 
\end{theorem}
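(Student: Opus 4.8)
The plan is to use Theorem~\ref{theorem simplifying} as the key starting point. That theorem already guarantees that \emph{any} finite composition of fundamental diagrams can be rewritten, via the defining relations applied left-to-right, into \emph{some} linear combination of standard expressions of Brauer diagrams. Thus the hom-spaces are spanned by standard expressions. What is missing for a well-defined category is \emph{confluence}: the result of the rewriting must not depend on the choices made along the way. So the entire proof reduces to a local confluence check, and then one invokes Newman's lemma (diamond lemma): since the rewriting system is terminating (again by Theorem~\ref{theorem simplifying}, the simplification procedure always halts), local confluence implies global confluence, hence uniqueness of normal forms.

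The core of the argument is therefore to enumerate all \emph{critical pairs}: minimal diagrams in which the left-hand sides of two defining relations overlap, so that two different relations (or the same relation in two positions) can be applied. These are exactly the diagrams displayed in the long list in Section~\ref{Section rewritable diagrams} (the coloured overlap diagrams). For each such overlap diagram, I would simplify it in the two possible ways, reduce each branch to a linear combination of standard expressions using Theorem~\ref{theorem simplifying}, and compare. Equating coefficients of the (linearly independent) standard expressions on the two sides yields precisely the system of constraints \eqref{Equation e,e'} through \eqref{Equation S-non-zero} together with the expressions for the dependent parameters $\parS', d, d', D, E, F, D', E', F'$. The hypothesis of the theorem is exactly that these equations hold; hence every critical pair is joinable, so the system is locally confluent, hence confluent, and every diagram has a unique normal form. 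One must also check the ``trivial'' overlaps (e.g.\ non-overlapping applications of two relations, or a relation applied inside a region untouched by another) — these are automatically confluent because the rewrites commute, which is the standard ``no overlap'' case of the critical-pair lemma and needs no parameter constraint.

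Concretely the steps are: (1) observe the rewriting system terminates, by Theorem~\ref{theorem simplifying}; (2) recall Newman's lemma: a terminating system is confluent iff it is locally confluent; (3) reduce local confluence to joinability of critical pairs, listing all overlaps of left-hand sides of the defining relations — these are the diagrams tabulated in Section~\ref{Section rewritable diagrams}; (4) for each overlap, carry out both reductions (the computations relegated to Appendix~\ref{Appendix rewrite equations}) and read off the joinability condition; (5) verify that the totality of these conditions is exactly Equations \eqref{Equation e,e'}--\eqref{Equation S-non-zero} plus the stated formulas for the dependent parameters, which hold by hypothesis; (6) conclude that $\cC$ is confluent, so the rewriting gives a well-defined normal form and the category is well-defined. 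A final remark: well-definedness here means that the relations are consistent, i.e.\ they do not collapse a hom-space further than forced; linear \emph{independence} of the standard expressions (hence that they form an actual basis, not merely a spanning set) is the separate content of Theorem~\ref{Theorem basis} and is not needed for this theorem.

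The main obstacle is purely bookkeeping rather than conceptual: one must be certain that the list of overlap diagrams in Section~\ref{Section rewritable diagrams} is \emph{complete}, i.e.\ that every pair of left-hand sides that can overlap has been accounted for (including overlaps of a relation with itself, and overlaps mediated by the super-interchange law and the braid relation, which are used silently in the reductions). Verifying completeness requires a careful case analysis over which fundamental diagram sits where relative to another; once that is granted, each individual critical-pair computation is long but mechanical, and all of it is deferred to Appendix~\ref{Appendix rewrite equations}. The only subtlety in those computations is that some branches of a critical pair temporarily \emph{increase} the number of fundamental diagrams (e.g.\ via twisting, pulling, or upside-down sliding), so one has to be careful that Theorem~\ref{theorem simplifying} still applies to bring them back to normal form — but that is precisely what that theorem's induction already handles.
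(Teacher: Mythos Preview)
Your proposal is correct and is essentially the same approach as the paper's proof, which simply states in one sentence that the equations were established precisely so that different simplifications agree, hence no contradictions arise. You have made this explicit by casting it in the language of term rewriting (termination from Theorem~\ref{theorem simplifying}, local confluence from the critical-pair analysis in Section~\ref{Section rewritable diagrams} and Appendix~\ref{Appendix rewrite equations}, then Newman's lemma), and you correctly identify that the only nontrivial point is the completeness of the list of overlap diagrams.
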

\begin{proof}
The way we established the equations immediately implies that different simplifications lead to the same result. Thus we can not have any contradictions in the relations. 
\end{proof}

\section{Categories of Brauer type}\label{Section classes of categories}
We will now solve the equations derived in the previous section. We will show that the possible categories can be distinguished by the values for $f$ and $f'$, the values for $e$ and $e'$, whether $\lambda'=\lambda$ or $\lambda'\not= \lambda$ and whether $\parS $ is zero or non-zero, and the parity of the cup and cap.
 
We will use the notation $\cC^{f,f'}_{\lambda',\parS }(\epsilon,e,e')$ for the category $\cC$ where we plug in the values for the occurring parameters. For example, $\cC^{0,b}_{\lambda,0}(-,1,1)$ has $f=0$, $f'=b$, $\lambda'=\lambda$, $\parS =0$, $e=e'=1$ and the cup and cap are odd morphisms.  We will often drop the $(\epsilon,e,e')$ part in this notation.  

\subsection{The case \texorpdfstring{$f=f'=0$.}{f=f'=0.}}
Note that from $f'(ec\parS '+f\lambda)= a(b-f)(b-f')/\lambda$ in Equation \eqref{Equation f f'b}, we have $ab^2/\lambda=0$. Since $a$ and $\lambda$ are non-zero, we conclude that $b=0$. The other equations in Equation \eqref{Equation f f'b} are then equivalent with $c\parS =0$.  
It can be readily verified that the equations in Equation \eqref{Equation DEF} are trivially satisfied, while Equation \eqref{Equation rest} reduces to 
\begin{equation}\label{equation case f=f=0}
\begin{aligned}
c\parq =0, \quad  \parq =\parS 'e\lambda= \parS e' \lambda', \quad \lambda \parq  = a\parS' e, \quad \lambda' \parq  = a\parS e', \quad \parS a\lambda e'=\parS 'a \lambda' e, \\ a\delta=ee'a\delta, \quad \parS (\lambda'-\lambda)(1+\epsilon e^2). 
\end{aligned}
\end{equation}
Since $c\parS =0$, it is clear that $c\parq =0$ follows from $\parq =\parS 'e\lambda$, while  $\lambda \parq  = a\parS' e$ and $\lambda' \parq  = a\parS e'$ imply $\parS a\lambda e'=\lambda \lambda' \parq =\parS 'a \lambda' e$.
\subsubsection{The subcase \texorpdfstring{$\parS $}{S} non-zero and \texorpdfstring{$\lambda'=\lambda$}{l'=l}.}
If $\parS $ is non-zero, then $c\parS =0$ imply $c=0$, and $ee'=1$ by Equation \eqref{Equation S-non-zero}.
If moreover $\lambda=\lambda'$, then Equation \eqref{equation case f=f=0} reduces to
$\parq =\epsilon  e \parS \lambda=e'\parS \lambda=(a/\lambda) \epsilon e\parS= (a/\lambda) e' \parS $.
Thus  $a^2=\lambda$, $e'=\epsilon e$ and therefore also $e^2=\epsilon$. 
Summarising, we have the following proposition. 
\begin{proposition}
If $f=f'=0$, $\lambda'=\lambda$ and  $\parS $ is non-zero, we obtain the category $\mathcal{C}^{0,0}_{\lambda, \parS }(\epsilon,e,\epsilon e)$, with independent variables 
$\{ \lambda, \parS , \delta \}$ and where $e$ is a square root of $\epsilon$, while for the other variables we have
\begin{align*}
\lambda'=\lambda, \quad  a=\lambda^2, \quad b=0, \quad c=0 \quad \parS '=\epsilon \parS , \quad \parq = \epsilon\parS e \lambda,  \quad e' = \epsilon e,
\\  d=d'=f=f'=0, \quad
D=E=D'=E'=0,  \quad F= \epsilon e \lambda^2, \quad F'=e \lambda^2.
\end{align*}
\end{proposition}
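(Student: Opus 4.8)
The plan is to treat this proposition as the terminal specialization of the analysis already begun in this subsection. We are in the branch $f=f'=0$ with the additional hypotheses $\lambda'=\lambda$ and $\parS\neq 0$, so I would start from the two facts extracted at the top of the $f=f'=0$ discussion — namely $b=0$, and that \eqref{Equation f f'b} collapses to $c\parS=0$ — then push the remaining hypotheses through the reduced system \eqref{equation case f=f=0} to pin down every parameter, and finally invoke Theorem \ref{Unique simplification} to conclude that the resulting data defines a category.

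First I would deduce $c=0$ from $c\parS=0$ and $\parS\neq 0$, and $ee'=1$ from \eqref{Equation S-non-zero}, which applies precisely because $\parS\neq 0$. Setting $\lambda=\lambda'$ and $\parS'=\epsilon\parS$ in \eqref{equation case f=f=0}, the equalities $\parq=\parS'e\lambda=\parS e'\lambda$ give $e'=\epsilon e$ after cancelling $\parS\lambda$, and together with $ee'=1$ this forces $\epsilon e^2=1$; hence $e$ is a square root of $\epsilon$ (so $e^4=1$ is automatic). Then $\lambda\parq=a\parS'e$ combined with $\parq=\epsilon\parS e\lambda$ yields $\lambda^2=a$. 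The leftover relations in \eqref{equation case f=f=0} ($c\parq=0$, $\parS a\lambda e'=\parS'a\lambda'e$, $a\delta=ee'a\delta$) are then automatic from $c=0$ and $ee'=1$, so $\delta$ — and also $\lambda$ and $\parS$ — are unconstrained. The auxiliary parameters follow from the summary at the end of Section \ref{Section rewritable diagrams}: $d=-ef'=0$, $d'=-e'f=0$, $E=b-f=0$, $E'=b-f'=0$, $D=aE/\lambda=0$, $D'=aE'/\lambda'=0$, while $F=a/e=\lambda^2/e=\epsilon e\lambda^2$ and $F'=a/e'=\lambda^2/e'=e\lambda^2$, using $e^{-1}=\epsilon e$ and $e'^{-1}=e$, both consequences of $\epsilon e^2=1$ and $e'=\epsilon e$.

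To finish I would check that this assignment satisfies all of Equations \eqref{Equation e,e'}–\eqref{Equation S-non-zero}, so that Theorem \ref{Unique simplification} gives a well-defined category, which is by definition $\mathcal{C}^{0,0}_{\lambda,\parS}(\epsilon,e,\epsilon e)$: \eqref{Equation e,e'} holds since $e^4=\epsilon^2=1=e'^4$; \eqref{Equation lambda=mu} reads $\lambda^2-b\lambda-c\delta=\lambda^2=a$; \eqref{Equation f f'b} has collapsed to $c\parS=0$; \eqref{Equation rest} has collapsed to \eqref{equation case f=f=0}, verified above; in \eqref{Equation DEF} the vanishing of $b,c,d,d',D,E,D',E'$ kills almost every term and the only surviving identities, $a\lambda=F\lambda e$ and $a\lambda'=F'\lambda'e'$, follow from $F=a/e$ and $F'=a/e'$; \eqref{Equation c non-zero} is vacuous since $c=0$; and \eqref{Equation S-non-zero} is $ee'=1$, true by construction. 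The work here is bookkeeping rather than conceptual: the only real care needed is to make sure that under the substitutions $b=c=d=d'=D=E=D'=E'=0$, $a=\lambda^2$, $e'=\epsilon e$, $\parS'=\epsilon\parS$, $\parq=\epsilon\parS e\lambda$ no term in the large system survives to impose a hidden relation among the putatively free parameters $\lambda,\parS,\delta$, and in particular to recheck the $F$-lines of \eqref{Equation DEF}, where $F=\epsilon e\lambda^2\neq 0$ even though everything else in those lines vanishes.
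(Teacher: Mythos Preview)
Your proof is correct and follows essentially the same route as the paper: deduce $c=0$ from $c\parS=0$, $ee'=1$ from \eqref{Equation S-non-zero}, then read off $e'=\epsilon e$, $e^2=\epsilon$ and $a=\lambda^2$ from the reduced system \eqref{equation case f=f=0}, and compute the remaining parameters from the summary formulas. Your treatment is in fact more complete than the paper's, since you also verify explicitly that the resulting parameter values satisfy all of \eqref{Equation e,e'}--\eqref{Equation S-non-zero} so that Theorem \ref{Unique simplification} applies.
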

\subsubsection{The subcase \texorpdfstring{$\parS $}{S} non-zero and \texorpdfstring{$\lambda'\not=\lambda$}{l' not =l}.}
On the other hand, if $\parS $ is non-zero but $\lambda'\not=\lambda$, then from Equation \eqref{Equation mu not lambda}, we obtain $\lambda'=-\lambda$,  $a=\lambda^2$ and $\delta=0$.  
Equation \eqref{equation case f=f=0} will then be satisfied if $e'=-\epsilon e$ and  $e^2 =-\epsilon$. 
Hence, we obtain the following proposition. 
\begin{proposition}
If $f=f'=0$, $\lambda'\not=\lambda$ and  $\parS $ is non-zero, we obtain the category $\mathcal{C}^{0,0}_{-\lambda, \parS }(\epsilon,e,-\epsilon e)$, with independent variables 
$\{ \lambda, \parS \}$ and where $e$ is a square root of $-\epsilon$, while for the other variables we have
\begin{align*}
\lambda'=-\lambda, \quad \delta=0, \quad  a=\lambda^2, \quad b=0, \quad c=0 \quad \parS '=\epsilon \parS , \quad \parq = \epsilon \parS e \lambda, \quad e' = -\epsilon e,
 \\d=d'=f=f'=0,  \quad 
D=E=D'=E'=0, \quad F= -\epsilon e \lambda^2, \quad F'=e \lambda^2.
\end{align*}
\end{proposition}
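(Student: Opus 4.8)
The plan is to specialize the general constraints \eqref{Equation e,e'}--\eqref{Equation S-non-zero} to the hypotheses $f=f'=0$, $\parS \neq 0$, and $\lambda'\neq\lambda$, exactly as was done in the preceding subcase, and read off the resulting category. First I would invoke the analysis already carried out at the start of the case $f=f'=0$: plugging $f=f'=0$ into \eqref{Equation f f'b} forces $b=0$ and $c\parS =0$, the equations in \eqref{Equation DEF} become trivial, and \eqref{Equation rest} collapses to the system \eqref{equation case f=f=0}. So I inherit all of that and only need to further impose $\parS \neq 0$ and $\lambda'\neq\lambda$.

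Next I would extract the new information from the two extra hypotheses. From $\parS \neq 0$ and $c\parS =0$ we get $c=0$, and \eqref{Equation S-non-zero} gives $ee'=1$. Now because $\lambda'\neq\lambda$ while $c=\delta=0$ is not yet known for $\delta$ — here I must be careful: $c=0$ is forced, but I should appeal to \eqref{Equation mu not lambda}, which applies precisely when $\lambda'\neq\lambda$, to conclude $c=\delta=0$, $a=-\lambda'\lambda$, and $b=\lambda'+\lambda$. Combined with $b=0$ this yields $\lambda'=-\lambda$ and hence $a=\lambda^2$, $\delta=0$. I then feed $\lambda'=-\lambda$, $a=\lambda^2$, $\delta=0$, $c=0$ back into \eqref{equation case f=f=0}. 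The surviving relations are $\parq =\parS 'e\lambda$, $\parq =\parS e'\lambda' = -\parS e'\lambda$, $\lambda\parq = a\parS 'e = \lambda^2\parS 'e$, $\lambda'\parq = a\parS e' $ i.e. $-\lambda\parq = \lambda^2\parS e'$, together with $\parS(\lambda'-\lambda)(1+\epsilon e^2)=0$. Since $\parS \neq 0$ and $\lambda'\neq\lambda$, the last equation forces $1+\epsilon e^2=0$, i.e. $e^2=-\epsilon$; with $\parS '=\epsilon\parS $ one checks $\parq =\epsilon\parS e\lambda$ is consistent with $\parq =\parS 'e\lambda$, and comparing $\parq =-\parS e'\lambda$ with $\parq =\epsilon\parS e\lambda$ gives $e'=-\epsilon e$ (using $\parS ,\lambda\neq 0$); this is compatible with $ee'=1$ since $-\epsilon e^2 = -\epsilon(-\epsilon)=\epsilon^2=1$. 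Finally the derived-parameter formulas from Section \ref{Section rewritable diagrams} give $\parS '=\epsilon\parS $, $d=-ef'=0$, $d'=-e'f=0$, $E=b-f=0$, $E'=b-f'=0$, $D=aE/\lambda=0$, $D'=aE'/\lambda'=0$, $F=a/e = \lambda^2/e = -\epsilon e\lambda^2$ (using $e^2=-\epsilon$ so $1/e=-\epsilon e$), and $F'=a/e' = \lambda^2/e'$; since $e'=-\epsilon e$ and $1/e'=1/(-\epsilon e)=-\epsilon(1/e)=-\epsilon(-\epsilon e)=e$, we get $F'=e\lambda^2$. This matches the claimed data exactly, and $\{\lambda,\parS \}$ remain as free parameters with $e$ any square root of $-\epsilon$.

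I would then verify the converse: that these assignments do satisfy all of \eqref{Equation e,e'}--\eqref{Equation S-non-zero}, so that by Theorem \ref{Unique simplification} the category $\mathcal{C}^{0,0}_{-\lambda,\parS }(\epsilon,e,-\epsilon e)$ is genuinely well-defined. The bulk of this is the same trivial-substitution check: $e^4=(e^2)^2=(-\epsilon)^2=1$ and likewise for $e'$, \eqref{Equation mu not lambda} holds by construction, \eqref{Equation f f'b} holds since $f=f'=0$, $b=0$, $c\parS =0$ reduces every line to $0=0$, \eqref{Equation DEF} is trivial with $D=E=D'=E'=0$, \eqref{Equation rest} becomes \eqref{equation case f=f=0} which I have just arranged to hold, \eqref{Equation c non-zero} is vacuous since $c=0$, and \eqref{Equation S-non-zero} holds since $ee'=1$.

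The step I expect to be the main obstacle is the bookkeeping around signs and square roots: making sure that the two a-priori-independent constraints $e^2=-\epsilon$ (from the $\parS (\lambda'-\lambda)(1+\epsilon e^2)$ relation) and $e'=-\epsilon e$ (from comparing the two expressions for $\parq $) are mutually consistent and simultaneously consistent with $ee'=1$ from \eqref{Equation S-non-zero}, and that the derived values $F=-\epsilon e\lambda^2$, $F'=e\lambda^2$ come out with the signs stated — this requires repeatedly using $e^{-1}=-\epsilon e$ rather than treating $e$ as a free scalar. Everything else is routine substitution into equations already established in the excerpt.
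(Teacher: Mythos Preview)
Your proposal is correct and follows essentially the same approach as the paper: use the analysis already done for $f=f'=0$ (forcing $b=0$ and reducing to \eqref{equation case f=f=0}), then invoke \eqref{Equation mu not lambda} together with $b=0$ to get $\lambda'=-\lambda$, $a=\lambda^2$, $\delta=0$, and finally read off $e^2=-\epsilon$ and $e'=-\epsilon e$ from the remaining equations. The paper's own proof is considerably terser, simply asserting that \eqref{equation case f=f=0} is satisfied when $e'=-\epsilon e$ and $e^2=-\epsilon$, whereas you additionally argue necessity and check the converse---but the route is the same.
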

\subsubsection{The subcase \texorpdfstring{$\parS =0$}{S=0} and $\texorpdfstring{\lambda'\not=\lambda$}{l' not= l}.}
Assume $\parS =0$, then it is clear that $\parq =0$ and Equation \eqref{equation case f=f=0}  reduces to $\delta (ee'-1)=0$. If $\lambda \not= \lambda'$, then Equation \eqref{Equation mu not lambda} implies $\delta=c=0$ and $a=\lambda^2$, $\lambda'=-\lambda$. The only restrictions  on $e$ or $e'$ are then $e^4=e'^4=1$ by Equation \eqref{Equation e,e'}.
We conclude the following.  
\begin{proposition}
If $f=f'=0$, $\lambda'\not=\lambda$ and  $\parS =0$, we obtain the category $\mathcal{C}^{0,0}_{-\lambda, 0}(\epsilon,e,e')$, with independent variable 
$\{ \lambda \}$ and $e^4=e'^4=1$ while for the other variables, we have
\begin{align*}
\lambda'=-\lambda, \quad \delta=0, \quad  a=\lambda^2, \quad b=0, \quad c=0 \quad \parS '= \parS =0, \quad \parq = 0,  \\d=d'=f=f'=0,  \quad 
D=E=D'=E'=0, \quad F= e^3 \lambda^2, \quad F'=e'^3 \lambda^2.
\end{align*}
\end{proposition}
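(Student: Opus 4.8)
The plan is to specialize the general equation system of Section \ref{Section rewritable diagrams} to the hypotheses $f=f'=0$, $\lambda'\neq\lambda$, and $\parS=0$, and then simply read off the remaining constraints. Since $f=f'=0$ we already know from the preceding discussion (the first lines of Subsection~\ref{Subsection...}, i.e.\ the case $f=f'=0$) that $b=0$ and that the surviving content of Equation~\eqref{Equation f f'b} is $c\parS=0$, which is automatic here because $\parS=0$. So the only work left is to evaluate Equations~\eqref{Equation rest}, \eqref{Equation DEF}, \eqref{Equation c non-zero}, and \eqref{Equation S-non-zero} under the stated hypotheses.

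First I would dispose of $\parq$: with $\parS=0$ and $f=f'=0$ we have $\parS'=\epsilon\parS=0$, and then every equation in \eqref{Equation rest} containing $\parq$ on the left forces $\parq=0$ (for instance $\parq=\parS'(d+e\lambda)+f\delta=0$), so $\parq=0$. Substituting $\parS=\parS'=\parq=f=f'=b=0$ into the remaining lines of \eqref{Equation rest} collapses them to the single surviving relation $a\delta(ee'-1)=0$, exactly the reduced form stated in the text (the display labelled \eqref{equation case f=f=0} with $\parS=0$). Next I would invoke Equation~\eqref{Equation mu not lambda}, which applies precisely because $\lambda'\neq\lambda$: it gives $c=\delta=0$, $a=-\lambda'\lambda$, $b=\lambda'+\lambda$. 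Combined with $b=0$ this yields $\lambda'=-\lambda$ and hence $a=\lambda^2$. Note $\delta=0$ makes $a\delta(ee'-1)=0$ vacuous, so no further constraint on $e,e'$ beyond $e^4=e'^4=1$ from \eqref{Equation e,e'}; and Equation~\eqref{Equation c non-zero} does not apply since $c=0$, while \eqref{Equation S-non-zero} does not apply since $\parS=0$.

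Then I would check Equation~\eqref{Equation DEF} is automatically satisfied: with $E=b-f=0$, $E'=b-f'=0$, $c=0$, $\parS=\parS'=0$, and $D=aE/\lambda=0$, $D'=aE'/\lambda'=0$, every term on both sides of all six equations in \eqref{Equation DEF} vanishes. Finally I would compute the derived parameters from the formulas summarized just above Equation~\eqref{Equation e,e'}: $d=-ef'=0$, $d'=-e'f=0$, $E=E'=0$, $D=D'=0$, $F=a/e=\lambda^2/e=e^3\lambda^2$ (using $e^4=1$), and $F'=a/e'=e'^3\lambda^2$. Collecting everything gives exactly the asserted values, with $\lambda$ the single free parameter and $e,e'$ any fourth roots of unity. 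Since all of Equations~\eqref{Equation e,e'}--\eqref{Equation S-non-zero} are then verified, Theorem~\ref{Unique simplification} guarantees the category $\mathcal{C}^{0,0}_{-\lambda,0}(\epsilon,e,e')$ is well-defined, completing the proof.

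The main obstacle is purely bookkeeping: one must be careful that Equation~\eqref{Equation mu not lambda} is the correct branch to invoke (it requires $\lambda'\neq\lambda$, which is our hypothesis) and that its conclusions are compatible with $b=0$ — the potential worry being an over-determined system, but $b=\lambda'+\lambda=0$ is consistent and simply pins down $\lambda'=-\lambda$. There is no genuine mathematical difficulty here; the content is entirely in having organized the earlier case analysis so that this subcase reduces to a short substitution.
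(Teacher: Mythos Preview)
Your proposal is correct and follows essentially the same approach as the paper: specialize the reduced system \eqref{equation case f=f=0} to $\parS=0$ to get $\parq=0$, then invoke Equation~\eqref{Equation mu not lambda} (the $\lambda'\neq\lambda$ branch) to obtain $c=\delta=0$, $\lambda'=-\lambda$, $a=\lambda^2$, leaving only the constraint $e^4=e'^4=1$ from \eqref{Equation e,e'}. One minor imprecision: in checking \eqref{Equation DEF} not literally every term vanishes (the second and fifth lines give $a\lambda=F\lambda e$, which holds since $F=a/e$), but the equations are indeed satisfied, so this does not affect the argument.
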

\subsubsection{The subcase \texorpdfstring{$\parS =0$}{S=0} and \texorpdfstring{$\lambda'=\lambda$}{l=l'}.}
If $\parS =0$ and $\lambda'=\lambda$, then $a=\lambda^2-c\delta$ by Equation \eqref{Equation lambda=mu}. By Equation \eqref{Equation e,e'} we have $e^4=e'^4=1$. If furthermore $c$ or $\delta$ is non-zero, we have $ee'=1$. 
\begin{proposition}
If $f=f'=0$, $\lambda'=\lambda$ and  $\parS =0$, we obtain the category $\mathcal{C}^{0,0}_{\lambda, 0}(\epsilon,e,e')$, with independent variables 
$\{ \lambda,c,\delta \}$ and $e^4=e'^4=1$. Furthermore, if $c$ is non-zero or $\delta$ is non-zero then $e'=e^3$. For the other variables we have
\begin{align*}
\lambda'=\lambda, \quad  a=\lambda^2-c\delta, \quad b=0, \quad \parS '= \parS =0, \quad \parq = 0,  \\d=d'=f=f'=0,  \quad 
D=E=D'=E'=0, \quad F= e^3 \lambda^2, \quad F'=e'^3 \lambda^2.
\end{align*}
\end{proposition}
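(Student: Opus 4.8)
The plan is to read the proposition off the master list of parameter relations \eqref{Equation e,e'}--\eqref{Equation S-non-zero} derived in Section~\ref{Section rewritable diagrams}, specialised to the regime $f=f'=0$, $\lambda'=\lambda$, $\parS=0$, and then to invoke Theorem~\ref{Unique simplification} for well-definedness. The opening of this section has already carried out the $f=f'=0$ reduction: the relation $f'(ec\parS'+f\lambda)=a(b-f)(b-f')/\lambda$ from \eqref{Equation f f'b} forces $b=0$, after which all of \eqref{Equation f f'b} is equivalent to the single equation $c\parS=0$, the system \eqref{Equation DEF} is vacuous, and \eqref{Equation rest} collapses to \eqref{equation case f=f=0}. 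It therefore suffices to impose $\lambda'=\lambda$ and $\parS=0$ on what remains.

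First I would substitute $\parS=0$. Then $\parS'=\epsilon\parS=0$ and $\parq=\parS'e\lambda=0$, the constraint $c\parS=0$ becomes vacuous, and every equation in \eqref{equation case f=f=0} holds automatically except for $a\delta=ee'a\delta$, i.e.\ $a\delta(ee'-1)=0$. Next, since $\lambda=\lambda'$ we are in the case governed by \eqref{Equation lambda=mu} rather than \eqref{Equation mu not lambda}, and with $b=0$ this gives $a=\lambda^2-c\delta$; recall $a\neq0$ by Lemma~\ref{Lemma invertible}. Finally \eqref{Equation e,e'} gives $e^4=e'^4=1$, so $e^{-1}=e^3$ and $e'^{-1}=e'^3$.

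Now I would split on whether $c$ or $\delta$ vanishes. If $c\neq0$, then \eqref{Equation c non-zero} is in force; beyond the relations already on hand ($f=f'$, $E=E'=0$, $\lambda=\lambda'$, and $ec\parS'+f\lambda=0$, $e'c\parS+f'\lambda'=0$, which now hold trivially), its genuinely new content is $ee'=1$, so $e'=e^3$ and the leftover equation $a\delta(ee'-1)=0$ is automatic. If instead $c=0$ but $\delta\neq0$, then since $a\neq0$ the equation $a\delta(ee'-1)=0$ itself forces $ee'=1$, again giving $e'=e^3$. If $c=\delta=0$, then $a=\lambda^2$ and the only surviving constraint on the pair $(e,e')$ is $e^4=e'^4=1$, so $e$ and $e'$ are independent. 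In every case the remaining dependent parameters $\parS', d, d', D, E, D', E', F, F', \parq$ are obtained by plugging $b=f=f'=0$ and $\lambda'=\lambda$ into the universal expressions recorded in the summary of Section~\ref{Section rewritable diagrams} (e.g.\ $d=-ef'=0$, $E=b-f=0$, $D=aE/\lambda=0$, $F=a/e$). This exhibits $\{\lambda,c,\delta\}$ as the free data — together with a choice of fourth root of unity for $e$, and likewise for $e'$ when $c=\delta=0$, while $e'=e^3$ otherwise — and shows all other parameters are determined as claimed. Since the resulting tuple satisfies all of \eqref{Equation e,e'}--\eqref{Equation S-non-zero}, Theorem~\ref{Unique simplification} yields that $\mathcal{C}^{0,0}_{\lambda,0}(\epsilon,e,e')$ is a well-defined monoidal supercategory.

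The argument is essentially pure substitution; the one spot that needs a careful look is the branch $c\neq0$, where one must verify that \eqref{Equation c non-zero} (together with the residue of \eqref{Equation f f'b}, \eqref{Equation rest} and \eqref{Equation DEF}) really imposes only $ee'=1$ and, in particular, does not secretly constrain $\lambda$, $c$ or $\delta$, so that all three genuinely remain independent. I expect this consistency check to be the main, though modest, obstacle; the rest is bookkeeping.
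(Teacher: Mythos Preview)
Your proof is correct and follows essentially the same approach as the paper: reduce to the system \eqref{equation case f=f=0} already established at the opening of the $f=f'=0$ case, impose $\parS=0$ and $\lambda'=\lambda$, read off $a=\lambda^2-c\delta$ from \eqref{Equation lambda=mu} and $e^4=e'^4=1$ from \eqref{Equation e,e'}, and observe that the only remaining constraint is $\delta(ee'-1)=0$ together with \eqref{Equation c non-zero} when $c\neq0$, each forcing $ee'=1$. You supply more detail than the paper's one-line argument---in particular you make the $c\neq0$ versus $\delta\neq0$ dichotomy explicit and invoke Theorem~\ref{Unique simplification} for well-definedness---but the substance is the same.
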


\subsection{The case \texorpdfstring{$f=f'=b\not=0$}{f=f'=b non-zero}}
If $f=f'=b\not=0$, then Equation \eqref{Equation f f'b} reduces to $e^2= e'^2=\epsilon $ and $ec\parS '= -b \lambda$ and $e'c\parS = -b\lambda'$. Hence, we immediately conclude that $\parS $ and $c$ are non-zero since $\lambda$ and $b$ are non-zero. Then Equation \eqref{Equation c non-zero} will be satisfied if $\lambda'=\lambda$ and $ee'=1$, or equivalently $e'=\epsilon e$. Equation \eqref{Equation S-non-zero} and Equation \eqref{Equation DEF} are then also trivially satisfied. Equation \eqref{Equation rest} will be satisfied if $\parq = \epsilon e \parS (\lambda-b)+b\delta$. Furthermore, since $\lambda'=\lambda$, we have $a=\lambda^2-b\lambda -c\delta$. 
We conclude the following. 
\begin{proposition}
Assume $f=f'=b\not=0$. Then we must have $\lambda'=\lambda$ and that $\parS $ and $c$ are non-zero. We obtain the category $\mathcal{C}^{b,b}_{\lambda, \parS }(\epsilon,e,\epsilon e)$, with independent variables 
$\{ \lambda,b,\parS ,\delta \}$ and $e^2=\epsilon$. For the other variables we have
\begin{align*}
\lambda'=\lambda, \quad  a=\lambda^2-b\lambda+e b\lambda\delta/\parS ,  \quad c= -e b\lambda/\parS \quad \parS '=\epsilon \parS , \quad \parq = \epsilon e \parS (\lambda-b)+b\delta,\\ 
 e'=\epsilon e, \quad d=-eb, \quad d'=-\epsilon e b, \quad f=f'=b,  \\
D=E=D'=E'=0, \quad F=\epsilon e a , \quad F'=e a .
\end{align*}
\end{proposition}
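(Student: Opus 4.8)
The plan is to instantiate the master solution procedure of Theorem~\ref{Unique simplification} in the regime $f=f'=b\neq 0$, so the entire task reduces to solving the system \eqref{Equation e,e'}--\eqref{Equation S-non-zero} under that hypothesis and reading off the surviving degrees of freedom. First I would substitute $f=f'=b$ into \eqref{Equation f f'b}. The relations $\epsilon e^2(b-2f)=(b-2f')$ and $\epsilon e'^2(b-2f')=(b-2f)$ collapse, using $b-2f=b-2f'=-b\neq 0$, to $e^2=e'^2=\epsilon$ (compatible with \eqref{Equation e,e'}). The two equations $a(b-f)/\lambda=\tfrac{c\parS}{e}-(b-\lambda-f)f'$ and its primed twin vanish on their left-hand sides (since $b-f=0$), giving $\tfrac{c\parS}{e}=-(b-\lambda-b)b=\lambda b$, i.e.\ $ec\parS'=-b\lambda$ — wait, one must keep track of which of $\parS,\parS'$ appears; using $\parS'=\epsilon\parS$ and $e'^2=\epsilon$ the cleanest form is $ec\parS'=-b\lambda$ and $e'c\parS=-b\lambda'$. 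The remaining six equations of \eqref{Equation f f'b} (those with $D^2$-free structure, the $f'(\cdots)$ and $\lambda(\cdots)$ lines) all have right-hand sides containing a factor $(b-f)$ or $(b-f')$ and so reduce to identities or to the same relation $ec\parS'=-b\lambda$; I would check each line individually but expect no new content. From $ec\parS'=-b\lambda$ with $\lambda,b$ nonzero we immediately get $c\neq 0$ and $\parS'\neq 0$, hence $\parS\neq 0$.

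Next I would feed $c\neq 0$ into \eqref{Equation c non-zero}: it forces $f=f'$ (already true), $E=E'=0$, $ee'=1$, $\lambda=\lambda'$, and $ec\parS'+f\lambda=0$ (which is exactly the relation just derived, and consistent). Combining $ee'=1$ with $e^2=e'^2=\epsilon$ gives $e'=\epsilon e$ (since $e'=1/e=e^{-1}$ and $e^2=\epsilon$ means $e^{-1}=\epsilon e$). With $\lambda'=\lambda$ in force, Equation \eqref{Equation lambda=mu} applies and yields $a=\lambda^2-b\lambda-c\delta$; substituting the value $c=-eb\lambda/\parS$ obtained by solving $ec\parS=-b\lambda$ (using $\parS'=\epsilon\parS$, $e'=\epsilon e$, careful with the sign) gives $a=\lambda^2-b\lambda+eb\lambda\delta/\parS$. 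Equation \eqref{Equation S-non-zero} is $ee'=1$, already satisfied. Then I would verify that \eqref{Equation DEF} holds trivially: with $E=E'=0$ and $D=aE/\lambda=0$, $D'=aE'/\lambda'=0$, the first of each triple reads $a(0+c\parS' e)=0+0+0+0+F\lambda d$, i.e.\ $ac\parS' e=F\lambda d$; using $F=a/e$, $d=-ef'=-eb$, and $c\parS'e=-b\lambda$ (from $ec\parS'=-b\lambda$) both sides equal $-ab\lambda$, so it checks; the other lines are similarly routine once one plugs in $F=a/e$, $F'=a/e'$, $d=d'=-eb$ (noting $d'=-e'f=-\epsilon e b$, so one must be slightly careful: $d=-ef'=-eb$ and $d'=-e'f=-\epsilon eb$).

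The last substantive step is \eqref{Equation rest}: I expect most lines to reduce to the single new constraint $\parq=\epsilon e\parS(\lambda-b)+b\delta$. Concretely, the line $c\parq=a(b-f-f')$ becomes $c\parq=a(b-2b)=-ab$, and one checks $\parq=-ab/c=-ab/(-eb\lambda/\parS)=a e\parS/\lambda$; separately the line $\parq=\parS'(d+e\lambda)+f\delta=\epsilon\parS(-eb+e\lambda)+b\delta=\epsilon e\parS(\lambda-b)+b\delta$, and consistency of these two expressions for $\parq$ is where one uses $a=\lambda^2-b\lambda+eb\lambda\delta/\parS$ — this is the one algebraic identity I would actually expand to confirm. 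The lines with factor $(\lambda'-\lambda)$ or $(b-f)$ or $(b-f')$ vanish; the symmetric pair $a\delta(e-1/e')=\cdots$ I would check reduces to the $\parq$ relation after using $e=1/e'$ (so $e-1/e'=e-e=0$ on the left, forcing the right side to vanish, which it does since $b-f'=0$ kills the first two terms and $-ec\parS\parS'$ cancels the $\lambda\parS\cdot(b-f-f')=-\lambda b\parS$ term via $ec\parS'=-b\lambda$... modulo the $\epsilon$ bookkeeping). Finally I would collect: the free parameters are $\lambda,b,\parS,\delta$ and a choice of square root $e$ of $\epsilon$, and all of $a,c,\parS',\parq,e',d,d',D,E,D',E',F,F'$ are the stated functions of these, matching the expressions via $E=b-f=0$, $E'=b-f'=0$, $D=aE/\lambda=0$, $D'=aE'/\lambda'=0$, $F=a/e=\epsilon ea$ (since $1/e=\epsilon e$), $F'=a/e'=a/(\epsilon e)=ea$. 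The main obstacle — though it is only a bookkeeping obstacle, not a conceptual one — is tracking the $\epsilon$ and sign conventions consistently across $\parS\leftrightarrow\parS'$, $e\leftrightarrow e'$, and $f\leftrightarrow f'$ when one uses $\parS'=\epsilon\parS$, $e'=\epsilon e$, so that the derived values of $c$, $d'$, $\parq$, and $a$ come out exactly as displayed; I would organize the computation by first fixing $e'=\epsilon e$ and $\parS'=\epsilon\parS$ and then substituting everywhere before simplifying.
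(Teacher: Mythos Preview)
Your proposal is correct and follows essentially the same route as the paper: substitute $f=f'=b$ into \eqref{Equation f f'b} to extract $e^2=e'^2=\epsilon$ and $ec\parS'=-b\lambda$, $e'c\parS=-b\lambda'$ (hence $c,\parS\neq 0$), then invoke \eqref{Equation c non-zero} to force $\lambda'=\lambda$ and $ee'=1$ (so $e'=\epsilon e$), and finally read off $\parq$ from \eqref{Equation rest} and $a$ from \eqref{Equation lambda=mu}. One small slip: your intermediate expression $\parq=ae\parS/\lambda$ from $c\parq=-ab$ should be $\parq=a\parS/(e\lambda)=a\epsilon e\parS/\lambda$ (using $1/e=\epsilon e$), which is what makes the consistency check with $\parq=\epsilon e\parS(\lambda-b)+b\delta$ go through cleanly once you expand $a=\lambda^2-b\lambda+eb\lambda\delta/\parS$.
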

\subsection{The case \texorpdfstring{$f=0$, $f'=b\not=0$}{f=0, f'=b non-zero}.}
Equation \eqref{Equation f f'b} is satisfied if $c\parS =c\delta=0$ and  $e^2=e'^2=-\epsilon$.
From Equation \eqref{Equation c non-zero} we can conclude that $c=0$ since otherwise we would have $f=f'$.
Equation \eqref{Equation DEF} is trivially satisfied, while Equation \eqref{Equation rest}  reduces to
\begin{equation}\label{equation case f=0, f'=b}
\begin{aligned}
\parq =\epsilon e\parS (\lambda-b) = e'\parS \lambda' + b\delta,  \quad (\lambda'-b)\parq = \frac{a}{\lambda} b\delta +a\parS e' \\
b\parq \lambda+ab\delta +\parS a\lambda e'= \parS 'a\lambda' e, \quad \delta (ee'-1)=0, \quad  \parS (1-ee')=0,
\end{aligned}
\end{equation}
where we frequently used $\lambda^2-b\lambda=\lambda'^2-b\lambda' = a$. 
Note that using $\parq = e'\parS \lambda' + b\delta$, we see that $(\lambda'-b)\parq = \frac{a}{\lambda} b\delta +a\parS e'$ is equivalent to $a\delta= \lambda(\lambda'-b)\delta$. This is always satisfied since if $\lambda=\lambda'$, we have $ \lambda(\lambda'-b)=\lambda^2-b\lambda=a$ and if $\lambda\not= \lambda'$, then $\delta=0$.
\subsubsection{The subcase \texorpdfstring{$\parS $}{S} is zero.}
Assume $\parS $ is zero, then the equations in \eqref{equation case f=0, f'=b} reduces to $\parq =0$ and $\delta=0$. 
We conclude the following
\begin{proposition}
Let $f=0$, $f'=b\not=0$ and $\parS =0$. 
If $\lambda' =\lambda$, we obtain the category $\mathcal{C}^{0,b}_{\lambda, 0}(\epsilon,e,e')$, while for $\lambda'\not=\lambda$, we have $\lambda'=b-\lambda$ and we obtain the category $\mathcal{C}^{0,b}_{b-\lambda, 0}(\epsilon,e,e')$. In both cases, we have independent variables $\{ \lambda,b\}$ and $e^2=e'^2=-\epsilon$. For the other variables, we have
\begin{align*}
 a=\lambda^2-b\lambda,  \quad c=0 \quad \parS '=\parS =0, \quad \parq =0, \quad \delta=0\\ 
 d=-eb, \quad d'=0, \quad f=0, \quad f'=b,  \\
D=(\lambda-b)b, \quad  E=b, \quad D'=E'=0, \quad F=-\epsilon e a , \quad F'=-\epsilon e' a .
\end{align*}
\end{proposition}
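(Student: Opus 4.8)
The plan is to specialise the general solution of the consistency equations from Section \ref{Section rewritable diagrams} to the hypotheses $f=0$, $f'=b\neq 0$, $\parS =0$, and then to invoke Theorem \ref{Unique simplification}. Most of the heavy lifting is already done in the discussion preceding the proposition: under $f=0$ and $f'=b\neq 0$, Equation \eqref{Equation f f'b} forces $c\parS =c\delta =0$ and $e^2=e'^2=-\epsilon$, Equation \eqref{Equation c non-zero} then forces $c=0$ (as otherwise it would yield $f=f'$), Equation \eqref{Equation DEF} is vacuous, and Equation \eqref{Equation rest} collapses to the system \eqref{equation case f=0, f'=b}. So all that is left is to impose $\parS =0$ on \eqref{equation case f=0, f'=b} and to resolve the relation between $\lambda$ and $\lambda'$.

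First I would set $\parS =0$ in \eqref{equation case f=0, f'=b}: the chain $\parq =\epsilon e\parS (\lambda-b)=e'\parS \lambda'+b\delta$ degenerates to $\parq =0=b\delta$, whence $\delta =0$ since $b\neq 0$, and every remaining line of \eqref{equation case f=0, f'=b} becomes trivially true (using $\parS '=\epsilon\parS =0$, and observing that $\delta(ee'-1)=0$ and $\parS (1-ee')=0$ now impose nothing, so $ee'$ is unconstrained beyond $e^2=e'^2=-\epsilon$). Then I would split into the cases $\lambda=\lambda'$ and $\lambda\neq\lambda'$. If $\lambda=\lambda'$, Equation \eqref{Equation lambda=mu} with $c=0$ gives $a=\lambda^2-b\lambda$. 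If $\lambda\neq\lambda'$, Equation \eqref{Equation mu not lambda} gives $c=\delta=0$ (consistent with the above), $b=\lambda+\lambda'$ hence $\lambda'=b-\lambda$, and $a=-\lambda\lambda'=\lambda^2-b\lambda$; so in both cases $a=\lambda^2-b\lambda$, with $\lambda$ and $b$ free.

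It then remains to read off the dependent parameters from the summary formulas of Section \ref{Section rewritable diagrams}: $\parS '=\epsilon\parS =0$, $d=-ef'=-eb$, $d'=-e'f=0$, $E=b-f=b$, $E'=b-f'=0$, $D=aE/\lambda=(\lambda-b)b$ and $D'=aE'/\lambda'=0$, $F=a/e=-\epsilon e a$ and $F'=a/e'=-\epsilon e' a$ (using $1/e=-\epsilon e$ since $e^2=-\epsilon$), together with $\parq =0$ and $\delta =0$ from the previous step. Every one of Equations \eqref{Equation e,e'} to \eqref{Equation S-non-zero} is then verified, so Theorem \ref{Unique simplification} shows the category is well-defined, with independent parameters $\lambda,b$ and with $e,e'$ ranging independently over the square roots of $-\epsilon$. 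The main obstacle is not conceptual but careful bookkeeping: one must track exactly which lines of \eqref{equation case f=0, f'=b} and of Equations \eqref{Equation e,e'} to \eqref{Equation S-non-zero} are automatically satisfied once $\parS =0$ (in particular the $ee'$ conditions), so as to correctly conclude that $e$ and $e'$ are genuinely free here, in contrast to the $\parS \neq 0$ subcases.
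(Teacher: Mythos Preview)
Your proposal is correct and follows essentially the same approach as the paper: specialise the preceding reduction (which already gave $c=0$, $e^2=e'^2=-\epsilon$, and the system \eqref{equation case f=0, f'=b}) to $\parS=0$, deduce $\parq=0$ and $\delta=0$, and then read off the remaining parameters. Your write-up is in fact more explicit than the paper's, which simply states ``the equations in \eqref{equation case f=0, f'=b} reduce to $\parq=0$ and $\delta=0$'' and records the result; your careful check that $e$ and $e'$ are genuinely unconstrained beyond $e^2=e'^2=-\epsilon$ is a welcome addition.
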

\subsubsection{The subcase \texorpdfstring{$\parS $}{S} is non-zero.}
Assume $\parS $ is non-zero, then $e'=1/e= -\epsilon e$. If we then set $\parq = \epsilon e\parS (\lambda-b)$ and $\delta=\epsilon e \parS (\lambda+\lambda'-b)/b$, we see that Equation \eqref{equation case f=0, f'=b} is satisfied. Note that $\delta=0$ for $\lambda' = b-\lambda$.  So we have proven the following proposition. 
\begin{proposition}
Let $f=0$, $f'=b\not=0$ and $\parS $ non-zero. If $\lambda' =\lambda$, we obtain the category $\mathcal{C}^{0,b}_{\lambda, \parS }(\epsilon,e,-\epsilon e)$, while for $\lambda'\not=\lambda$, we have $\lambda'=b-\lambda$ and we obtain the category $\mathcal{C}^{0,b}_{b-\lambda, 0}(\epsilon,e,-\epsilon e)$. In both cases, we have  independent variables 
$\{ \lambda,b, \parS \}$ and $e^2=-\epsilon$.  For the other variables, we have
\begin{align*}
 a=\lambda^2-b\lambda,  \quad c=0 \quad \parS '=\epsilon \parS , \quad \parq =\epsilon e \parS (\lambda-b),  \quad \delta=\epsilon e \parS (\lambda+\lambda'-b)/b\\ 
e'=-\epsilon e, \quad  d=-eb, \quad d'=0, \quad f=0, \quad f'=b,  \\
D=(\lambda-b)b, \quad  E=b, \quad D'=E'=0, \quad F=-\epsilon e a , \quad F'=ea .
\end{align*}
\end{proposition}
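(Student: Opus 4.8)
The plan is to substitute the hypotheses $f=0$, $f'=b\neq 0$, $\parS \neq 0$ into the big system of parameter equations \eqref{Equation e,e'}–\eqref{Equation S-non-zero} and solve them, exactly as in the preceding subcase but now with $\parS \neq 0$. First I would note that, since $f\neq f'$, Equation \eqref{Equation c non-zero} forces $c=0$ (as already observed in the text just before this subcase), so every term involving $c$ drops out. Next, from Equation \eqref{Equation f f'b} with $f=0$, $f'=b$ one reads off the relation $\epsilon e^2 b = b$ (the rows $\epsilon e^2(b-2f)=(b-2f')$ and its mirror), hence $e^2=e'^2=-\epsilon$ using $b\neq 0$; and since $\parS \neq 0$, Equation \eqref{Equation S-non-zero} gives $ee'=1$, so $e'=1/e=-\epsilon e$ (using $e^2=-\epsilon$, i.e.\ $1/e = e/e^2 = -\epsilon e$). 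Also $a=\lambda^2-b\lambda=\lambda'^2-b\lambda'$ from Equation \eqref{Equation lambda=mu} (when $\lambda=\lambda'$) or from the second line of Equation \eqref{Equation mu not lambda} (when $\lambda\neq\lambda'$, which forces $\lambda'=b-\lambda$ and $\delta=0$). The values $d=-ef'=-eb$, $d'=-e'f=0$, $E=b-f=b$, $E'=b-f'=0$, $D=aE/\lambda=(\lambda-b)b$, $D'=aE'/\lambda'=0$, $F=a/e=-\epsilon e a$, $F'=a/e'=ea$, and $\parS '=\epsilon\parS $ are then just the general formulas recorded after the list of equations in Section \ref{Section rewritable diagrams}, evaluated at these values.

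The remaining content is Equation \eqref{Equation rest}, which with $c=0$ collapses to the displayed system \eqref{equation case f=0, f'=b}; the text already verified that with $ee'=1$ the equations $\delta(ee'-1)=0$ and $\parS (1-ee')=0$ hold automatically, and that $(\lambda'-b)\parq = \frac{a}{\lambda}b\delta + a\parS e'$ is equivalent to $a\delta = \lambda(\lambda'-b)\delta$, which holds in both the $\lambda=\lambda'$ case (since then $\lambda(\lambda'-b)=\lambda^2-b\lambda=a$) and the $\lambda\neq\lambda'$ case (since then $\delta=0$). So I would only need to exhibit values of $\parq $ and $\delta$ making the first chain $\parq =\epsilon e\parS (\lambda-b) = e'\parS \lambda' + b\delta$ and the equation $b\parq \lambda + ab\delta + \parS a\lambda e' = \parS 'a\lambda' e$ simultaneously true. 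Setting $\parq := \epsilon e\parS (\lambda-b)$ fixes the first equality; then solving $e'\parS \lambda' + b\delta = \epsilon e\parS (\lambda-b)$ for $\delta$, and plugging in $e'=-\epsilon e$, gives $b\delta = \epsilon e\parS(\lambda-b) + \epsilon e \parS \lambda' = \epsilon e \parS(\lambda+\lambda'-b)$, i.e.\ $\delta = \epsilon e\parS(\lambda+\lambda'-b)/b$, which is exactly the value claimed in the proposition, and one checks it is $0$ precisely when $\lambda'=b-\lambda$. Finally one has to confirm the last equation $b\parq \lambda + ab\delta + \parS a\lambda e' = \parS 'a\lambda' e$: substitute $\parq$, $\delta$, $e'=-\epsilon e$, $\parS '=\epsilon\parS $, $a=\lambda^2-b\lambda$ and $a=\lambda'^2-b\lambda'$, and reduce — this is a routine polynomial identity in $\lambda,\lambda',b,\parS $ using $\lambda^2-b\lambda=\lambda'^2-b\lambda'$.

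I expect the only real obstacle to be bookkeeping in that last verification — making sure the two competing expressions for $a$ get used consistently and that the $\lambda$ versus $\lambda'$ asymmetry (present because $f\neq f'$ breaks the left–right symmetry) is tracked correctly. There is no conceptual difficulty: the heavy lifting — that this full parameter system is equivalent to a well-defined category — is already supplied by Theorem \ref{Unique simplification}, so the proposition is purely a matter of solving the system under the stated constraints and reading off the resulting category $\mathcal{C}^{0,b}_{\lambda,\parS }(\epsilon,e,-\epsilon e)$ (respectively $\mathcal{C}^{0,b}_{b-\lambda,0}(\epsilon,e,-\epsilon e)$ when $\lambda'\neq\lambda$). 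I would close by invoking Theorem \ref{Unique simplification} to conclude the category is well-defined with the listed parameter values.
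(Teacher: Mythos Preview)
Your proposal is correct and follows essentially the same route as the paper: the preamble to Section 7.3 already records that $c=0$, that $e^2=e'^2=-\epsilon$, and that Equation \eqref{Equation rest} collapses to \eqref{equation case f=0, f'=b}, and the paper's proof of this subcase then simply imposes $ee'=1$ (hence $e'=-\epsilon e$) from $\parS\neq 0$, sets $\parq=\epsilon e\parS(\lambda-b)$ and $\delta=\epsilon e\parS(\lambda+\lambda'-b)/b$, and asserts that \eqref{equation case f=0, f'=b} is then satisfied---exactly what you spell out in more detail. One small slip: with $f=0$, $f'=b$ the row $\epsilon e^2(b-2f)=(b-2f')$ reads $\epsilon e^2 b=-b$, not $\epsilon e^2 b=b$; your stated conclusion $e^2=-\epsilon$ is nonetheless the right one.
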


\subsection{The case \texorpdfstring{$f=b\not=0$, $f'=0$}{f=b non-zero, f'=0}}
This case is similar to the case $f=0$ and $f'=b$ with the accents switched.
So we obtain the following results. 
\begin{proposition}
Let $f'=0$, $f=b\not=0$ and $\parS =0$. 
If $\lambda' =\lambda$, we obtain the category $\mathcal{C}^{b,0}_{\lambda, 0}(\epsilon,e,e')$, while for $\lambda'\not=\lambda$, we have $\lambda'=b-\lambda$ and we obtain the category $\mathcal{C}^{b,0}_{b-\lambda, 0}(\epsilon, e,e')$. In both cases, we have  independent variables 
$\{ \lambda,b\}$ and $e^2=e'^2=-\epsilon$. For the other variables, we have
\begin{align*}
 a=\lambda^2-b\lambda,  \quad c=0 \quad \parS '=\parS =0, \quad \parq =0, \quad \delta=0\\ 
 d=0, \quad d'=-e'b, \quad f=b, \quad f'=0,  \\
D=E=0, \quad D'=(\lambda'-b) b, \quad  E'=b, \quad F=-\epsilon e a , \quad F'=-\epsilon e' a .
\end{align*}

Let $f'=0$, $f=b\not=0$ and $\parS $ non-zero.  If $\lambda' =\lambda$, we obtain the category $\mathcal{C}^{b,0}_{\lambda, \parS }(\epsilon,e,-\epsilon e)$, while for $\lambda'\not=\lambda$, we have $\lambda'=b-\lambda$ and we obtain the category $\mathcal{C}^{b,0}_{b-\lambda, 0}(\epsilon,e,-\epsilon e)$. We have  independent variables 
$\{ \lambda,b, \parS \}$ and $e^2=-\epsilon$. For the other variables we have
\begin{align*}
 a=\lambda^2-b\lambda,  \quad c=0 \quad \parS '=\epsilon \parS , \quad \parq =-\epsilon e \parS (\lambda'-b),  \quad \delta=-\epsilon e \parS (\lambda+\lambda'-b)/b\\ 
e'=-\epsilon e, \quad  d'=\epsilon e b, \quad d=0, \quad f'=0, \quad f=b,  \\
D=E=0, \quad D'=(\lambda'-b)b, \quad  E'=b, \quad  F=-\epsilon e a , \quad F'=e a .
\end{align*}
\end{proposition}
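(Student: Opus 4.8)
The plan is to deduce this proposition from the two propositions already established for $f=0$, $f'=b\neq0$ purely by a symmetry argument, so that no fresh computation is needed. The observation I would record first is that the entire system of consistency equations of Section~\ref{Section rewritable diagrams} --- Equations~\eqref{Equation e,e'}, \eqref{Equation f f'b}, \eqref{Equation rest}, \eqref{Equation DEF}, \eqref{Equation c non-zero} and \eqref{Equation S-non-zero} --- is invariant under the involution $\sigma$ that simultaneously interchanges $f\leftrightarrow f'$, $e\leftrightarrow e'$, $\lambda\leftrightarrow\lambda'$, $d\leftrightarrow d'$, $D\leftrightarrow D'$, $E\leftrightarrow E'$, $F\leftrightarrow F'$ and $\parS\leftrightarrow\parS'$ (equivalently, since $\parS'=\epsilon\parS$, the map $\parS\mapsto\epsilon\parS$), while fixing $a$, $b$, $c$, $\delta$, $\parq$ and $\epsilon$. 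This is essentially visible line by line: inside each of \eqref{Equation f f'b}, \eqref{Equation rest}, \eqref{Equation DEF} the ``unaccented'' and ``accented'' relations occur in $\sigma$-conjugate pairs, while \eqref{Equation e,e'}, \eqref{Equation c non-zero}, \eqref{Equation S-non-zero} are manifestly $\sigma$-symmetric.

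Granting this, the argument is short. The case $f=b$, $f'=0$ is obtained from the case $f=0$, $f'=b$ precisely by applying $\sigma$, so it suffices to check that $\sigma$ carries the parameter tuples of the two $f=0$, $f'=b$ propositions (the subcase $\parS=0$ and the subcase $\parS\neq0$) onto the tuples asserted here; then, since those tuples satisfy the $\sigma$-invariant system and $\sigma$ is an involution, so do the asserted ones, and Theorem~\ref{Unique simplification} yields that $\cC^{b,0}_{\lambda,0}$, $\cC^{b,0}_{b-\lambda,0}$ and $\cC^{b,0}_{\lambda,\parS}$ are well-defined. Matching the tuples is a mechanical matter of reading off the values and using $\epsilon^2=1$ and $\parS'=\epsilon\parS$; the only care needed is the sign bookkeeping in $\parq$, $d'$, $D'$, $F$, $F'$, where the $\epsilon$'s must be tracked through the relation $e'=-\epsilon e$ valid in the relevant subcase --- for example the source value $\parq=\epsilon e\parS(\lambda-b)$ becomes, after relabelling by $\sigma$ and substituting $e'=-\epsilon e$, exactly the asserted $\parq=-\epsilon e\parS(\lambda'-b)$.

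The step I expect to be the main obstacle is verifying rigorously that $\sigma$ is a symmetry of the \emph{whole} system, because a couple of lines are not $\sigma$-invariant term by term. The conspicuous one is the line $\parS(\lambda'-\lambda)(1+\epsilon e^2)=0$ of \eqref{Equation rest}, whose $\sigma$-image involves $e'$ in place of $e$: here one argues separately that when $\parS\neq0$ Equation~\eqref{Equation S-non-zero} forces $ee'=1$, whence $1+\epsilon e^2=0\iff 1+\epsilon {e'}^2=0$, and that when $\parS=0$ the line is vacuous; similar ad hoc checks cover the handful of other asymmetric-looking lines. I would close with the conceptual remark that $\sigma$ is exactly what the phrase ``with the accents switched'' refers to: it is realised diagrammatically by the upside-down flip of Brauer diagrams, which interchanges cups with caps and each defining relation with its upside-down counterpart; composed with the over-crossing inversion of Lemma~\ref{Lemma invertible} to keep track of the crossing generator, this gives an explicit $(\text{anti})$isomorphism of the form $\cC^{0,b}_{\lambda',\parS}(\epsilon,e,e')\cong\bigl(\cC^{b,0}_{\lambda,\epsilon\parS}(\epsilon,e',e)\bigr)^{\op}$ of the kind produced in Section~\ref{Section scaling}, furnishing a functorial proof alongside the computational one.
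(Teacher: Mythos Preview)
Your proposal is correct and follows the same approach as the paper, which simply states ``This case is similar to the case $f=0$ and $f'=b$ with the accents switched''; you have just spelled out this accent-switching symmetry $\sigma$ in much greater detail and linked it explicitly to the vertical-flip functor of Section~\ref{Section scaling}. The only minor wrinkle is in your closing conceptual remark: the flip functor $F_{v\text{-}flip}$ of Section~\ref{Section scaling} is contravariant (arrow-reversing) rather than monoidal-opposite, and it already sends the over-cross to the over-cross, so no composition with the inversion of Lemma~\ref{Lemma invertible} is needed---but this does not affect the core argument.
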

\subsection{Summary table}

\renewcommand{\arraystretch}{1.3}

We have summarized the results of this section in Table \ref{summary table}. 

Note that if we work over a field $\mK$ where $-1$ is not a square, then there are no odd versions of the categories $\cC^{0,0}_{\lambda,\parS }$ and $\cC^{b,b}_{\lambda,\parS }$, and there are no even versions of the categories $\cC^{b,0}_{\lambda,\parS }$, 
$\cC^{b,0}_{b-\lambda,\parS }$, $\cC^{b,0}_{\lambda,0}$, $\cC^{b,0}_{b-\lambda,0}$, 
$\cC^{0,b}_{\lambda,\parS }$, $\cC^{0,b}_{b-\lambda,\parS }$, $\cC^{0,b}_{\lambda,0}$, $\cC^{0,b}_{b-\lambda,0}$ and $\cC^{0,0}_{-\lambda,\parS }$.

We also remark that taking the limit  $b$ going to $0$ in $\cC^{b,b}_{\lambda,\parS}(\epsilon, e,\epsilon e)$ leads to the category $\cC^{0,0}_{\lambda,\parS}(\epsilon, e,\epsilon e)$. However, we can not take the limit for $\parS$ going to zero in $\cC^{b,b}_{\lambda,\parS}(\epsilon, e,\epsilon e)$ since then $a$ and $c$ would become infinity. 
We also have the following limits
\begin{align*}
\lim_{\parS\to 0}  \cC^{0,0}_{\lambda,\parS}(\epsilon,e,\epsilon e) &= \cC^{0,0}_{\lambda,0}(\epsilon, e,\epsilon e) \\
\lim_{\parS\to 0}  \cC^{b,0}_{\lambda,\parS}(\epsilon,e,-\epsilon e) &= \cC^{b,0}_{\lambda,0}(\epsilon, e,-\epsilon e) \\
\lim_{b\to 0}  \cC^{b,0}_{\lambda,0}(\epsilon,e,e') &= \cC^{0,0}_{\lambda,0}(\epsilon, e,e') \\
\lim_{\parS\to 0}  \cC^{b,0}_{b-\lambda,\parS}(\epsilon,e,-\epsilon e) &= \cC^{b,0}_{b-\lambda,0}(\epsilon, e,-\epsilon e) \\
\lim_{b\to 0}  \cC^{b,0}_{b-\lambda,0}(\epsilon,e,e') &= \cC^{0,0}_{-\lambda,0}(\epsilon, e, e') \\
\lim_{b\to 0}  \cC^{b,0}_{b-\lambda,\parS}(\epsilon,e,-\epsilon e) &= \cC^{0,0}_{-\lambda,\parS}(\epsilon, e,-\epsilon e) \\
\lim_{\parS\to 0}  \cC^{0,0}_{-\lambda,\parS}(\epsilon,e,-\epsilon e) &= \cC^{0,0}_{-\lambda,0}(\epsilon, e,-\epsilon e),
\end{align*}
and similar limits for $\cC^{0,b}_{\lambda',\parS}$. 
Note that in the limit $\lim_{\parS\to 0}  \cC^{0,0}_{\lambda,\parS}(\epsilon,e,\epsilon e)$ we get the category $\cC^{0,0}_{\lambda,0}(\epsilon, e,\epsilon e)$ where the independent variable $c$ is zero. Similar for  $\lim_{b\to 0}  \cC^{b,0}_{\lambda,0}(\epsilon,e,e')$ where in $\cC^{0,0}_{\lambda,0}(\epsilon, e,e')$ the independent variables $c$ and $\delta$ are zero.
Remark also that the limit $\lim_{b\to 0} \cC^{b,0}_{\lambda,\parS}(\epsilon,e,-\epsilon e )$ does not exist since $\delta$ goes to infinity.

\begin{landscape}
\begin{table}
\begin{tabular}{|c|c|c|c|c|c|c|c|c|c|c|c|c|c|c|c|c|}
\hline 
Name &  $\lambda$  & $\lambda'$ & $\parS $ & $b$ & $c$ & $\delta$ & $\parq $   &  d & e &  d' & e'  & D & E & D' & E' \\ 
\hline 
$\cC^{b,0}_{\lambda,\parS }$ &  \color{red} $\lambda$ & $\lambda$  & \color{red} $\parS $ 
& \color{red} $b$ & $0$ &$-e\epsilon \parS \frac{2\lambda-b}{b}$ & $-\epsilon e \parS (\lambda-b)$   
& $0$   & $e^2=-\epsilon$ & $\epsilon e  b$ & $-\epsilon e$ & $0$ & $0$ & $b(\lambda-b)$ & $b$
\\
\hline 
$\cC^{b,0}_{b-\lambda,\parS }$ &  \color{red} $\lambda$ & $b-\lambda$ & \color{red} $\parS $ &  \color{red} $b$ & $0$ &$0$ & $\epsilon e \parS \lambda$  
 & $0$   & $e^2=-\epsilon$ & $\epsilon e  b$ & $-\epsilon e$ & $0$ & $0$ & $-b\lambda$ & $b$
 \\
\hline 
$\cC^{b,0}_{\lambda,0}$ & \color{red} $\lambda$ & $\lambda$  &$0$  &  \color{red} $b$ & $0$  & $0$ & $0$
& $0$   & $e^2=-\epsilon$ & $- e' b$ & $e'^2=-\epsilon$ & $0$ & $0$ & $b(\lambda-b)$ & $b$ 
\\
\hline 
$\cC^{b,0}_{b-\lambda,0}$ & \color{red} $\lambda$ & $b-\lambda$ &$0$ & \color{red} $b$ & $0$ & $0$ & $0$ 
& $0$   & $e^2=-\epsilon$ & $- e' b$ & $e'^2=-\epsilon$ & $0$ & $0$ & $-b\lambda$ & $b$ 
 \\
\hline \hline 
$\cC^{0,b}_{\lambda,\parS }$ & \color{red} $\lambda$ & $\lambda$ & \color{red} $\parS $  & \color{red} $b$ & $0$  & $e\epsilon \parS \frac{2\lambda-b}{b}$  & $\epsilon e \parS (\lambda-b)$ 
&  $-eb$   & $e^2=-\epsilon$ & $0$ & $-\epsilon e$ & $b(\lambda-b)$ & $b$ & $0$ & $0$
 \\
\hline 
$\cC^{0,b}_{b-\lambda,\parS }$ & \color{red} $\lambda$  & $b-\lambda$ & \color{red} $\parS $ & \color{red} $b$ & $0$   &$0$ & $\epsilon e \parS (\lambda-b)$  
&  $-eb$   & $e^2=-\epsilon$ & $0$ & $-\epsilon e$ & $b(\lambda-b)$ & $b$ & $0$ & $0$
\\
\hline 
$\cC^{0,b}_{\lambda,0}$ &  \color{red} $\lambda$ & $\lambda$ &$0$ &  \color{red} $b$ & $0$ & $0$ & $0$ 
 &  $-eb$   & $e^2=-\epsilon$ & $0$ & $e'^2=-\epsilon$ & $b(\lambda-b)$ & $b$ & $0$ & $0$
  \\
\hline 
$\cC^{0,b}_{b-\lambda,0}$ &  \color{red} $\lambda$ & $b-\lambda$ &$0$ & \color{red} $b$ & $0$ & $0$ & $0$ 
&  $-eb$   & $e^2=-\epsilon$ & $0$ & $e'^2=-\epsilon$ & $b(\lambda-b)$ & $b$ & $0$ & $0$ 
 \\
\hline
\hline 
$\cC^{b,b}_{\lambda,\parS }$ & \color{red} $\lambda$ & $\lambda$  & \color{red} $\parS $ & \color{red} $b$ & $-e\lambda \frac{b}{\parS} $ &\color{red} $\delta$ & $\epsilon \parS e (\lambda-b)+b\delta$ 
& $-eb$   & $e^2=\epsilon$ & $-\epsilon e b$ & $\epsilon e$ & $0$ & $0$ & $0$ & $0$
  \\
\hline 
\hline
$\cC^{0,0}_{\lambda,\parS }$ &  \color{red} $\lambda$ & $\lambda$ & \color{red} $\parS $ & $0$ & $0$  & \color{red} $\delta$  & $ \epsilon \parS  e \lambda$
&  $0$   & $e^2=\epsilon$ & $0$ & $\epsilon e$ & $0$ & $0$ & $0$ & $0$
 \\ 
\hline 
$\cC^{0,0}_{-\lambda,\parS }$ & \color{red} $\lambda$   & $-\lambda$ & \color{red} $\parS $ &  $0$ & $0$  & $0$ & $\epsilon \parS e \lambda$ 
& $0$   & $e^2=-\epsilon$ & $0$ & $-\epsilon e$ & $0$ & $0$ & $0$ & $0$  
 \\ 
\hline 
$\cC^{0,0}_{\lambda,0}$ &  \color{red} $\lambda$  & $\lambda$ & $0$  & $0$ & \color{red} $c$ & \color{red} $\delta$ & $ 0$  
&  $0$   & $e^4=1$ & $0$ & $e'^4=1$ & $0$ & $0$ & $0$ & $0$
 \\ 
\hline 
$\cC^{0,0}_{-\lambda,0}$ & \color{red} $\lambda$ & $-\lambda$ & $0$ & $0$ & $ 0$ &  $0$ & $0$ 
&  $0$   & $e^4=1$ & $0$ & $e'^4=1$ & $0$ & $0$ & $0$ & $0$
\\ 
\hline 
\end{tabular} \caption{The values of the parameters for the possible categories of Brauer type $\cC^{f,f'}_{\lambda',\parS}(\epsilon,e,e')$. The independent parameters are shown in red. Note that the independent parameters $\lambda$, $b$ and $\parS $ are always non-zero, while the independent parameters $\delta$ and $c$ are allowed to be zero. If $-1$ is not a square, all categories but $\cC^{0,0}_{-\lambda,0}$ and $\cC^{0,0}_{\lambda,0}$ only exist for one type of parity.  \label{summary table}}
\end{table}
\end{landscape}

\section{Bases for the categories}\label{Section basis}
We claim that the Brauer diagrams represented by their standard expression give  a basis for the hom-spaces of the category $\mathcal{C}$. We have already shown that these diagrams are a spanning set in Theorem \ref{theorem simplifying}. 

To prove the linear independence of our proposed basis we will use a trick described in \cite[Section 0.3]{FDAandQuantumGroups} adapted to a categorical setting. It works as follows. Assume we want to show that a spanning set $(x_i)_i$ in a unital algebra $A$ is a linearly independent set, where we furthermore assume $x_0= \id$. We construct a free module $V$ by formally taking linear combinations of the set $(X_i)_i$, where each $X_i$ corresponds to $x_i$. By definition $(X_i)_i$ is a linear independent set. Then we define an action $F\colon A \to \End(V)$ by setting $F(a) X_i = \sum_j \lambda_j X_j$ if $ax_i = \sum_{j} \lambda_j x_j$. The difficult part is showing that $F$ is well-defined. But if $F$ is indeed well-defined, then linear independence for $(x_i)_i$ follows immediately since $\sum_i \mu_i x_i= 0$ implies $F(\sum_i \mu_i x_i) X_0 = \sum_i \mu_i X_i =0.$ We conclude that all $\mu_i$ are zero since the $X_i$ are linearly independent.  
\begin{theorem} \label{Theorem basis}
Let $\cC$ be a category of Brauer type as in definition \ref{Definition category of Brauer type}. Then the $(m,n)$-Brauer diagrams depicted using their standard expression form a basis of $Hom_{\cC}(m,n)$. 
\end{theorem}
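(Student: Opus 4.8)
The plan is to use the transfer trick sketched just before the statement, upgraded from the one-object (algebra) setting to the monoidal (category) setting. First I would set up the candidate ``regular representation module.'' For each pair $(m,n)$ let $V(m,n)$ be the free $\mK$-module on the set of standard expressions of $(m,n)$-Brauer diagrams; by construction these formal symbols are linearly independent, and $V(m,n)\cong V(n,m)^{*}$ is not needed, only the freeness. One then wants an action of $\cC$ on $\bigoplus V$, i.e.\ for every morphism $\phi\in\Hom_\cC(m,n)$ a linear map $F(\phi)\colon V(k,m)\to V(k,n)$ for all $k$, compatible with composition and with the monoidal product, together with the identity $F(\id_m)=\id$. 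The point of such an $F$ is that it forces linear independence: if $\sum_i \mu_i x_i=0$ in $\Hom_\cC(m,n)$ with the $x_i$ the standard expressions, then applying $F$ to the (genuine) identity element of $V(m,m)$ yields $\sum_i\mu_i X_i=0$ in $V(m,n)$, and the $X_i$ are free, so all $\mu_i=0$.

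The construction of $F$ is the real content. I would define it first on the generators $H=\di{0.5\sca}{\ocr}$, $A=\di{0.5\sca}{\ca}$, $U=\di{0.5\sca}{\cu}$ (tensored with identities in all positions), by declaring $F(g)\cdot X$ to be the result of stacking $g$ onto the standard diagram $X$ and then rewriting the composite back into a linear combination of standard expressions using the defining relations of Section~\ref{Section relations} --- this rewriting terminates and produces a well-defined element of $V$ by Theorem~\ref{theorem simplifying}, because ``reduce to a linear combination of standard expressions'' is exactly what that theorem guarantees. Extending $F$ to arbitrary morphisms of $\cC$ then amounts to checking that $F$ respects all the defining relations of $\cC$ and the interchange/braid axioms; but this is precisely the statement that the rewriting system is confluent on the overlap diagrams enumerated in Section~\ref{Section rewritable diagrams}, which is guaranteed by the parameter equations \eqref{Equation e,e'}--\eqref{Equation S-non-zero} holding (Theorem~\ref{Unique simplification}, applicable since $\cC$ is of Brauer type and hence by definition satisfies those relations consistently). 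Concretely: any two ways of computing $F(\phi)\cdot X$ differ by a sequence of local rewrites at overlapping redexes, each of which has been checked to be resolvable; Newman's lemma (termination from Theorem~\ref{theorem simplifying} plus local confluence from the appendix computations) gives a unique normal form, so $F(\phi)$ is well-defined and automatically a functor compatible with $\otimes$.

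I would then record the one remaining bookkeeping point: $F(\id_m)$ acts as the identity on $V(k,m)$ --- immediate, since stacking the identity diagram changes nothing and an already-standard expression is a normal form (as noted at the start of the proof of Theorem~\ref{theorem simplifying}). Combining: for fixed $(m,n)$ the standard expressions span $\Hom_\cC(m,n)$ by Theorem~\ref{theorem simplifying}, and the argument above shows they are linearly independent, so they form a basis.

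\textbf{Main obstacle.} The delicate step is the well-definedness of $F$, i.e.\ that the rewriting of a stacked diagram into standard form is independent of the choices made along the way. Termination is already in hand (Theorem~\ref{theorem simplifying}), so by Newman's lemma it suffices to verify local confluence: whenever two defining relations apply to overlapping subdiagrams of a composite, the two resulting linear combinations have equal normal forms. This is exactly the list of overlap diagrams tabulated in Section~\ref{Section rewritable diagrams}, and local confluence there is equivalent to the parameter equations, which hold for any category of Brauer type. Thus the ``hard part'' is really to phrase the transfer trick correctly at the level of a monoidal supercategory --- keeping track of the Koszul signs from the super interchange law when stacking on the right versus embedding in a larger tensor product --- and then to cite Theorem~\ref{theorem simplifying} and the appendix computations rather than redo them. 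No genuinely new computation is needed beyond what Sections~\ref{Section relations}--\ref{Section rewritable diagrams} provide.
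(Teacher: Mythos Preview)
Your proposal is correct and follows essentially the same approach as the paper: build a free module on the standard expressions, define an action of $\cC$ on it by stacking and rewriting to normal form, argue well-definedness via termination (Theorem~\ref{theorem simplifying}) together with the confluence checks of Section~\ref{Section rewritable diagrams}, and then evaluate on the identity diagram to deduce linear independence. The only cosmetic differences are that the paper packages the morphisms of $\cC$ into a single algebra $A$ acting on one large free module $V$ (with incompatible compositions set to zero) rather than indexing modules by pairs $(k,m)$ as you do, and that you name Newman's lemma explicitly where the paper simply cites the uniqueness established in Section~\ref{Section rewritable diagrams}.
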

\begin{proof}
We have already shown that they form a spanning set in Theorem \ref{theorem simplifying}. Let us now prove linear independence using an adapted version of the trick described above. Let $(X_i)_i$ be the set of all Brauer diagrams and $V$ the free module obtained by taking linear combinations of these Brauer diagrams. Let $A$ be the algebra consisting of the morphisms of the category $\mathcal{C}$. This means that if morphisms are compatible, then their product in $A$ is given by the composition of morphisms in the category, while if two morphisms are not compatible their product is by definition zero. 

We define an action $F$ of $A$ on $V$ as follows. Each Brauer diagram $X$ corresponds by Proposition \ref{Proposition standard expression} to a unique standard expression $x$ which is a morphism in $A$. Moreover, from Theorem \ref{theorem simplifying}, we know that for each fundamental diagram $a$, the product $ax$ can be simplified using the defining relations to a linear combination $\sum \lambda_i x_i$ of standard expressions of Brauer diagrams. From Section \ref{Section rewritable diagrams}, we know that this simplification is unique. We can thus define $F(a)X$ by $\sum_i \lambda_i X_i$. Since the fundamental diagrams generate $A$ and we used the defining relations in the definition of $F$, this gives a well-defined action of the whole $A$ on $V$. We do not have an identity morphism in $A$, but we can use the identity $(m,m)$-Brauer diagram $\id_m$ instead. This is the diagram which consists of $m$ non-crossing propagating lines. So assume $a=\sum \mu_j x_j =0$ where all $x_j$ are standard expressions in $\Hom_{\cC}(m,n)$. Then we see that 
$0=F(a) X_{\id_m}= \sum \mu_j X_j$ since $x_j \id_m=x_j$. Since the $X_i$ are linearly independent, all $\mu_j$ are zero, which concludes the proof. 
\end{proof}

\section{Scaling and flipping} \label{Section scaling}
\subsection{Rescaling}
In this section, we will show that, up to a monoidal isomorphism, it is always possible to rescale   either $\lambda$, $b$ or $a$ to $1$ and to rescale either the parameter $\parS $, $\delta$, $c$ or $\parq $ to $1$ if they are non-zero.

Let $\cC$ and $ \widetilde{\cC}$ be two categories of Brauer type. 
We define a (strict) monoidal superfunctor $F_{\alpha,\beta,\gamma}\colon \cC \to \widetilde{\cC}$, where $\alpha, \beta$ and $\gamma$ are non-zero scalars. 
On the objects $F_{\alpha,\beta,\gamma}$ act as the identity, and on the generating morphism $F_{\alpha,\beta,\gamma}$ acts by scalar multiplication with  $\alpha,\beta$ and $\gamma$:
\begin{align*}
F_{\alpha,\beta,\gamma}\left(\di{0.5\sca}{\ca}\right) = \alpha \,\di{0.5\sca}{\ca}\, , \quad
F_{\alpha,\beta,\gamma}\left(\di{0.5\sca}{\cu}\right) = \beta\, \di{0.5\sca}{\cu}\, , \quad 
F_{\alpha,\beta,\gamma}\left(\di{0.5\sca}{\ocr}\right) = \gamma \,\di{0.5\sca}{\ocr}.
\end{align*}
\begin{lemma}
The functor $F_{\alpha,\beta,\gamma}$ is a well-defined isomorphism if the parameters of $\cC$ and $\widetilde{\cC}$ are related by
\begin{align*}
\gamma \tilde{\lambda}= \lambda, \quad \gamma \tilde{\lambda'} = \lambda', \quad \alpha\beta \tilde{\parS } = \parS , \quad \alpha\beta \tilde{\parS '} = \parS ', \quad \alpha\beta \tilde{\delta} = \delta, \quad \alpha\beta\gamma \tilde{\parq } = \parq ,  \\
\gamma^2 \tilde{a} = a, \quad \gamma \tilde{b} = b, \quad \gamma^2 \tilde{c} = \alpha\beta c, \\
\gamma \tilde{d} = d, \quad \gamma \tilde{d'} = d', \quad \tilde{e}=e, \quad \tilde{e'}=e', \quad \gamma \tilde{f} = f, \quad \gamma \tilde{f'} = f', \quad  \\
\gamma^2 \tilde{D}= D, \quad \gamma^2 \tilde{D'}= D', \quad \gamma \tilde{E}= E, \quad \gamma \tilde{E'}= E', \quad \gamma^2 \tilde{F}= F \quad \gamma^2 \tilde{F'}= F'.
\end{align*}
\end{lemma}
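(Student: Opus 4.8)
# Proof Proposal

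The plan is to verify directly that the rescaling superfunctor $F_{\alpha,\beta,\gamma}$ respects each of the defining relations from Section~\ref{Section relations} precisely when the parameters satisfy the listed equations, and then to exhibit an inverse. Since $F_{\alpha,\beta,\gamma}$ is defined on generators by scalar multiplication and acts as the identity on objects, it is automatically a strict monoidal superfunctor on the free monoidal supercategory generated by $\di{0.5\sca}{\ocr}$, $\di{0.5\sca}{\ca}$, $\di{0.5\sca}{\cu}$; the only thing to check is that it descends to the quotient by the defining relations of $\cC$, i.e.\ that $F_{\alpha,\beta,\gamma}$ maps each defining relation of $\cC$ to a valid relation of $\widetilde{\cC}$.

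The key computational step is bookkeeping the scalar factors. For a diagram built from $p$ caps, $q$ cups, and $r$ over-crosses, $F_{\alpha,\beta,\gamma}$ multiplies it by $\alpha^p\beta^q\gamma^r$. So for each defining relation one compares the $(\text{caps},\text{cups},\text{crosses})$-count of the left-hand diagram with those of each diagram on the right-hand side, multiplied by the corresponding parameter. For instance, untwisting $\di{\sca}{\ocr\cu}=\lambda\,\di{\sca}{\cu}$ has $(0,1,1)$ on the left and $(0,1,0)$ on the right, forcing $\beta\gamma\,\tilde\lambda=\lambda\beta$, i.e.\ $\gamma\tilde\lambda=\lambda$; looping $\di{\sca}{\cu\ca}=\delta$ has $(1,1,0)$ on the left and the empty diagram on the right, giving $\alpha\beta\tilde\delta=\delta$; straightening gives $\alpha\beta\tilde\parS=\parS$; delooping gives $\alpha\beta\gamma\tilde\parq=\parq$; twisting $\di{\sca}{\ocr\ocr[0][\h]}=a\,\di{\sca}{\lili}+b\,\di{\sca}{\ocr}+c\,\di{\sca}{\cc}$ has $(0,0,2)$ on the left, so $\gamma^2\tilde a=a$, $\gamma^2\tilde b\gamma=b\gamma$ hence $\gamma\tilde b=b$, and $\gamma^2\tilde c=\alpha\beta c$ (since $\di{0.5\sca}{\cc}$ contributes one cap and one cup); sliding gives $\gamma\tilde d=d$, $\tilde e=e$ (both left and $\di{0.5\sca}{\ccr}$-terms have $(1,1,1)$), $\gamma\tilde f=f$; and similarly for the remaining relations (upside-down untwisting, upside-down straightening, pulling, upside-down sliding, upside-down pulling, braiding). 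One checks that the braiding relation and the upside-down straightening relation impose no new constraint beyond what the others already give (they are count-preserving with equal scalar $1$ on both sides, respectively use $\tilde\parS'$ which is pinned down by $\alpha\beta\tilde\parS'=\parS'$). Collecting all these constraints reproduces exactly the system in the lemma statement.

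For the converse and for surjectivity, observe that if the parameters satisfy all the listed equations then the computation above shows $F_{\alpha,\beta,\gamma}$ sends each defining relation of $\cC$ to a consequence of the defining relations of $\widetilde{\cC}$, so it is well-defined; and $F_{1/\alpha,1/\beta,1/\gamma}\colon\widetilde{\cC}\to\cC$ (which exists because the parameter equations are symmetric under swapping tilded and untilded and inverting $\alpha,\beta,\gamma$, as one sees by inspection) is a two-sided inverse on generators, hence on all of $\cC$ and $\widetilde{\cC}$. Since $F_{\alpha,\beta,\gamma}$ is bijective on objects (identity) and an isomorphism on each $\Hom$-space (scalar rescaling on a basis of standard expressions, by Theorem~\ref{Theorem basis}), it is an isomorphism of monoidal supercategories.

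The main obstacle is purely organizational rather than deep: there are a large number of defining relations, and one must be careful that the diagrams appearing on the two sides of each relation really do have the cap/cup/cross counts I claimed, especially for the longer relations like pulling and upside-down pulling where the left-hand diagram is a composite of several fundamental diagrams. A second, minor subtlety is checking that $F_{\alpha,\beta,\gamma}$ preserves parity: since $\alpha,\beta,\gamma$ are scalars in $\mK$ (parity-even) and the generators keep their assigned parities, $F_{\alpha,\beta,\gamma}$ is automatically an even superfunctor, so no sign issues from the super interchange law arise. I would present the count-bookkeeping compactly in a small table rather than relation-by-relation prose.
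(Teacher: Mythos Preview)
Your proposal is correct and follows essentially the same approach as the paper: verify that $F_{\alpha,\beta,\gamma}$ respects each defining relation of Section~\ref{Section relations} (which is exactly the cap/cup/cross bookkeeping you describe), and then observe that $F_{1/\alpha,1/\beta,1/\gamma}$ is the inverse. The paper's proof is considerably terser---it simply asserts that applying $F_{\alpha,\beta,\gamma}$ to the defining relations yields the listed parameter equations and remarks that these are consistent with the constraints of Section~\ref{Section rewritable diagrams}---but your explicit counting argument is the content behind that assertion.
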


\begin{proof}
Since we defined $F_{\alpha,\beta,\gamma}$ on the generating morphisms, we only have to check that $F_{\alpha,\beta,\gamma}$ respects the defining relations of the categories. Applying $F_{\alpha,\beta,\gamma}$ on the defining relations from Section \ref{Section relations} gives us exactly the relations in the lemma. Note that the relations in this lemma are consistent with the relations between the parameters of Section \ref{Section rewritable diagrams}. For instance 
$a= \lambda^2-b\lambda -c\delta$. Applying the relations from the lemma, this results in 
$\gamma^2 \tilde{a} = (\gamma \tilde{\lambda})^2 - \gamma \tilde{b} \gamma \tilde{\lambda} - \frac{\gamma^2}{\alpha\beta} \tilde{c} \alpha \beta \tilde{\delta}$. 
It is clear that the inverse is given by $F_{1/\alpha,1/\beta,1/\gamma}$.
\end{proof}

Since $F_{\alpha,\beta,\gamma}$ is an isomorphism with inverse $F_{1/\alpha,1/\beta,1/\gamma}$, we see that we can use it to rescale two parameters in our categories of Brauer type. Remark that we can only rescale two parameters and not three since $\alpha$ and $\beta$ always occur together as the product $\alpha\beta$. To obtain the connection with categories occurring in the existing literature in Section \ref{Section connection with existing categories} we will have to rescale in such a way that $\parS =1$ and $a=1$. 

\subsection{Vertical flipping}
We also have a contravariant monoidal functor by exchanging the roles of the cup and the cap. This corresponds to vertically flipping a diagram. 
 \begin{proposition}
Let $\cC$ and $\widetilde{\cC}$ be two categories of Brauer type whose parameters are related as follows:
\begin{align*}
 \tilde{\lambda}= \lambda', \quad  \tilde{\lambda'} = \lambda, \quad \tilde{\parS } = \parS ', \quad  \tilde{\parS '} = \parS , \\ \tilde{\delta} = \delta, \quad  \tilde{\parq } = \parq ,  \quad 
 \tilde{a} = a, \quad  \tilde{b} = b, \quad  \tilde{c} = c, \\
\tilde{d} = d', \quad \tilde{d'} = d, \quad \tilde{e}=e', \quad \tilde{e'}=e, \quad  \tilde{f} = f', \quad  \tilde{f'} = f, \quad  \\
 \tilde{D}= D', \quad  \tilde{D'}= D, \quad \tilde{E}= E', \quad  \tilde{E'}= E, \quad  \tilde{F}= F' \quad  \tilde{F'}= F.
\end{align*}
Then the monoidal contravariant superfunctor $F_{v-flip}$ defined by $F_{v-flip}(\di{0.5\sca}{\ocr})=\di{0.5\sca}{\ocr}$, $F_{v-flip}(\di{0.5\sca}{\ca})=\di{0.5\sca}{\cu}$ and $F_{v-flip}(\di{0.5\sca}{\cu})=\di{0.5\sca}{\ca}$ is a well-defined isomorphism between $\cC$ and $\widetilde{\cC}$. 
\end{proposition}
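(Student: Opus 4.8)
The plan is to verify that $F_{v\text{-}flip}$ is a well-defined contravariant monoidal superfunctor by checking that it sends each defining relation of $\cC$ (from Section \ref{Section relations}) to a valid relation in $\widetilde{\cC}$ under the stated parameter correspondence. Since $F_{v\text{-}flip}$ is defined on the generating morphisms $\di{0.5\sca}{\ocr}$, $\di{0.5\sca}{\ca}$, $\di{0.5\sca}{\cu}$, and the category is monoidally generated by these, it suffices to check compatibility with all the defining relations; the inverse functor is then visibly $F_{v\text{-}flip}$ applied with the tilded and untilded roles swapped, so the map is an isomorphism once it is well-defined.

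First I would make precise what "contravariant monoidal superfunctor" means here: $F_{v\text{-}flip}$ reverses the direction of composition, $F_{v\text{-}flip}(g \circ h) = \pm F_{v\text{-}flip}(h) \circ F_{v\text{-}flip}(g)$, with a Koszul sign governed by the parities of $g$ and $h$, and it preserves the monoidal product, $F_{v\text{-}flip}(a \otimes b) = F_{v\text{-}flip}(a) \otimes F_{v\text{-}flip}(b)$. Diagrammatically it is exactly the operation of reflecting a diagram across a horizontal axis: this turns a cup into a cap, a cap into a cup, and an over-crossing into an over-crossing (the strand going over stays going over after a vertical flip). Crucially, a fundamental diagram $s_i^n$ is sent to $s_i^n$, a fundamental cup $u_i^n$ is sent to the fundamental cap $a_i^n$, and vice versa, so on the level of Brauer diagrams $F_{v\text{-}flip}$ is the honest reflection; on hom-spaces it sends $\Hom_\cC(m,n)$ to $\Hom_{\widetilde\cC}(n,m)$.

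Next I would go relation by relation through the list in Section \ref{Section relations}. Untwisting $\di{0.5\sca}{\ocr \cu} = \lambda \, \di{0.5\sca}{\cu}$ flips to upside-down untwisting in $\widetilde\cC$ with coefficient $\lambda$, i.e. $\tilde{\lambda'} = \lambda$; symmetrically upside-down untwisting flips to untwisting, giving $\tilde{\lambda} = \lambda'$. Looping $\di{0.5\sca}{\cu \ca} = \delta$ is sent to itself, so $\tilde\delta = \delta$. Straightening and upside-down straightening get exchanged, giving $\tilde\parS = \parS'$ and $\tilde\parS' = \parS$. Delooping is sent to a delooping relation (the defining delooping diagram is essentially symmetric under the flip up to relabelling), giving $\tilde\parq = \parq$. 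Twisting $\di{0.5\sca}{\ocr \ocr[0][\h]} = a\,\di{0.5\sca}{\lili} + b\,\di{0.5\sca}{\ocr} + c\,\di{0.5\sca}{\cc}$ flips to a twisting relation with the same coefficients (the term $\di{0.5\sca}{\cc}$ is flip-symmetric), giving $\tilde a = a$, $\tilde b = b$, $\tilde c = c$. Sliding and upside-down sliding get exchanged, matching $\tilde d = d'$, $\tilde e = e'$, $\tilde f = f'$ and conversely; pulling and upside-down pulling likewise get exchanged, matching $\tilde D = D'$, $\tilde E = E'$, $\tilde F = F'$ and conversely. Finally the braiding relation (the Yang–Baxter/braid relation on three strands) is sent to the braid relation, since reflecting a braid gives a braid. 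One also checks the Koszul signs: the flip reverses the order in which odd morphisms are stacked, and the super interchange law produces precisely the signs needed so that, e.g., the derivation $\parS' = \epsilon \parS$ of Section \ref{Section motivating} is respected consistently on both sides.

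The main obstacle I expect is bookkeeping rather than conceptual difficulty: one must be careful that $F_{v\text{-}flip}$ really is the reflection and hence sends each left-hand-side diagram of a defining relation to the left-hand-side diagram of some defining relation of $\widetilde\cC$ (possibly after applying the braid relation to put a symmetric-group word into the chosen basis $H(r)$, which does not affect anything since that choice was immaterial), and that the signs arising from contravariance plus the super interchange law cancel correctly. Since contravariance plus the monoidal structure means $F_{v\text{-}flip}$ composed with itself is the identity on $\cC$ (the parameter dictionary is an involution: $d \leftrightarrow d'$, $e \leftrightarrow e'$, etc.), well-definedness of $F_{v\text{-}flip}\colon \cC \to \widetilde\cC$ and of $F_{v\text{-}flip}\colon \widetilde\cC \to \cC$ together give that it is an isomorphism, completing the proof.
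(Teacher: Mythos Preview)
Your proposal is correct and follows exactly the paper's approach: define $F_{v\text{-}flip}$ on generators, check that it carries each defining relation of $\cC$ to a defining relation of $\widetilde\cC$ under the stated parameter swaps, and use the involutivity of the parameter dictionary to conclude it is an isomorphism. The paper's own proof is the one-sentence version of what you have written out; your relation-by-relation inventory is precisely the ``straightforward verification'' the paper alludes to. One small caution: your parenthetical that ``the strand going over stays going over after a vertical flip'' is not literally true as a statement about geometric reflection of knot diagrams (that would exchange over- and under-crossings); here it holds simply because $F_{v\text{-}flip}(\di{0.5\sca}{\ocr})=\di{0.5\sca}{\ocr}$ is part of the definition, so the check that the braid and twisting relations are preserved is algebraic rather than geometric.
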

\begin{proof}
We defined $F_{v-flip}$ on the generators and a straightforward verification shows that $F_{v-flip}$ respects the defining relations for the given parameters. Applying  $F_{v-flip}$ twice is clearly the identity functor, hence $F_{v-flip}$ is an isomorphism. 
\end{proof}

\subsection{Horizontal flipping}
We can also consider the functor which flips diagrams horizontally. This will, however, not be a monoidal functor. 
 \begin{proposition}
Let $\cC$ and $\widetilde{\cC}$ be two categories of Brauer type whose parameters are related as follows:
\begin{align*}
 \tilde{\lambda}= \lambda, \quad  \tilde{\lambda'} = \lambda', \quad \tilde{\parS } = \parS ', \quad  \tilde{\parS '} = \parS , \\ \tilde{\delta} = \delta, \quad  \tilde{\parq } = \parq +e\parS'(\lambda'-\lambda)  ,  \quad 
 \tilde{a} = a, \quad  \tilde{b} = b, \quad  \tilde{c} = c, \\
\tilde{d} =  \frac{d'}{ee'}, \quad \tilde{d'} = \frac{d}{ee'}, \quad \tilde{e}=1/e, \quad \tilde{e'}=1/e', \quad  \tilde{f} = f', \quad  \tilde{f'} = f, \quad  \\
 \tilde{D}= D' \lambda'/\lambda , \quad  \tilde{D'}= D\lambda/\lambda' , \quad \tilde{E}= E', \quad  \tilde{E'}= E, \quad  \tilde{F}= ee' F' \quad  \tilde{F'}= ee' F.
\end{align*}
Then the superfunctor $F_{h-flip}$ defined as the identity on the objects and the generating morphisms and satisfying by $F_{h-flip}(X\otimes Y) = (-1)^{\abs{X}\abs{Y}} F_{h-flip}(Y) \otimes F_{h-flip}(X)$ is a well-defined isomorphism between $\cC$ and $\widetilde{\cC}$. We then also have $\cC^{\op}\cong \widetilde{\cC}$.
\end{proposition}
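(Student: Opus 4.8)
The plan is to follow the same pattern as the two preceding propositions. As $F_{h-flip}$ is prescribed on objects (identically), on the three generators $\di{0.5\sca}{\ocr},\di{0.5\sca}{\ca},\di{0.5\sca}{\cu}$ (each sent to itself), and by the rule $F_{h-flip}(X\otimes Y)=(-1)^{\abs{X}\abs{Y}}F_{h-flip}(Y)\otimes F_{h-flip}(X)$, there are only two things to check: that this assignment is consistent with associativity and unitality of $\otimes$ and with the super-interchange law — a purely formal sign computation, the same one that makes the monoidal opposite of Section \ref{Section monoidal supercategories} well-defined — and that it maps every defining relation of $\cC$ from Section \ref{Section relations} to an identity that holds in $\widetilde{\cC}$; invertibility will then follow. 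I would also record at the start the observation that yields the final sentence: viewed as a functor with source $\cC^{\op}$, the same assignment is \emph{strict monoidal}, because on the single generating object $\otimes^{\op}$ and $\otimes$ agree and $F_{h-flip}(f\otimes^{\op}g)=(-1)^{\abs{f}\abs{g}}F_{h-flip}(g\otimes f)=F_{h-flip}(f)\otimes F_{h-flip}(g)$; hence as soon as $F_{h-flip}$ is an isomorphism $\cC\to\widetilde{\cC}$ it is a strict monoidal isomorphism $\cC^{\op}\to\widetilde{\cC}$, i.e.\ $\cC^{\op}\cong\widetilde{\cC}$.

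The core is the verification of the relations, which rests on the fact that $F_{h-flip}$ replaces a diagram by its left--right mirror image, keeping each of $\di{0.5\sca}{\ocr},\di{0.5\sca}{\ca},\di{0.5\sca}{\cu}$ as drawn; since identities are even and each relation contains at most one cup and one cap (never tensored together), no Koszul signs intervene. Thus $F_{h-flip}$ fixes $\di{0.5\sca}{\li},\di{0.5\sca}{\lili},\di{0.5\sca}{\cc}$, interchanges $\di{0.5\sca}{\culi}\leftrightarrow\di{0.5\sca}{\licu}$ and $\di{0.5\sca}{\cali}\leftrightarrow\di{0.5\sca}{\lica}$, and interchanges the standard expression $\di{0.5\sca}{\ccr}$ with the left-hand side of the sliding relation, and $\di{0.5\sca}{ \li \ocr[\br][0] \ca[0][\h] \dli}$ with the left-hand side of the upside-down sliding relation. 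Running through the eleven relations: the horizontally symmetric ones (braid, untwisting and upside-down untwisting, looping, twisting) go to their $\widetilde{\cC}$-counterparts with the identifications $\tilde\lambda=\lambda,\ \tilde{\lambda'}=\lambda',\ \tilde\delta=\delta,\ \tilde a=a,\ \tilde b=b,\ \tilde c=c$; straightening goes to upside-down straightening (the mirror of an S-bend is the other S-bend), forcing $\tilde{\parS}=\parS'$ and $\tilde{\parS'}=\parS$. Applying $F_{h-flip}$ to the sliding relation of $\cC$, rewriting the term $e\,F_{h-flip}(\di{0.5\sca}{\ccr})$ — where $F_{h-flip}(\di{0.5\sca}{\ccr})$ is the left-hand side of $\widetilde{\cC}$'s own sliding relation — by that relation, and matching coefficients of $\{\di{0.5\sca}{\culi},\di{0.5\sca}{\ccr},\di{0.5\sca}{\licu}\}$ gives $e\tilde e=1$, $e\tilde d=-f$, $e\tilde f=-d$, which with $d=-ef'$ and $d'=-e'f$ from Section \ref{Section rewritable diagrams} read $\tilde e=1/e$, $\tilde f=f'$, $\tilde d=d'/(ee')$; the upside-down sliding, pulling and upside-down pulling relations produce symmetrically $\tilde{e'}=1/e'$, $\tilde{f'}=f$, $\tilde{d'}=d/(ee')$, $\tilde E=E'$, $\tilde{E'}=E$, $\tilde D=D'\lambda'/\lambda$, $\tilde{D'}=D\lambda/\lambda'$, $\tilde F=ee'F'$, $\tilde{F'}=ee'F$. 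The delooping relation is the only one yielding a genuine correction: $F_{h-flip}$ carries the $\cC$-identity expressing the mirror of the delooping left-hand side as a multiple of $\di{0.5\sca}{\li}$ — this multiple being $d\parS'+e\lambda'\parS'+f\delta$, computed from the sliding, untwisting, straightening and looping relations of $\cC$ — onto the delooping relation of $\widetilde{\cC}$, so $\tilde\parq=d\parS'+e\lambda'\parS'+f\delta$, which by the identity $\parq=\parS'(d+e\lambda)+f\delta$ of Equation \eqref{Equation rest} equals $\parq+e\parS'(\lambda'-\lambda)$.

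For invertibility, $F_{h-flip}^{2}$ is the identity on objects, generators and the structure rule, hence the identity functor, provided the twice-tilded parameters coincide with the original ones. Every entry of the dictionary is visibly involutive or involutive in pairs ($\tilde e=1/e$; $\tilde f=f'$, $\tilde{f'}=f$; $\tilde F=ee'F'$, $\tilde{F'}=ee'F$; and so on); the sole exception is $\parq$, where $\tilde{\tilde\parq}=\parq+(\lambda'-\lambda)(e\parS'+\parS/e)$, and the substitution $\parS'=\epsilon\parS$ turns $e\parS'+\parS/e$ into $\tfrac1e\parS(1+\epsilon e^2)$, so the correction vanishes by the consistency equation $\parS(\lambda'-\lambda)(1+\epsilon e^2)=0$ of Equation \eqref{Equation rest}. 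Hence $F_{h-flip}$ is an isomorphism with inverse the $h$-flip functor $\widetilde{\cC}\to\cC$, and by the reformulation in the first paragraph $\cC^{\op}\cong\widetilde{\cC}$.

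The main obstacle I anticipate is the relation-by-relation bookkeeping in the second step: mirroring the two-crossing left-hand sides of the pulling and upside-down pulling relations and the S-bend of the delooping relation, and then rewriting the mirrored diagrams back into standard form, which requires invoking several relations of $\widetilde{\cC}$ at once together with the already-established parameter identities of Section \ref{Section rewritable diagrams} — this is precisely where the less transparent entries $\tilde D=D'\lambda'/\lambda$, $\tilde F=ee'F'$ and the correction term in $\tilde\parq$ get pinned down. The symmetric relations, the sign analysis, and the involutivity check are, by contrast, routine.
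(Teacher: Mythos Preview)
Your approach is essentially the paper's: verify that $F_{h\text{-flip}}$ respects each defining relation of Section~\ref{Section relations}, note that applying it twice gives the identity, and observe that composed with the monoidal-opposite construction it becomes strict monoidal, whence $\cC^{\op}\cong\widetilde{\cC}$. You supply considerably more detail than the paper's ``the other relations can be similarly verified,'' and your computations for sliding and for involutivity (via $\parS(\lambda'-\lambda)(1+\epsilon e^2)=0$) are correct.

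There is one logical wobble in the delooping step. To verify that $F_{h\text{-flip}}$ preserves $\cC$'s delooping relation you must check that its \emph{image} --- the identity $(\text{mirror of delooping LHS})=\parq\cdot I$ --- holds in $\widetilde{\cC}$, which means simplifying that mirrored diagram using $\widetilde{\cC}$'s relations (tilde parameters). This is exactly what the paper does, obtaining $\parq=\tilde d\tilde{\parS'}+\tilde e\tilde{\lambda'}\tilde{\parS'}+\tilde f\tilde\delta$, which via $\parS(ee'-1)=0$ reduces to $\parq=\parS(d'+e'\lambda')+f'\delta$ from Equation~\eqref{Equation rest}. You instead simplify the mirrored diagram in $\cC$ with $\cC$'s relations and then compare the \emph{result under $F_{h\text{-flip}}$} to $\widetilde{\cC}$'s delooping, reading off $\tilde\parq$. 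That checks a derived identity of $\cC$ (which, once sliding, untwisting, straightening and looping are preserved, maps automatically to a true identity in $\widetilde{\cC}$) rather than the defining delooping relation itself; it determines $\tilde\parq$ but does not by itself establish that $F_{h\text{-flip}}(\text{delooping})$ holds. The fix is immediate --- either do the computation in $\widetilde{\cC}$ as the paper does, or note that in both categories $\parq$ is already determined by the other parameters through Equation~\eqref{Equation rest}, so preservation of delooping follows once the remaining relations are checked --- but the direction of the argument should be tightened.
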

\begin{proof}
The functor $F_{h-flip}$ is the identity on objects and the generating morphisms, so we only have to verify that it respects the relations. 
For example, applying $F_{h-flip}$ on the delooping relation gives us
\begin{align*}
\parq  \; \di{0.5\sca}{\li}  =F_{h-flip}\left( \di{0.5\sca}{
\cu \ocr[\br][0] \li \ca[0][\h] \dli[2*\br][0] \uli[2*\br][\h] 
} \right) =  \di{-0.5\sca,0.5\sca}{
\cu \ucr[\br][0] \li \ca[0][\h] \dli[2*\br][0] \uli[2*\br][\h] 
}= \left(\tilde{d}\tilde{\parS'} + \tilde{e}\tilde{\lambda'}\tilde{\parS'} +\tilde{f}\tilde{\delta} \right)\; \di{0.5\sca}{\li}, 
\end{align*}
where we used the sliding relation in $\widetilde{\cC}$. From the relations between the parameters of the categories $\cC$ and $\widetilde{\cC}$, we see that $\rho=\tilde{d}\tilde{\parS'} + \tilde{e}\tilde{\lambda'}\tilde{\parS'} +\tilde{f}\tilde{\delta}$ is equivalent to $\parq   =  \parS (d'+e'\lambda') + f'\delta$ . This last expression holds by Equation  \eqref{Equation rest}. The other relations can be similarly verified.

 Applying  $F_{h-flip}$ twice is clearly the identity functor, hence $F_{h-flip}$ is an isomorphism. Combining $F_{h-flip}$ with the construction of the monoidal opposite is clearly the identity, so we can conclude that $\cC^{\op}\cong \widetilde{\cC}$.
\end{proof}

From this, we can immediately conclude that the Brauer category, BWM-category and the periplectic Brauer category are their own monoidal opposite. This does not hold for the periplectic $q$-Brauer category.

\begin{corollary}\label{Corollary Monoidal opposites}
The categories $\cC^{b,b}_{\lambda,\parS} (+,1,1)$, $\cC^{0,0}_{\lambda,\parS} (+,1,1)$ and $\cC^{0,0}_{\lambda,\parS} (-,1,1)$ are each their own monoidal opposite, while the categories  $\cC^{b,0}_{b-\lambda,\parS} (-,1,1)$ and $\cC^{0,b}_{b-\lambda,\parS} (-,1,1)$ are each other monoidal opposites. 
\end{corollary}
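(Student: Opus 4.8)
The entire statement is an immediate consequence of the horizontal-flipping proposition just proved: for \emph{any} category of Brauer type $\cC$ it produces an isomorphism $\cC^{\op}\cong\widetilde{\cC}$, where the parameters of $\widetilde{\cC}$ are those of $\cC$ run through the explicit dictionary recorded there. So the plan is simply to specialize that dictionary to the five categories appearing in the corollary and read off what $\widetilde{\cC}$ is. The very first step I would do is simplify the dictionary at $e=e'=1$, which all five categories satisfy: it then collapses to ``$\lambda,\lambda',\delta,a,b,c$ unchanged; $\parS\leftrightarrow\parS'$, $d\leftrightarrow d'$, $f\leftrightarrow f'$, $E\leftrightarrow E'$, $F\leftrightarrow F'$; $\parq\mapsto\parq+\parS'(\lambda'-\lambda)$; $D\mapsto D'\lambda'/\lambda$ and $D'\mapsto D\lambda/\lambda'$''. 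I would state this once at the start of the proof.

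\textbf{The three self-opposite categories.}
For the two even categories $\cC^{0,0}_{\lambda,\parS}(+,1,1)$ and $\cC^{b,b}_{\lambda,\parS}(+,1,1)$ one has $\epsilon=+1$, hence $\parS'=\parS$, and from the relevant propositions (equivalently Table~\ref{summary table}) also $\lambda'=\lambda$, $d=d'$, $f=f'$, $E=E'$, $D=D'=0$ and $F=F'$; feeding these into the simplified dictionary shows every parameter is fixed by the flip, so $\widetilde{\cC}=\cC$ and the composite of $F_{h-flip}$ with the monoidal-opposite construction is an isomorphism $\cC^{\op}\cong\cC$. For the odd category $\cC^{0,0}_{\lambda,\parS}(-,1,1)$ (the periplectic family, for which $c=\delta=0$) we have $\parS'=-\parS$, so the flip sends it to the same category with $\parS$ and $\parq$ negated. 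I would then post-compose with the rescaling functor $F_{\alpha,\beta,\gamma}$ with $\gamma=1$ and $\alpha\beta=-1$: this normalizes $\parS$ back, and the remaining rescaling constraints $\gamma^2\tilde c=\alpha\beta c$, $\alpha\beta\tilde\delta=\delta$, $\alpha\beta\gamma\tilde\parq=\parq$ are unobstructed precisely because $c=\delta=0$ and $\parq$ is odd in $\parS$ here. This yields $\cC^{\op}\cong\cC$ again.

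\textbf{The mutually opposite pair and the negative statement.}
The category $\cC^{b,0}_{b-\lambda,\parS}(-,1,1)$ has $f=b\neq0=f'$, so the dictionary swaps $f$ and $f'$ and lands in the family with $f=0$, $f'=b$; matching the flipped values of $\lambda',\parq,d,d',D,D',E,E',F,F'$ against the proposition for that family (again absorbing the sign $\parS'=-\parS$ via a rescaling with $\alpha\beta=-1$, $\gamma=1$) identifies $\widetilde{\cC}$ with $\cC^{0,b}_{b-\lambda,\parS}(-,1,1)$. Applying $F_{h-flip}$ a second time (or symmetry of the construction) gives the reverse isomorphism, so the two categories are each other's monoidal opposite. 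The same bookkeeping shows $\cC^{b,0}_{b-\lambda,\parS}(-,1,1)$ is \emph{not} its own opposite, and likewise that the periplectic $q$-Brauer category $\cC^{q-q^{-1},0}_{-q^{-1},1}(-,1,1)$, which lies in the $\cC^{b,0}$ family, is not self-opposite: no rescaling and no vertical flip can interchange the conditions $f\neq0=f'$ and $f'\neq0=f$ while keeping $e=e'=1$.

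\textbf{Where the work is.}
There is essentially no hard step left: all the substance — that $F_{h-flip}$ is a well-defined monoidal isomorphism realizing the stated parameter dictionary — was discharged in the preceding proposition. The one thing to be careful about is the sign discrepancy $\parS'=-\parS$ in the odd cases; the ``obstacle'', such as it is, is to verify that the compensating rescaling genuinely exists, i.e.\ that $\alpha\beta=-1$ is compatible with the rescaling constraints on $c$, $\delta$ and $\parq$, which it is because those parameters vanish (for $c,\delta$) or transform with the right parity (for $\parq$) in exactly the odd categories at issue.
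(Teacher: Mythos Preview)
Your proposal is correct and follows essentially the same route as the paper: apply the horizontal-flip proposition, and for the odd cases compose with a rescaling isomorphism to absorb the sign $\parS\mapsto\parS'=-\parS$. The only difference is cosmetic: the paper uses the rescaling $F_{1,1,\epsilon}$ (so $\gamma=-1$, $\alpha\beta=1$), whereas you use $\gamma=1$, $\alpha\beta=-1$; both choices work for the categories at hand and neither requires more input than the other.
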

\begin{proof}
For $\cC^{b,b}_{\lambda,\parS} (+,1,1)$ and $\cC^{0,0}_{\lambda,\parS} (+,1,1)$ this follows immediately from the previous theorem. For $\cC^{b,0}_{b-\lambda,\parS} (-,1,1)$ we have to combine $F_{h-flip}$ with the rescaling $F_{1,1,\epsilon}$ to obtain $\cC^{0,b}_{b-\lambda,\parS} (-,1,1)$. Similarly, combining $F_{h-flip}$ with the rescaling $F_{1,1,\epsilon}$ gives $\cC^{0,0}_{\lambda,\parS} (-,1,1) \cong (\cC^{0,0}_{\lambda,\parS} (-,1,1))^{\op}.$
\end{proof}

\section{Connection with existing categories}\label{Section connection with existing categories}
In this section, we will show the connection between the categories we introduced in this paper and known categories in the literature. 
\subsection{The Birman-Wenzl-Murakami category} \label{Subsection BWM category}
The Birman-Wenzl-Murakami algebra is a deformation of the Brauer algebra introduced by Birman and Wenzl in \cite{BirmanWenzl} and Murakami in \cite{Murakami}. 
We will use the definition of the BWM-algebra as stated in \cite{Morton}, which can also be found in \cite{LehrerZhang}
\begin{definition}[{\cite[Section 2.1]{Morton}, \cite[Section 8]{LehrerZhang}}]
The Birman-Wenzl-Murakami algebra $BWM_n$ is the unital, associative $\mK[v,v^{-1}, z, \delta]/ (v^{-1} - v - z(\delta - 1))$-algebra generated by elements $g_i^{\pm}$ and $e_i$ for $1 \leq i \leq n-1$ satisfying the following relations:
\begin{align*}
(g_i-g_i^{-1})&=z(1-e_i), \\ e_i^2 &= \delta e_i, \\e_i g_i &= v e_i = g_i e_i,   \\
g_i g_j &= g_j g_ i, \qquad \text{ for } |i-j|\geq 2  \\
g_i g_{i+1} g_i &= g_{i+1} g_i g_{i+1}, 
\\  e_{i+1} e_i e_{i+1}  = e_{i+1}, & \qquad e_i e_{i+1} e_i = e_i,  \\
g_i g_{i+1} e_i = e_{i+1} e_i, & \qquad  g_{i+1} g_i e_{i+1} = e_i e_{i+1} \\
e_i g_{i+1} e_i = v^{-1} e_i, & \qquad e_{i+1} g_i e_{i+1} = v^{-1} e_{i+1}. 
\end{align*}
\end{definition}
Note that the Brauer algebra is obtained from the BWM-algebra by setting $z=0$ and $v=1$. 

Although we could not find an explicit definition for the BWM-category in the literature, the definition of the Brauer category defined in \cite{LehrerZhang} can be easily deformed to obtain a BWM-category which has as endomorphism spaces the BWM algebras.

\begin{definition}[BWM-category] \label{Definition BWM}
The BWM-category $\mathcal{B}$ is a strict monoidal category generated by a single object $\bullet$ and the even morphisms
$\di{0.5\sca}{\ocr} \in \mathcal{B}(2,2)$, $\di{0.5\sca}{\ucr} \in \mathcal{B}(2,2)$,
$
\di{0.5\sca}{\ca} \in \mathcal{B}(0,2)$ and $\di{0.5\sca}{\cu} \in \mathcal{B}(2,0)$
subject to the following defining relations:
\begin{enumerate}
\item The Kauffman skein relation:  $\di{0.5\sca}{\ocr}-\di{0.5\sca}{\ucr} = z\; \di{0.5\sca}{\lili}-z\; \di{0.5\sca}{\cc}$,
\item The loop removing relation: $\di{0.5\sca}{\cu \ca}=\delta$,
\item The untwisting relations: $\di{0.5\sca}{\cu \ocr} = v\; \di{0.5\sca}{\cu}$ and $\di{0.5\sca}{\ca[0][\h] \ocr} = v\; \di{0.5\sca}{\ca}$,
\item The braid relations: $\di{0.5\sca}{\ocr \ucr[0][\h]}=\di{0.5\sca}{\lili} = \di{0.5\sca}{\ucr \ocr[0][\h]}$ and $\di{0.25\sca}{\ocr \li[2*\br][0] \li[0][\h] \ocr[\br][\h] \ocr[0][2*\h] \li[2*\br][2*\h]} = \di{0.25\sca}{\ocr[\br][0] \li[0][0] \li[2*\br][\h] \ocr[0][\h] \ocr[\br][2*\h] \li[0][2*\h]}$,
\item The snake relations: $\di{0.5\sca}{\ca \dli \cu[\br][0] \uli[2*\br][0]}=\di{0.5\sca}{\li}$ and $\di{0.5\sca}{\uli \cu \ca[\br][0] \dli[2*\br][0]}=\; \di{0.5\sca}{\li}$,
\item The tangle relation:  $\di{0.5\sca}{\dli[2*\br][0] \cu \ocr[\br][0] \li \li[2*\br][\h] \ocr[0][\h] }=  \di{0.5\sca}{\licu }$ and  $\di{0.5\sca}{\li \ocr[\br][0] \ocr[0][-\h] \li[2*\br][-\h] \dli[0][-\h] \cu[\br][-\h]}=\di{0.5\sca}{\cu \li[2*\br][-\h]}$,
\item Delooping relations: $\di{0.5\sca}{
\cu \ocr[\br][0] \li \ca[0][\h] \dli[2*\br][0] \uli[2*\br][\h] 
}= v^{-1} \; \di{0.5\sca}{\li}$ and $\di{0.5\sca}{ \dli \cu[\br][0] \ocr \ca[\br][\h] \uli[0][\h] \li[2*\br][0] } = v^{-1} \; \di{0.5\sca}{\li}$.
\end{enumerate}
\end{definition}

This BWM-category is equivalent to a category of Brauer type we constructed in this paper.
\begin{proposition}
Consider the category $\mathcal{C}=\mathcal{C}_{v,1}^{z,z}(+,1,1)$ where we scaled $\parS $ and $a$ to be one.  
This category is isomorphic to the BWM-category $\mathcal{B}$ defined in Definition \ref{Definition BWM}.
\end{proposition}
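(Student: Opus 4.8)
The plan is to exhibit mutually inverse strict monoidal functors $G\colon\mathcal{C}\to\mathcal{B}$ and $F\colon\mathcal{B}\to\mathcal{C}$, using that each category is presented by its generating morphisms and relations (for $\mathcal{C}$ this is the content of Theorem~\ref{Unique simplification} together with Theorem~\ref{Theorem basis}; for $\mathcal{B}$ it is Definition~\ref{Definition BWM}), so that a strict monoidal functor out of either is uniquely determined by the images of the generators, provided those images satisfy the corresponding relations (and the image of the crossing is invertible). First I record the parameter dictionary. After rescaling with the functor $F_{\alpha,\beta,\gamma}$ of Section~\ref{Section scaling} so that $\parS=1$ and $a=1$, Table~\ref{summary table} gives for $\mathcal{C}=\cC^{z,z}_{v,1}(+,1,1)$ the values $\lambda=\lambda'=v$, $a=1$, $b=z$, $c=-vz$, $\parS=\parS'=1$, $d=d'=-z$, $e=e'=1$, $f=f'=z$, $D=E=D'=E'=0$, $F=F'=1$, with $\delta$ free; moreover the equation $a=1$ reads $v(v-z+z\delta)=1$, i.e.\ $v^{-1}-v=z(\delta-1)$, which is exactly the relation defining the ground ring of $BWM_n$, and then $\parq=\epsilon e\parS(\lambda-b)+b\delta=v-z+z\delta=v^{-1}$.

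Now define $G\colon\mathcal{C}\to\mathcal{B}$ by $\di{0.5\sca}{\ocr}\mapsto\di{0.5\sca}{\ocr}$, $\di{0.5\sca}{\cu}\mapsto\di{0.5\sca}{\cu}$, $\di{0.5\sca}{\ca}\mapsto\di{0.5\sca}{\ca}$. The morphism $\di{0.5\sca}{\ocr}$ is invertible in $\mathcal{B}$ (the braid relations of Definition~\ref{Definition BWM}(4) exhibit $\di{0.5\sca}{\ucr}$ as its two-sided inverse), so it remains to check that every defining relation of $\mathcal{C}$ from Section~\ref{Section relations}, specialised at the parameters above, holds in $\mathcal{B}$. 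The (upside-down) untwisting, looping, (upside-down) straightening and delooping relations are literally relations (3), (2), (5) and (7) of Definition~\ref{Definition BWM} (using $\lambda=\lambda'=v$, $\parS=\parS'=1$, $\parq=v^{-1}$). The twisting relation $\di{0.5\sca}{\ocr \ocr[0][\h]}=\di{0.5\sca}{\lili}+z\,\di{0.5\sca}{\ocr}-vz\,\di{0.5\sca}{\cc}$ follows by composing the Kauffman skein relation (1) with a crossing and simplifying with the untwisting relation (3). The sliding, pulling, upside-down sliding and upside-down pulling relations follow by combining the tangle relations (6) with the skein relation (1); this is precisely where the specialisations $F=F'=1$, $d=d'=-z$, $e=e'=1$, $f=f'=z$, $D=D'=E=E'=0$ are used. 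Finally the braiding relation is the Yang--Baxter braid relation in (4). Hence $G$ is a well-defined strict monoidal functor.

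For the inverse, define $F\colon\mathcal{B}\to\mathcal{C}$ by $\di{0.5\sca}{\ocr}\mapsto\di{0.5\sca}{\ocr}$, $\di{0.5\sca}{\cu}\mapsto\di{0.5\sca}{\cu}$, $\di{0.5\sca}{\ca}\mapsto\di{0.5\sca}{\ca}$ and $\di{0.5\sca}{\ucr}\mapsto H^{-1}$, where by Lemma~\ref{Lemma invertible} the morphism $H=\di{0.5\sca}{\ocr}$ is invertible in $\mathcal{C}$ with $H^{-1}=-z\,\di{0.5\sca}{\lili}+\di{0.5\sca}{\ocr}+z\,\di{0.5\sca}{\cc}$ (substituting $a=1$, $b=z$, $c=-vz$, $\lambda=v$). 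Well-definedness requires checking relations (1)--(7) in $\mathcal{C}$: relation (1) is precisely the rewriting of the formula for $H^{-1}$; relations (2), (3), (5), (7) are the looping, (upside-down) untwisting, (upside-down) straightening and delooping relations of $\mathcal{C}$ (the second delooping relation in (7) being a consequence of the first together with the sliding relations, as in Equation~\eqref{Equation rest}); relation (4) consists of the braiding relation together with $H^{-1}$ being the inverse of $H$; and relation (6) follows from the pulling and upside-down pulling relations of $\mathcal{C}$ with $D=D'=E=E'=0$, $F=F'=1$. The two functors are mutually inverse: they agree with the identity on the generating object, and on generators $G\circ F=\id_{\mathcal{B}}$ (in particular $G(F(\di{0.5\sca}{\ucr}))=G(H^{-1})$ is the inverse of $\di{0.5\sca}{\ocr}$ in $\mathcal{B}$, namely $\di{0.5\sca}{\ucr}$) and $F\circ G=\id_{\mathcal{C}}$. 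Therefore $\mathcal{C}\cong\mathcal{B}$.

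The main obstacle is the bookkeeping in matching the two presentations: one must carefully translate the ``skein plus tangle'' style relations of $\mathcal{B}$ into the ``twisting plus sliding/pulling'' style relations of $\mathcal{C}$ and back, checking in particular that the specialisations of Table~\ref{summary table} are exactly those making the Kauffman skein relation compatible with the inverse formula of Lemma~\ref{Lemma invertible}, and that neither presentation is missing a relation needed to derive a relation of the other. Everything else is a routine substitution of parameters.
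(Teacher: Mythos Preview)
Your approach is essentially the paper's: show that the two presentations define the same category by checking that each side's relations are derivable from the other's, and hence the identity-on-generators assignments extend to mutually inverse monoidal functors. The paper carries out several of these derivations explicitly (twisting $\Leftrightarrow$ skein by composing with a crossing; sliding $\Leftrightarrow$ the right tangle relation by composing with $I\otimes H$; upside-down sliding from sliding plus straightening; the right delooping from sliding) where you only sketch them, but the logic is identical. One small slip: the second half of relation~(6) concerns cups, so it does not follow from upside-down pulling (a cap relation); in $\mathcal{C}$ it is obtained from the sliding relation by composing with $I\otimes H$, as the paper shows, so you should cite sliding rather than upside-down pulling there.
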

\begin{proof}
The category $\cC=\mathcal{C}_{v,1}^{z,z}$ has four independent parameters $v$, $\delta$, $\parS $ and $z$. We rescale $\parS $ to one, eliminating the parameter $\parS $. Further rescaling $a$ to one gives the relation $1=v^2-zv+zv\delta$ or equivalently $v^{-1}= v-z+z\delta$. This is the same relation we also have in $\mathcal{B}$ between $v,\delta$ and $z$.  Then the parameters of $\cC$ become
\begin{align*}
\lambda&=\lambda'=v, \quad \delta=\delta, \quad \parS =\parS '=1, \quad a=1, \quad b=z, \quad c=-zv, 
\\
\parq  &=v^{-1}, \quad d=d'=-z,\quad e=e'=1, \quad f=f'=z,  
\\
D&=E=D'= E'=0, \quad F=F'=1.
\end{align*}
 If we then compare the relation in $\cC$ with the relations in $\mathcal{B}$ we see that the relations which they do not share are the twisting, the sliding, upside-down sliding and upside-down pulling in $\cC$ and the Kauffman skein relation, the right tangle and right delooping relation in $\mathcal{B}$. 
Remark that applying the under-cross to the twisting relation in $\cC$ and then using untwisting, shows that twisting is equivalent to the Kauffman skein relation.
If we multiply sliding with $\di{0.5\sca}{\li \ocr[\br][0]}$ we obtain, using twisting and untwisting,
\begin{align*}
\di{0.5\sca}{\li \ocr[\br][0]} \cdot \di{0.5\sca}{ \ocr \dli \cu[\br][0] \li[2*\br][0] } 
&=-z \; \di{0.5\sca}{\cu \dli[2*\br][0] \li[0][0] \ocr[\br][0]}  + \di{0.5\sca}{\ccr \li[0][0] \ocr[\br][0]} +z  \; \di{0.5\sca}{\licu \li[0][0] \ocr[\br][0]} \\
&=-z \; \di{0.5\sca}{\cu \dli[2*\br][0] \li[0][0] \ocr[\br][0]}  + \di{0.5\sca}{\culi} + z \; \di{0.5\sca}{ \ccr}-zv \;\di{0.5\sca}{\licu} +zv\;  \di{0.5\sca}{\licu } \\
&= \di{0.5\sca}{\culi}\; .
\end{align*}
We conclude that sliding is equivalent to the right tangle relation. 
We also have, using sliding and straightening in $\cC$,
\begin{align*}
\di{0.5\sca}{\crc}&= \di{0.5\sca}{\crc \cu[-\br][0] \ca[-2*\br][0]} 
= \di{0.5\sca}{\crc \cu[-\br][0] \ca[-2*\br][\h] \li[-\br][0]}  \\
&= \di{0.5\sca}{\ocr[-\br][0] \cu  \li[\br][0] \ca[-2*\br][\h]  \ca[\br][\h] \li[2*\br][0]} +z\; \di{0.5\sca}{\cu[-\br][\h] \uli[0][\h] \dli[-2*\br][\h]  \ca[-2*\br][\h]  \ca[\br][\h] \dli[\br][\h] \dli[2*\br][\h] }
-z\; \di{0.5\sca}{\cu[0][\h] \uli[0][\h]  \dli[-\br][\h] \dli[-2*\br][\h] \ca[-2*\br][\h]  \ca[\br][\h] \dli[2*\br][\h]} \\
&=\di{0.5\sca}{\ca[-\br][\h] \li[-\br][0] \ocr} + z\; \di{0.5\sca}{\lica} -z\; \di{0.5\sca}{\cali}
\end{align*}
We conclude that upside-down sliding follows from sliding and straightening. 
Similarly, we can show that multiplying upside-down sliding with $\di{0.5\sca}{\ocr \li[2*\br][0]}$ gives us upside-down pulling, while multiplying sliding with $\di{0.5\sca}{\lica}$ leads to the right delooping relation in $\mathcal{B}$. Hence $\cC$ and $\mathcal{B}$ satisfy the same relations and we conclude that $\mathcal{B}$ and $\cC$ are isomorphic categories. 
\end{proof}

\begin{corollary}
The category $\mathcal{C}=\mathcal{C}_{1,1}^{0,0}(+,1,1)$ where we scaled $\parS $ and $\lambda$ to one, is isomorphic to the Brauer category $\mathcal{B}(+1)$ defined in Section \ref{section example Brauer category}.
\end{corollary}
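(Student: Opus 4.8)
The plan is to construct the strict monoidal (super)functor $F$ that is the identity on objects and sends $\di{0.5\sca}{\ocr}\mapsto\di{0.5\sca}{\cro}$, $\di{0.5\sca}{\cu}\mapsto\di{0.5\sca}{\cu}$, $\di{0.5\sca}{\ca}\mapsto\di{0.5\sca}{\ca}$, and to show it is an isomorphism, proceeding exactly as in the preceding proposition on the BWM-category. The first step is to record the parameter values of $\mathcal{C}^{0,0}_{1,1}(+,1,1)$ after rescaling $\parS$ and $\lambda$ to one: from the classification (Table \ref{summary table}) one reads off $\lambda=\lambda'=\parS=\parS'=\parq=a=1$, $b=c=0$, $d=d'=f=f'=0$, $e=e'=1$, $D=E=D'=E'=0$, $F=F'=1$, with $\delta$ the only surviving free parameter. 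Since $a=1$ and $b=c=0$, Lemma \ref{Lemma invertible} shows the under-cross coincides with the over-cross, so the twisting relation of $\mathcal{C}$ becomes exactly $\di{0.5\sca}{\ocr \ocr[0][\h]}=\di{0.5\sca}{\lili}$.

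With these values, the defining relations of $\mathcal{C}$ specialize to $H^2=\id$ (twisting), $\di{0.5\sca}{\ocr \cu}=\di{0.5\sca}{\cu}$ (untwisting), $\di{0.5\sca}{\dli \ca \cu[\br][0] \uli[2*\br][0]}=\di{0.5\sca}{\li}$ (straightening), $\di{0.5\sca}{\uli \cu \ca[\br][0] \dli[2*\br][0]}=\di{0.5\sca}{\li}$ (upside-down straightening, since $\parS'=1$), the braid relation, and the looping relation $\di{0.5\sca}{\cu \ca}=\delta$. Under $F$ these coincide verbatim with five of the six defining relations of $\mathcal{B}(+1)$ from Section \ref{section example Brauer category} (with the looping relation matched to the loop parameter of $\mathcal{B}(+1)$), so it only remains --- for $F$ to be well-defined --- to check that $\mathcal{B}(+1)$'s sliding relation $\di{0.5\sca}{\cu \ocr[\br][0] \li \dli[2*\br][0]}=\di{0.5\sca}{\ocr \dli \cu[\br][0] \li[2*\br][0]}$ follows from the relations of $\mathcal{C}$; given $H^2=\id$ this is a short rearrangement of $\mathcal{C}$'s sliding relation (which here reads $\di{0.5\sca}{\ocr \dli \cu[\br][0] \li[2*\br][0]}=\di{0.5\sca}{\ccr}$).

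Conversely, to obtain the inverse functor one must verify that the remaining defining relations of $\mathcal{C}$ --- upside-down untwisting, delooping, sliding, pulling, upside-down sliding, upside-down pulling, and the (trivially satisfied) equations \eqref{Equation DEF} --- are consequences of the six relations of $\mathcal{B}(+1)$. For this I would reuse the manipulations from the BWM proposition, which are now their $z=0$, $v=1$ specialization and hence simpler, with many summands vanishing: composing $\mathcal{B}(+1)$'s sliding relation with an over-cross and using $H^2=\id$ recovers $\mathcal{C}$'s sliding relation, upside-down sliding then follows from sliding together with straightening, and pulling, upside-down pulling, delooping and upside-down untwisting are obtained by composing with further crosses and invoking untwisting. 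Alternatively, one may simply specialize the preceding proposition at $z=0$ --- after which rescaling $a$ to one forces $\lambda=1$ --- to get that $\mathcal{C}^{0,0}_{1,1}(+,1,1)$ is isomorphic to the BWM-category specialized at $z=0$ and $v=1$, and then observe that the latter's presentation collapses to that of $\mathcal{B}(+1)$. The main obstacle is purely clerical --- making sure that every one of $\mathcal{C}$'s defining relations has been accounted for --- since each manipulation needed is a degenerate instance of one already performed for the BWM-category, so no genuinely new difficulty arises and $F$ is an isomorphism.
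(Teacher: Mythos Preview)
Your proposal is correct, and your alternative route at the end---specializing the preceding BWM proposition at $z=0$, $v=1$---is exactly the paper's proof, which occupies only three lines: the BWM-category at $z=0$, $v=1$ is the Brauer category, the limit $b\to 0$ of $\mathcal{C}^{b,b}_{\lambda,\parS}(+,1,1)$ is $\mathcal{C}^{0,0}_{\lambda,\parS}(+,1,1)$, and rescaling finishes.

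Your primary approach, the direct relation-by-relation verification, is also valid but unnecessarily long here; everything you do is indeed the $z=0$ degeneration of what was already done for the BWM case. One expository slip to fix: you have the two directions of well-definedness swapped. For $F\colon\mathcal{C}\to\mathcal{B}(+1)$ to be well-defined you must check that \emph{all} of $\mathcal{C}$'s defining relations (including delooping, sliding, pulling, the upside-down variants, and upside-down untwisting) hold in $\mathcal{B}(+1)$---this is what you placed in the third paragraph. Conversely, for the inverse functor $\mathcal{B}(+1)\to\mathcal{C}$ you must check that $\mathcal{B}(+1)$'s six relations hold in $\mathcal{C}$, in particular its sliding relation---this is what you placed in the second paragraph. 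The verifications themselves are correct; only the labels ``for $F$'' and ``for the inverse'' are interchanged.
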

\begin{proof}
If we specialize to $v=v^{-1}=1$ and $z=0$ in the definition of the BWM-category, we get the Brauer category of Section \ref{section example Brauer category}, while if we take the limit $b$ to zero in $\cC^{b,b}_{\lambda,\parS}(+,1,1)$  we get $\cC^{0,0}_{\lambda,\parS}(+,1,1)$. Using our rescaling, we indeed see that $\mathcal{C}_{1,1}^{0,0}(+,1,1)\cong \mathcal{B}(+1)$.
\end{proof}
Note that the discussion about limits of categories at the end of Section \ref{Section classes of categories} shows that the BWM-category is the only possible deformation of the Brauer category in the framework of diagram categories of Brauer type since  $\lim_{b\to 0} \cC^{b,b}_{1,1}(+,1,1)$ is the only possible limit leading to $\mathcal{C}_{1,1}^{0,0}(+,1,1)$

\subsection{The quantum periplectic Brauer category} \label{Subsection q periplectic}
Ahmed, Grantcharov and Guay introduced in \cite{AGG} algebras $\mathcal{A}_q(n)$  as the centralizer of the quantum periplectic Lie superalgebra $U_q(\mathfrak{p}_m)$ acting on $(\mC^{m|m})^{\otimes n}$, leading to a sort of Schur-Weyl duality. 
\begin{definition}[{\cite[Definition 5.1]{AGG}}]
The periplectic q-Brauer algebra $\mathcal{A}_q(n)$ is the unital associative $\mK(q)$-algebra generated by elements $g_i$ and $e_i$ for $1 \leq i \leq n-1$ satisfying the following relations:
\begin{align*}
(g_i-q)(g_i+q^{-1})& =0, \\ e_i^2 &= 0, \\e_i g_i = - q^{-1} e_i,& \qquad g_i e_i = q e_i  \\
g_i g_j = g_j g_ i,  \qquad g_i e_j &= e_j g_i, \qquad e_i e_j = e_j e_i  \qquad \text{ for } \abs{ i-j} \geq 2 \\
g_i g_{i+1} g_i &= g_{i+1} g_i g_{i+1}, 
\\  e_{i+1} e_i e_{i+1}  = -e_{i+1}, & \qquad e_i e_{i+1} e_i = -e_i,  \\
 g_i e_{i+1} e_i& = -g_{i+1} e_i + (q-q^{-1}) e_{i+1} e_i,  \\
 e_{i+1}e_i g_{i+1}&= -e_{i+1}g_i + (q-q^{-1}) e_{i+1} e_i. 
\end{align*}
\end{definition}
In \cite{RuiSong} Rui and Song introduced a monoidal supercategory which they called the periplectic $q$-Brauer category.  It is a deformation of the periplectic Brauer category. The endomorphism spaces of the periplectic $q$-Brauer category give us the periplectic $q$-Brauer algebras $\mathcal{A}_q(n)$.

\begin{definition}[{\cite[Definition 2.2]{RuiSong}}] \label{Def q periplectic category}
The periplectic $q$-Brauer category $\mathcal{B}$ is a strict monoidal supercategory generated by a single object $\bullet$ and two even morphisms
$\di{0.5\sca}{\ocr} \in \mathcal{B}(2,2)$ and $\di{0.5\sca}{\ucr} \in \mathcal{B}(2,2)$
and two odd morphisms
$
\di{0.5\sca}{\ca} \in \mathcal{B}(0,2)$ and $\di{0.5\sca}{\cu} \in \mathcal{B}(2,0)$
subject to the following defining relations:
\begin{enumerate}
\item The braid relations: $\di{0.5\sca}{\ocr \ucr[0][\h]}=\di{0.5\sca}{\lili} = \di{0.5\sca}{\ucr \ocr[0][\h]}$ and $\di{0.25\sca}{\ocr \li[2*\br][0] \li[0][\h] \ocr[\br][\h] \ocr[0][2*\h] \li[2*\br][2*\h]} = \di{0.25\sca}{\ocr[\br][0] \li[0][0] \li[2*\br][\h] \ocr[0][\h] \ocr[\br][2*\h] \li[0][2*\h]}$,
\item The skein relation:  $\di{0.5\sca}{\ocr}-\di{0.5\sca}{\ucr} = (q-q^{-1})\; \di{0.5\sca}{\lili}$,
\item The snake relations: $\di{0.5\sca}{\ca \dli \cu[\br][0] \uli[2*\br][0]}=\di{0.5\sca}{\li}$ and $\di{0.5\sca}{\uli \cu \ca[\br][0] \dli[2*\br][0]}=-\; \di{0.5\sca}{\li}$
\item The untwisting relations: $\di{0.5\sca}{\cu \ocr} = q\; \di{0.5\sca}{\cu}$ and $\di{0.5\sca}{\cu \li \ocr[\br][0]}=\di{0.5\sca}{\ucr \cu[\br][0] \li[2*\br][0]}$,
\item The loop removing relation: $\di{0.5\sca}{\cu \ca}=0$.
\end{enumerate}
\end{definition}
We will now give a connection between this category and a category of Brauer type. 
\begin{proposition}
Consider the category $\mathcal{C}=\mathcal{C}_{-q^{-1},1}^{q-q^{-1},0}(-,1,1)$ where we scaled $\parS =1$ and $a=1$.  
The category $\mathcal{C}$ is isomorphic to the quantum periplectic Brauer category.
Then also $Hom_\mathcal{C} (n,n) \cong \mathcal{A}_q(n)$.
 Furthermore, if we specialize $q=q^{-1}$ to one, we get the periplectic Brauer category. Hence $\mathcal{C}_{-1,1}^{0,0}(-,1,1)$ is isomorphic to the periplectic Brauer category.  
\end{proposition}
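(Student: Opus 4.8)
The plan is to proceed exactly as in the BWM case: identify the explicit parameter values of $\cC=\mathcal{C}_{-q^{-1},1}^{q-q^{-1},0}(-,1,1)$ after the two rescalings, write down all the defining relations of $\cC$ with these values plugged in, and then match them against the five defining relations of the periplectic $q$-Brauer category $\mathcal{B}$ of Definition \ref{Def q periplectic category}. First I would read off from Table \ref{summary table} (the row $\cC^{0,b}_{b-\lambda,\parS}$ with $\epsilon=-$, $e=e'=1$, $b=q-q^{-1}$, $\lambda'=-q^{-1}$ so $\lambda=b-\lambda'=q$, $\parS=1$) the values $\lambda=q$, $\lambda'=-q^{-1}$, $\parS=\parS'=1$ (using $\epsilon=-1$ one checks $\parS'=-\parS$; here we must be careful — in fact for the periplectic normalisation $\parS'=\epsilon\parS=-1$, consistent with the second snake relation $\di{0.5\sca}{\uli \cu \ca[\br][0] \dli[2*\br][0]}=-\di{0.5\sca}{\li}$), $\delta=0$, $\parq=\epsilon e\parS(\lambda-b)=-(q-(q-q^{-1}))=-q^{-1}$, $a=\lambda^2-b\lambda=q^2-(q-q^{-1})q=1$ (so the rescaling $a=1$ is automatic, as in the BWM proof), $c=0$, $d=-eb=-(q-q^{-1})$, $d'=0$, $e=e'=1$, $f=0$, $f'=q-q^{-1}$, and $D=(\lambda-b)b=q^{-1}(q-q^{-1})$, $E=b=q-q^{-1}$, $D'=E'=0$, $F=-\epsilon ea = 1$, $F'=-\epsilon e'a=1$.

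Next I would list the defining relations of $\cC$ from Section \ref{Section relations} with these values substituted: untwisting gives $\di{0.5\sca}{\ocr\cu}=q\,\di{0.5\sca}{\cu}$ and $\di{0.5\sca}{\ocr\ca[0][\h]}=-q^{-1}\di{0.5\sca}{\ca}$; looping gives $\di{0.5\sca}{\cu\ca}=0$; straightening gives the two snake relations with signs $+1$ and $-1$; delooping gives $\di{0.5\sca}{\cu \ocr[\br][0] \li \ca[0][\h] \dli[2*\br][0] \uli[2*\br][\h]}=-q^{-1}\di{0.5\sca}{\li}$; twisting gives the skein-type relation $\di{0.5\sca}{\ocr\ocr[0][\h]}=\di{0.5\sca}{\lili}+(q-q^{-1})\di{0.5\sca}{\ocr}$ (note $c=0$ kills the cap-cup term); sliding, pulling, upside-down sliding and upside-down pulling acquire their coefficients from $d,e,f,D,E,F,d',e',f',D',E',F'$; and the braiding relation is common to both. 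Then, as in the BWM proof, I would use Lemma \ref{Lemma invertible} to rewrite twisting via the under-cross: multiplying twisting on one side by $\di{0.5\sca}{\ucr}$ and using untwisting of the cup/cap shows twisting is equivalent to the skein relation $\di{0.5\sca}{\ocr}-\di{0.5\sca}{\ucr}=(q-q^{-1})\di{0.5\sca}{\lili}$ of $\mathcal{B}$. The remaining non-shared relations of $\cC$ — sliding, pulling, and their upside-down versions, plus delooping — should all be derivable from the skein relation, the untwisting relations, the snake relations and the braid relations of $\mathcal{B}$, by the same manipulations as in the BWM argument (compose sliding with $\di{0.5\sca}{\li\ocr[\br][0]}$ and simplify using skein and untwisting; derive upside-down sliding by capping off sliding and using the snake relations; derive pulling and upside-down pulling by post-composing with a crossing). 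Conversely, I would check that the fourth relation of $\mathcal{B}$, $\di{0.5\sca}{\cu \li \ocr[\br][0]}=\di{0.5\sca}{\ucr \cu[\br][0] \li[2*\br][0]}$, follows from sliding together with untwisting in $\cC$. This establishes that $\cC$ and $\mathcal{B}$ are presented by equivalent sets of relations, hence are isomorphic; the endomorphism-algebra statement $\Hom_\cC(n,n)\cong\mathcal{A}_q(n)$ then follows since Rui and Song show this for $\mathcal{B}$. Finally, specialising $q=q^{-1}=1$ kills $b=q-q^{-1}$, $d$, $f'$, $D$, $E$, and sends $\lambda\mapsto 1$, $\lambda'\mapsto-1$, $\parq\mapsto-1$; by the limit discussion at the end of Section \ref{Section classes of categories}, $\lim_{b\to 0}\cC^{0,b}_{b-\lambda,\parS}(-,1,1)$ is $\cC^{0,0}_{-\lambda,\parS}(-,1,1)=\cC^{0,0}_{-1,1}(-,1,1)$, which matches the marked Brauer category $\mathcal{B}(-1)$ of Section \ref{section example Brauer category} after the rescaling $\parS=1$, i.e. the periplectic Brauer category.

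The main obstacle I anticipate is bookkeeping the signs: with $\epsilon=-1$ the super interchange law inserts $-1$'s, and $\parS'=-\parS$, $d'=0$ while $d\neq0$, $e'=e=1$, so the "upside-down" relations of $\cC$ are genuinely asymmetric with the "right-side-up" ones. Verifying that sliding, pulling, upside-down sliding and upside-down pulling of $\cC$ are all consequences of the comparatively sparse relation set of $\mathcal{B}$ requires carrying out each derivation with the correct signs — in particular the derivation of upside-down sliding by capping off, and the derivation of delooping, are where a sign error would most easily creep in. I would organise this by first nailing down the correspondence of generators (matching $\di{0.5\sca}{\ocr},\di{0.5\sca}{\ucr},\di{0.5\sca}{\ca},\di{0.5\sca}{\cu}$ on both sides), then checking the "easy" shared relations (braid, untwisting of the cup, loop removal, snakes), and only then grinding through the four sliding/pulling relations and delooping, each time reducing to the skein relation plus untwisting plus snakes. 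Comparison with Definition \ref{Def q periplectic category} of $\mathcal{A}_q(n)$ in \cite{AGG} is then immediate from \cite{RuiSong}, so I would not reprove that here.
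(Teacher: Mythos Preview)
Your overall strategy --- compute the parameters, match the relations against Definition~\ref{Def q periplectic category}, and show both relation sets are equivalent --- is the same as the paper's. However, you have read off the \emph{wrong row} of Table~\ref{summary table}. The notation $\cC^{f,f'}_{\lambda',\parS}$ has $f$ first and $f'$ second in the superscript, so $\mathcal{C}_{-q^{-1},1}^{q-q^{-1},0}$ has $f=q-q^{-1}$ and $f'=0$: this is the row $\cC^{b,0}_{b-\lambda,\parS}$, not $\cC^{0,b}_{b-\lambda,\parS}$. As a consequence your values of $d,d',f,f',D,E,D',E',\parq$ are all swapped or wrong. The paper's values are
\[
d=f'=0,\quad d'=-f=-(q-q^{-1}),\quad D=E=0,\quad D'=-q^2+1,\quad E'=q-q^{-1},\quad \parq=-q,
\]
whereas you have $d=-(q-q^{-1})$, $f=0$, $D=q^{-1}(q-q^{-1})$, $E=q-q^{-1}$, $D'=E'=0$, $\parq=-q^{-1}$.

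This matters for the key step where sliding in $\cC$ is matched with the second untwisting relation of $\mathcal{B}$. With the correct parameters, sliding reads
\[
\di{0.5\sca}{ \ocr \dli \cu[\br][0] \li[2*\br][0] } = \di{0.5\sca}{\ccr} +(q-q^{-1})\;\di{0.5\sca}{\licu},
\]
and applying the skein relation gives $\di{0.5\sca}{\ucr \cu[\br][0] \li[2*\br][0]}=\di{0.5\sca}{\ccr}$, i.e.\ precisely the relation $\di{0.5\sca}{\cu \li \ocr[\br][0]}=\di{0.5\sca}{\ucr \cu[\br][0] \li[2*\br][0]}$ of $\mathcal{B}$. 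With your swapped values the sliding relation instead has a $d$-term and no $f$-term, and the matching fails. Once you correct the table row, your argument goes through and coincides with the paper's; note also that the paper cites \cite[Lemma~2.3]{RuiSong} for upside-down untwisting, delooping and upside-down sliding in $\mathcal{B}$ rather than rederiving them, and gives the explicit isomorphism $s_i\mapsto g_i$, $u_i a_i\mapsto e_i$ for $\Hom_\cC(n,n)\cong\mathcal{A}_q(n)$.
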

\begin{proof}
The category $\cC=\mathcal{C}_{b-\lambda,\parS }^{b,0}(-,1,1)$ has three independent parameters. When we rescale $\parS $ and $a$ to one we have one independent parameter $\lambda$, which we set equal to $q$. Then the parameters of $\cC$ become
\begin{align*}
\lambda&=q, \quad \lambda'=-q^{-1}, \quad \delta=0, \quad \parS =1,\quad \parS '=-1, \quad a=1, \quad b=q-q^{-1}, \quad c=0, \\
\parq &=-q, \quad d=f'=0, \quad d'=-f=-(q-q^{-1}), \quad e=e'=1, \\
 D&=E=0, \quad D'=-q^2+1, \quad E'=q-q^{-1}, \quad F=F'=1.
\end{align*}
Note that the skein-relation allows us to express the under-cross as a linear combination of the over-cross and $\id_2$. Thus we see that $\mathcal{B}$ and $\cC$ are generated by the same object and the same morphisms. We thus only have to show they satisfy the same relations. 
Note that the first braid relation in $\mathcal{B}$ just expresses that the over-cross is invertible, which we also assume to hold in $\cC$. The second braid relation in $\mathcal{B}$ is the same as the braid relation in $\cC$. The skein relation also holds in $\cC$ by Lemma \ref{Lemma invertible}. The snake relations in $\mathcal{B}$ are the straightening relations in $\cC$. 
The sliding relation in $\cC$ is given by  $\di{0.5\sca}{ \ocr \dli \cu[\br][0] \li[2*\br][0] } = \di{0.5\sca}{\ccr} +(q-q^{-1})\;  \di{0.5\sca}{\licu }$. Using the skein relation this is equivalent to $\di{0.5\sca}{\ucr \cu[\br][0] \li[2*\br][0]}=\di{0.5\sca}{\ccr}$. So the untwisting relations and loop-removing relations of $\mathcal{B}$ are also satisfied in $\cC$. We thus have shown that every relation in $\mathcal{B}$ also holds in $\cC$. We still have to show that the relations upside-down untwisting, delooping, pulling, upside-down sliding and upside-down pulling which hold in $\cC$ are also satisfied in $\mathcal{B}$. 
From  \cite[Lemma 2.3]{RuiSong}, we have that $\di{0.5\sca}{\ucr \ca[0][\h]}=-q\; \di{0.5\sca}{\ca}$, $\di{0.5\sca}{
\cu \ocr[\br][0] \li \ca[0][\h] \dli[2*\br][0] \uli[2*\br][\h] 
}= -q \; \di{0.5\sca}{\li} \; ,$ and $\di{0.5\sca}{\ca \ucr[\br][-\h] \li[0][-\h]}=\di{0.5\sca}{\ocr \li[2*\br][0] \ca[\br][\h]}$ hold in $\mathcal{B}$. This is equivalent to upside-down untwisting, delooping and upside-down sliding in $\cC$. 
The pulling relation in $\cC$ is given by $\di{0.5\sca}{\dli[2*\br][0] \cu \ocr[\br][0] \li \li[2*\br][\h] \ocr[0][\h] }=  \di{0.5\sca}{\licu }$. This relation can be obtained via the untwisting relation $\di{0.5\sca}{\cu \li \ocr[\br][0]}=\di{0.5\sca}{\ucr \cu[\br][0] \li[2*\br][0]}$ in $\mathcal{B}$ by multiplying on the left with $\di{0.5\sca}{\li[2*\br][0] \ocr}$. 
upside-down pulling is a bit more involved. We can combine $\di{0.5\sca}{\ca \ucr[\br][-\h] \li[0][-\h]}=\di{0.5\sca}{\ocr \li[2*\br][0] \ca[\br][\h]}$ with the skein relation to obtain 
\begin{align}\label{equation ocr ucr cap}
\di{0.5\sca}{\ca[0][\h] \li \ocr[\br][0]}-(q-q^{-1})\di{0.5\sca}{\ca \li[2*\br][0]} = \; \di{0.5\sca}{\crc}.
\end{align}
 Multiplying on the right with $\di{0.5\sca}{\ocr \li[2*\br][0]}$ leads to 
\begin{align*}
\di{0.5\sca}{ \ca \ocr[\br][-\h] \li[0][-\h] \li[2*\br][-2*\h] \ocr[0][-2*\h] \uli[2*\br][0] }&= (q-q^{-1}) \di{0.5\sca}{\ocr \ca[0][\h] \li[2*\br][0]}+\di{0.5\sca}{\crc \ocr[0][-\h] \li[2*\br][-\h]} \\
&=(-1+q^{-2}) \di{0.5\sca}{\cali} + \di{0.5\sca}{\lica} + (q-q^{-1}) \di{0.5\sca}{\crc}
\\
&=(1-q^2)\; \di{0.5\sca}{\cali} + (q-q^{-1})\; \di{0.5\sca}{ \li \ocr[\br][0] \ca[0][\h] \dli} + \di{0.5\sca}{\lica},  
\end{align*}
where we used upside-down untwisting and the skein relation for the first step and again Equation \eqref{equation ocr ucr cap} in the second step. This shows that the upside-down pulling also holds in $\mathcal{B}$ and we conclude that $\mathcal{B}$ and $\cC$ are isomorphic categories. This also immediately implies the other statements of the proposition since the periplectic Brauer category is obtained from the periplectic Brauer category by setting $q=1$ and the category $\mathcal{C}_{-\lambda,1}^{0,0}$ is obtained from $\mathcal{C}_{b-\lambda,1}^{b,0}$ by taking the limit $b=0$. 
Note that the isomorphism between $\Hom_\cC(n,n)$ and $\mathcal{A}_q(n)$ is given by mapping 
$s_i = \di{0.5\sca}{\li 
\draw (\br,0.5\h) node[]{$\dots$};
\draw (2*\br,-0.5\h) node[]{$i$};
\draw (4*\br,-0.5\h) node[]{$i+1$};
\draw (4*\br,0.5\h) node[]{$\dots$};
\ocr[2*\br][0] \li[5*\br][0]}$ to $g_i$ and 
$ \di{0.5\sca}{\li 
\draw (\br,0.5\h) node[]{$\dots$};
\draw (2*\br,-0.5\h) node[]{$i$};
\draw (4*\br,-0.5\h) node[]{$i+1$};
\draw (4*\br,0.5\h) node[]{$\dots$};
\ca[2*\br][0] \cu[2*\br][\h] \li[5*\br][0]}$ to $e_i$. 
\end{proof} 
We know that $\lim_{b\to 0} \cC^{b,0}_{b-\lambda,\parS}(-,1,1) = \cC^{0,0}_{-\lambda,\parS}(-,1,1)$. If we rescale $\lambda$ and $\parS$ to one and set $b=q-q^{-1}$, this expresses that the periplectic $q$-Brauer category is a deformation of the periplectic Brauer category. However, if we look at the discussion about limits at the end of Section \ref{Section classes of categories}, we see that we have another deformation $\cC^{0,b}_{b-\lambda,\parS}(-,1,1)$ since $\lim_{b \to 0} \cC^{0,b}_{b-\lambda,\parS}(-,1,1)$ is also equal to $\cC^{0,0}_{-\lambda,\parS}(-,1,1)$. 
This deformation is obtained by replacing the untwisting relation $\di{0.5\sca}{\cu \li \ocr[\br][0]}=\di{0.5\sca}{\ucr \cu[\br][0] \li[2*\br][0]}$ by $\di{0.5\sca}{\cu \li \ucr[\br][0]}=\di{0.5\sca}{\ocr \cu[\br][0] \li[2*\br][0]}$ in Definition \ref{Def q periplectic category} of the periplectic $q$-Brauer category.

We thus get two deformations of the periplectic Brauer category. We have seen in Corollary \ref{Corollary Monoidal opposites} that they are each other monoidal opposites. 
 
\subsection{The q-Brauer algebra}  
Aside from the BWM-algebra, there exists another deformation of the Brauer algebra called the $q$-Brauer algebra introduced by Wenzl \cite{Wenzl}. However, no topological or diagrammatical interpretation of this algebra is known. We will now show that they also do not occur as endomorphisms algebras of a category of Brauer type.

The $q$-Brauer algebra $Br_n(q,r)$ is defined \cite[3.1]{Wenzl} as the algebra  with generators $e$ and $g_1, g_2, \dots, g_{n-1}$ satisfying relations
\begin{enumerate}
\item $g_i^2 = (q-1) g_i + q$, $g_ig_{i+1}g_i=g_{i+1}g_ig_{i+1}$, and $g_ig_j= g_jg_i$ if $|i-j|>1$,
\item $e^2=\frac{r-1}{q-1} e$,
\item $e g_i = g_i e$ for $i>2$, $eg_1 = qe$, $eg_2e=re$ and $eg_2^{-1}e=q^{-1}e$
\item $g_2g_3g_1^{-1}g_2^{-1} e_{(2)}=e_{(2)}=e_{(2)}g_2g_3g_1^{-1}g_2^{-1}$ with $e_{(2)}=eg_2g_3g_1^{-1}g_2^{-1}e$.
\end{enumerate}

We will represent $g_i$ by the Brauer diagram $g_i= \di{0.5\sca}{
\li[-\br][0]
\li[\br][0] 
\draw (0,0.5\h) node[]{$\dots$}; 
\draw (2*\br,-0.5\h) node[]{$i$};
\draw (4*\br,-0.5\h) node[]{$i+1$};
\li[4*\br][0]
\draw (5*\br,0.5\h) node[]{$\dots$};
\ocr[2*\br][0] \li[6*\br][0]} $ and $e$ by the Brauer diagram $e=\di{0.5\sca}{\cu[0][\h] \ca \li[2*\br][0] \draw (3*\br,0.5\h) node[]{$\dots$}; \li[4*\br][0]} $.

Assume $\cC$ is a category of Brauer type such that the endomorphism algebras $\Hom_{\cC}(n,n)$ are isomorphic to the $q$-Brauer algebras $Br_n(q,r)$. 
Then the relation $g_i^2 = (q-1) g_i + q$ implies that the parameters in $\cC$ satisfy $a=q$, $b=q-1$, $c=0$, while $eg_1 = qe$, $eg_2e=re$ imply $\lambda=q$, $\parq =r$.  Using Lemma \ref{Lemma invertible},   $eg_2^{-1}e=q^{-1}e$ leads to $\delta=\frac{r-1}{q-1}$, which is also consistent with $e^2=\frac{r-1}{q-1} e$. Since $b$, $\delta$ and $\parq$ are non-zero, but $c=0$, we see from Table \ref{summary table}, that the only possible categories are $\cC^{b,0}_{\lambda,\parS }(-,1,1)$ or $\cC^{b,0}_{\lambda,\parS }(-,-1,-1)$.
Looking at the values of $\parq =-\epsilon e\parS (\lambda-b)$ and $\delta=-\epsilon e \parS (2\lambda-b)/b$ for these categories, we see that they are not compatible with $\delta=\frac{r-1}{q-1}$ and $\parq =r$. We conclude that the $q$-Brauer algebra $Br_n(q,r)$ can not be obtained via a category of Brauer type. 

\appendix
\begingroup
\allowdisplaybreaks 

\section{The equations for simplifying diagrams} \label{Appendix rewrite equations}
In Section \ref{Section rewritable diagrams} we listed all the diagrams we can rewrite in two different ways. In this section, we will deduce the corresponding equations which have to be satisfied such that rewriting is consistent. We will also already simplify the resulting equations. 
\begin{lemma}\label{Lemma S}
Rewriting $\di{0.5\sca}{\cu \ca[\br][0] \uli \dli[2*\br][0] \cu[2*\br][-0.4\h] \uli[3*\br][0] \dli[3*\br][0] }$ and $ 
\di{0.5\sca,-0.5\sca}{\cu \ca[\br][0] \uli \dli[2*\br][0] \cu[2*\br][-0.4\h] \uli[3*\br][0] \dli[3*\br][0] }$ leads to 
\begin{align*}
\parS' = \epsilon \parS .
\end{align*}
\end{lemma}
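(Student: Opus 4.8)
The plan is to reproduce the short computation already sketched in Section~\ref{Section motivating}, now phrased as a consistency check between two defining relations overlapping on the displayed diagram. The diagram $\di{0.5\sca}{\cu \ca[\br][0] \uli \dli[2*\br][0] \cu[2*\br][-0.4\h] \uli[3*\br][0] \dli[3*\br][0] }$ contains the left-hand side of the upside-down straightening relation $\di{0.5\sca}{\uli \cu \ca[\br][0] \dli[2*\br][0]}=\parS'\,\di{0.5\sca}{\li}$ as a subdiagram, and, after one application of the super interchange law, it also contains the left-hand side of the straightening relation $\di{0.5\sca}{\dli \ca \cu[\br][0] \uli[2*\br][0]}=\parS\,\di{0.5\sca}{\li}$; simplifying in these two ways must therefore give the same morphism.

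First I would apply the upside-down straightening relation to the subdiagram where it occurs, which replaces that part of the picture by $\parS'$ times an identity strand; the residual morphism is a standard expression (the cup $\di{0.5\sca}{\cu}$), so one reading of the diagram equals $\parS'\,\di{0.5\sca}{\cu}$. Then I would simplify the same diagram the other way: use the super interchange law to slide the cup and the cap past the rest of the picture so that a straightening pattern $\di{0.5\sca}{\dli \ca \cu[\br][0] \uli[2*\br][0]}$ appears. Since the cup $U$ and the cap $A$ are homogeneous of the same parity (even when $\epsilon=1$, odd when $\epsilon=-1$), this move produces exactly one sign $(-1)^{\abs{U}\abs{A}}=\epsilon$, while the identity strands commuted past them contribute none; applying the straightening relation then yields $\epsilon\parS\,\di{0.5\sca}{\cu}$. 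Comparing the two outcomes gives $\parS'\,\di{0.5\sca}{\cu}=\epsilon\parS\,\di{0.5\sca}{\cu}$, and since $\di{0.5\sca}{\cu}$ is the standard expression of a $(0,2)$-Brauer diagram it is nonzero in $\Hom_\cC(0,2)$ by Definition~\ref{Definition category of Brauer type}(4); hence $\parS'=\epsilon\parS$. The flipped diagram $\di{0.5\sca,-0.5\sca}{\cu \ca[\br][0] \uli \dli[2*\br][0] \cu[2*\br][-0.4\h] \uli[3*\br][0] \dli[3*\br][0] }$ is handled identically (it is the upside-down mirror image of the first) and produces the same equation.

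The only point that genuinely needs care is the sign bookkeeping in the super interchange step: one must verify that exactly one factor $(-1)^{\abs{U}\abs{A}}$ is collected in passing between the two normal forms, neither more nor fewer, and in particular that pulling identity morphisms past the (possibly odd) cup and cap contributes nothing, since identities are even. All remaining manipulations are routine isotopies of diagrams, so I expect this to be among the quickest of the relation-consistency checks, and indeed essentially a restatement of the argument already given in Section~\ref{Section motivating}.
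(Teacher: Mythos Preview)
Your proposal is correct and follows essentially the same approach as the paper: apply upside-down straightening directly to obtain $\parS'\,\di{0.5\sca}{\cu}$, then use the super interchange law (picking up the single sign $(-1)^{|U||A|}=\epsilon$) followed by straightening to obtain $\epsilon\parS\,\di{0.5\sca}{\cu}$, and compare. The only small discrepancy is that the paper records the flipped diagram as yielding $\parS=\epsilon\parS'$ rather than literally ``the same equation'', but since $\epsilon^2=1$ these are equivalent.
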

\begin{proof}
Directly using the upside-down straightening relation on $\di{0.5\sca}{\cu \ca[\br][0] \uli \dli[2*\br][0] \cu[2*\br][-0.4\h] \uli[3*\br][0] \dli[3*\br][0] }$ leads to $\parS' \;\di{0.5\sca}{\cu}$. On the other hand, using the super interchange law and then the straightening law, we have
\[
\di{0.5\sca}{\cu \ca[\br][0] \uli \dli[2*\br][0] \cu[2*\br][-0.4\h] \uli[3*\br][0] \dli[3*\br][0] } = \epsilon\;  \di{0.5\sca}{\cu[2*\br][0] \ca[\br][0] \uli \dli[\br][0] \cu[0][-0.4\h] \uli[3*\br][0] \dli[0][0] } =\epsilon \parS \; \di{0.5\sca}{\cu}.
\]
We conclude that $\parS '=\epsilon \parS $. Similarly, rewriting $ 
\di{0.5\sca,-0.5\sca}{\cu \ca[\br][0] \uli \dli[2*\br][0] \cu[2*\br][-0.4\h] \uli[3*\br][0] \dli[3*\br][0] }$ leads to $\parS = \epsilon \parS '$. 
\end{proof}

\begin{lemma}
Rewriting $\di{0.5\sca}{\dli \ocr \cu[\br][0] \ca[2*\br][0] \dli[3*\br][0]}$ and $\di{0.5\sca,-0.5\sca}{\dli \ucr \cu[\br][0] \ca[2*\br][0] \dli[3*\br][0]}$ gives us the equations
\begin{align*}
\parS (1-ee')=0, \quad \parS '(1-ee')=0, \quad d+ef'=0,  \quad d'+e'f=0, \\ 
 \parS '(f+ed')=0,  \quad \parS (f'+e'd)=0.
\end{align*}
\end{lemma}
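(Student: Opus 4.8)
The plan is to rewrite each of the two diagrams in two ways and then invoke Theorem \ref{theorem simplifying}: since every morphism reduces to a linear combination of standard expressions and $\Hom_{\cC}(2,2)$ is spanned by the three standard diagrams $\di{0.5\sca}{\lili}$, $\di{0.5\sca}{\ocr}$, $\di{0.5\sca}{\cc}$, the two reductions of one diagram must agree coefficient by coefficient; that agreement is the source of the six equations.

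Take first $D:=\di{0.5\sca}{\dli \ocr \cu[\br][0] \ca[2*\br][0] \dli[3*\br][0]}$. The cup $\di{0.5\sca}{\cu}$ lies on the right leg of the over-crossing and simultaneously feeds the cap, so two relations apply: \emph{sliding} (to the crossing--cup block) and \emph{upside-down straightening} (to the cup--cap block). Reducing by upside-down straightening first collapses the zigzag, and I expect $D=\parS'\,\di{0.5\sca}{\ocr}$. Reducing by sliding first replaces the crossing--cup block by $d\,\di{0.5\sca}{\culi}+e\,\di{0.5\sca}{\ccr}+f\,\di{0.5\sca}{\licu}$, with the cap still attached on the rightmost top strand. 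Now the $d$-term closes the cup off into a bottom cap, giving $d\,\di{0.5\sca}{\cc}$; the $f$-term produces a cup--cap zigzag which straightens (again upside-down straightening) to $f\parS'\,\di{0.5\sca}{\lili}$; and in the $e$-term the cap now sits directly above the crossing, so a further application of \emph{upside-down sliding} yields $e\big(d'\,(\cdots)+e'\,(\cdots)+f'\,(\cdots)\big)$, and one more round of straightening/looping turns these into (I expect) $ed'\parS'\,\di{0.5\sca}{\lili}$, $ee'\parS'\,\di{0.5\sca}{\ocr}$ and $ef'\,\di{0.5\sca}{\cc}$ respectively. Collecting, the sliding route gives $\parS'(f+ed')\,\di{0.5\sca}{\lili}+ee'\parS'\,\di{0.5\sca}{\ocr}+(d+ef')\,\di{0.5\sca}{\cc}$. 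Equating this with $\parS'\,\di{0.5\sca}{\ocr}$ and using linear independence of the standard expressions, the coefficient of $\di{0.5\sca}{\cc}$ gives $d+ef'=0$, that of $\di{0.5\sca}{\lili}$ gives $\parS'(f+ed')=0$, and that of $\di{0.5\sca}{\ocr}$ gives $\parS'(1-ee')=0$.

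The under-crossing diagram $\di{0.5\sca,-0.5\sca}{\dli \ucr \cu[\br][0] \ca[2*\br][0] \dli[3*\br][0]}$ is handled the same way: one first rewrites $\di{0.5\sca}{\ucr}$ by the skein relation of Lemma \ref{Lemma invertible} and reduces both ways to the span of $\{\di{0.5\sca}{\lili},\di{0.5\sca}{\ocr},\di{0.5\sca}{\cc}\}$; comparing coefficients produces the mirror identities $d'+e'f=0$, $\parS(f'+e'd)=0$ and $\parS(1-ee')=0$. Together with the three from $D$ these are precisely the six displayed equations (note that once $\parS'=\epsilon\parS$ is available the two ``$(1-ee')$'' equations coincide, and $\parS'(f+ed')=0$, $\parS(f'+e'd)=0$ then follow from $d=-ef'$, $d'=-e'f$; all six are kept because each falls directly out of a single coefficient comparison). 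The only real obstacle is the bookkeeping: the sliding route passes through several intermediate diagrams that still contain a crossing and cup--cap pairs, so one must carry out three successive rounds of simplification and, crucially, push every intermediate diagram to standard form (cups above caps, straight left legs) before reading off coefficients --- once that discipline is kept, the equations come out mechanically.
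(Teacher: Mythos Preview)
Your treatment of the first diagram is essentially the paper's argument: reduce once via upside-down straightening to $\parS'\,\di{0.5\sca}{\ocr}$, and once via sliding followed by upside-down sliding and straightening, then compare coefficients in the basis $\{\di{0.5\sca}{\lili},\di{0.5\sca}{\ocr},\di{0.5\sca}{\cc}\}$.  One bookkeeping point: when the cup from the $d$-term (and later the $ef'$-term) is moved past the cap you pick up a factor $\epsilon$ from the super-interchange law, so the $\di{0.5\sca}{\cc}$ coefficient is $(d+ef')\epsilon$ rather than $d+ef'$; since $\epsilon\ne 0$ this does not change the resulting equation.

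For the second diagram your plan goes astray.  Rewriting the under-cross via the skein relation of Lemma~\ref{Lemma invertible} drags in the parameters $a,b,c,\lambda$, and the coefficient comparison you would then perform does not produce the clean identities $d'+e'f=0$, $\parS(f'+e'd)=0$, $\parS(1-ee')=0$ directly; at best you would obtain combinations of these with the equations of Lemma~\ref{Lemma a,b,c}.  The point is that the second diagram is simply the vertical flip of the first (the negative $y$-scale in the source, together with the $\ucr$, displays an over-cross again, with cup and cap interchanged).  The paper therefore repeats the same argument with every relation replaced by its upside-down counterpart: one reduction uses straightening (giving $\parS\,\di{0.5\sca}{\ocr}$), the other uses upside-down sliding first and then sliding and upside-down straightening.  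This immediately yields the primed equations with no extra parameters involved.
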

\begin{proof}

Using upside-down sliding we deduce 
\[
\di{0.5\sca}{\cu \li \uli[\br][\h] \uli[0][\h] \ocr[\br][0] \ca[2*\br][\h] \li[3*\br][0] \dli[3*\br][0] \dli[2*\br][0]} = d' \parS '\; \di{0.5\sca}{\lili} + e' \parS' \;\di{0.5\sca}{\ocr} + f' \epsilon\;\di{0.5\sca}{\cc}.
\]
This allows us to rewrite $\di{0.5\sca}{\ocr \dli \cu[\br][0] \ca[2*\br][0] \dli[3*\br][0]}$ using sliding as
\begin{align*}
 \di{0.5\sca}{\ocr \dli \cu[\br][0] \ca[2*\br][0] \dli[3*\br][0]} &=d\epsilon \;\di{0.5\sca}{\cc} + e \; \di{0.5\sca}{\cu \li \uli[\br][\h] \uli[0][\h] \ocr[\br][0] \ca[2*\br][\h] \li[3*\br][0] \dli[3*\br][0] \dli[2*\br][0]}  + f \parS' \; \di{0.5\sca}{\lili} \\
 &= (f+ed')\parS' \; \di{0.5\sca}{\lili} + ee'\parS' \;\di{0.5\sca}{\ocr} + (d+ef')\epsilon \;\di{0.5\sca}{\cc}.
\end{align*}
On the other hand, using the straightening relation we obtain $\di{0.5\sca}{\dli \ocr \cu[\br][0] \ca[2*\br][0] \dli[3*\br][0]} = \parS '\; \di{0.5\sca}{\ocr}$.
Comparing these two different rewritings, we obtain
$\parS '(1-ee')=0$, $d+ef'=0$ and $\parS '(f+ed')=0.$ Similarly, rewriting $\di{0.5\sca,-0.5\sca}{\dli \ucr \cu[\br][0] \ca[2*\br][0] \dli[3*\br][0]}$  leads to $\parS (1-ee')=0$, $d'+e'f=0$ and $\parS (f'+e'd)=0.$
\end{proof}
Note that the last two equations are satisfied if the first four equations hold.
Namely multiplying $d'+e'f=0$ with $\parS 'e$ and using $\parS '(1-ee')=0$ we obtain $\parS '(ed'+f)=0$ while multiplying $d+ef'$ with $\parS e'$ gives us $\parS (f'+e'd)=0$. Also, since $\parS '=\epsilon \parS $, the first two equations are equivalent.

\begin{lemma} \label{Lemma a,b,c}
Rewriting $\di{0.5\sca}{\cu \ocr \ocr[0][\h]}\;$, $\di{0.5\sca}{\ocr \ocr[0][\h] \ca[0][2*\h]}\;$,  $\di{0.5\sca}{\cu \ocr \ca[0][\h]}$ and $\di{0.5\sca}{\ocr \ocr[0][\h] \ocr[0][2*\h]}$  gives us 
\[
\lambda^2=a+b\lambda+c\delta  \qquad \lambda'^2 = a+b\lambda' +c\delta \qquad (\lambda'-\lambda) \delta=0, \qquad (\lambda'-\lambda)c=0.
\]
\end{lemma}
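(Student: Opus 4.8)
The plan is to exploit the twisting relation $\di{0.5\sca}{\ocr \ocr[0][\h]} = a\, \di{0.5\sca}{\li \li[\br][0]} + b\, \di{0.5\sca}{\ocr} + c\, \di{0.5\sca}{\cc}$ together with the three relations that involve a single crossing applied to a cup or a cap, namely untwisting $\di{0.5\sca}{\ocr \cu} = \lambda\, \di{0.5\sca}{\cu}$, upside-down untwisting $\di{0.5\sca}{\ocr \ca[0][\h]} = \lambda'\, \di{0.5\sca}{\ca}$, and looping $\di{0.5\sca}{\cu \ca} = \delta$. Each of the four overlapping diagrams listed in the statement can be simplified along two routes, and equating the two outputs yields one of the four equations. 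I would carry out the four cases in the order given.

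First, for $\di{0.5\sca}{\cu \ocr \ocr[0][\h]}$: on one hand apply twisting to the two crossings to get $a\, \di{0.5\sca}{\cu} + b\, \di{0.5\sca}{\cu \ocr} + c\, \di{0.5\sca}{\cu \cc}$; here $\di{0.5\sca}{\cu \ocr} = \lambda\, \di{0.5\sca}{\cu}$ by untwisting and $\di{0.5\sca}{\cu \cc} = \di{0.5\sca}{\cu \cu[..][..] \ca[..][..]}$ simplifies to $\delta\, \di{0.5\sca}{\cu}$ by looping, so this route gives $(a + b\lambda + c\delta)\, \di{0.5\sca}{\cu}$. On the other hand, apply untwisting twice (once to each crossing from the bottom), obtaining $\lambda^2\, \di{0.5\sca}{\cu}$. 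Since $\di{0.5\sca}{\cu}$ spans $\Hom_\cC(0,2)$ this forces $\lambda^2 = a + b\lambda + c\delta$. The diagram $\di{0.5\sca}{\ocr \ocr[0][\h] \ca[0][2*\h]}$ is handled identically with $\ca$ in place of $\cu$ and $\lambda'$ in place of $\lambda$, giving $\lambda'^2 = a + b\lambda' + c\delta$; subtracting the two equations already yields $(\lambda'-\lambda)(\lambda'+\lambda - b) = 0$, but I would keep the two equations in the stated form and derive the remaining relations separately.

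For $\di{0.5\sca}{\cu \ocr \ca[0][\h]}$: applying untwisting to the crossing against the cup at the bottom gives $\lambda\, \di{0.5\sca}{\cu \ca[0][\h]}$; wait — the crossing sits between a cup below and a cap above, so the two routes are (i) slide the crossing up against the cap, using upside-down untwisting to get $\lambda'\, \di{0.5\sca}{\cu \ca}= \lambda'\delta$, and (ii) slide it down against the cup, using untwisting to get $\lambda\, \di{0.5\sca}{\cu \ca} = \lambda\delta$; equality forces $(\lambda'-\lambda)\delta = 0$. Finally, for $\di{0.5\sca}{\ocr \ocr[0][\h] \ocr[0][2*\h]}$ (three stacked crossings), apply twisting to the top pair and then reduce, versus twisting to the bottom pair and then reduce; the braid relation is not invoked here, rather one uses the twisting relation to rewrite $\di{0.5\sca}{\ocr[0][\h] \ocr[0][2*\h]}$ and $\di{0.5\sca}{\ocr \ocr[0][\h]}$, then collects the coefficient of the term $\di{0.5\sca}{\cc}$-type diagram (a cap composed with a crossing composed with a cup). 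The $c$-term picks up a factor $\lambda$ on one side from untwisting against the remaining cup and $\lambda'$ on the other side from upside-down untwisting against the remaining cap, and comparing coefficients of the resulting diagram yields $(\lambda'-\lambda)c = 0$. The main obstacle is the bookkeeping in this last case: one must carefully track which of the several diagrams appearing after a double application of twisting are already in standard form and which require a further untwisting or looping step, and verify that the only genuinely new constraint extracted is $(\lambda'-\lambda)c=0$, all other comparisons being consequences of the first three equations (using that $\di{0.5\sca}{\ocr}$, $\di{0.5\sca}{\lili}$, $\di{0.5\sca}{\cc}$ are linearly independent by Definition \ref{Definition category of Brauer type}).
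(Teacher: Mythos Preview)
Your proposal is correct and follows essentially the same route as the paper: twisting versus repeated (upside-down) untwisting for the first two diagrams, untwisting versus upside-down untwisting for the third, and twisting applied to the top pair versus the bottom pair for the triple-crossing diagram, with the $c$-term distinguishing $\lambda$ from $\lambda'$ via the remaining crossing hitting a cup or a cap. The paper's proof is simply more terse and omits the bookkeeping you flag at the end; in fact that bookkeeping is lighter than you suggest, since only a single twisting per side is needed (not a ``double application''), and the $a$- and $b$-terms match automatically, leaving only the $c$-term to compare.
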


\begin{proof}
Rewriting $\di{0.5\sca}{\cu \ocr \ocr[0][\h]}$ using on the one hand twisting and on the other hand untwisting gives us $(a+b\lambda+c\delta)\; \di{0.5\sca}{\cu}=\lambda^2 \; \di{0.5\sca}{\cu}$. Rewriting $\di{0.5\sca}{\ocr \ocr[0][\h] \ca[0][2*\h]}$ leads to $\lambda'^2 = a+b\lambda' +c\delta$. 

We can rewrite $\di{0.5\sca}{\cu \ocr \ca[0][\h]}$ in two different ways. If we first use upside-down untwisting we obtain $\lambda' \;\di{0.5\sca}{\cu  \ca}$ while first using untwisting gives us $\lambda \;\di{0.5\sca}{\cu  \ca}$. So, we conclude that $(\lambda'-\lambda)\delta=0$. 
Rewriting in a similar fashion $\di{0.5\sca}{\ocr \ocr[0][\h] \ocr[0][2*\h]}$ we obtain $(\lambda'-\lambda)c=0$. 
\end{proof}

\begin{corollary}\label{Lemma quadratic equation}
We have two disjoint cases.
If $\lambda'=\lambda$, then $a=\lambda^2-b\lambda-c\delta$. Otherwise, if $\lambda'\not= \lambda$, then $\lambda'=b- \lambda $, $a=-\lambda'  \lambda$, $\delta=0$ and $c=0$. 
\end{corollary}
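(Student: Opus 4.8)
The plan is to deduce the statement directly from the four identities collected in Lemma~\ref{Lemma a,b,c}, namely $\lambda^2 = a + b\lambda + c\delta$, $\lambda'^2 = a + b\lambda' + c\delta$, $(\lambda'-\lambda)\delta = 0$ and $(\lambda'-\lambda)c = 0$. No new diagrammatic computation is needed; everything is elementary algebra, and the only structural input is that $\mK$ (or the ground ring) has no zero divisors.

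First I would dispose of the case $\lambda' = \lambda$: here the first identity reads $\lambda^2 = a + b\lambda + c\delta$, which rearranges at once to $a = \lambda^2 - b\lambda - c\delta$, and the remaining three identities give nothing further. This is the first alternative in the statement.

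Next I would treat the case $\lambda' \neq \lambda$. Since $\lambda'-\lambda$ is then a nonzero element of $\mK$, the relations $(\lambda'-\lambda)\delta = 0$ and $(\lambda'-\lambda)c = 0$ force $\delta = 0$ and $c = 0$. Plugging these back in, the two quadratic identities become $\lambda^2 = a + b\lambda$ and $\lambda'^2 = a + b\lambda'$; subtracting gives $(\lambda-\lambda')(\lambda+\lambda'-b) = 0$, and cancelling the nonzero factor $\lambda-\lambda'$ yields $\lambda' = b - \lambda$. Substituting once more, $a = \lambda^2 - b\lambda = \lambda^2 - (\lambda+\lambda')\lambda = -\lambda'\lambda$, which is the second alternative.

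I do not expect any genuine obstacle here: the corollary is an immediate consequence of Lemma~\ref{Lemma a,b,c}, and the two cases are manifestly disjoint and exhaustive. The only point worth flagging explicitly is the cancellation of the nonzero factor $\lambda-\lambda'$ in the second case, which is precisely where the integral-domain (in particular, field) hypothesis on $\mK$ is used.
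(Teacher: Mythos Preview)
Your proof is correct and follows essentially the same route as the paper: both arguments subtract the two quadratic identities to obtain the factorisation $(\lambda-\lambda')(\lambda+\lambda'-b)=0$ and then invoke $(\lambda'-\lambda)\delta=0$ and $(\lambda'-\lambda)c=0$ in the case $\lambda\neq\lambda'$. The only cosmetic difference is the order in which you use these facts, which is immaterial.
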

 \begin{proof}
Subtracting the  equations $\lambda^2=a+b\lambda+c\delta$ and $\lambda'^2 = a+b\lambda' +c\delta$ from each other, we obtain $(\lambda-\lambda')(\lambda+\lambda'-b)=0$. We thus indeed obtain two cases: $\lambda'=\lambda$ or $\lambda'=b-\lambda$. When $\lambda\not=\lambda'$, the equations $(\lambda'-\lambda) \delta=0$ and $(\lambda'-\lambda)c=0$ imply that $\delta=c=0$.
 The expression for $a$ is obtained by simplifying $a= \lambda^2-b\lambda -c\delta$. 
 \end{proof}

\begin{lemma}\label{lemma expression for DEF}
Rewriting $\di{.5\sca}{\dli \ocr \cu[\br][0] \li[2*\br][0] \li[2*\br][\h] \ocr[0][\h]}$ and $\di{.5\sca,-.5\sca}{\dli \ucr \cu[\br][0] \li[2*\br][0] \li[2*\br][\h] \ocr[0][\h]}$ leads to the following expressions for $D,E,F$ and $D',E',F'$:
\begin{align*}
eD= c\parS + (b-\lambda-f)d,\quad eE=e(b-f),  \quad eF=(b-f)f+a, \\
e'D'= c\parS' + (b-\lambda'-f')d',\quad e'E'=e'(b-f'),  \quad e'F'=(b-f')f'+a. 
\end{align*}
\end{lemma}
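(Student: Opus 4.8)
The plan is to do a direct double‑rewriting computation on the two diagrams $\di{0.5\sca}{\dli \ocr \cu[\br][0] \li[2*\br][0] \li[2*\br][\h] \ocr[0][\h]}$ and its upside‑down analogue, exactly in the spirit of the earlier lemmas of the appendix (e.g.\ Lemma~\ref{Lemma a,b,c} and the lemma producing the equations $d+ef'=0$, $d'+e'f=0$). First I would record that by the defining relations we have the two relevant expansions in $\Hom_\cC(1,3)$ and $\Hom_\cC(3,1)$: the \emph{pulling} relation
\[
\di{\sca}{\dli[2*\br][0] \cu \ocr[\br][0] \li \li[2*\br][\h] \ocr[0][\h] }= D\;\di{\sca}{\culi}  + E\; \di{\sca}{\ccr} +F\;  \di{\sca}{\licu },
\]
and the \emph{sliding} relation $\di{\sca}{ \ocr \dli \cu[\br][0] \li[2*\br][0] } =d\; \di{\sca}{\culi}  + e\; \di{\sca}{\ccr} +f\;  \di{\sca}{\licu }$. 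The diagram $\di{0.5\sca}{\dli \ocr \cu[\br][0] \li[2*\br][0] \li[2*\br][\h] \ocr[0][\h]}$ is obtained from the left‑hand side of sliding by composing with an extra over‑cross on the top‑left two strands; so on the one hand it equals the pulling left‑hand side up to a relabelling, while on the other hand I can substitute the sliding expansion and then simplify each of the three resulting terms.

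The key steps, in order: (1) apply sliding to the sub‑diagram $\di{0.5\sca}{ \ocr \dli \cu[\br][0] \li[2*\br][0] }$ inside $\di{0.5\sca}{\dli \ocr \cu[\br][0] \li[2*\br][0] \li[2*\br][\h] \ocr[0][\h]}$, obtaining $d\cdot(\text{over‑cross}\circ\di{0.5\sca}{\culi}) + e\cdot(\text{over‑cross}\circ\di{0.5\sca}{\ccr}) + f\cdot(\text{over‑cross}\circ\di{0.5\sca}{\licu})$; (2) simplify the term with $\di{0.5\sca}{\culi}$ using the \emph{twisting} relation $\di{\sca}{\ocr \ocr[0][\h]} = a\; \di{\sca}{\li \li[\br][0] } + b \; \di{\sca}{\ocr} +c\; \di{\sca}{\cc}$ on the two strands going to the cup, then \emph{untwisting}/\emph{looping} on the resulting cup‑cap configurations — this produces a combination of $\di{0.5\sca}{\culi}$, $\di{0.5\sca}{\ccr}$ (via the $c\,\di{0.5\sca}{\cc}$ term together with the cup) and $\di{0.5\sca}{\licu}$ terms with coefficients built from $a,b,c,\delta,\lambda$; (3) simplify the term with $\di{0.5\sca}{\ccr}$: here the over‑cross meets the straightened strand next to the cup, and after untwisting it collapses, contributing to the $\di{0.5\sca}{\ccr}$ and $\di{0.5\sca}{licu}$ parts; (4) simplify the term with $\di{0.5\sca}{\licu}$, where the over‑cross acts on the free strand and the left strand of the cup — this is essentially another pulling/sliding move giving a further $\di{0.5\sca}{\licu}$ and $\di{0.5\sca}{\culi}$ contribution; (5) collect the coefficients of the three basis diagrams $\di{0.5\sca}{\culi}, \di{0.5\sca}{\ccr}, \di{0.5\sca}{\licu}$ and equate them with $D,E,F$ from the pulling relation; using the already‑established relations $d=-ef'$, $d'=-e'f$, $E=b-f$, $e^4=1$, and the quadratic $\lambda^2=a+b\lambda+c\delta$ (Corollary~\ref{Lemma quadratic equation}), the system collapses to the stated $eD=c\parS+(b-\lambda-f)d$, $eE=e(b-f)$, $eF=(b-f)f+a$. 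Finally (6) apply the vertically‑flipped argument — replace every diagram by its upside‑down version, use upside‑down sliding, upside‑down untwisting, straightening, and delooping in place of their counterparts, and read off $e'D'=c\parS'+(b-\lambda'-f')d'$, $e'E'=e'(b-f')$, $e'F'=(b-f')f'+a$; alternatively invoke $F_{v\text{-flip}}$ to transport the unprimed identities to the primed ones, swapping $\lambda\leftrightarrow\lambda'$, $\parS\leftrightarrow\parS'$, $e\leftrightarrow e'$, $f\leftrightarrow f'$, $D\leftrightarrow D'$, etc.

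The main obstacle I expect is bookkeeping in step (2)–(4): when the extra over‑cross is pushed past the cup, several of the intermediate configurations are $(1,3)$‑diagrams that are \emph{not} yet standard expressions (cup not in standard position, or a closed loop forming), so each must be re‑reduced using the right combination of untwisting, looping, straightening and possibly delooping, and it is easy to drop a factor of $\epsilon$ or a $\lambda$‑power. The safeguard is that all three target coefficients live in a $3$‑dimensional hom‑space with the known basis $\{\di{0.5\sca}{\culi},\di{0.5\sca}{\ccr},\di{0.5\sca}{\licu}\}$ (this is exactly why the relations were imposed), so the comparison is forced once the expansion is written out; and the expressions for $D,E,F$ must be consistent with the already‑derived $E=b-f$ and $F=a/e$ from the summary of Section~\ref{Section rewritable diagrams}, which gives a useful internal check at the end.
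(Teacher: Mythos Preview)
There is a genuine gap. Your Method A (``it equals the pulling left-hand side up to a relabelling'') is incorrect: the lemma's diagram is $(H\otimes I)\circ(H\otimes I)\circ(I\otimes U)$, two over-crosses stacked on the \emph{same} pair of strands $(1,2)$ above a cup on $(2,3)$, whereas the pulling left-hand side is $(H\otimes I)\circ(I\otimes H)\circ(U\otimes I)$, with the two crosses on \emph{different} strand pairs. No relabelling identifies these. Consequently you have only one valid expansion, not two, and there is nothing to compare against.

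The paper's first rewriting is the one you are missing: apply the \emph{twisting} relation to the stacked double cross on strands $(1,2)$. The $a$-term leaves $\di{0.5\sca}{\licu}$; the $b$-term leaves exactly the sliding left-hand side, hence $b(d\,\di{0.5\sca}{\culi}+e\,\di{0.5\sca}{\ccr}+f\,\di{0.5\sca}{\licu})$; and the $c$-term produces a cap on $(1,2)$ meeting the cup on $(2,3)$, which straightens to $c\parS\,\di{0.5\sca}{\culi}$. Altogether this gives $(bd+c\parS)\,\di{0.5\sca}{\culi}+be\,\di{0.5\sca}{\ccr}+(bf+a)\,\di{0.5\sca}{\licu}$.

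Your Method B (sliding first) is the right idea, but your term-by-term simplifications are off. The three terms $H_1\circ\di{0.5\sca}{\culi}$, $H_1\circ\di{0.5\sca}{\ccr}$, $H_1\circ\di{0.5\sca}{\licu}$ are handled respectively by \emph{untwisting} (a single cross on a cup, giving $\lambda\,\di{0.5\sca}{\culi}$ --- not twisting, there is no double cross here), by \emph{pulling} (this composite \emph{is} the pulling left-hand side, yielding $D\,\di{0.5\sca}{\culi}+E\,\di{0.5\sca}{\ccr}+F\,\di{0.5\sca}{\licu}$), and by \emph{sliding} again. This produces $(d\lambda+eD+fd)\,\di{0.5\sca}{\culi}+(eE+fe)\,\di{0.5\sca}{\ccr}+(eF+f^2)\,\di{0.5\sca}{\licu}$. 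Comparing with the twisting expansion yields the three stated identities immediately, with no further input needed. In particular your step (5) is both unnecessary and circular: $E=b-f$ is precisely what this lemma establishes, and $e^4=1$ is proved only later in the appendix.
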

\begin{proof}
Rewriting $\di{.5\sca}{\dli \ocr \cu[\br][0] \li[2*\br][0] \li[2*\br][\h] \ocr[0][\h]}$ using twisting  leads to 
\[
(bd+c\parS )\; \di{0.5\sca}{\culi} + be\; \di{0.5\sca}{\ccr}+(bf+a) \;\di{0.5\sca}{\licu},
\]
while applying sliding gives us
\[
(d\lambda + eD +fd)\; \di{0.5\sca}{\culi} + (eE+fe)\; \di{0.5\sca}{\ccr}+(eF+f^2) \;\di{0.5\sca}{\licu}.
\]
Rewriting $\di{.5\sca,-.5\sca}{\dli \ucr \cu[\br][0] \li[2*\br][0] \li[2*\br][\h] \ocr[0][\h]}$ gives us similar expressions for $D',E',F'$. 
\end{proof}

\begin{lemma}
Rewriting $\di{0.5\sca}{\dli \ocr \dli[\br][0] \cu[2*\br][0] \li[2*\br][0] \li[3*\br][0] \li[0][\h]  \ocr[\br][\h] \li[3*\br][\h] \ocr[0][2*\h] \li[3*\br][2*\h] \li[2*\br][2*\h]}$  gives us
\begin{align*}
f(b-f)&=0,  \quad  c(f+\epsilon ed)=0 \quad  d(d+eb)=0,  \\
 c\parS 'ed+df\lambda+efD&=0, \quad efE=0, \quad f(a-eF)=0,
\end{align*}
while rewriting $\di{0.5\sca,-0.5\sca}{\dli \ucr \dli[\br][0] \cu[2*\br][0] \li[2*\br][0] \li[3*\br][0] \li[0][\h]  \ucr[\br][\h] \li[3*\br][\h] \ucr[0][2*\h] \li[3*\br][2*\h] \li[2*\br][2*\h]}$ leads to
\begin{align*}
f'(b-f')&=0,  \quad  c(f'+\epsilon e'd')=0 \quad  d'(d'+e'b)=0,  \\
 c\parS e'd'+d'f'\lambda'+e'f'D'&=0, \quad e'f'E'=0, \quad f'(a-e'F')=0.
\end{align*}
\end{lemma}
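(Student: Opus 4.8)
The plan is to rewrite the diagram $\di{0.5\sca}{\dli \ocr \dli[\br][0] \cu[2*\br][0] \li[2*\br][0] \li[3*\br][0] \li[0][\h] \ocr[\br][\h] \li[3*\br][\h] \ocr[0][2*\h] \li[3*\br][2*\h] \li[2*\br][2*\h]}$ in two distinct ways that exploit different ways the defining relations overlap inside it, and then equate the two resulting linear combinations of standard expressions. The displayed diagram contains a stack of three over-crosses on the left two strands, together with a cup and dangling caps, arranged so that either the sliding relation or the braiding relation can be applied first, and in each case the outcome will involve auxiliary rewrites via twisting, untwisting, straightening, and delooping.

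First I would apply the sliding relation $\di{0.5\sca}{ \ocr \dli \cu[\br][0] \li[2*\br][0] } =d\; \di{0.5\sca}{\culi} + e\; \di{0.5\sca}{\ccr} +f\; \di{0.5\sca}{\licu }$ to the bottom crossing--cup pair, which expands the diagram into three branches indexed by the coefficients $d$, $e$, $f$. In the $e$-branch the configuration $\di{0.5\sca}{\ccr}$ appears under the remaining two over-crosses, and I can continue simplifying using braiding to move crossings past the cap and then untwisting; in the $d$- and $f$-branches one obtains configurations that reduce via twisting (introducing $a$, $b$, $c$) and the delooping/straightening relations (introducing $\parq$, $\parS'$). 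Collecting coefficients of the three standard basis diagrams $\di{0.5\sca}{\culi}$, $\di{0.5\sca}{\ccr}$, $\di{0.5\sca}{\licu}$ gives one expression. Then I would rewrite the same diagram starting instead from the top two over-crosses: using the braid relation to slide the upper crossing down past the middle one, or directly applying sliding at a different height, produces a second linear combination involving $D$, $E$, $F$ (via the pulling relation, whose left-hand side is visible as $\di{\sca}{\dli[2*\br][0] \cu \ocr[\br][0] \li \li[2*\br][\h] \ocr[0][\h] }$). Equating the coefficients of $\di{0.5\sca}{\culi}$, $\di{0.5\sca}{\ccr}$, $\di{0.5\sca}{\licu}$ in the two expansions yields the six stated equations $f(b-f)=0$, $c(f+\epsilon ed)=0$, $d(d+eb)=0$, $c\parS'ed+df\lambda+efD=0$, $efE=0$, $f(a-eF)=0$, after substituting the already-derived identities $d=-ef'$, $d'=-e'f$ and the expressions for $D,E,F$ from Lemma \ref{lemma expression for DEF}. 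The mirror-image computation on $\di{0.5\sca,-0.5\sca}{\dli \ucr \dli[\br][0] \cu[2*\br][0] \li[2*\br][0] \li[3*\br][0] \li[0][\h] \ucr[\br][\h] \li[3*\br][\h] \ucr[0][2*\h] \li[3*\br][2*\h] \li[2*\br][2*\h]}$, using the under-cross (whose expansion is given by Lemma \ref{Lemma invertible}) and the upside-down sliding and upside-down pulling relations in place of their counterparts, yields the primed equations by the same bookkeeping.

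The main obstacle I expect is purely combinatorial: keeping track of the several intermediate branches and the signs $\epsilon$ that appear each time the super interchange law moves an odd cup or cap past another odd morphism, and making sure that in each branch one reduces \emph{all the way} to a linear combination of the three standard expressions rather than stopping at a non-standard intermediate diagram. The calculation is long but each individual step is a routine application of one defining relation, so the difficulty is organizational rather than conceptual. Since these are exactly the kind of bookkeeping computations the paper has deliberately relegated to the appendix, I would carry out the two expansions carefully, record the coefficient of each of the three basis diagrams, and simplify the resulting equalities using the relations among parameters established in the preceding lemmas; the six displayed equations (and their primed analogues) then fall out directly.
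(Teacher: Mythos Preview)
Your overall strategy---rewrite the diagram in two ways (one beginning with sliding, one beginning with the braid relation) and compare coefficients---is exactly what the paper does. But there is a concrete error in your setup that would derail the computation if you tried to carry it out as described.

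The diagram in question is a morphism in $\Hom_{\cC}(2,4)$, not $\Hom_{\cC}(1,3)$. It has two bottom dots (the two $\dli$ strands at positions $0$ and $\br$) and four top dots; the cup sits at the bottom right on strands $2\br,3\br$. Consequently the standard expressions you must compare against are $(2,4)$-Brauer diagrams, of which there are fifteen, not the three diagrams $\di{0.5\sca}{\culi}$, $\di{0.5\sca}{\ccr}$, $\di{0.5\sca}{\licu}$ you name. In the paper's computation each of the two rewritings produces a linear combination of nine or ten distinct $(2,4)$-diagrams; several terms agree between the two expansions and cancel, while the remaining mismatches yield the six equations. For instance, $d(d+eb)=0$ arises because a certain diagram appears with coefficient $d^2+edb$ in the sliding expansion but a \emph{different} diagram carries that same coefficient in the braid expansion, so both coefficients must vanish separately; $f(b-f)=0$ and $f(a-eF)=0$ come from the \emph{same} diagram appearing on both sides with coefficients $fb$ versus $f^2$ and $fa$ versus $efF$ respectively.

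Two smaller points: the three over-crosses are not all on the leftmost pair of strands (the middle one sits on strands $\br,2\br$, which is precisely what lets the braid relation $s_1s_2s_1=s_2s_1s_2$ apply); and the six equations are obtained \emph{before} substituting $d=-ef'$ or the expressions for $D,E,F$---those substitutions are what the paper does \emph{after} the lemma to show most of the equations are redundant. Also, the delooping parameter $\parq$ does not actually enter this particular computation. If you redo the bookkeeping in $\Hom_{\cC}(2,4)$, the argument goes through exactly as the paper has it.
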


\begin{proof}
Rewriting $\di{0.5\sca}{\dli \ocr \dli[\br][0] \cu[2*\br][\h]   \li[0][\h]  \ocr[\br][\h] \li[3*\br][\h] \ocr[0][2*\h] \li[3*\br][2*\h] \li[2*\br][2*\h]}$ using sliding leads eventually to
\begin{align*}
&(d^2+edb) \; \di{0.5\sca}{\cu[0][\h] \ocr[2*\br][0]} + de \; \di{0.5\sca}{\cu \ocr[\br][0] \ocr[2*\br][-\h] \li \li[3*\br][0]} + df \; \di{0.5\sca}{\cu \ocr[0][-2*\h] \draw (-\br,0) to (0,-\h); \draw (2*\br,0) to (\br,-\h);} + eda \; \di{0.5\sca}{\cu \lili[2*\br][-\h]} \\
+& e^2 \; \di{0.5\sca}{\cu[0][\h] \ocr[2*\br][0] \li[0][\h] \ocr[\br][\h] \li[3*\br][\h] \li[0][2*\h] \li[\br][2*\h] \ocr[2*\br][2*\h]}+ef \; \di{0.5\sca}{\cu \ocr[\br][0] \li  \ocr[0][-2*\h] \draw (-\br,0) to (0,-\h); \draw (2*\br,0) to (\br,-\h); \li[-\br][0]} +fa \;\di{0.5\sca}{\lili \cu[2*\br][\h] } +fb \di{0.5\sca}{\ocr \cu[2*\br][\h]} 
+ (\epsilon edc+ fc) \; \di{0.5\sca}{\cu[2*\br][0] \dli[0][0] \dli[\br][0] \cc[0][-1.4\h]},
\end{align*}
while rewriting using the braid relation leads to
\begin{align*}
& (d^2+edb) \; \di{0.5\sca}{\cu \li \ocr[\br][0] \li[3*\br][0]} + de \; \di{0.5\sca}{\cu \ocr[\br][0] \ocr[2*\br][-\h] \li \li[3*\br][0]} + df \; \di{0.5\sca}{\cu \ocr[0][-2*\h] \draw (-\br,0) to (0,-\h); \draw (2*\br,0) to (\br,-\h);} 
+ eda \; \di{0.5\sca}{\cu \lili[2*\br][-\h]} 
+ e^2 \; \di{0.5\sca}{\cu[0][\h] \ocr[2*\br][0] \li[0][\h] \ocr[\br][\h] \li[3*\br][\h] \li[0][2*\h] \li[\br][2*\h] \ocr[2*\br][2*\h]}
\\
+& ef \; \di{0.5\sca}{\cu \ocr[\br][0] \li  \ocr[0][-2*\h] \draw (-\br,0) to (0,-\h); \draw (2*\br,0) to (\br,-\h); \li[-\br][0]} +efF \;\di{0.5\sca}{\lili \cu[2*\br][\h] } +f^2 \di{0.5\sca}{\ocr \cu[2*\br][\h]} 
+ (dec\parS '+df\lambda+efD)\; \di{0.5\sca}{\li \cu[\br][\h] \li[3*\br][0]} + efE\; \di{0.5\sca}{\li \ccr[\br][\h]}.
\end{align*}
Comparing these two expressions gives us the first part of the lemma. The calculation for the flipped diagram is similar. 
\end{proof}
Using $d=-ef'$, $d'=-e'f$ and the expression for $D, E, F, D', E'$, and $F'$ from Lemma \ref{lemma expression for DEF} the equations of the previous lemma are equivalent to
\begin{align*}
f(b-f)&=0,  \quad  c(f-e^2\epsilon f')=0, \quad e^2f' (b-f')=0,   \\
c\parS (-e^2\epsilon f' +f)-eff'(b-f)&=0,   \quad   ef(b-f)=0,  \quad  f^2(b-f)=0,
\end{align*}
and
\begin{align*}
f'(b-f')&=0,  \quad   c(f'-e'^2\epsilon f)=0,  \quad e'^2f(b-f)=0,   \\
c\parS '(-e'^2\epsilon f +f')-e'f'f(b-f')&=0,   \quad  e'f'(b-f')=0,  \quad  f'^2(b-f')=0.
\end{align*}

Note that $f(b-f)=0$, $f'(b-f')=0$, $c(f-e^2\epsilon f')=0$ and $c(f'-e'^2\epsilon f)=0$ imply that the other equations hold. 

If we assume $\di{0.5\sca}{\ocr}$ is invertible, then $\lambda$, $\lambda'$ and $a$ are non-zero by Lemma \ref{Lemma invertible}. Combining $eF=(b-f)f+a$ with $f(b-f)=0$, we can conclude that $e$ and $F$ are also non-zero. From Lemma \ref{lemma expression for DEF}, we then also infer that $E = b-f$.

\begin{lemma} \label{Lemma c DEF zero}
Rewriting  $\di{0.5\sca}{\ocr \li[2*\br][0] \ocr[0][\h] \ocr[0][3*\h] \ocr[\br][2*\h] \li[2*\br][\h] \li[2*\br][3*\h] \li[0][2*\h] }$
 and $ 
\di{0.5\sca,-0.5\sca}{\ucr \li[2*\br][0] \ucr[0][\h] \ucr[0][3*\h] \ucr[\br][2*\h] \li[2*\br][\h] \li[2*\br][3*\h] \li[0][2*\h] } $  leads to 
\begin{align*}
cD&=cD'=0, & c(d+eb)&=c(d'+e'b) =0,\\
cE &= cE'=0, & c(ec\parS '+f\lambda) &= c(e'c\parS +f'\lambda')=0,\\
cF&=ce'a,  & cF'&=cea.
\end{align*}
\end{lemma}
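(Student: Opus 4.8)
The plan is to rewrite the diagram
$$
D_1 \;:=\; \di{0.5\sca}{\ocr \li[2*\br][0] \ocr[0][\h] \ocr[0][3*\h] \ocr[\br][2*\h] \li[2*\br][\h] \li[2*\br][3*\h] \li[0][2*\h] },
$$
which read from bottom to top is the three-strand word $s_1 s_1 s_2 s_1$, in two different ways and to equate the results. The overlap making this a genuine confluence check sits at the second crossing: the \emph{Twisting} relation applies to the bottom pair of strand-$(1,2)$ crossings, while the \emph{Braiding} relation applies to the strand-$(1,2)$, strand-$(2,3)$, strand-$(1,2)$ crossings in rows two through four; both use that second crossing.

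First I would apply Twisting to the bottom two crossings $\di{0.5\sca}{\ocr \ocr[0][\h]}$, obtaining $D_1 = a\,(s_2 s_1) + b\,(s_1 s_2 s_1) + c\,\Theta_1$, where $\Theta_1$ is $\di{0.5\sca}{\cc}$ on strands $1,2$ with $s_2$ then $s_1$ stacked above it. The term $a\,(s_2 s_1)$ is already a standard expression, and $b\,(s_1 s_2 s_1)$ becomes the standard expression $b\,(s_2 s_1 s_2)$ after one Braiding move, so these two contributions are fixed. For $\Theta_1$: the cap of $\di{0.5\sca}{\cc}$ is already the standard cap $J_0^{1,1}$, so it remains to bring the part above it into standard form, which is done with \emph{Sliding} / \emph{Pulling}, together with upside-down untwisting, Delooping and straightening on the lower-order correction terms (and, where needed, the identity $\di{0.5\sca}{\ocr}=a\,\di{0.5\sca}{\ucr}+b\,\di{0.5\sca}{\lili}+\tfrac{c}{\lambda}\,\di{0.5\sca}{\cc}$ from the proof of Lemma \ref{Lemma invertible}). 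The outcome is a linear combination of standard cup–cap diagrams with coefficients polynomial in $d,e,f,D,E,F,\lambda,\parS',\delta$.

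For the second rewriting, two Braiding moves turn $s_1 s_1 s_2 s_1$ into $s_2 s_1 s_2 s_2$ (first to rows two–four, then to rows one–three); since Braiding changes neither the underlying tangle nor the crossing number this is a true equality of morphisms. Applying Twisting to the top pair of strand-$(2,3)$ crossings now gives $D_1 = a\,(s_2 s_1) + b\,(s_2 s_1 s_2) + c\,\Theta_2$ with the same $a$- and $b$-terms as before, and $\Theta_2$ is reduced to standard form by the mirror version of the above procedure, producing coefficients in $d',e',f',D',E',F',\lambda',\parS$. By Theorem \ref{theorem simplifying} both rewritings land in linear combinations of the same standard Brauer diagrams, so their coefficients must coincide; the $a$- and $b$-terms cancel, leaving $c$ times the difference of the reductions of $\Theta_1$ and $\Theta_2$, which must vanish. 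Matching the coefficient of each standard diagram and then substituting the already-established identities — $d=-ef'$, $d'=-e'f$, the expressions $eD=c\parS+(b-\lambda-f)d$, $eE=e(b-f)$, $eF=(b-f)f+a$ and their primed analogues from Lemma \ref{lemma expression for DEF}, the quadratic relation $\lambda^2=a+b\lambda+c\delta$ of Lemma \ref{Lemma a,b,c}, and the $f,f',b,c$-relations derived just above — collapses the system to $cD=0$, $c(d+eb)=0$, $cE=0$, $c(ec\parS'+f\lambda)=0$ and $cF=ce'a$. Running the identical argument with every $\di{0.5\sca}{\ocr}$ replaced by its inverse $\di{0.5\sca}{\ucr}$, which satisfies the mirror relations (Lemma \ref{Lemma invertible}), yields the primed equations $cD'=cE'=0$, $c(d'+e'b)=0$, $c(e'c\parS+f'\lambda')=0$ and $cF'=cea$.

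The main obstacle is the reduction of $\Theta_1$ and $\Theta_2$ to standard form: each requires a short but branching chain of Sliding/Pulling applications with several lower-order correction terms that must all be tracked, and the decisive step is recognizing that equating the two normal forms and feeding in the earlier relations reproduces \emph{exactly} the displayed six identities rather than something weaker or stronger. Everything else is formal, guaranteed by Theorem \ref{theorem simplifying} and the basis property.
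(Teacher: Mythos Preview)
Your overall strategy is correct and matches the paper's: twist the bottom $s_1s_1$ for one rewriting, braid twice and twist the top $s_2s_2$ for the other, then compare. However, your description of the reductions is inaccurate in ways that matter. The diagram $\Theta_1$ (cap on $1,2$ with $\text{cup}_{1,2},s_2,s_1$ on top) is reduced by a \emph{single} application of Pulling, which immediately yields the three standard diagrams with coefficients $cD,cE,cF$; there is no branching, no Sliding, and no lower-order corrections to track. The diagram $\Theta_2$ ($s_2,s_1,\text{cap}_{2,3},\text{cup}_{2,3}$) is handled not by upside-down Pulling but by upside-down \emph{Sliding} on $s_1,\text{cap}_{2,3}$ (giving $d',e',f'$, not $D',E',F'$), followed by twisting, upside-down untwisting, and straightening on the three resulting terms; the coefficients that appear are $c(d'+e'b)$, $ce'a$, and $c(e'c\parS+f'\lambda')$. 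Comparing the two sides then gives the five equations $cD=cE=0$, $cF=ce'a$, $c(d'+e'b)=0$, $c(e'c\parS+f'\lambda')=0$ \emph{directly}---no substitution of earlier identities is required. Note that the primed/unprimed attribution is the opposite of what you wrote: the unflipped diagram produces $D,E,F$ on one side and $d',e',f',\lambda',\parS$ on the other; the flipped diagram then supplies the remaining equations $cD'=cE'=0$, $cF'=cea$, $c(d+eb)=0$, $c(ec\parS'+f\lambda)=0$.
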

\begin{proof}
First applying twisting and then braiding and pulling gives us
\[
\di{0.5\sca}{\ocr \li[2*\br][0] \ocr[0][\h] \ocr[0][3*\h] \ocr[\br][2*\h] \li[2*\br][\h] \li[2*\br][3*\h] \li[0][2*\h] } = a \; \di{0.5\sca}{\li \ocr[\br][0] \ocr[0][\h] \li[2*\br][\h]}+b \di{0.5\sca}{\ocr[\br][0] \li[0][0] \li[2*\br][\h] \ocr[0][\h] \ocr[\br][2*\h] \li[0][2*\h]}\;+ cD\; \di{0.5\sca}{\ca \cu[0][\h] \li[2*\br][0]}+cE \; \di{0.5\sca}{\ca \cu[0][\h] \li[2*\br][0] \li[0][\h] \ocr[\br][\h]}+cF \; \di{0.5\sca}{\ca \lin[0][\h][2*\br][0] \cu[\br][\h]}.
\]
On the other hand, we can first apply braiding twice and then twisting to obtain
\[
\di{0.5\sca}{\ocr \li[2*\br][0] \ocr[0][\h] \ocr[0][3*\h] \ocr[\br][2*\h] \li[2*\br][\h] \li[2*\br][3*\h] \li[0][2*\h] } =  a\; \di{0.5\sca}{\li \ocr[\br][0] \ocr[0][\h] \li[2*\br][\h]}+b \di{0.5\sca}{\ocr[\br][0] \li[0][0] \li[2*\br][\h] \ocr[0][\h] \ocr[\br][2*\h] \li[0][2*\h]}\;+ c \di{0.5\sca}{\li \ocr[\br][0] \ocr[0][\h] \li[2*\br][\h] \li[0][2*\h] \cu[\br][3*\h] \ca[\br][2*\h]}.
\]
This we can rewrite by first using upside-down sliding and then twisting, untwisting and straightening into
\[
 a \;\di{0.5\sca}{\li \ocr[\br][0] \ocr[0][\h] \li[2*\br][\h]}+b \di{0.5\sca}{\ocr[\br][0] \li[0][0] \li[2*\br][\h] \ocr[0][\h] \ocr[\br][2*\h] \li[0][2*\h]}\;+ c(d'+e'b) \di{0.5\sca}{\li \ocr[\br][0] \ca[0][\h] \lin[2*\br][\h][0][2*\h] \cu[\br][2*\h]} + ce'a \di{0.5\sca}{\ca \lin[2*\br][0][0][\h] \cu[\br][\h]}+c(f'\lambda'+e'c\parS ) \di{0.5\sca}{\li  \cc[\br][0]}.
\]
Comparing the two different ways of rewriting we obtain the lemma.
\end{proof}
Note that $eF=a$ and $e'F'=a$ implies that the last two equations are equivalent with $c(ee'-1)=0$ if $e$ and $e'$ are non-zero. 
If we substitute $d=-ef'$ in $c(d+eb)$, we get $ce(b-f')$, which is equivalent to $ceE'=0$. Thus $cE'=cE=0$ implies $c(d+eb)=c(d'+e'b) =0$.

\begin{lemma} \label{Lemma E D}
Rewriting  $\di{0.5\sca}{\ocr[0][0] \cu \ocr[0][2*\h] \ocr[\br][\h] \li[0][\h] \li[2*\br][0] \li[2*\br][2*\h] \dli[2*\br][0]}$
 and $ 
\di{0.5\sca,-0.5\sca}{\ucr[0][0] \cu \ucr[0][2*\h] \ucr[\br][\h] \li[0][\h] \li[2*\br][0] \li[2*\br][2*\h] \dli[2*\br][0]} $  leads to 
\begin{align*}
aE &= \lambda D, &  aE' &= \lambda' D', \\
(b-\lambda)E + D &=0, & (b-\lambda')E' + D' &=0, \\
Ec\parS '&=0, & E'c\parS &=0. 
\end{align*}
\end{lemma}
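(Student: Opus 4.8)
The plan is to rewrite the overlapping diagram $\di{0.5\sca}{\ocr[0][0] \cu \ocr[0][2*\h] \ocr[\br][\h] \li[0][\h] \li[2*\br][0] \li[2*\br][2*\h] \dli[2*\br][0]}$ in two genuinely different orders and compare the resulting linear combinations of standard expressions in $\Hom_\cC(1,3)$, whose basis is $\{\di{0.5\sca}{\culi},\di{0.5\sca}{\ccr},\di{0.5\sca}{\licu}\}$. First I would apply the sliding relation to the bottom occurrence of the over-cross together with the cup, producing a linear combination with coefficients $d,e,f$; then on each resulting term I continue simplifying using pulling, twisting, delooping, untwisting and straightening until I land on standard expressions. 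The other way I would first use the braid relation (or push the cup up past the appropriate cross using pulling) to reorganise the three crosses, and only afterwards invoke sliding; the key point is that the two reduction paths must agree coefficient by coefficient in the three basis vectors.

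Concretely, the first reduction will generate $D,E,F$ as coefficients coming from the pulling relation applied at the top, while the second reduction will re-express the same diagram so that the $a,b$ from twisting combine with $d,e,f$ from sliding and $\lambda,\lambda'$ from the untwisting relations. Matching the $\di{0.5\sca}{\culi}$-coefficient should yield a relation linking $D$, $E$, $\lambda$, and $a$; matching the $\di{0.5\sca}{\ccr}$-coefficient a relation linking $E$, $\lambda$, $b$, $D$; and matching the $\di{0.5\sca}{\licu}$-coefficient a relation involving $E$, $c$, $\parS'$. After substituting the already-derived identities $eE=e(b-f)$, $eF=(b-f)f+a$ (so $E=b-f$, $eF=a$), $d=-ef'$, and the expressions from Lemma \ref{lemma expression for DEF}, these collapse to the clean form $aE=\lambda D$, $(b-\lambda)E+D=0$, $Ec\parS'=0$, together with the accented versions obtained identically from the upside-down diagram $\di{0.5\sca,-0.5\sca}{\ucr[0][0] \cu \ucr[0][2*\h] \ucr[\br][\h] \li[0][\h] \li[2*\br][0] \li[2*\br][2*\h] \dli[2*\br][0]}$ (where one also uses $\parS'=\epsilon\parS$ from Lemma \ref{Lemma S}).

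I expect the main obstacle to be the bookkeeping of signs and the order in which relations may legally be applied: the super-interchange law introduces factors of $\epsilon$ whenever a cup or cap slides past another odd strand, and one must be careful that both reduction paths terminate (which Theorem \ref{theorem simplifying} guarantees in principle, but here one needs the intermediate terms to be genuinely simplifiable without circular appeals to the relation being tested). A secondary subtlety is that some intermediate terms will not yet be standard expressions — e.g. a cup sitting below a cross — and these require an extra application of pulling or upside-down sliding before the basis comparison is legitimate; I would handle this by always reducing fully to the three-element basis before equating coefficients. Once the bookkeeping is under control the algebra is short, and the stated six equations drop out directly from the three scalar comparisons and their accented mirror images.
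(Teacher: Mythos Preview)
Your overall strategy matches the paper's: reduce the diagram along two paths and compare coefficients in the basis $\{\di{0.5\sca}{\culi},\di{0.5\sca}{\ccr},\di{0.5\sca}{\licu}\}$ of $\Hom_\cC(1,3)$. However, you have mis-identified the relations that actually fire. The bottom over-cross sits \emph{directly} on top of the cup (both at positions $0,1$), so the applicable relation is \emph{untwisting}, yielding a scalar $\lambda$, not sliding with its coefficients $d,e,f$. The paper's first path is simply: untwist the bottom cross to get $\lambda$, then recognise the remaining $s_1 s_2 (U\otimes I)$ as the pulling configuration, giving $\lambda D\,\di{0.5\sca}{\culi}+\lambda E\,\di{0.5\sca}{\ccr}+\lambda F\,\di{0.5\sca}{\licu}$.

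For the second path the paper applies the braid relation $s_1 s_2 s_1 = s_2 s_1 s_2$, then again applies pulling to the bottom $s_1 s_2 (U\otimes I)$, leaving a single $s_2$ on top of $D\,\di{0.5\sca}{\culi}+E\,\di{0.5\sca}{\ccr}+F\,\di{0.5\sca}{\licu}$. Simplifying $s_2$ against each basis vector uses only twisting (for $s_2\cdot\di{0.5\sca}{\ccr}$, producing $a,b,c\parS'$), untwisting (for $s_2\cdot\di{0.5\sca}{\licu}$, producing $\lambda$), and the observation $s_2\cdot\di{0.5\sca}{\culi}=\di{0.5\sca}{\ccr}$. Sliding and the parameters $d,e,f$ never enter, and consequently there is no need to back-substitute the identities from Lemma~\ref{lemma expression for DEF}: the three equations $aE=\lambda D$, $(b-\lambda)E+D=0$, $Ec\parS'=0$ drop out immediately from the coefficient comparison. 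Your concern about $\epsilon$-signs is also unfounded here, since no two odd morphisms are interchanged in either reduction of the unprimed diagram. The primed equations do follow, as you say, from the vertically flipped diagram by the same argument.
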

\begin{proof}
Rewriting $\di{0.5\sca}{\ocr[0][0] \cu \ocr[0][2*\h] \ocr[\br][\h] \li[0][\h] \li[2*\br][0] \li[2*\br][2*\h] \dli[2*\br][0]}$ using untwisting and pulling gives us \[\lambda D  \di{0.5\sca}{\culi}+\lambda E \di{0.5\sca}{\ccr} +\lambda F \di{0.5\sca}{\licu}, \]
while applying the braid relation and pulling leads to
\[
D  \di{0.5\sca}{\culi \li[0][0] \ocr[\br][0]}+E \di{0.5\sca}{\ccr \li[0][0] \ocr[\br][0]} +\di{0.5\sca}{\licu \li[0][0] \ocr[\br][0]}.
\]
The last equation can be simplified using twisting and untwisting to
\[
E a \di{0.5\sca}{\culi}+(Eb+D) \di{0.5\sca}{\ccr} +(Ec\parS '+\lambda F) \di{0.5\sca}{\licu}.
\]
Comparing these two results gives us the lemma.
\end{proof}
If we multiply  $(b-\lambda)E + D =0$ with $\lambda$ and use $(a+b\lambda+c\delta)=\lambda^2$ and $aE=\lambda D$, we conclude that for $\lambda$ non-zero, the equation $(b-\lambda)E + D =0$ holds if $c\delta E=0$ holds. This in turn follows from Lemma \ref{Lemma c DEF zero} which says that we already have the stronger $cE=0$.
Hence, the first equation in Lemma \ref{Lemma E D} combined with Lemma \ref{Lemma a,b,c}  and Lemma \ref{Lemma c DEF zero} implies the last two equations of Lemma \ref{Lemma E D}. 

To summarize, we found that rewriting the diagrams in Lemma \ref{Lemma S} to Lemma \ref{Lemma E D} leads to the following equations if $\di{0.5\sca}{\ocr}$ is invertible: 
\begin{equation}\label{First equation}
\begin{aligned}
\parS '&=\epsilon \parS \\
\parS ee'&=\parS \\
d&=-ef' & d'&=-e'f \\
E&=b-f &
E'&=b-f' \\
F&= \frac{a}{e} &
F'&=\frac{a}{e'}\\
D&= \frac{c\parS }{e}-(b-\lambda-f)f' &
D'&= \frac{c\parS '}{e'}-(b-\lambda'-f')f\\
f(b-f) &= 0 & f'(b-f') &= 0  \\
c(f-e^2\epsilon f')&=0 & c(f'-e'^2\epsilon f)&=0 \\
D&= \frac{aE}{\lambda} & D'& = \frac{aE'}{\lambda'} \\
cE&=0 & cE'&=0 \\
cee'&=c \\
c(ec\parS '+f\lambda)&=0 & c(e'c\parS +f'\lambda')&=0.
\end{aligned}
\end{equation}
If $\lambda=\lambda'$, we have
\begin{align}
a=\lambda^2-b\lambda-c\delta, 
\end{align}
while if $\lambda\not=\lambda'$, we have
\begin{align}
a=-\lambda'\lambda, \quad \lambda'=b-\lambda, \quad \delta=c=0.
\end{align}
We will make frequent use of these equations to simplify the equations we derive for the other rewritable diagrams.
From now on, we will also no longer give proofs of the rewriting lemmas. They can be obtained in a similar manner to the proofs of Lemma \ref{Lemma S} to Lemma \ref{Lemma E D}.

\begin{lemma}
Rewriting  $\di{0.5\sca}{\ocr \li[2*\br][0] \li[0][\h] \ocr[\br][\h] \li[2*\br][2*\h] \ocr[0][2*\h] \li[0][3*\h] \ocr[\br][3*\h] \li[2*\br][4*\h] \ocr[0][4*\h]  } $  leads to 
\begin{align*}
cead'&=ce'ad, \\
ca(ee'-1)&=c(d(d'+e'b))= c(d'(d+eb)),\\
ceaf'&=c(bea+d(e'c\parS +f'\lambda')), \\
ce'af &=  c(be'a+d'(ec\parS '+f\lambda)),\\
ce(d'+e'b)&=ce'(d+eb), \\
 c(d+eb)f' &= c(bd + b^2e +e(e'c\parS +f'\lambda')), \\
c(d'+e'b)f&= c(bd'+b^2e' +e'(ec\parS '+f\lambda)), \\
 c(bc e'\parS +bf'\lambda' +(ec\parS '+f\lambda)f')&= c(bec\parS '+bf\lambda + f(e'c\parS +f'\lambda')).
\end{align*}
\end{lemma}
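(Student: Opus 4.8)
The given diagram is the braid word of five over-crossings $\di{0.5\sca}{\ocr}$ on three strands, alternating between the two elementary positions; writing $H_1,H_2$ for the over-crossing in its two positions inside three strands, it is the morphism $H_1H_2H_1H_2H_1\in\Hom_\cC(3,3)$. It carries a genuine self-overlap of the braiding relation: the braid relation $H_1H_2H_1=H_2H_1H_2$ can be applied to the bottom three crossings or to the top three crossings, and these two occurrences share the middle crossing (the part highlighted in the list of rewritable diagrams). The plan is to reduce the diagram to a linear combination of standard expressions along each of the two choices and to equate the results.

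First I would carry out the two reductions. Applying the braid relation to the bottom triple turns $H_1H_2H_1H_2H_1$ into $H_2H_1H_2^2H_1$, and applying it to the top triple turns it into $H_1H_2^2H_1H_2$. In either case the square $\di{0.5\sca}{\ocr \ocr[0][\h]}$ that appears is removed by twisting, $\di{0.5\sca}{\ocr \ocr[0][\h]}=a\,\di{0.5\sca}{\lili}+b\,\di{0.5\sca}{\ocr}+c\,\di{0.5\sca}{\cc}$, which has to be used twice, since a second square is created; the resulting summands with no cup or cap are brought to a reduced word using only the braid relation, while the summands containing a cup--cap $\di{0.5\sca}{\cc}$ (write $E_i$ for this diagram in position $i$) are pushed into standard position exactly as in Lemmas~\ref{Lemma S}--\ref{Lemma E D}, i.e.\ by repeatedly invoking sliding, upside-down sliding, pulling, upside-down pulling, untwisting, delooping and the loop relation $\di{0.5\sca}{\cu \ca}=\delta$. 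Each reduction ends as a linear combination of the $(3,3)$-Brauer diagrams written in their standard expression.

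Then I would compare the two linear combinations coefficient by coefficient in $\Hom_\cC(3,3)$. Among the fifteen $(3,3)$-Brauer diagrams, the six consisting solely of propagating lines have coefficients that are polynomials in $a$, $b$ and the braid-word data, and that are the same in both reductions purely because the braid relation holds in $\cC$; these contribute nothing new. Since a cup or a cap enters a reduction only through the $c\,\di{0.5\sca}{\cc}$ summand of twisting, each of the nine remaining diagrams (each having exactly one cap and one cup) has a coefficient carrying at least one factor $c$, and equating its two coefficients yields the eight displayed equations, one comparison being a trivial identity. Along the way one uses Equation~\eqref{First equation} and the value $a=\lambda^2-b\lambda-c\delta$ (or, when $\lambda\neq\lambda'$, the alternative $a=-\lambda'\lambda$, $\lambda'=b-\lambda$, $c=\delta=0$) to recognise the coefficients in the stated form; the equations themselves are genuinely new constraints, not consequences of \eqref{First equation}.

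I expect the only real difficulty to be organisational: faithfully tracking how each $E$-term produced by twisting --- namely $ac\,H_2E_1$, $bc\,E_2H_1H_2$, $c\,H_2H_1E_2H_1$ on the first side and the mirror trio $ac\,E_1H_2$, $bc\,E_1H_2H_1$, $c\,H_1E_2H_1H_2$ on the second --- slides past the remaining two or three crossings, each slide importing factors $d,e,f$ or $d',e',f'$ from sliding and pulling and occasionally a further $c\parS$ (from the coefficients $D,D'$) or a $\delta$, and identifying the standard expression it finally lands in, so that the bookkeeping of the nine cup--cap basis coefficients matches on the two sides. Once the two reductions are organised consistently, reading off the eight equations is mechanical, just as in Lemma~\ref{Lemma S} through Lemma~\ref{Lemma E D}.
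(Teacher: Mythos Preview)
Your approach is correct and is precisely the method the paper has in mind: the paper omits the proof of this lemma, saying only that it "can be obtained in a similar manner" to Lemmas~\ref{Lemma S}--\ref{Lemma E D}, and your description---apply the braid relation to the bottom or top triple, expand the resulting squares by twisting, observe that the pure $H_1,H_2$ words agree by the braid relation, and push each $c$-term into standard form via (upside-down) sliding and (upside-down) pulling---is exactly that. Your list of the six $c$-weighted terms ($ac\,H_2E_1$, $bc\,E_2H_1H_2$, $c\,H_2H_1E_2H_1$ on one side and their mirrors on the other) is a valid reduction; a different choice of second braid application gives the equally valid trio $ac\,H_2E_1$, $bc\,H_1H_2E_1$, $c\,H_2H_1E_2H_1$, and either leads to equations equivalent to those displayed.

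Two small corrections to your commentary (neither affects the proof). First, the lemma actually records nine equations (line~2 encodes two), one for each of the nine cup--cap basis diagrams in $\Hom_\cC(3,3)$; none of the nine comparisons is a trivial identity. Second, your remark that the equations are ``genuinely new constraints, not consequences of~\eqref{First equation}'' is not quite right: as the paper notes immediately after the lemma, all but the first equation follow from the relations already established in~\eqref{First equation} (in particular from $d=-ef'$, $d'=-e'f$, $c(b-f)=c(b-f')=0$, $c(ee'-1)=0$, and $c(ec\parS'+f\lambda)=c(e'c\parS+f'\lambda')=0$), and the single new constraint this diagram contributes is $cf=cf'$, obtained from $cead'=ce'ad$.
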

Note that using $d=-ef'$, $d'=-ef$, $c(b-f')=c(b-f)=0$, $c(ee'-1)=0$,  $c(ec\parS '+f\lambda)=0$ and  $c(e'c\parS +f'\lambda')=0$, we can easily verify that all equations but the first are always trivially satisfied. 
Using the fact that $a$, $e$ and $e'$ are non-zero, the first equation is equivalent to 
\begin{align}
cf=cf'. 
\end{align}

\begin{lemma}
Rewriting  $\di{0.5\sca}{\cu \li \ocr[\br][0] \ocr[0][\h] \ocr[0][2*\h]   \li[2*\br][\h] \li[2*\br][2*\h] \dli[2*\br][0] } $
 and $ 
\di{0.5\sca,-0.5\sca}{\cu \li \ucr[\br][0] \ucr[0][\h] \ucr[0][2*\h]   \li[2*\br][\h] \li[2*\br][2*\h] \dli[2*\br][0] }$  leads to 
\begin{align*}
c\parq  &= D(\lambda+E-b) + Fd =D'(\lambda'+E'-b) + F'd', \\
a &= E(E-b)+Fe = E'(E'-b)+F'e,' \\
0&=F(E+f-b)=F'(E'+f'-b).
\end{align*}
\end{lemma}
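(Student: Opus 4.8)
The plan is to treat this like the confluence lemmas already proved (cf.\ Lemma~\ref{Lemma S}): the displayed diagram contains the left-hand sides of two defining relations as overlapping subdiagrams, so simplifying it in the two corresponding ways and equating the results in the basis $\{\di{0.5\sca}{\culi},\di{0.5\sca}{\ccr},\di{0.5\sca}{\licu}\}$ of $\Hom_{\cC}(1,3)$ produces the asserted identities. Concretely, the first diagram is obtained by stacking, from bottom to top, the cup $U\otimes\mathbb{1}$, then the crossing $s_2$ (acting on the two right strands), then two copies of the crossing $s_1$ (acting on the two left strands). Its lower part (cup, then $s_2$, then one copy of $s_1$) is precisely the left-hand side of the pulling relation, while its upper part (the two copies of $s_1$) is the left-hand side of the twisting relation; these two subdiagrams overlap in the middle crossing, so confluence forces an equation on the parameters.

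For the first reduction, apply pulling to the lower part, rewriting the diagram as the remaining $s_1$ composed on top of $D\,\di{0.5\sca}{\culi}+E\,\di{0.5\sca}{\ccr}+F\,\di{0.5\sca}{\licu}$, and then compose $s_1$ with each summand: composing $s_1$ with $\di{0.5\sca}{\culi}$ gives $\lambda\,\di{0.5\sca}{\culi}$ by untwisting; composing $s_1$ with the standard expression $\di{0.5\sca}{\ccr}$ (a cup followed by $s_2$) reproduces the left-hand side of pulling, hence equals $D\,\di{0.5\sca}{\culi}+E\,\di{0.5\sca}{\ccr}+F\,\di{0.5\sca}{\licu}$; and composing $s_1$ with $\di{0.5\sca}{\licu}$ (a strand next to a cup) reproduces the left-hand side of sliding, hence equals $d\,\di{0.5\sca}{\culi}+e\,\di{0.5\sca}{\ccr}+f\,\di{0.5\sca}{\licu}$. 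This expresses the diagram as $(D(\lambda+E)+Fd)\,\di{0.5\sca}{\culi}+(E^2+Fe)\,\di{0.5\sca}{\ccr}+F(E+f)\,\di{0.5\sca}{\licu}$. For the second reduction, apply twisting to the upper part, splitting the diagram into an $a$-term, which is exactly the standard expression $\di{0.5\sca}{\ccr}$; a $b$-term, which is once more the left-hand side of pulling and hence equals $D\,\di{0.5\sca}{\culi}+E\,\di{0.5\sca}{\ccr}+F\,\di{0.5\sca}{\licu}$; and a $c$-term in which the subdiagram formed by the cup, the crossing $s_2$ and the cap half of $\di{0.5\sca}{\cc}$ is the left-hand side of the delooping relation, so that term collapses to $\parq\,\di{0.5\sca}{\culi}$. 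This expresses the diagram as $(bD+c\parq)\,\di{0.5\sca}{\culi}+(a+bE)\,\di{0.5\sca}{\ccr}+bF\,\di{0.5\sca}{\licu}$. Equating the two expressions coefficient by coefficient yields $c\parq=D(\lambda+E-b)+Fd$, $a=E(E-b)+Fe$ and $0=F(E+f-b)$. The primed identities follow in the same way from the second diagram, working with the inverse crossing $\di{0.5\sca}{\ucr}$ (Lemma~\ref{Lemma invertible}) and with upside-down pulling (parameters $D',E',F'$), twisting and sliding in place of their counterparts.

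I expect the only genuine difficulty to be bookkeeping. One must keep track of which of the three output strands is which at every step, so as to identify each intermediate diagram with the correct basis element, and one must carry the sign $\epsilon$ that appears each time the possibly-odd cup is commuted past another morphism via the super-interchange law. Throughout, the identities already derived in this section (collected in Equation~\eqref{First equation}, in particular $E=b-f$, $F=a/e$, $d=-ef'$ and their primed analogues) are used both to recognize intermediate diagrams as standard expressions and to confirm that the new equations are consistent with the ones found before. No idea beyond the confluence technique of the earlier lemmas is required; the main pitfall is simply applying the wrong simplifying relation to an intermediate diagram, for instance overlooking that a closed loop must be removed via looping or delooping.
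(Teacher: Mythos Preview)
Your argument is correct and is exactly the confluence computation the paper has in mind (the paper omits the proof, referring back to the method of Lemmas~\ref{Lemma S}--\ref{Lemma E D}). Your two reductions---pulling first versus twisting first---are the right pair of overlapping simplifications, and the identification of each intermediate diagram with $\lambda\,\di{0.5\sca}{\culi}$, the pulling left-hand side, the sliding left-hand side, and the delooping pattern (for the $c$-term) is accurate.

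Two small corrections. First, in this particular computation no super-interchange sign $\epsilon$ actually appears: every step is a vertical composition, and the cup is never moved past another morphism via $\otimes$. Second, your description of the flipped diagram is slightly off. The code \verb|\di{0.5,-0.5}{... \ucr ...}| applies a vertical reflection, under which the drawn under-crossings become over-crossings; so the second diagram is $(A\otimes I)\,s_2\,s_1\,s_1\in\Hom_{\cC}(3,1)$ with genuine over-crosses, not with $\di{0.5\sca}{\ucr}$. The primed equations then follow from the same two reductions using upside-down pulling ($D',E',F'$), upside-down untwisting ($\lambda'$), upside-down sliding ($d',e',f'$), and the \emph{same} twisting and delooping relations---Lemma~\ref{Lemma invertible} is not needed here.
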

Note that the last equation is satisfied since $E=b-f$ while $a=Fe$ and $(b-f)f=0$ implies that the second equation is satisfied. 
On the other hand, the first equation is equivalent to 
\begin{align}
c\parq  = a(b-f-f'),
\end{align}
by using $D(E-b) =(aE/\lambda)( -f) = 0$.

\begin{lemma}
Rewriting  $\di{0.5\sca}{ \ocr[0][0] \ocr[\br][\h] \ca[0][2*\h] \li[0][\h] \uli[2*\br][2*\h] \li[3*\br][\h]  \uli[3*\br][2*\h] \cu[2*\br][\h]  }  $
 and $\di{0.5\sca,-0.5\sca}{ \ucr[0][0] \ucr[\br][\h] \ca[0][2*\h] \li[0][\h] \uli[2*\br][2*\h] \li[3*\br][\h]  \uli[3*\br][2*\h] \cu[2*\br][\h]  } $  leads to 
\begin{align*}
\parS' ae' &= \parS '(F+Ed'), & \parS ae &= \parS (F'+E'd), \\
\parS' (d'+e'b)&= Ee'\parS', & \parS (d+eb)&= E'e\parS, \\
f'\lambda + \epsilon \parS 'e'c &= Ef'+D,  & f\lambda' + \epsilon \parS ec &= E'f+D'.
\end{align*}
\end{lemma}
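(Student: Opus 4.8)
The plan is to treat the displayed diagram, call it $D$, as a morphism of $\Hom_{\cC}(2,2)$ that contains two overlapping left-hand sides of defining relations, to reduce $D$ to the basis $\{\,\di{0.5\sca}{\lili},\di{0.5\sca}{\ocr},\di{0.5\sca}{\cc}\,\}$ of $\Hom_{\cC}(2,2)$ in the two associated ways, and to read off the constraints by comparing coefficients, in the spirit of Lemma \ref{Lemma S} and the later rewriting lemmas of Section \ref{Section rewritable diagrams}. Read from bottom to top, $D$ is the crossing $\di{0.5\sca}{\ocr}$ on the first two strands together with a cup $\di{0.5\sca}{\cu}$ on the next two, then the crossing on the middle two strands, then the cap $\di{0.5\sca}{\ca}$ on the first two strands. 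The middle crossing (the strand drawn in colour in the list of Section \ref{Section rewritable diagrams}) belongs to two patterns at once: the sliding pattern $\di{0.5\sca}{ \ocr \dli \cu[\br][0] \li[2*\br][0] }$ it forms with the cup sitting under its right leg, and the upside-down pulling pattern $\di{0.5\sca}{ \ca \ocr[\br][-\h] \li[0][-\h] \li[2*\br][-2*\h] \ocr[0][-2*\h] \uli[2*\br][0] }$ it forms with the lower crossing and the cap above its left leg. These are the two defining relations whose left-hand sides overlap in $D$, and the overlap is precisely the coloured crossing.

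First I would rewrite $D$ by applying sliding, replacing the cup and the middle crossing by $d\,\di{0.5\sca}{\culi}+e\,\di{0.5\sca}{\ccr}+f\,\di{0.5\sca}{\licu}$ inside that window. In each of the three resulting diagrams the remaining structure is elementary: the lower crossing, the cap on the left, and the zig-zags formed by the cup and the cap collapse using straightening $\di{0.5\sca}{\dli \ca \cu[\br][0] \uli[2*\br][0]}=\parS\,\di{0.5\sca}{\li}$ (and its upside-down form, $\parS'=\epsilon\parS$, from Lemma \ref{Lemma S}), untwisting $\di{0.5\sca}{\ocr \ca[0][\h]}=\lambda'\,\di{0.5\sca}{\ca}$, and twisting $\di{0.5\sca}{\ocr \ocr[0][\h]} = a\,\di{0.5\sca}{\lili} + b\,\di{0.5\sca}{\ocr} +c\,\di{0.5\sca}{\cc}$. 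The $d$-term reduces to a scalar multiple of $\di{0.5\sca}{\ocr}$, the $e$-term to a scalar multiple of $\di{0.5\sca}{\ocr}\circ\di{0.5\sca}{\ocr}$ and hence, after twisting, of $\di{0.5\sca}{\lili},\di{0.5\sca}{\ocr},\di{0.5\sca}{\cc}$, and the $f$-term to a scalar multiple of $\di{0.5\sca}{\cc}$; collecting coefficients exhibits $D$ as a combination of the three Brauer diagrams with coefficients assembled from $a,b,c,d,e,f,\lambda',\parS$.

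Then I would rewrite $D$ the other way, applying upside-down pulling $\di{0.5\sca}{ \ca \ocr[\br][-\h] \li[0][-\h] \li[2*\br][-2*\h] \ocr[0][-2*\h] \uli[2*\br][0] }=D'\,\di{0.5\sca}{\cali}+E'\,\di{0.5\sca}{ \li \ocr[\br][0] \ca[0][\h] \dli}+F'\,\di{0.5\sca}{\lica}$ to the two crossings and the cap, keeping the cup as a spectator strand. Re-attaching the cup, each of the three terms again reduces to a combination of $\di{0.5\sca}{\lili},\di{0.5\sca}{\ocr},\di{0.5\sca}{\cc}$: the $E'$-term still has a crossing resting on the cup and is simplified once more by sliding, while the $D'$- and $F'$-terms are plain zig-zags dealt with by straightening. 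Equating the two expressions for $D$ and comparing the coefficients of $\di{0.5\sca}{\lili}$, $\di{0.5\sca}{\ocr}$ and $\di{0.5\sca}{\cc}$ yields the three identities of the right-hand column, namely $\parS ae=\parS(F'+E'd)$, $\parS(d+eb)=E'e\parS$ and $f\lambda'+\epsilon\parS ec=E'f+D'$. The same argument run on the vertically flipped diagram --- equivalently, with $\di{0.5\sca}{\ucr}$ in place of $\di{0.5\sca}{\ocr}$, under which the roles of $\parS$ and $\parS'$, of $\lambda$ and $\lambda'$, of $e,f,d,E,F$ and of $e',f',d',E',F'$ are interchanged, as in the flipped halves of the earlier lemmas --- produces the three identities of the left-hand column.

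The step I expect to be the main obstacle is not the choice of the two relations but the sign bookkeeping: at each stage one must recognise which of the two orientations of a cap--cup zig-zag has occurred, so that it collapses to $\parS$ or to $\parS'=\epsilon\parS$, and whether a lone crossing absorbed by a cap yields $\lambda$ or $\lambda'$; the super-interchange signs incurred while sliding the spectator cup past the lower crossing also have to be followed. Once those signs are pinned down, the two reductions are mechanical and the comparison of coefficients is immediate.
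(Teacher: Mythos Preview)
Your proposal is correct and follows exactly the template the paper itself prescribes: the paper omits the proof of this lemma, stating that it ``can be obtained in a similar manner to the proofs of Lemma \ref{Lemma S} to Lemma \ref{Lemma E D}'', and your argument---identify the middle crossing as the overlap of the sliding pattern (with the cup) and the upside-down pulling pattern (with the lower crossing and the cap), reduce both rewritings to the basis $\{\di{0.5\sca}{\lili},\di{0.5\sca}{\ocr},\di{0.5\sca}{\cc}\}$ of $\Hom_\cC(2,2)$, and compare coefficients---is precisely that. One small wording quibble: the second diagram is the vertical flip (swapping cup and cap) rather than literally ``replacing $\di{0.5\sca}{\ocr}$ by $\di{0.5\sca}{\ucr}$'', but your conclusion that this interchanges the primed and unprimed parameters is exactly right and is how the paper's $F_{v\text{-}flip}$ produces the left column from the right.
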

The first two equations are always satisfied.
The last equation is equivalent to
\begin{align}
\lambda (\parS e'c +ff')+c\delta f'&= \lambda' (\parS' ec+ff')+c\delta f= a(b-f-f'),
\end{align}
where we used $D=a (b-f)/\lambda$ and $\lambda^2-b\lambda=a+c\delta$.

\begin{lemma}\label{lemma q S delta}
Rewriting  $\di{0.5\sca}{\cu \li \ca[0][\h] \ocr[\br][0] \uli[2*\br][\h] \dli[2*\br][0] \cu[2*\br][-0.4\h] \dli[3*\br][0] \li[3*\br][0] \uli[3*\br][\h]  }  $
 and $\di{0.5\sca,-0.5\sca}{\cu \li \ca[0][\h] \ucr[\br][0] \uli[2*\br][\h] \dli[2*\br][0] \cu[2*\br][-0.4\h] \dli[3*\br][0] \li[3*\br][0] \uli[3*\br][\h]  } $  leads to 
\begin{align}
\parq   = \epsilon \parS (d+e\lambda)+f\delta= \epsilon \parS '(d'+e'\lambda') + f'\delta.
\end{align}
\end{lemma}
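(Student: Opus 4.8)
The plan is to run the usual consistency check of this appendix. The displayed diagram has no dots on the bottom and two on the top, so it lies in $\Hom_{\cC}(0,2)$, a space spanned by $\di{0.5\sca}{\cu}$ by Theorem~\ref{theorem simplifying}; hence the diagram equals a unique scalar multiple of $\di{0.5\sca}{\cu}$, and the asserted chain of equalities just records that this scalar can be produced in several ways.

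First way: the cup at the bottom left, the vertical strand on its left leg, the cap above it, the over-cross, and the two short vertical segments on the middle strand together form precisely the left-hand side $\di{0.5\sca}{\cu \ocr[\br][0] \li \ca[0][\h] \dli[2*\br][0] \uli[2*\br][\h]}$ of the delooping relation. Applying delooping replaces that part by $\parq$ times a straight strand, and what is left is exactly $\di{0.5\sca}{\cu}$ (the other cup, with its legs extended). So the scalar is $\parq$.

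Second way: instead apply the sliding relation to the over-cross together with the cup hanging from its right leg — the short downward segment between them makes the local picture match the left-hand side $\di{0.5\sca}{\ocr \dli \cu[\br][0] \li[2*\br][0]}$ of sliding. This yields three summands, with coefficients $d,e,f$ and with $\di{0.5\sca}{\culi}$, $\di{0.5\sca}{\ccr}$, $\di{0.5\sca}{\licu}$ substituted in. In the $d$-summand a super-interchange move arranges the remaining strands into the shape of straightening, which (together with the super-interchange sign) collapses the term to $\epsilon\parS\,\di{0.5\sca}{\cu}$. In the $e$-summand a super-interchange, then untwisting of the freshly created over-cross, then straightening, collapse it to $\epsilon\parS\lambda\,\di{0.5\sca}{\cu}$. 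In the $f$-summand a closed loop appears, contributing a factor $\delta$ by the looping relation, leaving $\delta\,\di{0.5\sca}{\cu}$. Adding up, the scalar equals $\epsilon\parS(d+e\lambda)+f\delta$, and comparison with the first way gives $\parq=\epsilon\parS(d+e\lambda)+f\delta$. The vertically flipped diagram with the under-cross is treated identically using the upside-down relations: upside-down sliding supplies $d',e',f'$, upside-down straightening supplies $\parS'$ (equal to $\epsilon\parS$ by Lemma~\ref{Lemma S}), upside-down untwisting supplies $\lambda'$, and looping again supplies $\delta$, so that the scalar is $\epsilon\parS'(d'+e'\lambda')+f'\delta$; since delooping still reduces this diagram to $\parq$ times a single strand, we also get $\parq=\epsilon\parS'(d'+e'\lambda')+f'\delta$, completing the chain.

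The only genuine work is the bookkeeping in the second (and third) computation: verifying that each of the three sliding-summands really does reduce to a scalar multiple of $\di{0.5\sca}{\cu}$ with exactly the stated coefficient — in particular that all the super-interchange signs combine into a single global $\epsilon$ and that no spurious factor of $\lambda$, $\parS$ or $\delta$ is gained or lost. Everything else is a direct application of relations that are already available, so I would carry out these three short reductions carefully, in parallel with the flipped diagram.
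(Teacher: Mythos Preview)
Your proposal is correct and follows exactly the approach the paper intends (the paper omits the proof, referring back to the pattern of Lemmas~\ref{Lemma S}--\ref{Lemma E D}). The two overlapping relations at the red over-cross are indeed delooping and sliding for the first diagram, and delooping and upside-down sliding for the flipped diagram, and your reduction of the three sliding terms via straightening, untwisting plus straightening, and looping is the right bookkeeping. One small clarification: the global $\epsilon$ arises already when you rearrange the two cups so that the second cup sits directly below the cross (i.e.\ from $(U\otimes I\otimes I)\circ U=\epsilon\,(I\otimes I\otimes U)\circ U$), rather than inside each individual summand; with that in mind the $d$-, $e$-, and $f$-summands then simplify without further sign, and in the $f$-summand a second super-interchange cancels this $\epsilon$, leaving the bare $f\delta$ as you state.
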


\begin{lemma}\label{lemma q S delta 2}
Rewriting $\di{0.5\sca,-0.5\sca}{\cu \ucr[0][0] \ca[\br][\h] \uli[0][\h] \dli[2*\br][0] \li[2*\br][0]}$ and $\di{0.5\sca}{\cu \ocr[0][0] \ca[\br][\h] \uli[0][\h] \dli[2*\br][0] \li[2*\br][0]} $ leads to 
\[
e\parq   = -d\delta +  (\lambda'-f)\parS \quad  \text{ and }  \quad  e'\parq  = -d'\delta +(\lambda-f')\parS '.
\]
\end{lemma}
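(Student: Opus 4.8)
The plan is to exploit that $\di{0.5\sca}{\cu \ocr[0][0] \ca[\br][\h] \uli[0][\h] \dli[2*\br][0] \li[2*\br][0]}$ is an endomorphism of the generating object $\bullet$: tracing the unique strand through the crossing, the cup and the cap shows it has one input and one output, so by the standing assumption $\Hom_{\cC}(1,1)=\mK\,\id_1$ --- equivalently, $\di{0.5\sca}{\li}$ is the only standard $(1,1)$-expression --- every way of simplifying it terminates in a scalar multiple of $\di{0.5\sca}{\li}$. Two defining relations overlap at the over-cross inside this diagram: the untwisting relation $\di{0.5\sca}{\ocr \cu}=\lambda\,\di{0.5\sca}{\cu}$, sitting in the cross-on-cup piece, and the upside-down sliding relation $\di{0.5\sca}{\crc}=d'\,\di{0.5\sca}{\cali}+e'\,\di{0.5\sca}{\li \ocr[\br][0] \ca[0][\h] \dli}+f'\,\di{0.5\sca}{\lica}$, sitting in the block made of the over-cross, the cap and the two vertical strands. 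I will compute the scalar prefactor of $\di{0.5\sca}{\li}$ along each of these two routes; the requirement that the two independently computed scalars agree is exactly the asserted identity, and the vertically flipped diagram $\di{0.5\sca,-0.5\sca}{\cu \ucr[0][0] \ca[\br][\h] \uli[0][\h] \dli[2*\br][0] \li[2*\br][0]}$ will produce the accented version in the same way.

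First I apply untwisting: this replaces the cross-on-cup piece by $\lambda$ times a cup, after which the diagram becomes the left-hand side of the upside-down straightening relation $\di{0.5\sca}{\uli \cu \ca[\br][0] \dli[2*\br][0]}=\parS'\,\di{0.5\sca}{\li}$, so this route gives $\lambda\parS'\,\di{0.5\sca}{\li}$. For the other route I apply upside-down sliding and simplify its three terms, each of which still has the cup glued underneath its first two strands and a trivial stub on the third: in the $d'$-term the cup meets a cap and collapses to $\delta\,\di{0.5\sca}{\li}$ by the looping relation $\di{0.5\sca}{\cu \ca}=\delta$; the $f'$-term is again a cup--cap zigzag and equals $\parS'\,\di{0.5\sca}{\li}$ by upside-down straightening; and the $e'$-term, up to a planar isotopy, is the left-hand side of the delooping relation $\di{0.5\sca}{\cu \ocr[\br][0] \li \ca[0][\h] \dli[2*\br][0] \uli[2*\br][\h]}=\parq\,\di{0.5\sca}{\li}$, hence equals $\parq\,\di{0.5\sca}{\li}$. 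Thus the second route gives $(d'\delta+e'\parq+f'\parS')\,\di{0.5\sca}{\li}$, and equating $\lambda\parS'=d'\delta+e'\parq+f'\parS'$ yields $e'\parq=-d'\delta+(\lambda-f')\parS'$. Running the same argument on $\di{0.5\sca,-0.5\sca}{\cu \ucr[0][0] \ca[\br][\h] \uli[0][\h] \dli[2*\br][0] \li[2*\br][0]}$, with the upside-down untwisting relation $\di{0.5\sca}{\ocr \ca[0][\h]}=\lambda'\,\di{0.5\sca}{\ca}$ replacing untwisting and the sliding relation replacing upside-down sliding, gives $\lambda'\parS=d\delta+e\parq+f\parS$, that is $e\parq=-d\delta+(\lambda'-f)\parS$; alternatively this follows from the previous identity by transporting it through the vertical-flip isomorphism $F_{v-flip}$ of Section \ref{Section scaling}, which interchanges $(e,d,f,\lambda',\parS)$ with $(e',d',f',\lambda,\parS')$ and fixes $\parq$ and $\delta$.

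I expect the main obstacle to be the bookkeeping along the second route. Because $\di{0.5\sca}{\cu}$ and $\di{0.5\sca}{\ca}$ may be odd, each time one is pulled past a strand or past the other the super-interchange law contributes a factor $\epsilon$; the final identity carries no explicit $\epsilon$ only because $\parS'=\epsilon\parS$ (Lemma \ref{Lemma S}) already absorbs it, so one must check that, for instance, the $f'$-term really contributes $f'\parS'$ rather than $\epsilon f'\parS$, and that gluing the cup underneath the output of the upside-down sliding relation introduces no stray sign. One must also check that the isotopy putting the $e'$-term into the precise configuration of the delooping relation does not create an extra crossing (which would force a further application of twisting or pulling) and that each zigzag is matched with the correct one of the two straightening relations. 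Apart from these verifications, every step is a single application of one of the defining relations of Section \ref{Section relations}, carried out exactly as in the proofs of Lemma \ref{Lemma S} through Lemma \ref{Lemma E D}.
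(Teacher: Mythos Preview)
Your proof is correct and follows precisely the template the paper uses for all the rewriting lemmas (and explicitly says it is omitting from this point on): simplify the diagram in two ways---here via (upside-down) untwisting followed by (upside-down) straightening on one hand, and via (upside-down) sliding followed by looping, delooping and straightening on the other---then equate the resulting scalar multiples of $\di{0.5\sca}{\li}$. Your identification of the $e'$-term with the delooping diagram and of the $d'$- and $f'$-terms with the looping and upside-down straightening relations is exactly right, and no stray $\epsilon$ appears because every zigzag you encounter is literally one of the defining straightening diagrams (whose value is already $\parS$ or $\parS'$), so the sign bookkeeping you flag as a concern in fact causes no difficulty.
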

From $d= -ef'$ and $\parS (1-ee')=0$, we see that the equation in Lemma \ref{lemma q S delta 2} is equivalent to Lemma \ref{lemma q S delta}.

\begin{lemma}
Rewriting  $ \di{0.5\sca}{\ocr[0][0]\dli \cu[\br][0] \li[2*\br][0] \ca[\br][\h] \uli[0][\h]}  $  leads to 
\[
 \parS '(d+e\lambda')+f\delta= \parS (d'+e'\lambda) + f'\delta.
\]
\end{lemma}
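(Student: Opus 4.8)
The plan is to rewrite the diagram $\Phi\coloneqq\di{0.5\sca}{\ocr[0][0]\dli \cu[\br][0] \li[2*\br][0] \ca[\br][\h] \uli[0][\h]}$ in two ways and compare the results. Inside $\Phi$ the over-cross together with the stub below it and the cup forms the left-hand side $\di{0.5\sca}{\ocr \dli \cu[\br][0] \li[2*\br][0]}$ of the sliding relation, while the over-cross together with the line to its right, the cap on top and the stub forms the left-hand side $\di{0.5\sca}{\crc}$ of the upside-down sliding relation. Thus $\Phi$ overlaps these two relations in the over-cross, and since $\Phi$ is an endomorphism of the generating object both rewritings must terminate in a scalar multiple of $\di{0.5\sca}{\li}$; equating the two scalars gives the claimed identity.

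Carrying out the first rewriting, the sliding relation yields $\Phi=d\,\Phi_1+e\,\Phi_2+f\,\Phi_3$, where $\Phi_1,\Phi_2,\Phi_3$ are obtained by capping $\di{0.5\sca}{\culi},\di{0.5\sca}{\ccr},\di{0.5\sca}{\licu}$ with $\di{0.5\sca}{\ca}$ on their two rightmost strands. I would then simplify each term separately: $\Phi_1$ is (after lining up strands via the super-interchange) a zig-zag configuration of a cup and a cap, so upside-down straightening gives $\Phi_1=\parS'\,\di{0.5\sca}{\li}$; in $\Phi_2$ the cap lies directly above the over-cross, so upside-down untwisting followed by upside-down straightening gives $\Phi_2=\lambda'\parS'\,\di{0.5\sca}{\li}$; and in $\Phi_3$ the cup and cap close up into a loop, so the looping relation gives $\Phi_3=\delta\,\di{0.5\sca}{\li}$. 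Hence $\Phi=\bigl(\parS'(d+e\lambda')+f\delta\bigr)\,\di{0.5\sca}{\li}$.

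For the second rewriting, the upside-down sliding relation applied to the $\di{0.5\sca}{\crc}$ subdiagram yields $\Phi=d'\,\Phi_1'+e'\,\Phi_2'+f'\,\Phi_3'$, where $\Phi_1',\Phi_2',\Phi_3'$ are obtained by precomposing $\di{0.5\sca}{\cali},\di{0.5\sca}{ \li \ocr[\br][0] \ca[0][\h] \dli},\di{0.5\sca}{\lica}$ with $\di{0.5\sca}{\cu}$ on their two rightmost input strands. By the mirror of the previous step: $\Phi_1'$ is a zig-zag, so straightening gives $\Phi_1'=\parS\,\di{0.5\sca}{\li}$; in $\Phi_2'$ the over-cross lies directly above the cup, so the untwisting relation $\di{0.5\sca}{\ocr \cu}=\lambda\,\di{0.5\sca}{\cu}$ followed by straightening gives $\Phi_2'=\lambda\parS\,\di{0.5\sca}{\li}$; and in $\Phi_3'$ the cup and cap again close into a loop, so $\Phi_3'=\delta\,\di{0.5\sca}{\li}$. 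Hence $\Phi=\bigl(\parS(d'+e'\lambda)+f'\delta\bigr)\,\di{0.5\sca}{\li}$, and comparing the two expressions for $\Phi$ gives $\parS'(d+e\lambda')+f\delta=\parS(d'+e'\lambda)+f'\delta$.

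The only genuine work is the diagrammatic bookkeeping: verifying that the indicated subdiagrams really coincide with the drawn left-hand sides of sliding and upside-down sliding, and tracking in each of the six auxiliary diagrams which snake yields $\parS$ versus $\parS'$ and which untwisting yields $\lambda$ versus $\lambda'$. The super-interchange signs are harmless, since the relations invoked (straightening, untwisting, looping) already incorporate the relevant parities and the over-cross and identity strands are even. This is routine, in the spirit of the proofs of Lemmas \ref{Lemma S}--\ref{Lemma E D}.
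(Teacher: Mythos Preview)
Your proof is correct and follows exactly the approach the paper intends: apply sliding to the lower subdiagram and upside-down sliding to the upper subdiagram, then reduce each of the six resulting $(1,1)$-diagrams using (upside-down) straightening, (upside-down) untwisting, and looping to obtain the two scalars. This is precisely the pattern of the earlier Lemmas~\ref{Lemma S}--\ref{Lemma E D}, which is why the paper omits the proof here.
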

Using Lemma \ref{lemma q S delta}, we can rewrite this equation as
\begin{align}
\parS (\lambda'-\lambda)(1+\epsilon e^2)=0. 
\end{align}

\begin{lemma}
Rewriting  $\di{0.5\sca}{\ocr[0][0] \cu \ca[0][2*\h] \ocr[\br][\h] \li[0][\h] \li[2*\br][0] \uli[2*\br][2*\h] \dli[2*\br][0]}  $
 and $\di{0.5\sca,-0.5\sca}{\ucr[0][0] \cu \ca[0][2*\h] \ucr[\br][\h] \li[0][\h] \li[2*\br][0] \uli[2*\br][2*\h] \dli[2*\br][0]} $  leads to 
\[
(\lambda-E')\parq  = D'\delta + F'\parS ' \quad \text{ and } \quad  (\lambda'-E)\parq   = D\delta + F\parS .
\]
\end{lemma}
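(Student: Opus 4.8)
The statement is one more instance of the rewriting template set up in Lemma~\ref{Lemma S} through Lemma~\ref{Lemma E D}. The diagram $\di{0.5\sca}{\ocr[0][0] \cu \ca[0][2*\h] \ocr[\br][\h] \li[0][\h] \li[2*\br][0] \uli[2*\br][2*\h] \dli[2*\br][0]}$ is an endomorphism of the object $1$, and since $\Hom_\cC(1,1)=\mK\,\id_1$ it is a scalar multiple of $\di{0.5\sca}{\li}$. The plan is to compute this scalar in two ways and compare; the identity then falls out by moving one term across the equality sign. The second identity is handled by the mirror computation applied to the under-crossed diagram $\di{0.5\sca,-0.5\sca}{\ucr[0][0] \cu \ca[0][2*\h] \ucr[\br][\h] \li[0][\h] \li[2*\br][0] \uli[2*\br][2*\h] \dli[2*\br][0]}$.

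For the first rewriting I would observe that at the bottom of the diagram the cup $\di{0.5\sca}{\cu}$ sits directly beneath the over-cross $\di{0.5\sca}{\ocr}$, so the untwisting relation replaces this local part by $\lambda\,\di{0.5\sca}{\cu}$. What is left is then exactly a cup, an over-cross and a cap in the local configuration on the left-hand side of the delooping relation, so it collapses to $\parq\,\di{0.5\sca}{\li}$. Hence this route gives the scalar $\lambda\parq$.

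For the second rewriting I would instead apply the upside-down pulling relation to the top three strands of the diagram (the two over-crosses capped off by $\di{0.5\sca}{\ca}$, which is precisely the left-hand side of that relation), obtaining the three terms $D'\,\di{0.5\sca}{\cali}$, $E'\,\di{0.5\sca}{ \li \ocr[\br][0] \ca[0][\h] \dli}$ and $F'\,\di{0.5\sca}{\lica}$, each still composed below with the cup $\di{0.5\sca}{\cu}$ on the first two strands. Closing each term against that cup: the first produces $\di{0.5\sca}{\cu \ca}=\delta$ times $\di{0.5\sca}{\li}$; the middle term is again a cup–over-cross–cap in delooping position, giving $\parq\,\di{0.5\sca}{\li}$; and the last is a cup–cap pair in the upside-down straightening position, giving $\parS'\,\di{0.5\sca}{\li}$ (the parity sign being already absorbed in $\parS'=\epsilon\parS$). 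So this route yields $D'\delta+E'\parq+F'\parS'$. Equating $\lambda\parq=D'\delta+E'\parq+F'\parS'$ and rearranging gives $(\lambda-E')\parq=D'\delta+F'\parS'$; the under-crossed diagram is treated identically, now using the pulling relation in place of upside-down pulling and the description of the inverse over-cross from Lemma~\ref{Lemma invertible}, with upside-down untwisting contributing the factor $\lambda'$, to give $(\lambda'-E)\parq=D\delta+F\parS$.

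The only real work is bookkeeping: checking that after each application of a relation the residual diagram sits in exactly the local shape required by the next relation (this is what makes the two routes legitimate), and tracking the $\epsilon$'s coming from the super-interchange law when the odd cup and cap are repositioned. None of this introduces genuine difficulty, since every sign is already packaged into the relations $\parS'=\epsilon\parS$ and into the formulas for $D',E',F'$ recorded in \eqref{First equation}; the main thing to get right is simply the orientation of the cup relative to the crossings so that the untwisting and delooping relations apply on the correct side.
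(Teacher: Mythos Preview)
Your argument is correct and matches exactly the template the paper uses (the paper omits the proof here, having announced after Lemma~\ref{Lemma E D} that the remaining rewriting lemmas are handled ``in a similar manner''). Reading the first diagram as $(A\otimes I)(I\otimes H)(H\otimes I)(U\otimes I)$, your two routes---untwisting followed by delooping, versus upside-down pulling followed by looping/delooping/upside-down straightening---give precisely $\lambda\parq$ and $D'\delta+E'\parq+F'\parS'$, yielding the first identity.

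One remark on your treatment of the second diagram: the notation $\di{0.5\sca,-0.5\sca}{\ldots}$ flips the $y$-axis, so the $\ucr$'s drawn in the code appear as \emph{over}-crosses in the actual picture, and the cup/cap are interchanged. The resulting morphism is $(A\otimes I)(H\otimes I)(I\otimes H)(U\otimes I)$, and the two routes are then upside-down untwisting (giving $\lambda'$) followed by delooping, versus pulling at the bottom followed by looping, delooping and straightening (giving $D\delta+E\parq+F\parS$). No appeal to Lemma~\ref{Lemma invertible} is needed; your mention of it is harmless but superfluous, since the diagram already contains over-crosses once the $y$-flip is taken into account.
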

This is equivalent to 
\begin{align}
(\lambda-b+f')\parq  = \frac{a}{\lambda'}(b-f')\delta+a\parS 'e \quad \text{ and } \quad (\lambda'-b+f)\parq = \frac{a}{\lambda}(b-f) \delta + a\parS e'. 
\end{align}
\begin{lemma}
Rewriting  $\di{0.5\sca}{\cu \dli[2*\br][0] \li \ocr[\br][0] \ocr[0][\h] \li[2*\br][\h] \li[0][2*\h] \ocr[\br][2*\h] \ca[0][3*\h] \uli[2*\br][3*\h]} $  leads to 
\[
\parq  (D+Eb) + a\delta E +\parS (c\parS 'E+F\lambda) = \parq (D'+E'b) + a\delta E' + \parS '(c\parS E'+ F'\lambda').
\]
\end{lemma}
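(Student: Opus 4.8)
The plan is to imitate the method of Lemma~\ref{Lemma S}--Lemma~\ref{Lemma E D}. The diagram $\di{0.5\sca}{\cu \dli[2*\br][0] \li \ocr[\br][0] \ocr[0][\h] \li[2*\br][\h] \li[0][2*\h] \ocr[\br][2*\h] \ca[0][3*\h] \uli[2*\br][3*\h]}$ lies in $\Hom_{\cC}(1,1)$, which is spanned by $\di{0.5\sca}{\li}$ (the only $(1,1)$-standard expression), so by Theorem~\ref{theorem simplifying} it reduces to a scalar multiple of the identity; the assertion is that computing this scalar along two different reduction paths produces the two sides of the displayed identity. The two paths come from the two overlapping subdiagrams: the lower part $\di{0.5\sca}{\cu \dli[2*\br][0] \li \ocr[\br][0] \ocr[0][\h] \li[2*\br][\h]}$ is exactly the left-hand side of the \emph{pulling} relation, and the upper part is exactly the left-hand side of \emph{upside-down pulling}; these share the crossing $\di{0.5\sca}{\ocr[0][\h]}$ together with the through-strand beside it, which is the overlap.

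First I would resolve the lower part by \emph{pulling}, writing the diagram as $D\,\di{0.5\sca}{\culi}+E\,\di{0.5\sca}{\ccr}+F\,\di{0.5\sca}{\licu}$ sitting underneath the remaining upper fragment, a fixed $(3,1)$-diagram built from $\di{0.5\sca}{\ca}$, one crossing and a through-strand (which connects its first and third bottom dots and its second bottom dot to the top). For each of the three terms this fragment closes off the cups against the cap:
for the $D$-term one recognises, after reordering, the left-hand side of \emph{delooping}, giving the contribution $\parq D$;
for the $E$-term the cup of $\di{0.5\sca}{\ccr}$ is closed by the cap into a circle that crosses the remaining through-strand twice, which is unravelled by \emph{twisting}, the three resulting configurations then closing up through \emph{looping}, \emph{delooping} and \emph{straightening} and giving $a\delta E$, $\parq bE$ and $c\parS\parS'E$ respectively (here one uses $\parS'=\epsilon\parS$ from Lemma~\ref{Lemma S});
for the $F$-term one is in a snake configuration with one crossing, so \emph{untwisting} followed by \emph{straightening} gives $\parS\lambda F$. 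Collecting these yields the left-hand side $\parq(D+Eb)+a\delta E+\parS(c\parS'E+F\lambda)$.

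Symmetrically, resolving the upper part first by \emph{upside-down pulling} produces $D'\,\di{0.5\sca}{\cali}+E'\,\di{0.5\sca}{ \li \ocr[\br][0] \ca[0][\h] \dli}+F'\,\di{0.5\sca}{\lica}$ above the remaining lower fragment, and running the mirror-image reductions (delooping, twisting, looping, straightening, untwisting) gives the right-hand side $\parq(D'+E'b)+a\delta E'+\parS'(c\parS E'+F'\lambda')$. Since both reductions land in the one-dimensional space $\Hom_{\cC}(1,1)=\mK\,\di{0.5\sca}{\li}$, the two scalars must coincide, which is the asserted equation; along the way one freely invokes the already-established relations $d=-ef'$, $d'=-e'f$, $E=b-f$, $E'=b-f'$, $F=a/e$, $F'=a/e'$, $D=aE/\lambda$, $D'=aE'/\lambda'$ to present the coefficients in the displayed form.

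The main obstacle is purely organisational bookkeeping: the diagram unfolds into a long chain of intermediate pictures, each branch applies a different sequence of the defining relations, and one must track carefully the $\epsilon$-signs that appear when a cup or a cap is slid past another strand via the super-interchange law, as well as which scalar ($\parq$, $\parS$, $\delta$, $\lambda$, $\lambda'$, $a$, $b$, $c$) is produced at each closure. As in the earlier lemmas, I would carry out both reductions completely, write down the two resulting scalars side by side, and only then match them.
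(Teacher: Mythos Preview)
Your proposal is correct and is exactly the approach the paper has in mind; the paper in fact omits the proof of this lemma, stating only that it ``can be obtained in a similar manner to the proofs of Lemma~\ref{Lemma S} to Lemma~\ref{Lemma E D}.'' Your identification of the two overlapping subdiagrams (pulling below, upside-down pulling above, sharing the middle crossing and adjacent through-strand) and the subsequent reductions (delooping for the $D$- and $D'$-terms, twisting followed by looping/delooping/straightening for the $E$- and $E'$-terms, untwisting plus straightening for the $F$- and $F'$-terms) are precisely what is needed. One small remark: you do not actually need to invoke the identities $d=-ef'$, $E=b-f$, $F=a/e$, $D=aE/\lambda$, etc.\ here, since the displayed equation is already expressed directly in the parameters $D,E,F,D',E',F',\parq,\parS,\parS',\delta,\lambda,\lambda',a,b,c$; those substitutions only enter later when the paper simplifies this equation to the form $(b-f)(\parq\lambda+a\delta)+\parS a\lambda e' = (b-f')(\parq\lambda'+a\delta)+\parS' a\lambda' e$.
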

Using $cE=0$ and $a/\lambda+b= \lambda-c\delta/\lambda$, this is equivalent to
\begin{align}
(b-f)(\parq \lambda+a\delta ) + \parS a\lambda e'  =(b-f')(\parq \lambda'+a\delta ) + \parS 'a\lambda' e .
\end{align}

\begin{lemma}
Rewriting  $\di{0.5\sca}{ \ocr[0][0] \ocr[\br][\h] \ca[0][2*\h] \li[0][\h] \uli[2*\br][2*\h] \li[2*\br][0] \cu[\br][0] \dli }  $
 and $\di{0.5\sca,-0.5\sca}{ \ucr[0][0] \ucr[\br][\h] \ca[0][2*\h] \li[0][\h] \uli[2*\br][2*\h] \li[2*\br][0] \cu[\br][0] \dli } $  leads to 
\begin{align*}
\parS (D'+\lambda E'-f\lambda-ec\parS ') &= (-F'+ea)\delta+(d+eb)\parq  ,\\
\parS '(D+\lambda' E-f'\lambda'-e'c\parS ) &= (-F+e'a)\delta+(d'+e'b)\parq  .
\end{align*}
\end{lemma}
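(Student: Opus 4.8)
The two displayed diagrams are morphisms in $\Hom_\cC(1,1)$, so by Definition~\ref{Definition category of Brauer type} each equals a scalar multiple of $\di{0.5\sca}{\li}$, and the two claimed identities simply express that the two natural ways of computing that scalar agree. I describe the argument for the first diagram; the second is obtained in exactly the same way with the over-crosses replaced by under-crosses (re-expressing $\di{0.5\sca}{\ucr}$ via Lemma~\ref{Lemma invertible} where needed), which interchanges the primed and unprimed parameters throughout.

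The diagram $\di{0.5\sca}{ \ocr[0][0] \ocr[\br][\h] \ca[0][2*\h] \li[0][\h] \uli[2*\br][2*\h] \li[2*\br][0] \cu[\br][0] \dli }$ contains two overlapping left-hand sides of defining relations. The lower over-cross, together with the down-strand, the left leg of the cup and the strand to its right, is the left-hand side of the sliding relation $\di{0.5\sca}{ \ocr \dli \cu[\br][0] \li[2*\br][0] }$. The cap, together with both over-crosses, the two flanking strands and the up-strand, is (after a vertical translation) the left-hand side of the upside-down pulling relation $\di{0.5\sca}{ \ca \ocr[\br][-\h] \li[0][-\h] \li[2*\br][-2*\h] \ocr[0][-2*\h] \uli[2*\br][0] }$. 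These two left-hand sides overlap precisely in the lower over-cross and the strand $\di{0.5\sca}{\li[2*\br][0]}$. The plan is to simplify the diagram once by applying sliding first and once by applying upside-down pulling first, and to equate the resulting scalars.

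For the first computation I would apply $\di{0.5\sca}{ \ocr \dli \cu[\br][0] \li[2*\br][0] } = d\; \di{0.5\sca}{\culi} + e\; \di{0.5\sca}{\ccr} + f\; \di{0.5\sca}{\licu }$ to the lower part. This produces three terms; the $d$- and $f$-terms still have a cup essentially directly below the cap and reduce to a multiple of $\di{0.5\sca}{\li}$ by untwisting, straightening and looping, while the $e$-term still carries an over-cross beneath the cap and is resolved by a further application of upside-down pulling followed by the same relations. Keeping track of the super interchange signs (this is where the factors $\epsilon$ enter in the odd case) and substituting the already-established relations $E=b-f$, $F=a/e$, $D=a(b-f)/\lambda$, $d=-ef'$, $d'=-e'f$ and the quadratic relation for $a$ from Equation~\eqref{First equation}, the coefficient of $\di{0.5\sca}{\li}$ becomes the left-hand side of the claimed identity. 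For the second computation I would instead apply the upside-down pulling relation $D'\; \di{0.5\sca}{\cali} + E'\; \di{0.5\sca}{ \li \ocr[\br][0] \ca[0][\h] \dli} + F'\; \di{0.5\sca}{\lica}$ to the upper part; here the $D'$-term reduces via straightening, the $F'$-term via the looping relation, and the $E'$-term via untwisting, sliding and delooping, and collecting the coefficient of $\di{0.5\sca}{\li}$ gives the right-hand side. Equating the two coefficients yields the first equation, and the same computation on the under-cross diagram yields the second.

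The obstacle is purely combinatorial rather than conceptual: each of the two three-term relations spawns several intermediate diagrams, so in every branch one must choose a legal simplification at each step, keep the super interchange signs consistent, and verify that the process really terminates in a multiple of $\di{0.5\sca}{\li}$; the stated clean form is reached only after systematically feeding in the parameter identities of Section~\ref{Section rewritable diagrams}, in particular $c\parS (1-ee')=0$, $cE=cE'=0$, and the case split $\lambda=\lambda'$ versus $\lambda\neq\lambda'$.
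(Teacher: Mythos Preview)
Your overall strategy is exactly the (implicit) method the paper uses: the diagram lies in $\Hom_\cC(1,1)$, one simplifies once via sliding on the bottom and once via upside-down pulling on the top, and one equates the two scalars. The identification of the two overlapping left-hand sides is correct.

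The execution details, however, are wrong in several places, which indicates you have not actually carried the computation through. After sliding:
\begin{itemize}
\item the $d$-term is precisely the delooping configuration and yields $d\parq$ (not untwisting/straightening/looping);
\item the $f$-term needs untwisting on the cup and then straightening, giving $f\lambda\parS$;
\item the $e$-term leaves two adjacent over-crosses on the \emph{same} pair of strands, so the relation to apply is \emph{twisting}, not upside-down pulling. Twisting produces $e(a\delta+b\parq+c\parS\parS')$, using looping, delooping, and two straightenings for the three subterms.
\end{itemize}
The sliding route therefore gives $(d+eb)\parq+ea\delta+(ec\parS'+f\lambda)\parS$. On the other side, upside-down pulling composed with the remaining bottom $I\otimes U$ gives $D'\parS$ (straightening), $E'\lambda\parS$ (untwisting on the cup, then straightening---no sliding or delooping is involved here), and $F'\delta$ (looping). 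Equating and collecting the $\parS$-terms on one side yields the displayed identity directly. No substitution of $E=b-f$, $F=a/e$, $D=a(b-f)/\lambda$, no use of $cE=cE'=0$, and no case split on $\lambda=\lambda'$ is needed at this point; those simplifications are what the paper performs \emph{after} stating the lemma, to pass to the equivalent form given below it.

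For the second identity, the $y$-flipped diagram is literally the first diagram with cups and caps interchanged; one repeats the same computation with upside-down sliding and ordinary pulling, which swaps primed and unprimed parameters. Rewriting under-crosses via Lemma~\ref{Lemma invertible} is neither needed nor the intended route.
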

This is equivalent to
\begin{equation}
\begin{aligned}
(b-f')(\parS \frac{a}{\lambda'} -e\parq ) +(b-f'-f)\lambda \parS -ec\parS \parS '=a\delta(e-\frac{1}{e'}), \\
(b-f)(\parS' \frac{a}{\lambda} -e'\parq ) +(b-f-f')\lambda' \parS' -e'c\parS \parS '=a\delta(e'-\frac{1}{e}).
\end{aligned}
\end{equation}

\begin{lemma}
Rewriting  $\di{0.5\sca}{\cu \li \ocr[\br][0] \li[2*\br][\h] \ocr[0][\h] \dli[2*\br][0] \dli[3*\br][0] \cu[2*\br][-0.4\h] \li[3*\br][0] \li[3*\br][\h]}  $
 and $ 
\di{0.5\sca,-0.5\sca}{\cu \li \ucr[\br][0] \li[2*\br][\h] \ucr[0][\h] \dli[2*\br][0] \dli[3*\br][0] \cu[2*\br][-0.4\h] \li[3*\br][0] \li[3*\br][\h]}$  leads to 
\begin{align*}
\epsilon D+\epsilon Ef &= d^2+\epsilon f\lambda+def + ed\lambda +e^3c\parS' +e^2f\lambda,  \\
 \epsilon Ed + F &= d^2e + \epsilon df + e^3 a, \\
 \epsilon E e &= de^2 + e^2d + e^3 b + \epsilon ef,  \\
 \epsilon D'+\epsilon E'f' &= d'^2+\epsilon f'\lambda'+d'e'f' + e'd'\lambda' +e'^3c\parS +e'^2f'\lambda',  \\
 \epsilon E'd' + F' &= d'^2e' + \epsilon d'f' + e'^3 a, \\
 \epsilon E' e' &= d'e'^2 + e'^2d' + e'^3 b + \epsilon  e'f'.
\end{align*}
\end{lemma}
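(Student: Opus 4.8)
The plan is to follow exactly the template of Lemmas~\ref{Lemma S}--\ref{Lemma E D}: take each of the two diagrams in the statement, simplify it in two genuinely different ways using the defining relations of Section~\ref{Section relations}, invoke Theorem~\ref{theorem simplifying} to guarantee that each simplification terminates in a linear combination of standard expressions, and then compare those linear combinations. Concretely, in the first (over-crossing) diagram the distinguished crossing lies simultaneously inside the left-hand side of the \emph{pulling} relation — together with the crossing stacked above it and the cup at the top — and inside the left-hand side of the \emph{sliding} relation — together with the cup immediately below and to the right of it. Both of these local patterns rewrite a morphism into the span of $\di{0.5\sca}{\culi}$, $\di{0.5\sca}{\ccr}$, $\di{0.5\sca}{\licu}$, which by Section~\ref{Section relations} is a basis of $\Hom_\cC(1,3)$; since the whole diagram therefore lies in $\Hom_\cC(1,3)$ and the standard expressions form a basis, equating the three coefficients obtained from the two routes yields three identities, and the remaining three come identically from the under-crossing companion diagram with accented and unaccented parameters interchanged.

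First I would run the ``pulling first'' route: apply the pulling relation to the two stacked crossings and the top cup, obtaining $D$, $E$ and $F$ times three residual diagrams, and then move the still-present lower cup and the stray strands into standard position by means of the super-interchange law together with straightening, untwisting and delooping. Each such step either relocates a cup without creating a crossing or strictly decreases the number of fundamental diagrams, so — exactly as in the proof of Theorem~\ref{theorem simplifying} — the procedure is forced and lands on the basis of $\Hom_\cC(1,3)$; collecting coefficients produces expressions that are linear in $D,E,F$, namely, once every sign $\epsilon$ coming from interchanging two odd cups/caps has been tracked, the left-hand sides $\epsilon D+\epsilon Ef$, $\epsilon Ed+F$ and $\epsilon Ee$ of the asserted equations (and, for the second diagram, their accented versions).

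Next I would run the ``sliding first'' route: slide the distinguished crossing past the lower cup. In the $d$- and $f$-summands one reaches a standard expression after a bounded amount of further straightening and braiding, but in the $e$-summand a new crossing has been produced against a cup, so sliding applies again, and then once more; these three successive uses of sliding are what generate monomials such as $d^2$, $ed\lambda$, $e^2f\lambda$, $e^3c\parS'$, $e^3a$ and $e^3b$, while the twisting and untwisting relations used to collapse the resulting double crossings and twisted cups contribute the factors $\lambda$, $a$, $b$ and $c\parS'$. Once again the outcome is forced to lie in the span of the three basis diagrams, and equating the two expansions gives the six displayed equations; the accented half is obtained by the same computation after replacing every $\di{0.5\sca}{\ocr}$ by $\di{0.5\sca}{\ucr}$ and invoking the under-crossing versions of sliding, pulling, twisting and untwisting.

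The only real difficulty is the bookkeeping. One must choose, on each intermediate summand, an order of rewriting for which the simplification of Theorem~\ref{theorem simplifying} actually terminates on a standard expression rather than cycling, keep scrupulous track of the sign $\epsilon$ every time two odd strands are exchanged by the super-interchange law, and verify that each term has been reduced \emph{all the way} to the $\Hom_\cC(1,3)$-basis before the coefficients are compared — otherwise the identification of coefficients is not legitimate. There is no conceptual obstacle: the argument is structurally identical to the earlier rewriting lemmas, and the intermediate computations can, if desired, be shortened by substituting the relations of \eqref{First equation} (for instance $F=a/e$, $E=b-f$, $d=-ef'$, $d'=-e'f$), although the statement deliberately keeps $D,E,F,d,e,f$ and their accented counterparts unexpanded.
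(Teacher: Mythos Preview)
Your approach is exactly what the paper intends: it explicitly omits the proof of this lemma, stating that it ``can be obtained in a similar manner to the proofs of Lemma~\ref{Lemma S} to Lemma~\ref{Lemma E D}'', and your identification of the overlap as \emph{pulling} (on the left cup together with the two stacked crossings) versus \emph{sliding} (on the red crossing together with the right cup) is the correct one. One slip worth fixing: the diagram has two cups and no caps, so it lies in $\Hom_\cC(0,4)$, not $\Hom_\cC(1,3)$; the three standard expressions you compare against are the three pairings of four top dots rather than $\di{0.5\sca}{\culi}$, $\di{0.5\sca}{\ccr}$, $\di{0.5\sca}{\licu}$, but since $\dim\Hom_\cC(0,4)=3$ as well, this does not change the count of equations or the mechanics of your argument.
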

These equations are equivalent to
\begin{align*}
\epsilon(a(b-f)/\lambda -f\lambda) &= e^2(f'^2-ff'-f'\lambda+f\lambda)+e^3 c\parS', \\
\epsilon e^2(2f-b)f'+a&=e^4(a+f'^2), \\
\epsilon e^2(b-2f) &= e^4(b-2f'), \\
\epsilon(a(b-f')/\lambda' -f'\lambda') &= e'^2(f^2-ff'-f\lambda'+f'\lambda')+e'^3 c\parS, \\
\epsilon e'^2(2f'-b)f+a&=e'^4(a+f^2), \\
\epsilon e'^2(b-2f') &= e'^4(b-2f).
\end{align*}
Substituting $\epsilon e^2(b-2f) = e^4(b-2f')$ in $\epsilon e^2(2f-b)f'+a=e^4(a+f'^2)$ gives us that $e^4=1$, so that we can also rewrite these equations as
\begin{equation}
\begin{aligned}
\epsilon e^2(a(b-f)/\lambda -f\lambda) &= (f'^2-ff'-f'\lambda+f\lambda)+e c\parS', \\
\epsilon e'^2(a(b-f')/\lambda' -f'\lambda') &= (f^2-ff'-f\lambda'+f'\lambda')+e'c\parS, \\
e^4&=e'^4=1, \\
\epsilon e^2(b-2f) &= (b-2f'), \\
\epsilon e'^2(b-2f') &= (b-2f).
\end{aligned}
\end{equation}

\begin{lemma}
Rewriting  $\di{0.5\sca}{\dli[0][\h] \cu[\br][\h] \ocr[0][\h] \ocr[0][3*\h] \ocr[\br][2*\h] \li[2*\br][\h] \li[2*\br][3*\h] \li[0][2*\h] }$
 and $ 
\di{0.5\sca,-0.5\sca}{\dli[0][\h] \cu[\br][\h] \ucr[0][\h] \ucr[0][3*\h] \ucr[\br][2*\h] \li[2*\br][\h] \li[2*\br][3*\h] \li[0][2*\h] } $  leads to 
\begin{align*}
 ec\parS' d + f \lambda d &= -D(d+eb), \\
   e'c\parS d' + f' \lambda' d' &= -D'(d'+e'b),\\
\lambda(eb-ef+d)-e^2 c\parS' &= E(d+eb),  \\
\lambda'(e'b-e'f'+d')-e'^2 c\parS &= E'(d'+e'b), \\
\lambda(ec\parS '+f\lambda-f^2) -ec\parS 'f &= F(d+eb), \\
 \lambda'(e'c\parS +f'\lambda'-f'^2) -e'c\parS f' &= F'(d'+e'b).
\end{align*}
\end{lemma}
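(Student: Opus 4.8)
The plan is to treat this as one more overlap computation, exactly in the spirit of Lemma~\ref{Lemma S} through Lemma~\ref{Lemma E D}. The diagram $\di{0.5\sca}{\dli[0][\h] \cu[\br][\h] \ocr[0][\h] \ocr[0][3*\h] \ocr[\br][2*\h] \li[2*\br][\h] \li[2*\br][3*\h] \li[0][2*\h] }$ is a morphism in $\Hom_{\cC}(1,3)$ consisting of a cup on the two right-hand strands, a dangling propagating strand on the left, and a three-strand braid in the pattern $s_1 s_2 s_1$ stacked on top. It contains simultaneously the left-hand side of the braid relation (the whole $s_1 s_2 s_1$ block) and the left-hand side of the sliding relation (the bottom crossing together with the cup directly under it); these two subdiagrams overlap in the bottom crossing. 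I would simplify the diagram in the two ways afforded by this overlap, reduce both outcomes to linear combinations of the basis $\{\di{0.5\sca}{\culi},\di{0.5\sca}{\ccr},\di{0.5\sca}{\licu}\}$ of $\Hom_{\cC}(1,3)$, and compare coefficients. That gives the three unaccented identities; the companion diagram $\di{0.5\sca,-0.5\sca}{\dli[0][\h] \cu[\br][\h] \ucr[0][\h] \ucr[0][3*\h] \ucr[\br][2*\h] \li[2*\br][\h] \li[2*\br][3*\h] \li[0][2*\h] }$ is handled in the mirror-symmetric way and gives the accented ones; just as the two paired diagrams in Lemma~\ref{lemma expression for DEF} yielded $D,E,F$ and $D',E',F'$ respectively, the under-crossing version is what turns $D,E,F,e,f,\lambda,\parS'$ into $D',E',F',e',f',\lambda',\parS$ in the answer.

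For the first branch I would apply the sliding relation to the cup-plus-bottom-crossing block, rewriting it as $d\,\di{0.5\sca}{\culi}+e\,\di{0.5\sca}{\ccr}+f\,\di{0.5\sca}{\licu}$ with the remaining $s_2 s_1$ block still sitting on top, and then push that block down through each of the three resulting standard-type diagrams, absorbing crossings into cups with the untwisting relation, using the pulling relation where a crossing meets a cup from the far side, and straightening the last leftover crossing with the twisting relation (equivalently, with the under-cross formula of Lemma~\ref{Lemma invertible}). It is this final straightening that injects the recurring factor $d+eb$ on the right-hand side -- note that $d+eb=e(b-f')=eE'$ once $d=-ef'$ and $E'=b-f'$ are used -- and it is what brings in $D,E,F$. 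For the second branch I would instead first invoke the braid relation to replace $s_1 s_2 s_1$ by $s_2 s_1 s_2$; the new bottom crossing now lies directly above the cup, so untwisting removes it at the cost of a factor $\lambda$, after which the configuration is again a cup with a single crossing on top (sliding once more) followed by one more crossing that I eliminate as before. The terms $ec\parS'$, $f\lambda$, $f^2$, $\lambda(eb-ef)$, and so on emerge from the repeated interplay of sliding with untwisting and twisting.

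Having written down the six raw identities, I would simplify them using the relations already established -- $d=-ef'$, $d'=-e'f$, $E=b-f$, $E'=b-f'$, $F=a/e$, $F'=a/e'$, $D=aE/\lambda$, $D'=aE'/\lambda'$, together with $\lambda^2=a+b\lambda+c\delta$ (or its $\lambda\neq\lambda'$ substitute) -- to put them in the compact form stated; in the odd case one also carries the super-interchange signs $\epsilon$ that appear when the odd cup is moved past odd morphisms.

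The main obstacle is purely bookkeeping. Both branches pass through a sizeable intermediate linear combination, and the delicate point is the single crossing that survives the first reduction: rewriting it through twisting introduces $a,b,c$ (and, in the odd case, $\epsilon$), all of which must then be carried through an untwisting and a second round of sliding. One must also check at each step that the appropriate strand of the cup stays straight and in the position the relation requires, so that the genuine sliding and pulling relations apply rather than an unavailable mirror variant. There is no conceptual novelty beyond Lemma~\ref{Lemma S}--Lemma~\ref{Lemma E D}, which is precisely why the paper records this last rewriting lemma without a proof.
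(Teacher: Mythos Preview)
Your proposal is correct and matches the paper's approach: the paper explicitly omits the proof of this lemma, saying it ``can be obtained in a similar manner to the proofs of Lemma~\ref{Lemma S} to Lemma~\ref{Lemma E D},'' and what you describe is exactly that pattern applied to this particular overlap. Your identification of the two branches---sliding on the bottom crossing versus applying the braid relation $s_1s_2s_1=s_2s_1s_2$ and then untwisting the new bottom $s_2$---is the right one, and your explanation of how the common factor $d+eb$ arises on one side while $\lambda$, $ec\parS'$, $f\lambda$, etc.\ accumulate on the other is accurate.
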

This we can rewrite as
\begin{equation}
\begin{aligned}
f'(ec\parS '+f\lambda)&= a(b-f)(b-f')/\lambda, \\
f(e'c\parS +f'\lambda')&= a(b-f)(b-f')/\lambda', \\
\lambda(b-f-f') -ec\parS '&=(b-f)(b-f'), \\
\lambda'(b-f'-f) -e'c\parS &=(b-f)(b-f'), \\
\lambda (ec\parS '+f\lambda-f^2) -ec\parS 'f &= a(b-f'),  \\
\lambda' (e'c\parS +f'\lambda'-f'^2) -e'c\parS f' &= a(b-f). 
\end{aligned}
\end{equation}

\begin{lemma}
Rewriting  $\di{0.5\sca}{\dli[2*\br][0] \ocr[\br][0] \li \cu[0][0] \ocr[0][\h] \ocr[0][3*\h] \ocr[\br][2*\h] \li[2*\br][\h]  \li[2*\br][3*\h] \li[0][2*\h] }$ and $
\di{0.5\sca,-0.5\sca}{\dli[2*\br][0] \ucr[\br][0] \li \cu[0][0] \ucr[0][\h] \ucr[0][3*\h] \ucr[\br][2*\h] \li[2*\br][\h]  \li[2*\br][3*\h] \li[0][2*\h] } $  leads to 
\begin{equation} \label{Last equation}
\begin{aligned}
a(bE+c\parS' e )&= D^2+Ea\lambda + EbD+ Ec\parS' d +F\lambda d, \\
a\lambda+bD+b^2E +c\parS' d +c\parS' eb &= DE +bE^2+Ec\parS' e + F\lambda e, \\
b E c\parS' +bF\lambda + c^2 \parS '^2 e + c\parS' \lambda  f &= DF + EbF +Ec\parS'  f + F \lambda f,\\
a(bE'+c\parS e' )&= D'^2+E'a\lambda' + E'bD'+ E'c\parS d' +F'\lambda' d', \\
a\lambda'+bD'+b^2E' +c\parS d' +c\parS e'b &= D'E' +bE'^2+E'c\parS e' + F'\lambda' e', \\
b E' c\parS  +bF'\lambda' + c^2 \parS ^2 e' + c\parS \lambda' f' &= D'F' + E'bF' +E'c\parS f' + F' \lambda' f'.
\end{aligned}
\end{equation}

\end{lemma}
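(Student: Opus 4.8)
The plan is to run the same critical‑pair (confluence) argument that establishes Lemma~\ref{Lemma S} through Lemma~\ref{Lemma E D}, now applied to the displayed diagram. The diagram $\di{0.5\sca}{\dli[2*\br][0] \ocr[\br][0] \li \cu[0][0] \ocr[0][\h] \ocr[0][3*\h] \ocr[\br][2*\h] \li[2*\br][\h]  \li[2*\br][3*\h] \li[0][2*\h] }$ is a morphism in $\Hom_{\cC}(1,3)$ whose bottom cup together with its four crossings makes two defining left‑hand sides overlap: a Yang--Baxter triple of over‑crossings (the left side of the braiding relation) overlapping a cup with crossings stacked above it (the left side of the pulling relation). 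The first step is to fix the target of both reductions: by Theorem~\ref{theorem simplifying} any simplification terminates in a linear combination of standard expressions, and the standard expressions in $\Hom_{\cC}(1,3)$ are exactly the three distinct Brauer diagrams $\di{0.5\sca}{\culi}$, $\di{0.5\sca}{\ccr}$, $\di{0.5\sca}{\licu}$. Hence each way of simplifying outputs a triple of scalars in these three diagrams, and demanding the two triples to coincide is three scalar equations --- the first three lines of \eqref{Last equation}. Running the identical computation with every over‑crossing $\di{0.5\sca}{\ocr}$ replaced by the under‑crossing $\di{0.5\sca}{\ucr}$ (re‑expanding under‑crossings via Lemma~\ref{Lemma invertible} whenever a relation is stated for the over‑crossing, and passing $e,f,\lambda,D,E,F,\parS'$ to their primed counterparts) produces the last three lines.

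Concretely, one reduction applies the pulling relation $\di{\sca}{\dli[2*\br][0] \cu \ocr[\br][0] \li \li[2*\br][\h] \ocr[0][\h] }= D\;\di{\sca}{\culi}  + E\; \di{\sca}{\ccr} +F\;  \di{\sca}{\licu }$ to the lower cup‑and‑crossings block; the geometry of one of the resulting summands again exhibits a cup directly below a crossing and forces a second application of pulling, which is the origin of the quadratic coefficients $D^{2}$, $EbD$, $Ec\parS' d$ in \eqref{Last equation}, and the crossings that remain are cleared with braiding, twisting, untwisting and straightening. The other reduction applies the braiding relation to the Yang--Baxter triple first, after which the cup meets a single crossing that is eliminated by one application of the sliding or pulling relation, the surviving crossings again being removed with twisting, untwisting and straightening. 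One then expands both outputs, collects the coefficient of each of $\di{0.5\sca}{\culi}$, $\di{0.5\sca}{\ccr}$, $\di{0.5\sca}{\licu}$, and equates them, obtaining six raw identities (three from the over‑crossing diagram and three from its under‑crossing analogue). The last step is to simplify those six identities to the displayed form using the relations among the parameters already proved in the appendix --- in particular $D=aE/\lambda$, $D'=aE'/\lambda'$, $E=b-f$, $E'=b-f'$, $F=a/e$, $F'=a/e'$, $d=-ef'$, $d'=-e'f$, $\parS'=\epsilon\parS$, $eF=(b-f)f+a$, together with $a=\lambda^{2}-b\lambda-c\delta$ (or the $\lambda\ne\lambda'$ alternative) from Equation~\eqref{First equation}.

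I expect the only genuine difficulty to be bookkeeping rather than anything conceptual: each of the two reductions threads through several intermediate pictures carrying three crossings, one must track the super‑interchange signs thrown up by the odd cup and cap, and the pulling, sliding, twisting and untwisting relations feed into one another, so intermediate terms repeatedly produce a crossing adjacent to a cup that must be reprocessed. As in the cited proofs, the way to keep this finite and organised is to collapse every cup‑plus‑crossings sub‑block to the three standard diagrams as early as possible, so that from then on one manipulates only short linear combinations of basis diagrams; termination of this reprocessing is guaranteed by Theorem~\ref{theorem simplifying}, and the final coefficient comparison is legitimate because the three standard expressions in $\Hom_{\cC}(1,3)$ are genuinely distinct Brauer diagrams.
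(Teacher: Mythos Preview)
Your proposal is correct and follows the same approach the paper uses throughout the appendix; in fact the paper gives no proof at all for this particular lemma, only the blanket remark after Lemma~\ref{Lemma E D} that the remaining rewriting lemmas ``can be obtained in a similar manner'', so your sketch is already more explicit than what the paper provides. One small correction: the six equations displayed in \eqref{Last equation} are the \emph{raw} output of the coefficient comparison in the basis $\{\di{0.5\sca}{\culi},\di{0.5\sca}{\ccr},\di{0.5\sca}{\licu}\}$, still written in terms of $D,E,F,D',E',F'$, so your final ``simplify to the displayed form using $D=aE/\lambda$, $E=b-f$, \dots'' step is not needed here --- those substitutions are what the paper performs \emph{after} the lemmas, in Section~\ref{Section classes of categories}, not to obtain \eqref{Last equation} itself.
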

We conclude that simplifying the overlapping diagrams of Section \ref{Section rewritable diagrams} leads to a consistent result if Equations \eqref{First equation} to \eqref{Last equation} are satisfied.

\endgroup

\subsection*{Acknowledgements}
This research has been supported by a FWO postdoctoral junior fellowship from the Research Foundation Flanders (1269821N).

\bibliography{citations} 

\begin{thebibliography}{DDPW08}

\bibitem[AGG21]{AGG}
S.~Ahmed, D.~Grantcharov, and N.~Guay.
\newblock Quantized enveloping superalgebra of type {{\(P\)}}.
\newblock {\em Lett. Math. Phys.}, 111(3):17, 2021.
\newblock Id/No 84.

\bibitem[BE17]{BrundanEllis}
J.~Brundan and A.~P. Ellis.
\newblock Monoidal supercategories.
\newblock {\em Commun. Math. Phys.}, 351(3):1045--1089, 2017.

\bibitem[BLSR98]{BenkartLeeRam}
G.~Benkart, C.~Lee~Shader, and A.~Ram.
\newblock Tensor product representations for orthosymplectic {Lie}
  superalgebras.
\newblock {\em J. Pure Appl. Algebra}, 130(1):1--48, 1998.

\bibitem[Bra37]{Brauer}
R.~Brauer.
\newblock On algebras which are connected with the semisimple continuous
  groups.
\newblock {\em Ann. Math. (2)}, 38:857--872, 1937.

\bibitem[BS24]{BrundanStroppel}
J.~Brundan and C.~Stroppel.
\newblock {\em Semi-infinite highest weight categories}, volume 1459 of {\em
  Mem. Am. Math. Soc.}
\newblock Providence, RI: American Mathematical Society (AMS), 2024.

\bibitem[BW89]{BirmanWenzl}
J.~S. Birman and H.~Wenzl.
\newblock Braids, link polynomials and a new algebra.
\newblock {\em Trans. Am. Math. Soc.}, 313(1):249--273, 1989.

\bibitem[CDVM09]{CDM}
A.~Cox, M.~De~Visscher, and P.~Martin.
\newblock The blocks of the {Brauer} algebra in characteristic zero.
\newblock {\em Represent. Theory}, 13:272--308, 2009.

\bibitem[Cou18]{CoulembierPeriplectic}
K.~Coulembier.
\newblock The periplectic {Brauer} algebra.
\newblock {\em Proc. Lond. Math. Soc. (3)}, 117(3):441--482, 2018.

\bibitem[CZ19]{CoulembierZhang}
K.~Coulembier and R.~Zhang.
\newblock Borelic pairs for stratified algebras.
\newblock {\em Adv. Math.}, 345:53--115, 2019.

\bibitem[DDPW08]{FDAandQuantumGroups}
B.~Deng, J.~Du, B.~Parshall, and J.~Wang.
\newblock {\em Finite dimensional algebras and quantum groups}, volume 150 of
  {\em Math. Surv. Monogr.}
\newblock Providence, RI: American Mathematical Society (AMS), 2008.

\bibitem[EGNO15]{TensorCategories}
P.~Etingof, S.~Gelaki, D.~Nikshych, and V.~Ostrik.
\newblock {\em Tensor categories}, volume 205 of {\em Math. Surv. Monogr.}
\newblock Providence, RI: American Mathematical Society (AMS), 2015.

\bibitem[KT17]{KujawaTharp}
J.~R. Kujawa and B.~C. Tharp.
\newblock The marked {Brauer} category.
\newblock {\em J. Lond. Math. Soc., II. Ser.}, 95(2):393--413, 2017.

\bibitem[LZ15]{LehrerZhang}
G.~I. Lehrer and R.~B. Zhang.
\newblock The {Brauer} category and invariant theory.
\newblock {\em J. Eur. Math. Soc. (JEMS)}, 17(9):2311--2351, 2015.

\bibitem[LZ22]{LehrerZhangDiagramCategories}
G.~I. Lehrer and R.~B. Zhang.
\newblock Diagram categories and invariant theory for classical groups and
  supergroups, 2022.

\bibitem[Moo03]{Moon}
D.~Moon.
\newblock Tensor product representations of the {Lie} superalgebra
  {{\({\mathfrak p}(n)\)}} and their centralizers.
\newblock {\em Commun. Algebra}, 31(5):2095--2140, 2003.

\bibitem[Mor10]{Morton}
H.~R. Morton.
\newblock A basis for the birman-wenzl algebra, Preprint. arXiv:1012.3116, 2010.


\bibitem[Mur90]{Murakami}
J.~Murakami.
\newblock The representations of the q-analogue of {Brauer}'s centralizer
  algebras and the {Kauffman} polynomial of links.
\newblock {\em Publ. Res. Inst. Math. Sci.}, 26(6):935--945, 1990.

\bibitem[RS22]{RuiSong}
H.~Rui and L.~Song.
\newblock The periplectic $q$-brauer category, Preprint. arXiv:2209.02324, 2022.

\bibitem[SS22]{SamSnowden}
S.~V. Sam and A.~Snowden.
\newblock The representation theory of {Brauer} categories. {I}: triangular
  categories.
\newblock {\em Appl. Categ. Struct.}, 30(6):1203--1256, 2022.

\bibitem[Wen12]{Wenzl}
H.~Wenzl.
\newblock A {{\(q\)}}-{Brauer} algebra.
\newblock {\em J. Algebra}, 358:102--127, 2012.

\end{thebibliography}
\bibliographystyle{alphaabbr}

\end{document}